\documentclass[leqno, 12pt, a4paper, twoside]{article}	% leqno = equation numbers left

\usepackage{mathptmx}
\usepackage[scaled=.90]{helvet}
\usepackage{courier}
\usepackage{color}
\usepackage[T1]{fontenc}
\usepackage{soul}			% ~ gesperrt
\usepackage{amsmath, amssymb,amsthm}
\usepackage[mathscr]{euscript}
\usepackage{graphicx}
\usepackage{epstopdf}
\usepackage{microtype}			% for better typography
\usepackage{totpages}			% total number of pages
\usepackage{dsfont}			% doublestrike (for maths symbols \N, \Z, \Q, \R, \C, ...)
\usepackage[left=1in,right=1in,top=1in,bottom=1in]{geometry}
\usepackage{xcolor}
\usepackage[all]{xy}
\usepackage{extarrows}
\usepackage[colorlinks = true,
            linkcolor = red,
            urlcolor  = red,
            citecolor = red,
            anchorcolor = red]{hyperref}

\hypersetup{colorlinks=false}

\makeatletter

\DeclareMathAlphabet{\mathcal}{OMS}{cmsy}{m}{n}

\SetMathAlphabet{\mathcal}{normal}{OMS}{cmsy}{m}{n}
\SetMathAlphabet{\mathcal}{bold}{OMS}{cmsy}{m}{n}

\let\amalg=\undefined
\let\coprod=\undefined
\DeclareSymbolFont{cmsymbols}{OMS}{cmsy}{m}{n}
\DeclareSymbolFont{cmlargesymbols}{OMX}{cmex}{m}{n}
\DeclareMathSymbol{\amalg}{\mathbin}{cmsymbols}{"71}
\DeclareMathSymbol{\coprod}{\mathop}{cmlargesymbols}{"60}
\let\jmath=\undefined
\DeclareSymbolFont{cmletters}{OML}{cmm}{m}{it}
\DeclareMathSymbol{\jmath}{\mathord}{cmletters}{"7C}

%%%%%%%%%%%%%%%%%%%%%%%%%%%%%%%%%%%%%%%%%%%%%%%%%%%%%%%%%%%%%%%%%%%%%%%%%%%%%%%%%%%%%%%%%%%%%%
%%     G E N E R A L   C O N F I G U R A T I O N S
%%%%%%%%%%%%%%%%%%%%%%%%%%%%%%%%%%%%%%%%%%%%%%%%%%%%%%%%%%%%%%%%%%%%%%%%%%%%%%%%%%%%%%%%%%%%%%
\clubpenalty = 10000                             %%%
\widowpenalty = 10000                            %%%  to avoid widows and orphans (typography)
\displaywidowpenalty = 10000                     %%%
\setlength\parindent{\z@}                        %%%  no parindent
\setlength\parskip{\z@}                          %%%  no parskip
%%%%%%%%%%%%%%%%%%%%%%%%%%%%%%%%%%%%%%%%%%%%%%%%%%%%%%%%%%%%%%%%%%%%%%%%%%%%%%%%%%%%%%%%%%%%%%%% Environment  -  Exercises
%%%%%%%%%%%%%%%%%%%%%%%%%%%%%%%%%%%%%%%%%%%%%%%%%%%%%%%%%%%%%%%%%%%%%%%%%%%%%%%%%%%%%%%%%%%%%%

\newcounter{arno}
\setcounter{arno}{1}

\newenvironment{arlist}{
\begin{list}{{\normalfont\bf (\arabic{arno})\hfill}}
{
\usecounter{arno}
\setlength{\topsep}{0.25ex}
\setlength{\labelwidth}{1ex}
\setlength{\leftmargin}{0ex}
\setlength{\labelsep}{1ex}
\setlength{\rightmargin}{0ex}
\setlength{\itemindent}{2ex}
\setlength{\parsep}{0ex}%0.75ex plus0.2ex minus0.1ex}
\setlength{\itemsep}{0.5ex plus0.2ex minus0.1ex}
}
}{\end{list}}

\newcounter{ano}
\setcounter{ano}{1}
\newenvironment{alist}{
\begin{list}{{\normalfont \bf  (\alph{ano})}}
{
\usecounter{ano}
\setlength{\topsep}{0.25ex}
\setlength{\labelwidth}{1ex}
\setlength{\leftmargin}{0ex}
\setlength{\labelsep}{1ex}
\setlength{\rightmargin}{0ex}
\setlength{\itemindent}{2ex}
\setlength{\parsep}{0ex}%0.75ex plus0.2ex minus0.1ex}
\setlength{\itemsep}{0.5ex plus0.2ex minus0.1ex}
}
}{\end{list}}

\newcounter{rno}
\setcounter{rno}{1}
\newenvironment{rlist}{
\begin{list}{{\normalfont(\roman{rno})}}
{
\usecounter{rno}
\setlength{\topsep}{0.25ex}
\setlength{\labelwidth}{1ex}
\setlength{\leftmargin}{0ex}
\setlength{\labelsep}{1ex}
\setlength{\rightmargin}{0ex}
\setlength{\itemindent}{2ex}
\setlength{\parsep}{0ex}%0.75ex plus0.2ex minus0.1ex}
\setlength{\itemsep}{0.5ex plus0.2ex minus0.1ex}
}
}{\end{list}}

%%%%%%%%%%%%%%%%%%%%%%%%%%%%%%%%%%%%%%%%%%%%%%%%%%%%%%%%%%%%%%%%%%%%%%%%%%%%%%%%%%%%%%%%%%%%%%%%%     DEFINITION OF     S O U L     (instead of   G E S P E R R T)
%%%%%%%%%%%%%%%%%%%%%%%%%%%%%%%%%%%%%%%%%%%%%%%%%%%%%%%%%%%%%%%%%%%%%%%%%%%%%%%%%%%%%%%%%%%%%%
\sodef\so{}
    {.175em}                                       %%%  skip within words
    {.6em plus.1em minus .1em}                     %%%  skip between two words
    {0.5em plus.1em minus .1em}                    %%%  preskip/postskip (if there is a space)

\newif\ifsilent \newif\ifproofmode
\newwrite\inx
\proofmodefalse

\def\cameraready{\immediate\openout\inx=index}

\overfullrule=0pt  % No proofmode is cameraready mode

%%%%%%%%%%%%%%%%%%%%%%%%%%%%%%%%%%%%%%%%%%%%%%%%%%%%%%%%%%%%%%%%%%%%%%%%%%%%%%%%%%%%%%%%%%%%%%%%%     CHANGES OF       F O O T N O T E  -  S T Y L E
%%%%%%%%%%%%%%%%%%%%%%%%%%%%%%%%%%%%%%%%%%%%%%%%%%%%%%%%%%%%%%%%%%%%%%%%%%%%%%%%%%%%%%%%%%%%%%
\setlength{\footskip}{7.5ex}
\renewcommand*\footnoterule{%
    \kern-1\p@                                           %%%  variance of distance TEXT -- RULE (\p@ = pt)
    \hrule\@width.4\columnwidth                      %%%  width of the rule (flushleft)
%   \vskip5\p@
    \kern2\p@}                                           %%%  variance of distance RULE -- FOOTNOTE
\renewcommand*\@makefntext[1]{%
        \parindent 0.5em                                   %%%  if there's a new par within the footnote
        \noindent                                            %%%  no new line
        \hb@xt@1.25em{\hss\@makefnmark}              %%%  box for number, with width 'xt@__em'
        \hskip1\p@ #1}%                                %%%  hskip between number and text
    
%%%%%%%%%%%%%%%%%%%%%%%%%%%%%%%%%%%%%%%%%%%%%%%%%%%%%%%%%%%%%%%%%%%%%%%%%%%%%%%%%%%%%%%%%%%%%%%%%      CHANGES OF     T H E O R E M S
%%%%%%%%%%%%%%%%%%%%%%%%%%%%%%%%%%%%%%%%%%%%%%%%%%%%%%%%%%%%%%%%%%%%%%%%%%%%%%%%%%%%%%%%%%%%%%
\def\swappedhead#1#2#3{%
    \thmnumber{#2}%
    \thmname{\@ifnotempty{#2}{\,~}#1{~}}%          %%%  defines the distances between thm-number,
    \thmnote{\,\,\the\thm@notefont\so{(#3)}}}         %%%  thm-name and (optional) thm-description
\swapnumbers                                            %%%  Theorem 1.A  -->  1.A Theorem

%%%%%%%%%%%%%%%%%%%%%%%%%%%%%%%%%%%%%%%%%%%%%%%%%%%%%%%%%%%%%%%%%%%%%%%%%%%%%%%%%%%%%%%%%%%%%%
\topsep6\p@\@plus2\p@\@minus.5\p@                %%%  defines thm-seperator
\def\th@plain{%
    \thm@headpunct{}                          %%%  no period after heading
    \thm@preskip\topsep
    \thm@postskip\thm@preskip
    \itshape                                       %%%  body font
}
\def\th@definition{%
%  \thm@headfont{\color{blue}}                   %%%  opt., coloured headfont
    \thm@headpunct{}%
    \thm@preskip\topsep
    \thm@postskip\thm@preskip
    \normalfont                                    %%%  body font
}
\def\th@remark{%
%  \thm@headfont{\color{red}}                    %%%  opt., coloured headfont
    \thm@headpunct{}%
    \thm@preskip\topsep %\divide\thm@preskip\tw@
    \thm@postskip\thm@preskip
    \small                                     %%%  body font
}
\DeclareRobustCommand{\dppqed}{%                 %%%  defines the new \qed-symbol
    \ifmmode \bullet                               %%%  as a bullet (Scheja-Storch-style)
    \else
        \leavevmode\unskip\penalty9999 \hbox{}
        \nobreak\hfill
        \quad\hbox{$\bullet$}%
    \fi
}
\renewenvironment*{proof}[1][\proofname]{\par
    \pushQED{\dppqed}%
    \normalfont
    \topsep5\p@\@plus2\p@\relax                    %%%  defines distance above/below the environment
    \trivlist
     \item[\hskip\labelsep
%         \itshape
         \bfseries
    {#1\@addpunct{\,}}]\ignorespaces
}{%
    \popQED\endtrivlist\@endpefalse
}

%%%%%%%%%%%%%%%%%%%%%%%%%%%%%%%%%%%%%%%%%%%%%%%%%%%%%%%%%%%%%%%%%%%%%%%%%%%%%%%%%%%%%%%%%%%%%%%%%      N E W   T H E O R E M S
%%%%%%%%%%%%%%%%%%%%%%%%%%%%%%%%%%%%%%%%%%%%%%%%%%%%%%%%%%%%%%%%%%%%%%%%%%%%%%%%%%%%%%%%%%%%%%
\theoremstyle{plain}
\newtheorem{theorem}{Theorem{\hskip-3pt}}[section]
\newtheorem{lemma}[theorem]{Lemma{\hskip-3pt}}
\newtheorem{corollary}[theorem]{Corollary{\hskip-3pt}}
\newtheorem{proposition}[theorem]{Proposition{\hskip-3pt} }

\newtheorem{Euclid'sLemma}[theorem]{Euclid's Lemma{\hskip-3pt}}
\newtheorem{Euler'sFormula}[theorem]{Euler's Formula{\hskip-3pt} }

\newtheorem{Bernstein'sEquivalenceTheorem}[theorem]{Bernstein's Equivalence Theorem{\hskip-3pt}}

            %%  counting numbers associated to section

%
\theoremstyle{definition}
\newtheorem{definition}[theorem]{Definition{\hskip-4pt}}
\newtheorem{mypar}[theorem]{}

\theoremstyle{remark}
\newtheorem{example}[theorem]{Example{\hskip-3pt}}
\newtheorem{examples}[theorem]{Examples{\hskip-3pt}}
\newtheorem{remark}[theorem]{Remark{\hskip-3pt}}
\newtheorem{remarks}[theorem]{Remarks{\hskip-3pt}}

%

%%%%%%%%%%%%%%%%%%%%%%%%%%%%%%%%%%%%%%%%%%%%%%%%%%%%%%%%%%%%%%%%%%%%%%%%%%%%%%%%%%%%%%%%%%%%%%%%%      N E W   M A T H S   C O M M A N D S
%%%%%%%%%%%%%%%%%%%%%%%%%%%%%%%%%%%%%%%%%%%%%%%%%%%%%%%%%%%%%%%%%%%%%%%%%%%%%%%%%%%%%%%%%%%%%%
\numberwithin{equation}{section}
\RequirePackage{dsfont}                          %%%  double stroke

\newcommand{\C}{\mathds{C}}

\newcommand{\N}{\mathds{N}}
\newcommand{\Z}{\mathds{Z}}
\newcommand{\Q}{\mathds{Q}}
\newcommand{\R}{\mathds{R}}

\newcommand{\Min}{\mathrm{Min}\,}

\renewcommand{\max}{\mathrm{max}\,}
\renewcommand{\min}{\mathrm{min}\,}

\newcommand{\ord}{\mathrm{ord}}

\newcommand{\Hom}{\mathrm{Hom\,}}

\newcommand{\Supp}{\operatorname{Supp\,}}

\newcommand{\Spec}{\mathrm{Spec\,}}
\newcommand{\KSpec}{K\operatorname{\!-Spec\,}}
\newcommand{\Spm}{\operatorname{Spm\,}}
\newcommand{\Ann}{\operatorname{Ann\,}}

\newcommand{\Dim}{\operatorname{Dim\,}}
\newcommand{\Det}{\operatorname{Det\,}}

\newcommand{\id}{\operatorname{id}}

\newcommand{\Ker}{\operatorname{Ker}}
\newcommand{\Coker}{\operatorname{Coker}}
\newcommand{\img}{\operatorname{Im\,}}
\newcommand{\im}{\operatorname{Im\,}}

\newcommand{\legsym}[2]{\displaystyle
\Bigl(\!\frac{\lower1pt\hbox{$#1$}}{\raise1.5pt\hbox{$#2$}}\Bigr)}

\font\Gmath=cmsy10 scaled 2074%
\def\Ast{\mathop{\vphantom{\sum}%
                   \lower2.5pt\hbox{\Gmath\char3}}}%
\makeatother

%%%%%%%%%%%%%%%%%%%%%%%%%%%%%%%%%%%%%%%%%%%%%%%%%%%%%%%%%%%%%%%%%%%%%%%%%%%%%%%%%%%%%%%%%%%%%%
\def\mathraise#1#2{\setbox0=\hbox{$#1{#2}$}%
  \setbox1=\vbox to0pt{\box0\vss}%
  \box1}

\def\Sim{\mathpalette\mathraise\sim}
\def\Approx{\mathpalette\mathraise\approx}
\def\iso{\stackrel{\raise1pt\hbox{$\Sim$}}{\longrightarrow}}
\def\iiso{\stackrel{\raise3pt\hbox{$\Approx$}}{\longrightarrow}}
\def\leftiso{\stackrel{\raise1pt\hbox{$\Sim$}}{\longrightarrow}}
\def\leftrightiso{\stackrel{\raise1pt\hbox{$\Sim$}}{\longleftrightarrow}}

\newcommand{\rightiso}{\stackrel{\raise1pt\hbox{$\Sim$}}{\longleftarrow}}
\newcommand{\toiso}{\stackrel{\raise1pt\hbox{$\Sim$}}{\longrightarrow}}

\def\map#1#2{\stackrel{\raise1pt\hbox{${\scriptstyle #1}$}}{\xrightarrow{\hspace*{#2mm}}}}

\def\maprightsquigarrow{\,\raise1pt\hbox{$\shortmid$}\!\!\!\!\rightsquigarrow}
%%%%%%%%%%%%%%%%%%%%%%%%%%%%%%%%%%%%%%%%%%%%%%%%%%%%%%%%%%%%%%%%%%%%%%%%%
\makeatletter
\providecommand{\bigsqcap}{%
  \mathop{%
    \mathpalette\@updown\bigsqcup
  }%
}
\newcommand*{\@updown}[2]{%
  \rotatebox[origin=c]{180}{$\m@th#1#2$}%
}
\makeatother

\parindent0mm
\headsep=5mm
\footskip=5mm
\footnotesep4mm

\font\tentt=cmtt10

%%%%%%%%%%%%%%%%%%%%%%%%%%%%%%%%%%%%%%%%%%%%%%%%%%%%%
			%%%Caligraphic Fonts%%%
%%%%%%%%%%%%%%%%%%%%%%%%%%%%%%%%%%%%%%%%%%%%5%%%%%%%%
	    	%Caligraphi Upper Case letters%

%%%%%%%%%%%%%%%%%%%%%%%%%%%%%%%%%%%%%%%%%%%%%%%%%%%%%
			%%%Gothic Fonts%%%
%%%%%%%%%%%%%%%%%%%%%%%%%%%%%%%%%%%%%%%%%%%%5%%%%%%%%
    		%Gothic Lower Case letters%

\newcommand{\gotha}{\mathfrak{a}}
\newcommand{\gothb}{\mathfrak{b}}

\newcommand{\gothm}{\mathfrak{m}}
\newcommand{\gothn}{\mathfrak{n}}

\newcommand{\gothp}{\mathfrak{p}}
\newcommand{\gothq}{\mathfrak{q}}

%%%%%%%%%%%%%%%%%%%%%%%%%%%%%%%%%%%%%%%%%%%%%%%%%%%
	    	%Gothic Upper Case letters%

\newcommand{\gothA}{\mathfrak{A}}

\newcommand{\gothP}{\mathfrak{P}}

%%%%%%%%%%%%%%%%%%%%%%%%%%%%%%%%%%%%%%%%%%%%%%%%%%%%%
			%%%Mathbold Fonts%%%
%%%%%%%%%%%%%%%%%%%%%%%%%%%%%%%%%%%%%%%%%%%%5%%%%%%%%
		%Mathbold Upper Case letters%

%\newcommand{\Ab}{\ensuremath{\mathbf{A}}}

%\newcommand{\Ob}{\ensuremath{\mathbf{O}}}

%%%%%%%%%%%%%%%%%%%%%%%%%%%%%%%%%%%%%%%%%%%%5%%%%%%%%
		%Mathbold Lower case letters%

%\newcommand{\sb}{\ensuremath{\mathbf{s}}}

\title{\Large\bf Nullstellens\"atze and Applications
\thanks{This article grew out of discussions with late Prof.\,Dr.\,Uwe Storch (1940-2017) and lectures delivered by the second and the third author in various workshops and conferences.
Prof.\,Uwe Storch was known for his work in commutative algebra,  analytic and algebraic geometry, in particular derivations, divisor class group and  resultants.}}

\author{Kriti Goel$^{1}$\thanks{The first author is supported by UGC Fellowship of Government of India.} \ , Dilip P. Patil$^2$  and  Jugal Verma$^{3}$  \\[10mm]
\textit{\small $^{1}$\,Department of Mathematics, Indian Institute of Technology Bombay} \\[-1mm]
{\tentt kritigoel.maths@gmail.com} \\[-1mm]
\textit{\small $^{2}$\,Department of Mathematics, Indian Institute of Science Bangalore} \\[-1mm]
{\tentt patil@iisc.ac.in} \\[-1mm]
\textit{\small $^{3}$\,Department of Mathematics, Indian Institute of Technology Bombay} \\[-1mm]
{\tentt jkv@math.iitb.ac.in} \\[2mm]
Dedicated to the memory of Prof. Dr. Uwe Storch}

\cameraready
\date{}

\newcommand{\ncom}{\newcommand}

\ncom{\p}{\mathfrak{p}}
\newcommand{\rsqrt}[1]{{\mathop{\hbox{{\rm r-}}\sqrt{#1}}}}
\newcommand{\rV}[1]{{\mathop{\hbox{{\rm r-V}}({#1})}}}
\newcommand{\radI}[1]{{\mathop{\mathscr{R}ad\hbox{-}\mathscr{I}\,(#1)}}}
\newcommand{\affalg}[2]{{\mathop{\mathscr{A}{\rm ff}\hbox{-}\mathscr{A}{\rm lg}_{#1}(#2)}}}
\newcommand{\zclosed}[1]{{\mathop{\mathscr{F}_{{\rm Z}}(#1)}}}

\newcommand{\rSpec}{{\mathop{\hbox{{\rm r-}}{\rm Spec}\,}\nolimits}}
\newcommand{\CSpec}{{\mathop{\C\hbox{{\rm -}} {\rm Spec}\,}\nolimits}}
\newcommand{\RSpec}{{\mathop{\R\hbox{{\rm -}} {\rm Spec}\,}\nolimits}}
\newcommand{\Kalg}{{\mathop{K\hbox{{\rm -}} {\rm alg}}\nolimits}}

\newcommand{\Aalg}{{\mathop{A\hbox{{\rm -}} {\rm alg}}\nolimits}}

\newcommand{\hSpec}{{\rm h}\operatorname{-Spec\,}}

\usepackage{fancyhdr}
\pagestyle{fancy}
\fancyhead[RE]{\small Nullstellens\"atze and Applications \,\, \bf\thepage}
\fancyhead[LO]{\small {\bf \thepage} \,\, K. Goel, D. P. Patil and J. Verma}
\fancyfoot[L,R,C]{}

\setlength{\headsep}{15pt}
\setlength{\headheight}{15pt}

\begin{document}
\maketitle

\begin{abstract}
 In this expository paper, we present simple proofs of the Classical, Real, Projective  and Combinatorial Nullstellens\"atze. Several applications are also presented such as a classical theorem of Stickelberger for solutions of polynomial equations in terms of eigenvalues of commuting operators, construction of a principal ideal domain which is not Euclidean, Hilbert's $17^{th}$ problem,  the Borsuk-Ulam theorem in topology and solutions of the conjectures of Dyson, Erd\"{o}s and Heilbronn.
\end{abstract}

\section{Introduction}

Hilbert's Nullstellensatz (HNS) is one of the fundamental results of Hilbert which paved the way for a systematic introduction of algebraic techniques in algebraic geometry. It was proved in the third section of his landmark paper on invariant theory  \cite{Hilbert1893}. The proof runs into five pages. In fact, Hilbert proves it for homogeneous polynomials. He applies induction on the number of indeterminates and uses elimination theory and resultants. Since the appearance of this proof, several new proofs have appeared in the literature. Notable among them are: (1) proof by A. Rabinowitsch \cite{rabinowitsch}, (2)
Krull's proof based on dimension theory of algebraic varieties, Noether normalization lemma and the concept of integral dependence \cite{krull}, (3) proof by Krull and Van der Waerden for uncountable fields \cite{artin}, (4) proof by E. Artin and J. Tate based on the Artin-Tate lemma \cite{artinTate} (5) O. Zariski's proof based on field theory \cite{zariski}, (6)
proof by R.  Munshi \cite{munshi} and its exposition by P. May \cite{may} and (7) a remarkably simple proof by Arrondo \cite{arrondo} using resultants. (8) Krull \cite{krull52}\; and independently Goldman \cite{goldman} introduced the notion of Jacobson ring, a ring in which every prime ideal is an intersection of maximal ideals. They proved that a finitely generated algebra over a Jacobson ring is a Jacobson ring which implies the HNS.

\medskip

The objective of this paper is to present an exposition of four variations of the Hilbert's Nullstellensatz, namely, the classical, real, projective and combinatorial. Each of these versions has given rise to
new techniques and insights into the basic problem of understanding the common solutions of polynomial equations.

\medskip
Analogues of the
HNS have been investigated for non-algebraically closed fields. Notable among them are the real Nullstellensatz \cite{knebusch}, \cite{lam} and the combinatorial Nullstellensatz \cite{alon}.  There is a Nullstellensatz for partial differential equations \cite{shankar} and most recently a tropical Nullstellensatz \cite{shustin} has also been proved. We have selected   simple and short proofs and a few striking applications for each of these versions which are accessible to students with basic background in algebra. 

\medskip

We now describe the contents of various sections. In section 2, we  discuss the  classical version of the Nullstellensatz over algebraically closed fields. We present a proof due to E. Arrondo \cite{arrondo} which uses two lemmas about polynomials and their resultants. This proof is very much in the spirit of Hilbert's original proof.
We gather six versions of Classical Nullstellensatz and show that they are all  equivalent to  the weak Nullstellensatz.
As an application, we present a theorem of Stickelberger about systems of polynomial equations which have finitely many solutions. This theorem converts the problem of construction of the solutions to the problem of finding  common eigenvectors of commuting linear operators acting on a finite dimensional vector space. We also discuss a general  construction of a Principal ideal domain that is not a Euclidean domain.
\medskip

We present the Real Nullstellensatz in section 3. It answers the question about existence  of a real solution of  a system of real polynomial equations. The Real Nullstellensatz  has a weak version and a strong version which are similar to the corresponding versions of the classical HNS for algebraically closed fields. The central concepts here are those of real fields, real closed fields and real radical of an ideal. We shall present the proofs of both the versions assuming the Artin-Lang homomorphism theorem.   We present a modern solution of Hilbert's $17^{th}$ problem. The Real Nullstellensatz was proved only in the 1970's. A systematic study of real algebraic varieties was started soon after.

\medskip
We shall discuss the Projective Nullstellensatz in section 4. This answers the question of  existence of a nontrivial solution  of a system of homogeneous polynomial equations. We shall prove that if $f_1, f_2,\dots, f_n$ are homogeneous polynomials in $K[X_0, X_1, \dots, X_n]$ where $K$ is a $2$-field then there is a nontrivial solution to the system $f_1=f_2=\dots=f_n=0.$ We follow the approach given in \cite{pfister} which uses Hilbert functions and multiplicity of a graded ring. As an application, we prove the  the Borsuk-Ulam Theorem  in topology.

\medskip
Section 5 is devoted to the most recent version of the Nullstellensatz, namely the Combinatorial Nullstellensatz.  We present a proof of Noga Alon's formulation \cite{alon} using the Classical Nullstellensatz. We describe  two  striking applications of the Combinatorial Nullstellensatz:  a proof of Dyson's conjecture about the constant term of a Laurent polynomial and a solution of a conjecture of Erd\"{o}s and Heilbronn about a lower bound on the cardinality of $A+B$ where $A$ and $B$ are subsets of a finite field.

\medskip
This expository article is not intended to be a survey paper on the Nullstellensatz. There are important works which we do not discuss, for example, the Tropical Nullstellensatz and the Nullstellensatz for partial differential equations, the Eisenbud-Hochster's paper about Nullstellensatz with nilpotents \cite{EisenbudHochster}, role of Gr\"{o}bner bases in computation of radical ideals and testing whether an ideal is the unit ideal of a polynomial ring and works of many authors about Effective Nullstellensatz\cite{kollar}, \cite{zelo}, \cite{brown}, \cite{einlaz}, \cite{Kol1} etc. A version of Nullstellensatz for finite fields has been discussed in \cite{ghorpade}.

\section{Nullstellens\"atze}

Hilbert's Nullstellensatz is the starting point of the classical algebraic geometry, it provides a bijective correspondence between {\it affine algebraic sets} which are geometric objects and {\it radical ideals} in a polynomial algebra (over a field) which are algebraic objects.
\smallskip

In this section we formulate several versions of Nullstellensatz and prove their equivalence. First we recall some standard notation, definitions and preliminary results. For other undefined terms and notions we recommend the reader to see the books   \cite{AM} and \cite{PS2010}.

\begin{mypar}\label{mypar:2.1}\,{\bf Notation and Preliminaries}\,
All rings considered  in this article are commutative rings with unity. The letter $K$ will always denote a field and the letters $A$, $B$, $C$, $R$ will be generally used for rings.
As usual we use $\N$, $\Z$, $\Q$, $\R$ and $\C$ to denote the set of non-negative integers, the ring of integers, the fields of rational, real and complex numbers respectively.

{\small

\begin{arlist}
\smallskip

\item {\bf Algebras over a ring}\,
Let $A$ be a ring. An $A$-{\it algebra} $B$  is a ring  together with a ring homomorphism  $\varphi:A\to B$ called the  {\it structure homomorphism}  of the $A$-algebra $B$.   Overrings and residue class rings of $A$  are considered $A$-algebras with natural inclusions and surjections as the structure homomorphisms, respectively. The {\it polynomial ring} $A[X_{i}\mid i\in I]$ in the indeterminates $X_{i}$, $i\in I$, is an $A$-algebra with the natural inclusion $A\hookrightarrow A[X_{i} \mid i\in I]$ as the structure homomorphism.
\smallskip

Let $B$ and $C$ be $A$-algebras. An $A${\it -algebra homomorphism} from  $B$ to $C$ is a ring homomorphism $\theta:B\to C$  such that the diagram
\vspace*{-4mm}
\begin{align*}
\xymatrix{
B \ar[rr]^{\theta}   & & C  \\
& A \ar[ul]^{\varphi}\ar[ur]_{\psi}  & \\
}
\vspace*{-2mm}
\end{align*}
is commutative, that is, $\,\theta\circ\varphi  = \psi\,$,  or equivalently $\theta$ is $A$-linear.
\smallskip

The set  of all $A$-algebra homomorphisms from $B$ to $C$ is denoted by $\Hom_{\Aalg}(B,C)$.
\smallskip

\item  {\bf Polynomial algebras}\,
Polynomial algebras are the free objects (in the language of categories) in the category of (commutative) algebras over a ring $A$ with the following universal property\,:
\smallskip

{\bf Universal property of polynomial algebras}\, {\it Let $B$ be an $A$-algebra and let $\,x=(x_{\,i})_{i\in I}$, be a family of elements of $B$. Then there exists a unique $A$-algebra homomorphism $\,\varepsilon_{x}:A[X_{i}\mid i\in I] \to B$ such that $X_{i}\mapsto x_{\,i}$ for every $i\in I$.}
In particular, we can identify $\Hom_{\Aalg}(A[X_{i} \mid i\in I\,], B)$ with $B^{\,I}$. For $I=\{1,\ldots , n\}$, we can identify $\Hom_{\Aalg}(A[X_1,\ldots,X_n],  B)$ with $B^{\,n}$.
The unique $A$-algebra homomorphism $\varepsilon_{x}$ is called the {\it substitution homomorphism} or the {\it evaluation homomorphism} defined by $x$.
\smallskip

The image of $\varepsilon_{x}$ is the smallest $A$-subalgebra of $B$ containing $\{x_{\,i}\mid i\in I\}$ and is denoted by $A[x_{\,i}\mid i\in I\,]$. We call it the $A${\it -subalgebra generated} by the family $x_{\,i}$, $i\in I$.  We say that $B$ is an $A${\it -algebra generated} by the family $x_{\,i}$, $i\in I$, if $B=A[x_{\,i} \mid i\in I\,]\,$. Further, we say that $B$ is a {\it finitely generated} $A${\it -algebra} or an $A${\it -algebra of finite type}  or an {\it affine algebra over $A$}\, if there exists a finite family $x_1,\ldots,x_{\,n}$ of elements of $B$ such that $B=A[x_1,\ldots,x_{\,n}]$. A ring homomorphism $\varphi:A\to B$ is called a {\it homomorphism of finite type} if $B$ is an $A$-algebra of finite type with respect to $\varphi$.
\smallskip

\item  {\bf Prime, maximal and radical Ideals}\,  Let $A$ be a ring.
The set  $\Spec A$ (resp. $\Spm A$) of prime (resp. maximal) ideals in $A$ is called the {\it prime} (resp. {\it maximal}) {\it spectrum} of $A$. Then $\Spm A\subseteq \Spec A$ and a well-known theorem asserts that if $A\neq 0$ then $\Spm A\neq \emptyset$. For example, $\Spm \Z$ is precisely the set $\mathds{P}$  of positive prime numbers  and $\Spec \Z=\{0\}\cup \mathds{P}$. The ring $R$ is a field if and only if $\Spm A=\{0\}$. The ring $A$ is an integral domain if and only if $\{0\}\in \Spec A$.
For an ideal $\,\mathfrak{a}\,$ in $\,R\,$, the ideal
$\sqrt{\mathfrak{a}}:=\{f\in R \mid f^{\,r}\in \mathfrak{a} \hbox{ {\rm for some integer} } r\geq 1\}$ is called the {\it radical} of $\mathfrak{a}$. Clearly $\mathfrak{a}\subseteq \sqrt{\mathfrak{a}}$. If $\sqrt{\mathfrak{a}} = \mathfrak{a}$, then $\mathfrak{a}$ is called a {\it ra\-di\-cal ideal}.  Obviously,
$\sqrt{\sqrt{\mathfrak{a}}} = \sqrt{\mathfrak{a}}$. Therefore the radical of an ideal is a radical ideal. Prime ideals are radical ideals. An ideal $\gotha$ in $\Z$ is a radical ideal if and only if $\gotha=0$ or $\gotha$ is generated by a square-free integer.
\smallskip

The radical $\mathfrak{n}_{A}:= \sqrt{0}$ of the zero ideal is the ideal of nilpotent elements and is called the {\it nilradical} of $A$. {\it The nilradical $\mathfrak{n}_A=\cap_{\,\gothp\in\Spec A}\,\gothp$ is the intersection of all prime ideals in $A$.}   More generally,  ({\it formal Nullstellensatz})\, $\sqrt{\gotha }= \cap_{\,\gothp\in\Spec A}\,\{\gothp \mid \gotha\subseteq \gothp\}$ for every ideal $\gotha$  in $A$.
\smallskip

The intersection $\displaystyle\gothm_{A}:=\cap_{\,\gothm\in\Spm A}\,\gothm$ of maximal ideals in $A$ is called the {\it Jacobson radical} of $A$. Clearly, $\gothn_{A}\subseteq \gothm_{A}$. The Jacobson radical of $\Z$ (resp. the polynomial algebra $K[X_{1},\ldots , X_{n}]$ over a field $K$) is $0$.
\smallskip

\item
{\bf Integral Extensions}. Let $A\subseteq B$ be an extension of rings. We say that an element $b\in B$ is {\it integral over $A$} if $b$ is a zero of a monic polynomial $a_{0}\!+\!\cdots \!+\! a_{n-1}X^{n-1}\!\!+X^{n}\!\!\in A[X]$,  i.\,e. if
$a_{0}\!+\!\cdots \!+\! a_{n-1}b^{n-1}\!+\!b^{n}\!\!=\!0$ with $a_{0},\ldots , a_{n-1}\!\!\in\!A$. We say that $B$ is {\it integral over $A$} if every element of $B$ is integral over $A$. The concept of an integral extension is a generalization of that of an algebraic extension. For example, an algebraic field extension $E\,\vert \,K$ is an integral extension. Moreover, if a ring extension $A\subseteq B$ is an integral extension, then the polynomial extension $A[X_{1}, \ldots , X_{n}]\subseteq B[X_{1},\ldots , X_{n}]$ is also integral.  It is easy to see that\,:  {\it If $B$ is  a finite type algebra over a ring $A$, then $B$ is integral over $A$ if and only if $B$ is a finite $A$-module.} \, Later we shall use the following simple proposition in the proof of the classical form of HNS\,:
\smallskip

{\bf Proposition}\,
{\it Let $\,A\subseteq B$ be an integral extension of rings and $\gotha\subsetneq A$ be a non-unit ideal in $A$. Then the extended ideal $\gotha\,B$ (in $B$) is also a non-unit ideal.}
\smallskip

{\bf Proof}\, Note that $\gotha\,B=B$ if and only if $1\in\gotha\,B$. Moreover,  if $1\in\gotha\,B$ then since $B$ is integral over $A$, already  $1\in\gotha\,B'$ for some finite $A$-subalgebra $B'$ of $B$. Therefore, we may assume that $B$ is a finite $A$-module. But, then by the Lemma\footnote{\label{foot:1}{\bf Lemma  of Dedekind-Krull-Nakayama}\, {\it Let $\gotha$ be an ideal in a commutative ring $A$ and $V$ be a finite $A$-module. If $\gotha\,V=V$, then there exists an element $a\in \gotha$ such that $(1-a)V=0$, {\rm i.\,e.} $ (1-a)\in \Ann_{A} V$.} For a proof one uses the well-known ``Cayley-Hamilton trick''.},  there exists an element $a\in\gotha$ such that $(1-a)\,B=0$, in~particular, $(1-a)\cdot 1=0$, i.\,e. $1=a\in\gotha$ which contradicts the assumption. \dppqed
\smallskip

\item {\bf The ${\bf K}$-Spectrum of a ${\bf K}$-algebra}\, (see \cite{PS2010})
Let $K$ be a field. Then using the universal property of the polynomial algebra $K[X_{1},\ldots , X_{n}]$,   the affine space $K^{n}$ can be identified with the set of $K$-algebra homomorphisms
$\Hom_{\Kalg}(K[X_{1},\ldots,X_{n}]\,,K)$ by identifying $a =
(a_{1},\ldots,a_{n}) \in K^{n}$ with the substitution homomorphism
$\xi_{\,a}: K[X_{1},\ldots,X_{n}] \to K$, $X_{i} \mapsto a_{\,i}$.  The kernel of $\xi_{\,a}$ is the maximal ideal $\mathfrak{m}_{a} = \langle X_{1}-a_{1},\ldots,X_{n}-a_{n}\rangle $ in $K[X_{1},\ldots,X_{n}]$. Moreover, every maximal ideal $\mathfrak{m}$ in $K[X_{1},\ldots,X_{n}]$ with $K[X_{1},\ldots,X_{n}]/\mathfrak{m} =K$ is of the type $\mathfrak{m}_{a}$ for a unique $a=(a_{1},\ldots , a_{n})\in K^{n}$; the component $a_{\,i}$ is determined by the congruence $X_{i} \equiv~a_{i}~ {\rm mod}~ \mathfrak{m}$.
\smallskip

The subset $\,\KSpec  K[X_{1},\ldots,X_{n}]:=\{\mathfrak{m}_{a} \mid a\in K^{n}\}$ of $\Spm K[X_{1},\ldots,X_{n}]$ is called the {\it $K$-spectrum} of $K[X_{1},\ldots,X_{n}]$.  We have the identifications\,:
\begin{align*}
K^{n}  & \xlongleftrightarrow{\hspace*{20mm}}  \enskip \Hom_{\Kalg}(K[X_{1},\ldots,X_{n}]\,,K) \enskip  \xlongleftrightarrow{\hspace*{20mm}}  \enskip \KSpec K[X_{1},\ldots,X_{n}]\,,\\
a  \enskip & \xlongleftrightarrow{\hspace*{42.5mm}} \enskip   \xi_{\,a}  \enskip \xlongleftrightarrow{\hspace*{42.5mm}} \enskip  \mathfrak{m}_{a}=\Ker
\xi_{\,a}\,.
\end{align*}
More generally, for any $K$-algebra $A$, the map $\,\Hom_{\Kalg}(A\,,K)\longrightarrow
\{\gothm\in\Spm A\mid A/\gothm =K\}$, $\xi\mapsto \Ker\xi$, is bijective. Therefore we make the following definition\,:
\smallskip

For any $K$-algebra $A$, the subset
$\,\KSpec A:= \{\gothm\in\Spm A\mid A/\gothm =K\}\,$ is
called the $K${\it -spectrum} of $A$ and is denoted by $\KSpec A$. Under the above bijective map, we have the identification $\KSpec A = \Hom_{\Kalg}(A\,,K)$.
\smallskip

For example, since $\C$ is an algebraically closed field, $\Spm \C[X] = \CSpec \C[X]$, but $\RSpec\R[X] \subsetneq \Spm\R[X]$.  In fact, the maximal ideal $\mathfrak{m}:=\langle X^{2}+1\rangle$ does not belong to $\RSpec\R[X]$. More generally, a field $K$ is algebraically closed
\footnote{\label{foot:2}A field $K$ is called {\it algebraically closed} if every non-constant polynomial in $K[X]$ has a zero in $K$ or equivalently, every irreducible polynomial in $K[X]$ is linear. The {\it Fundamental Theorem of Algebra} asserts that\,: {\it  the field of complex numbers $\C$ is algebraically closed.} This was first stated  in 1746  by J.\,d'Alembert (1717-1783), who gave an incomplete proof \,---\, with gaps at that time. The first complete proof was given in 1799 by Carl Friedrich Gauss (1777--1855).}
if and only if $\Spm K[X] = \KSpec K[X]$.
\smallskip

\item {\bf Polynomial maps}\,
Let $A$ be an algebra over a field $K$. For a polynomial $f\in K[X_{1},\ldots,X_{n}]$, the function $\varphi^{*}_{f}:A^{n} \to A$, $a \mapsto f(a)$, is called the {\it polynomial function (over $K$)} defined by $f$.
If $A$ is an infinite integral domain, then the polynomial function $\varphi^{*}_{f}$ defined by $f$ determines the polynomial $f$ uniquely. This  follows from the following more general observation \,:
\smallskip

{\bf Identity Theorem for Polynomials}\, {\it   Let $A$ be an integral domain and $f\in A[X_{1}, \ldots , X_{n}]$, $f\neq 0$. If  $\Lambda_{1},\ldots , \Lambda_{n}\subseteq A$ with $|\Lambda_{i}|>\deg_{X_{i}} f$ for all $i=1,\ldots , n$, then  $\Lambda:=\Lambda_{1}\times \cdots \times \Lambda_{n}\not\subseteq {\rm V}_{A}(f):=\{a\in A^{n}\mid f(a)=0\}$, that is, there exists $(a_{1},\ldots , a_{n})\in \Lambda$ such that $f(a_{1},\ldots , a_{n})\neq 0$. In~particular, if $A$ is infinite, then $f:A^{n}\to A$, $a\mapsto f(a)$, is not a zero function. If $A$ is infinite, then the evaluation map
$\,\varepsilon: A[X_{1}, \ldots , X_{n}] \longrightarrow {\rm Maps}\,(A^{n}, A)$, $\,f\mapsto \varepsilon(f): a\mapsto f(a)\,$ is injective.}

\begin{proof}
We prove the assertion by induction on $n$. If $n=0$, it is trivial. For a proof of the inductive step from $n-1$ to $n$, write $f=\sum_{k=0}^{d}f_{k}(X_{1},\ldots , X_{n-1})X_{n}^{k}$ with $f_{d}(X_{1}, \ldots , X_{n-1})\neq 0$ in $A[X_{1},\ldots , X_{n-1}]$. Since $\deg_{X_{i}} f_{d} \leq \deg_{X_{i}} f <|\Lambda_{i}|$ for all $i=1,\ldots , n-1$, by induction hypothesis, there exists $(a_{1},\ldots , a_{n-1})\in A^{n-1}$ with $f_{d}(a_{1},\ldots , a_{n-1})\neq 0$. Therefore $f(a_{1},\ldots , a_{n-1}, X_{n})$ is a non-zero polynomial in $A[X_{n}]$ of degree $d<|\Lambda_{n}|$ and hence there exists $a_{n}\in \Lambda_{n}$ with $f(a_{1},\ldots , a_{n-1},a_{n})\neq 0$.
\end{proof}

\smallskip

If $A=K$, then the identifications in (5) above  allow us to write $\,f(a)=\xi_{\,a}(f)\equiv f ~{\rm mod}~\mathfrak{m}_{a}$ for any $a\in K^{n}$; $f(a)$ is called the {\it value of $f$}  at $a\,$,  or  at $ \xi_{\,a}\,$,  or  at $\mathfrak{m}_{a}$.
\smallskip

Let $\varphi:K[Y_{1},\ldots,Y_{m}] \to K[X_{1},\ldots,X_{n}]$ be a $K$-algebra
homomorphism and let $f_{i}:=\varphi(Y_{i}), 1\leq i\leq m$. Then the map
$\varphi^{*}:K^{n} \to K^{m}$ defined by $\varphi^{*}(a_{1},\ldots,a_{n}) = (f_{1}(a), \ldots ,f_{m}(a))$ is called the {\it polynomial map} associated to $\varphi$.
Under the identifications in (5), the polynomial map $\varphi^{*}$ is described as follows\,: \ $\xi_{\,a} \mapsto \varphi^{*}\xi_{\,a}=\xi_{\,a} \circ \varphi\,$  or\,  by
$\mathfrak{m}_{a} \mapsto \varphi^{*}\mathfrak{m}_{a} =\varphi^{-1}(\mathfrak{m}_{a}) =\mathfrak{m}_{f(a)}\,$, $a\in K^{n}$. For every $G\in K[Y_{1},\ldots,Y_{m}]$, we have $\varphi^{*}_{G}\circ\varphi^{*} = \varphi^{*}_{\varphi(G)}$.
\smallskip

More generally, for any $K$-algebra homomorphism $\varphi\!:\!A\to B$, we define
the map $\varphi^{*}\!:\!\KSpec B \to \KSpec A\,$ by $\varphi^{*}\xi\!:=\!\xi\circ \varphi$ or by  $\varphi^{*}\mathfrak{m}\!=\!\varphi^{-1}\!(\mathfrak{m})$,
$\mathfrak{m}\!=\!\Ker\xi\!\in\!\KSpec B\!=
\Hom_{\Kalg}(B,K)$. Further, if $\psi\!:\!B\to C$ is an another $K$-algebra homomorphism then  $(\psi\circ\varphi)^{*}\!=\! \varphi^{*}\circ\psi^{*}$.

\end{arlist}
}
\end{mypar}

\begin{mypar}\label{mypar:2.2}\,
In general, we are interested in studying the solution set of  a finite system of polynomials $f_{1}, \ldots , f_{m}\in K[X_{1}, \ldots , X_{n}]$ over a given field $K$ (for example,  $K=\Q$, $\R$, $\C$,  or any finite field, more generally, even over a commutative ring, e.g. the ring of integers $\Z$)  in the affine $n$-space $K^{n}$ over $K$ or even in bigger affine $n$-space $L^{n}$ over a field extension $L$ of $K$. Typical cases are\,:

\begin{alist}

\item   $K=\R$, $L=\C$. ({\it Classical Algebraic Geometry}).

\item  $K=\Q$, $L=\C$ or $\overline{\Q}:=$ the algebraic closure of $\Q$ in $\C$. ({\it  Arithmetic  Geometry})

\item  $K$ is  a finite field, $L=\overline{K}:=$ the algebraic closure of $K$.
\end{alist}

 \end{mypar}

\begin{mypar}\label{mypar:2.3}\,{\bf Affine $K$-algebraic sets}\,
Let $L\,\vert\, K$ be a field extension of a field $K$. The solution space
\begin{align*}
{\rm V}_{L}(f_{j}, j\in J) = \{a\in L^{n}\mid f_{j}(a)=0 \ \hbox{for all} \ j\in J\} \subseteq L^{n}
\end{align*}
of a family $f_{j}$, $j\in J$, of polynomials in $K[X_{1},\ldots , X_{n}]$
is called an {\it affine $K$-algebraic set} in $L^{n}\!$,   the family $f_{j}$, $j\in J$ is called a  {\it system of defining equations}, the field $K$ is called the {\it field of definition} and the field $L$ is called the {\it coordinate field} of $\,{\rm V}_{L}(f_{j}, j\in J)$. The points of
$\,{\rm V}_{L}(f_{j}, j\in J)\cap K^{n}$ are called the {\it $K$-rational points of $V$}.
\smallskip

Note that ${\rm V}_{L}(f_{j}, j\in J)\! =\!\cap_{j\in J} {\rm V}_{L}(f_{j})$ and ${\rm V}_{L}(f_{j}, j\in J)$ depends only on the radical $\sqrt{\gotha}$ of the ideal $\gotha\!:=\!\langle f_{j}\mid j\in J\,\rangle$ generated by the family $f_{j}, j\!\in\!J$ in $K[X_{1},\ldots , X_{n}]$. By Hilbert's Basis Theorem every ideal in the polynomial ring $K[X_{1}, \ldots , X_{n}]$ is finitely generated and so there exists a finite subset $J'\subseteq J$ such that
$\gotha\!:=\!\langle f_{j}\mid j\in J'\,\rangle$. This shows that
${\rm V}_{L}(f_{j}, j\in J) \!=\!{\rm V}_{L}(f_{j}, j\in J')\!=\!\cap_{j\in J'} {\rm V}_{L}(f_{j})$. In other words, {\it every affine $K$-algebraic set in $L^{n}$ is a set of common zeros of  finitely \hbox{many polynomials.}}

\end{mypar}

\begin{examples}\label{exs:2.4}\,
Let $\,L\,\vert \,K$ be a field extension of a field $K$.
\smallskip

\begin{arlist}

\item {\bf Linear $K$-algebraic sets}\,
For linear polynomials $f_{i} = \sum_{j=1}^{n} a_{ij} X_{j} -b_{i}$,  $a_{ij}\,,\, b_{i}\in K$, $i=1,\ldots , m$, $j=1,\ldots n$, the affine $K$-algebraic set
${\rm V}_{L}(f_{1},\ldots , f_{m})$ is called a linear $K$-algebraic set defined by the $m$ linear equations $f_{1}, \ldots , f_{m}$ over $K$.  This is precisely the solution space of the system of $m$ linear equations in $X_{1},\ldots , X_{n}$ written in the matrix notation\,:
\vspace*{-2mm}
\begin{align*}
\gothA\cdot X=b\,, \  \hbox{where} \ \gothA=(a_{ij})_{{1\leq i\leq m}\atop {1\leq j \leq n}} \in
{\rm M}_{m\,,\,n}(K)\,, \
X= \begin{pmatrix} X_{1} \cr  \vdots \cr  X_{n}  \end{pmatrix} \ \hbox{and} \
b= \begin{pmatrix} b_{1} \cr  \vdots \cr  b_{m}  \end{pmatrix} \in {\rm M}_{m\,,\,1}(K)\,.
\end{align*}
Their  investigation is part of Linear algebra.  For example, if $L=K$, $r$ is the rank of the matrix $\gothA$, then ${\rm V}_{K}(f_{1},\ldots , f_{m})$ has $d=n-r$ linearly independent solutions. In fact, there is a parametric representation\,:
\vspace*{-3mm}
\begin{align*}
{\rm V}_{K}(f_{1},\ldots , f_{m}) = \{ x_{0}+\sum_{i=1}^{d}\,  t_{i}\cdot x_{i} \mid t_{1}, \ldots , t_{d}\in K\}\,,
\end{align*}
where $x_{0}\in K^{n}$ is a special solution and $ x_{i}\in K^{n}$, $i=1,\ldots , d\,$ are $d$ linearly independent solutions of the given system $\gothA\,X=b$.
\smallskip

\item {\bf $K$-Hypersurfaces}\,
For  $f\in K[X_{1},\ldots , X_{n}]$, the affine $K$-algebraic set ${\rm V}_{L}(f)=\{a\in L^{n}\mid f(a)=0\}$ is called the {\it $K$-hypersurface}  defined by $f$.
For $n=1$, since $K[X]$ is a PID, every affine $K$-algebraic set is defined by one polynomial $f\in K[X]$. Moreover,  if $L$ is algebraically closed and if $\deg f$ is positive, then  ${\rm V}_{L}(f)$ is a non-empty finite subset of $L$ of cardinality $\leq \deg f$. Furthermore, if every $a\in {\rm V}_{L}(f)$ is counted with its multiplicity $\nu_{a}(f):={\rm Min} \{r\in\N\mid f^{(r)}(a)\neq 0\}$, where for $r\in\N$, $f^{(r)}\in K[X]$ denote the (formal) $r$-th derivative of $f$, then we have a nice formula\,: $\,\displaystyle \deg f = \sum_{a\in {\rm V}_{L}(f)}\,\nu_{a}(f)\,.$ Therefore ${\rm V}_{L}(f) =\{a_{1}, \ldots , a_{r}\}$ if $f=a(X-a_{1})^{\nu_{1}}\cdots (X-a_{r})^{\nu_{r}}$ with $a\in K$ and $a_{1},\ldots , a_{r}\in L$  distinct and $\nu_{i}:=\nu_{a_{i}}(f)$, $i=1,\ldots , r$.
\smallskip

For $n=2$, $3$, $4$,  $K$-hypersurfaces are  called {\it plane curves}, {\it surfaces}, {\it $3$-folds},  defined over $K$, respectively.
\smallskip

\item
{\it If $L$ is infinite and $n\geq 1$, then the complement $L^{n}\!\smallsetminus {\rm V}_{L}(f)$ of the $K$-hypersurface ${\rm V}_{L}(f)$, $f\in K[X_{1}, \ldots , X_{n}]$, is infinite. In~particular, if $V=V_{L}(\gotha)\subsetneq L^{n}$ is a proper $K$-algebraic set, then its complement $L^{n}\!\smallsetminus\!{\rm V}_{L}(\gotha)$ is infinite.}\\[1mm]
{\bf Proof}\,  By induction on $n$. If $n=1$, then clearly ${\rm V}_{L}(f)$ is finite and hence the assertion is trivial, since $L$ is infinite. Assume that $n\geq 2$. We may assume that the indeterminate $X_{n}$ appears in $f$\,; then we have a representation\,: $f= f_{0}+f_{1}X_{n}+\cdots + f_{d} X_{n}^{d}$ with $f_{0},\ldots , f_{d}\in K[X_{1},\ldots , X_{n-1}]$,  $d>0$  and $f_{d}\neq 0$. By the induction hypothesis, we may assume that there is $(a_{1},\ldots , a_{n-1})\in L^{n}\!\smallsetminus\!{\rm V}_{L}(f_{d})$.  Then the polynomial $f(a_{1},\ldots , a_{n-1}, X_{n})\neq 0$  and hence it has only finitely many zeroes in $L$. In other words, there are infinitely many $a_{n}\in L$ such that $f(a_{1},\ldots , a_{n-1}, a_{n})\neq 0$. \dppqed
\smallskip

\item
{\it If $L$ is algebraically closed and $n\geq 2$, then every $K$-hypersurface ${\rm V}_{L}(f)$, $f\in K[X_{1},\ldots , X_{n}]$ contains infinitely many points.}\\[1mm]
{\bf Proof}\,  Since $L$ is algebraically closed, it is infinite. Further, since $n\geq 2$,  we may assume that $f$ has representation as in the above proof of (3) and hence by (3) $f_{d}(a_{1},\ldots , a_{n-1})\neq 0$ for infinitely many
$(a_{1},\ldots , a_{n-1})\in L^{n-1}$. Now, since $L$ is algebraically closed, for each of these $(a_{1},\ldots , a_{n-1})$, there exists $a_{n}$ such that $f(a_{1},\ldots , a_{n-1}, a_{n})=0$.
 \dppqed

\smallskip

\item {\bf Conic Sections}\,
 The $K$-hypersurfaces $V_{K}(f)\subseteq K^{2}$ defined by  polynomials $f(X,Y)\in K[X,Y]$ of degree $2$ are called  {\it conic sections}.\footnote{\label{foot:3}The discovery of conic sections is attributed to Menaechmus (350~B.~C.). They were intensively investigated by Apollonius of Perga (225~B.~C.).}
There are two possibilities. First $f$ is not prime, then the (degenerated)
conic $f(x,y) =0$ is a double line, or a union of two distinct straight lines. Second, $f$ is prime, in this case, we assume that $K$ is an infinite  field of ${\rm char}\,K\neq 2$. Then by an {\it affine $K$-automorphism}\,\footnote{\label{foot:4}An {\it affine $A$-automorphism} of a polynomial algebra $A[X_{1},\ldots,X_{n}]$ is an $A$-algebra automorphism $\varphi$ defined by $\varphi(X_{j})\!= \!\sum_{i=1}^{n}a_{ij}X_{i} + b_{j}\,$,
$\,1\leq j\leq n$, where $(a_{ij})\!\in\! {\rm GL}_{n}(A)$ and $(b_j)\!\in\!A^n\!$. If
$(a_{ij})$ is the identity matrix then $\varphi$ is called a
{\it translation}, if $(b_j)\!=\! 0$, then $\varphi$ is called a {\it linear $K$-automorphism}.}  of $K[X,Y]$, $f(X,Y)$ can be brought into one of the forms
$Y^2-X$, $aX^2+bY^2-1$, $a,b\in K^\times\!\!$, see Lemma\,\ref{lem:2.9}. These are called
{\it parabola}, {\it ellipse} or {\it hyperbola}
according as
$aX^2+bY^2$ is prime or not prime. Further, the defining polynomial of a hyperbola
can be transformed into $\,XY-1\,$. Note that a polynomial $aX^2+bY^2 -1$, $a,b\in K^\times\!$, is always prime and, if it has at least one zero
\footnote{\label{foot:5}Depending on the ground field $K$, it can be very difficult to decide whether such a polynomial has a zero or not.}, then it has infinitely many zeros and hence is a defining polynomial of a hyperbola or an ellipse.
\begin{align*}
\includegraphics[scale=1.2]{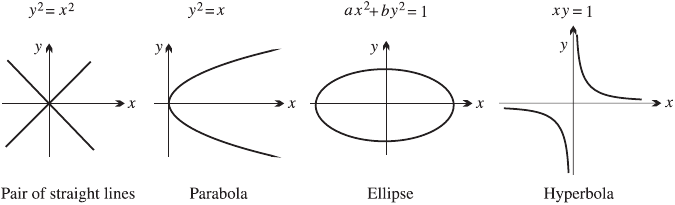}
\end{align*}
\end{arlist}
\end{examples}

\begin{mypar}\label{mypar:2.5}\,
{\bf The $K$-Ideals},  {\bf $K$-coordinate rings and (Classical) Algebra-Geometry correspondences}\,
\smallskip

We use the following notation\,: For a set $S$, let $\gothP(S)$ denote the power set of $S$,
for any ring $A$, let $\mathscr{I}(A)$ (resp. $\mathscr{R}ad\hbox{-}\mathscr{I}(A)$) denote the set of ideals (resp. radical ideals) in $A\,$  and
for a field extension $L\,\vert\, K$, let $\mathscr{A}{\rm ff}\hbox{-}\mathscr{A}{\rm lg}_{K}(L^{n})\,\subseteq \gothP(L^{n})\,$ denote the set of all affine $K$-algebraic sets in $L^{n}$.
\medskip

The definition in \ref{mypar:2.3} defines a  map
\begin{align*}
{\rm V}_{L}: \radI{K[X_{1}, \ldots, X_{n}]} \longrightarrow  \affalg{K}{L^{n}} \  \subseteq \  \gothP(L^{n}), \  \  \gotha \longmapsto {\rm V}_{L}(\gotha)\,,
\end{align*}
Note that if $L$ is infinite,  then not every subset $V\in\gothP(L^{n})$ is an affine algebraic $K$-set. For instance, if $n=1$ and if $V$ is infinite and $\neq L$.  On the other hand if $K=L$ is finite, then  every subset $V\in\gothP(L^{n})$ is an affine algebraic $K$-set.
\smallskip

To understand the map ${\rm V}_{L}$ better, for a subset $W\in\gothP(L^{n})$,  we define the {\it vanishing $K$-ideal} of $W$\,:
\vspace*{-2mm}
\begin{align*}
{\rm I}_{K}(W):= \{f\in K[X_{1},\ldots , X_{n}]\mid f(a)=0 \ \hbox{\,for all\,} \  a\in W\}.
\end{align*}
Clearly,  ${\rm I}_{K}(W)$ is a radical ideal in $K[X_{1},\ldots , X_{n}]$. The affine $K$-algebra $K[V]:=K[X_{1},\ldots , X_{n}]/{\rm I}_{K}(V)$ is called the {\it $K$-coordinate ring of $V$}. We therefore have defined the map\,:
\begin{align*}
{\rm I}_{K}: \gothP(L^{n})  \longrightarrow  \radI{K[X_{1}, \ldots, X_{n}]}, \  \  W \longmapsto {\rm I}_{K}(W)\,.
\end{align*}
With these definitions, we are looking for answers to the following questions\,:
\begin{alist}
\item
For which subsets $W\in\gothP(L^{n})$ the equality ${\rm V}_{L}\circ {\rm I}_{K}(W) = W$ holds?   The answer to this question is provided by the introduction of the {\it Zariski $K$-topology} on $L^{n}$ (\,see\,\ref{mypar:2.6} below). This topology is weaker than the usual topology (for instance, if $L=K=\R$ or $\C$).
The Zariski topology on $L^{n}$ reflects the algebraic structure of $K$-regular $L$-valued functions on $L^{n}$.   More generally, a   function $\varphi:V\to L$ on a $K$-algebraic set $V\subseteq L^{n}$ is called {\it $K$-regular function} if there exists a polynomial $f\in K[X_{1},\ldots , X_{n}]$ such that $\varphi(a)=f(a)$ for all $a\in V$. The set $\Gamma_{K}(V,L)$  of all $K$-regular functions on $V$ with values in $L$ is, obviously,  an affine $K$-algebra.  Further,  {\it for every affine $K$-algebraic set $V\subseteq L^{n}$, the canonical map
$K[X_{1},\ldots , X_{n}/{\rm I}_{K}(V) \iso \Gamma_{K}(V,L)$ is an isomorphism of $K$-algebras and the map $V \iso \Hom_{\Kalg}(K[V], L)$, $a\mapsto \xi_{a}:\overline{f}\mapsto f(a)$ is well-defined and  is bijective.}

\item
When is the composite map ${\rm I}_{K}\circ {\rm V}_{L} = \id_{\radI{K[X_{1},\ldots , X_{n}]}}$? The answer to this question is provided by the   Hilbert's Nullstellensatz (HNS\,2\, Geometric version),  see\,Theorem\,\ref{thm:2.10}\,(2).

\item
When does the system $f_{1}=0, \ldots , f_{m}=0\,$ have a common solution, i.\,e. ${\rm V}_{L}(f_{1}, \ldots , f_{m})\neq \emptyset$. The answer to this question is provided by the Hilbert's Nullstellensatz (HNS\,1),   see\,Theorem\,\ref{thm:2.10}\,(1).

 \item
 When exactly is ${\rm V}_{L}(f_{1}, \ldots , f_{m})$  a finite set?  The answer to this question is provided by the Finiteness Theorem, see\,Theorem\,\ref{thm:2.12}.
\end{alist}
\smallskip

Therefore if $L$ is algebraically closed,  then we have the algebra-geometry bijective correspondences ${\rm V}_{L}$ and ${\rm I}_{K}$ which are inclusion reversing inverses of each other\,:
\vspace*{-2mm}
\begin{align*}
\radI{K[X_{1}, \ldots, X_{n}]}  & \enskip \xlongleftrightarrow[\text{\hspace*{15mm}${\rm I}_{K}$\hspace*{15mm}}]
{\text{${\rm V}_{L}$}}    \enskip  \affalg{K}{L^{n}} \\[-1mm]
 \hspace*{-20mm} \gotha= {\rm I}_{K}({\rm V}_{L}(\gotha))  & \enskip \xlongleftrightarrow{\hspace*{33mm}}  \enskip {\rm V}_{L}(\gotha)={\rm V}_{L}({\rm I}_{K}(\gotha))\,,
\end{align*}
One can therefore study algebraic objects\,---\,ideals in the polynomial ring $K[X_{1},\ldots , X_{n}]$  and geometric objects\,---\,affine $K$-algebraic sets together by using these algebra-geometry bijective correspondences.
These correspondences are the starting point of {\it classical algebraic geometry}. HNS\,1$'$ extends the Fundamental Theorem of Algebra   to certain polynomials in many variables over algebraically closed fields.
To establish the above bijective correspondences, the fundamental step is provided by the Hilbert's Nullstellensatz (HNS\,2).
\end{mypar}

\begin{mypar}\label{mypar:2.6}\,{\bf Zariski $K$-topology}\,
It  is easy to check that the set $ \affalg{K}{L^{n}}$ of affine $K$-algebraic sets in $L^{n}$ satisfy the axioms for closed sets in a topological space.
Therefore the affine $K$-algebraic sets in $L^{n}$ form the closed sets
of a topology on $L^{n}$. This topology is called the {\it Zariski $K$-topology} on $L^{n}$. The open sets are the complements
\vspace*{-1mm}
\begin{align*}
L^{n} \!\smallsetminus\! {\rm V}_L(F_j\,,\, j\in J) =: {\rm D}_L(F_j\,,\, j\in J)
=\bigcup\nolimits_{j\in J} {\rm D}_L(F_j).
\end{align*}

In~particular, ${\rm D}_L(F)=\{a\in L^n\mid F(a)\neq 0\} = L^n\!\smallsetminus\! {\rm V}_L(F)$ for every polynomial $F\in K[X_1,\ldots , X_n]$.
These open subsets are called the {\it distinguished open subsets} in $L^n$.
{\it They form a basis for the Zariski topology on $L^n$}.
\end{mypar}

\begin{mypar}\label{mypar:2.7}\, {\bf Abstract algebraic geometry}\,
In a more general set-up one can replace affine $K$-algebraic set by the {\it prime spectrum}  $\Spec A$ of a commutative ring $A$.
\smallskip

The subsets of the form ${\rm V}(\gotha):=\{\gothp\in\Spec A\mid \gotha\subseteq \gothp\}$, $\gotha\in\radI{A}$  of $\Spec A$ are called {\it affine algebraic sets} in $\Spec A$. The subset $\zclosed{\Spec A}=\{{\rm V}(\gotha)\mid \gotha\in \radI{A}\}\subseteq \gothP(\Spec A)$   form the closed sets of a topology on $\Spec A$ which is called the {\it Zariski topology} on $\Spec A$.    This topology is not Hausdorff in general, but it is compact. The open subsets  ${\rm D}(f):=\Spec A\!\smallsetminus\!{\rm V}(Af)$, $f\in A$,  are basic open sets for the  Zariski topology on $\Spec A$.
\smallskip

Similarly, for every subset $W\subseteq \Spec A$,  we define the {\it ideal    ${\rm I}(W):=\cap_{\gothp\in W}\,\gothp$  of $W$.} This is clearly a radical ideal in $A$. Further,  the equalities\,:  ${\rm V}({\rm I}(W) ) =\overline{W}=$ the closure of $W$ in the Zariski topology of $\Spec A$ and
({\it Formal Hilbert's Nullstellensatz}) $\gotha= {\rm I}({\rm V}(\gotha))$  are rather easy to prove, see \ref{mypar:2.1}\,(3).
\smallskip

With these general definitions, we have the abstract algebra-geometry bijective correspondences  ${\rm V}$ and ${\rm I}$ which are inclusion reversing inverses of each other\,:
\vspace*{-2mm}
\begin{align*}
\radI{A} & \enskip \xlongleftrightarrow[\text{\hspace*{15mm}${\rm I}$\hspace*{15mm}}]
{\text{${\rm V}$}}    \enskip  \zclosed{\Spec A} \\[-1mm]
\hspace*{-20mm} \gotha= {\rm I}({\rm V}(\gotha))  & \enskip \xlongleftrightarrow{\hspace*{32mm}}  \enskip {\rm V}(\gotha)={\rm V}({\rm I}(\gotha))\,,
\end{align*}
One can therefore study algebra and geometry together by using this abstract algebra-geometry bijective correspondence which is the starting point of  {\it abstract algebraic geometry}.
\end{mypar}
We now prove the classical version of Hilbert's Nullstellensatz (HNS\,1$'$),  see\,Theorem\,\ref{thm:2.10}\,(1$'$).
We shall present a proof based on the ideas of the proof given by  E. Arrondo  in \cite{arrondo}. It uses the classical notion of resultant of two polynomials over  a commutative ring and the so-called ``tilting of axes lemma''  (see Lemma\,\ref{lem:2.9} below) which is of independent interest.

\begin{mypar}\label{mypar:2.8}{\bf Hilbert's Nullstellensatz}\, (\,HNS\,1$'$: \so{Classical Version}\,)\,
{\it Let $K$ be an algebraically closed field   and let  $\gotha \subsetneq K[X_{1},\ldots , X_{n}]$ be a non-unit ideal. Then $\,\mathrm{V}_{K}(\gotha)\neq \emptyset\,$.}
\end{mypar}

\begin{proof}
Let  $\gotha\subsetneq K[X_{1},\ldots , X_{n}]$ be a non-unit ideal. We shall prove that ${\rm V}_{K}(\gotha)\neq \emptyset$ by induction on  $n$.  For $n=0$, there is nothing to prove. If $n=1$,  then $\gotha =\langle f\rangle$ for some polynomial $f\in K[X_{1}]$ which is not a unit in $K[X_{1}]$, i.\,e. $f\not\in K^\times:= K\!\smallsetminus\! \{0\}$. Since $K$ is algebraically closed and $f\in K[X_{1}]\!\smallsetminus\!K^\times$,  obviously ${\rm V}_{K}(f)\neq \emptyset$.
Now, assume that  $n\geq 2$.
Since $\gotha \subsetneq K[X_{1}, \ldots , X_{n}]$, the contraction $\,\gotha':=\gotha\cap K[X_{1}, \ldots , X_{n-1}]\subsetneq K[X_{1}, \ldots , X_{n-1}]$  too. Therefore by induction hypothesis
${\rm V}_{K}(\gotha')\neq \emptyset$. Choose $a'=(a'_{1},\ldots , a'_{n-1})\in {\rm V}_{K}(\gotha')$. We consider the  surjective  $K$-algebra homomorphism $\varphi:K[X_{1}, \ldots , X_{n}]\longrightarrow K[X_{n}]$, $f\mapsto f(a'_{1},\ldots , a'_{n-1}, X_{n})$.
\medskip

$(*)$ \ We claim that
the image $\varphi(\gotha) = \{f(a'_1,\dots, a'_{n-1}, X_n)\mid f\in\gotha\}$  is a  non-unit ideal in $K[X_n]$.
\medskip

Since $\varphi$ is surjective,  $\gothb=\varphi(\gotha)$ is an ideal in $K[X_{n}]$. We now prove that $\gothb\neq K[X_{n}]$. Suppose, on the contrary that,  $\gothb=K[X_{n}]$. Then
$f(a'_1,\dots, a'_{n-1}, X_n)=1$ for some  $f=f_0+f_1X_n+\cdots+f_dX_n^d\in\gotha $, where $f_0,\dots, f_d\in K[X_1,\dots, X_{n-1}]$, $d\in\N$  with $f_{0}(a'_1,\dots, a'_{n-1})=1$ and
$f_{i}(a'_1,\dots,a'_{n-1})=0$ for every $i=1,\ldots , d$.
\smallskip

Remember that we are now looking for a contradiction. For this, since $K$ is algebraically closed, it is infinite and hence by Lemma\,\ref{lem:2.9} below,  we may assume that the ideal $\gotha$ contains  a monic polynomial
$\,g=g_{0}+\cdots+g_{r-1}X_{n}^{r-1}+X_{n}^{r}\in \gotha\,$ with  $g_{0},\ldots , g_{r-1}\in   K[X_1,\dots, X_{n-1}] $ and $r\geq 1$.
Now consider the $X_{n}$-resultant of the polynomials $f$ and $g$
\begin{align*}
{\rm Res}_{X_{n}} (f,g)= {\rm Det}\,
\left(\phantom{\begin{matrix}a_0 \\ a_{1} \\ \ddots \\a_0\\b_0\\ b_1 \\ \ddots\\b_0 \end{matrix}}
\right.\hspace{-1em}
\underbrace{\begin{matrix}
f_0 & f_1 & \cdots & f_d & 0        & 0 &  \cdots & 0\\
0   & f_0 & \cdots  & f_{d-1} &f_{d} & 0 &   \cdots  & 0 \\
\vdots  & \vdots  & \ddots & \vdots & \vdots    & \vdots  &  \ddots  & \vdots \\
0   & 0 & \cdots  & f_{0} & f_1 & f_2  &  \cdots  & f_{d}  \\
g_0 & g_1 & \cdots  &  g_{r-1} & 1     & 0 & \cdots   & 0 \\
0    & g_0  &  \cdots & g_{r-2}  &  g_{r-1} &  1  & \cdots & \cdot  \\
\vdots  & \vdots  &  \ddots & \vdots & \vdots  & \vdots   &  \ddots  & \vdots \\
0     &   0   & \cdots     & g_0 & g_1 & g_2 & \dots & 1\\
\end{matrix}}_{r+d\hbox{\small -columns}}
\hspace{-1em}
\left.\phantom{\begin{matrix}a_0\\ a_{1} \\ \ddots\\a_0\\b_0\\  b_{1} \\ \ddots\\b_0 \end{matrix}}\right)\hspace{-1em}
\begin{tabular}{l}
$\left.\lefteqn{\phantom{\begin{matrix} a_0\\ a_{1} \\ \ddots\\ a_0\ \end{matrix}}}\right\}r$-\hbox{rows}\\
$\left.\lefteqn{\phantom{\begin{matrix} b_0\\ b_{1} \\ \ddots\\ b_0\ \end{matrix}}} \right\}d$-\hbox{rows}
\end{tabular}
 \hspace*{-15mm}\in K[X_{1}, \ldots , X_{n-1}]\,.
\end{align*}
Since $f_{0}(a'_1,\dots, a'_{n-1})=1$ and
$f_{i}(a'_1,\dots,a'_{n-1})=0$ for every $i=1,\ldots , d$, ${\rm Res}_{X_{n}} (f,g) (a'_1,\dots, a'_{n-1})$ is the determinant of the lower triangular matrix with all diagonal entries equal to  $1$ and so ${\rm Res}_{X_{n}} (f,g) (a'_1,\dots, a'_{n-1})=1$. On the other hand, note that
expanding the determinant of the above $(d+r)\times (d+r)$ matrix ( the {\it Sylvester's matrix of polynomials} $f$ and $g$) by using the first column after replacing the first column by adding $X_{n}^{i}$-times the $(i+1)$-th column for all $i=1,\ldots d+r-1$, we get   ${\rm Res}_{X_{n}} (f,g) = \Phi f+\psi g$ for some polynomials $\Phi, \Psi\in K[X_{1},\ldots , X_{n-1}]$. In~particular, ${\rm Res}_{X_{n}} (f,g) \in \gotha \cap K[X_{1},\ldots , X_{n-1}]\!=\!\gotha'$ and hence ${\rm Res}_{X_{n}} (f,g)(a'_{1},\ldots , a'_{n-1})\!=\!0$, since $ (a'_{1},\ldots , a'_{n-1})\!\in\!{\rm V}_{K}(\gotha')$. This is a contradiction. Therefore  $\gothb\subsetneq K[X_n]$ and so  $\gothb\!=\!\langle h(X_n)\rangle$ for some $h(X_n)\in K[X_{n}]\!\smallsetminus\!K^{\times}$. Once again, since $K$ is algebraically closed, $h(X_n)$ has a zero $a_n\in K$.  This proves that $f(a'_1,\dots, a'_{n-1}, a_n)\!=\!0$ for all $f\in\gotha$, i.\,e. $(a'_1,\dots, a'_{n-1}, a_n)\!\in\!{\rm V}_{K}(\gotha)$.\end{proof}

\begin{lemma}[Tilting of Axes Lemma] \label{lem:2.9}\,
Let $K$ be a field and $f\in K[X_{1},\ldots,X_{n}]$ be a non-constant polynomial. Then there exists a $K$-automorphism $\varphi:K[X_{1},\ldots,X_{n}]\to K[X_{1},\ldots,X_{n}]$ such that $\varphi(X_{n})=X_{n}$ and
$f= aX_{n}^{d}+g_{d-1}X_{n}^{d-1}+\cdots +g_{0}$, where $a\in K^{\times}$ and
$g_ {j}\in K[Y_{1},\ldots,Y_{n-1}]$,  $0\leq j\leq d-1$, $Y_i:= \varphi(X_i)\,$, $1\leq i\leq n-1$. Moreover, if $K$ is infinite, then one can also choose a linear $K$-automorphism $\varphi$ satisfying the above conclusion. {\rm See Footnote~\ref{foot:4}.}
\end{lemma}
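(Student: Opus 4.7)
The goal is to perform a change of variables so that $f$, viewed in the new generators, becomes monic in $X_n$ up to a nonzero scalar. Write $f = f_0 + f_1 + \cdots + f_d$ as the sum of its homogeneous components, where $d = \deg f \geq 1$ and $f_d \neq 0$. I plan to treat the general case via a polynomial (non-linear) substitution and then refine this to a linear one when $K$ is infinite.

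For arbitrary $K$, pick an integer $m > d$ and define the $K$-algebra endomorphism $\varphi$ of $K[X_1, \ldots, X_n]$ by $\varphi(X_n) = X_n$ and $\varphi(X_i) = X_i - X_n^{m^{n-i}}$ for $1 \leq i \leq n-1$; it is an automorphism, as $X_i \mapsto X_i + X_n^{m^{n-i}}$ furnishes an inverse. Setting $Y_i := \varphi(X_i)$, the relation $X_i = Y_i + X_n^{m^{n-i}}$ lets me expand each monomial $X^\alpha$ of $f$ in $K[Y_1, \ldots, Y_{n-1}][X_n]$; its unique highest-$X_n$-degree term is $X_n^{w(\alpha)}$, with weight $w(\alpha) := \alpha_1 m^{n-1} + \alpha_2 m^{n-2} + \cdots + \alpha_{n-1} m + \alpha_n$ and scalar coefficient $1$. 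Since each $\alpha_i \leq d < m$, uniqueness of base-$m$ digit expansion makes $\alpha \mapsto w(\alpha)$ injective on the support of $f$. Therefore a unique $\alpha^{*}$ maximises $w$ on that support, and the rewritten $f$ is $c_{\alpha^{*}} X_n^{w(\alpha^{*})}$ plus strictly lower $X_n$-powers, with $c_{\alpha^{*}} \in K^{\times}$, which is the desired form.

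For the linear refinement when $K$ is infinite, I would instead take $\varphi$ to be the linear $K$-automorphism $\varphi(X_i) = X_i - c_i X_n$ for $1 \leq i \leq n-1$ and $\varphi(X_n) = X_n$, with scalars $c_i \in K$ to be chosen. Substituting $X_i = Y_i + c_i X_n$ into $f$, the coefficient of $X_n^d$ in the expansion is purely scalar and equals $f_d(c_1, \ldots, c_{n-1}, 1)$: only $f_d$ can reach the power $X_n^d$, and within $f_d$ that power is attained only by taking $c_i X_n$ (and not $Y_i$) from every factor $(Y_i + c_i X_n)^{\alpha_i}$. Hence it suffices to choose $(c_1, \ldots, c_{n-1}) \in K^{n-1}$ with $f_d(c_1, \ldots, c_{n-1}, 1) \neq 0$. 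The key observation is that $f_d(T_1, \ldots, T_{n-1}, 1) \in K[T_1, \ldots, T_{n-1}]$ is a nonzero polynomial: by homogeneity of $f_d$ of degree $d$, the coefficient of $T^\beta$ in $f_d(T_1, \ldots, T_{n-1}, 1)$ equals the coefficient of $X_1^{\beta_1} \cdots X_{n-1}^{\beta_{n-1}} X_n^{d - |\beta|}$ in $f_d$ for $|\beta| \leq d$, so vanishing would force $f_d \equiv 0$. Since $K$ is infinite, the Identity Theorem for Polynomials from the preliminaries then yields the required $c_i$.

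The principal subtlety is this non-vanishing of $f_d(T_1, \ldots, T_{n-1}, 1)$: it genuinely uses that $f_d$ is \emph{homogeneous}, for the analogous statement fails for general polynomials (consider $X_n - 1$, which vanishes at $X_n = 1$). Fortunately, homogeneity is automatic for the top-degree part of $f$. The non-linear construction for arbitrary $K$ bypasses this issue entirely, trading linearity of $\varphi$ for the weighted-monomial trick that assigns a distinct base-$m$ label to every monomial in the support of $f$.
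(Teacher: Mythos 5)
Your argument is correct and follows the same two-pronged route as the paper: for infinite $K$ a linear substitution $Y_i = X_i - c_i X_n$ whose leading $X_n^d$-coefficient is $f_d(c_1, \ldots, c_{n-1}, 1)$, nonzero as a polynomial by homogeneity and realizable by a point since $K$ is infinite; for general $K$ the nonlinear substitution $Y_i = X_i - X_n^{m^{n-i}}$ with $m$ exceeding all exponents, so that the weight map is injective by uniqueness of base-$m$ expansions. This is exactly the paper's argument (the paper writes the weights as $r^i$ rather than $m^{n-i}$, which is only a relabeling).
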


\proof First assume that $K$ is infinite. Let   $f = f_{0} + f_{1} + \cdots + f_{d}$, where
$f_{m}\in K[X_{1},\ldots,X_{n}]$ is the homogeneous component of degree $m$ of $f$, $0\leq m \leq d:=
{\rm deg}\,f$. For any $a_1, \ldots , a_{n-1}\in K$, put
$Y_{i}:= X_{i} - a_{\,i}X_{n}$, $1\leq i\leq n-1$.
Then
\vspace*{-2mm}
\begin{align*}
f & =
\sum_{m=0}^{d}f_{m}(Y_{1}+a_{1}X_{n},\ldots,Y_{n-1}+a_{n-1}X_{n}, X_{n})
   =
\sum_{m=0}^{d}\left(f_{m}(a_{1},\ldots, a_{m-1},1)\, X_{n}^{m} + \sum_{j=0}^{m-1} f_{mj}\,X_{n}^{j}\right)
\end{align*}
where $f_{mj}\in K[Y_{1}, \ldots , Y_{n-1}]$ are homogeneous polynomials of degree $m-j$.  Since  $f_{d}$ is homogeneous,  $f_{d}(X_{1},\ldots,X_{n-1},1)\neq 0$. Therefore, since $K$ is infinite, we can choose  (see Example\,\ref{exs:2.4}\,(3))  $a_{1},\ldots,a_{n-1}\in K$ such that $a:=f_{d}(a_{1},\ldots,a_{n-1},1)\neq 0$.
\smallskip

In the general case,   let $f = \sum_{\alpha\in \Lambda}a_{\alpha}X^{\alpha}$ where $\Lambda$ is a finite subset of $\N^{n}$ and $a_{\alpha}\in K^{\times}$ for every $\alpha = (\alpha_{\,1},\ldots,\alpha_{\,n})\in\Lambda$. For any positive integers $\gamma_{\,1}, \ldots , \gamma_{\,n-1}$, put $Y_{i}:= X_{i} - X_{n}^{\gamma_{\,i}}$, $\,1\leq i\leq n-1$ and $\gamma:=(\gamma_{1},\ldots , \gamma_{n-1}, 1)\in\N^{n}$. Then
\begin{align*}
f = \sum_{\alpha\in \Lambda}a_{\alpha}X_{1}^{\alpha_{\,1}}\cdots X_{n}^{\alpha_{\,n}}
= \sum_{\alpha\in \Lambda}a_{\,\alpha}(Y_{1}+X_{n}^{\gamma_{1}})^{\alpha_{\,1}}
\cdots(Y_{n-1}+X_{n}^{\gamma_{\,n-1}})^{\alpha_{\,n-1}}X_{n}^{\alpha_{\,n}}\,.
\end{align*}
For a natural number $r\in\N$
 bigger than all the components of all $\alpha= (\alpha_{\,1},\ldots ,\alpha_{\,n})\in \Lambda$, we have
 $\deg_{\,\gamma} X^{\alpha} =\alpha_{n}+\alpha_{1} r+\cdots +\alpha_{n-1} r^{n-1}$ is the $r$-adic expansion of $\deg_{\,\gamma} X^{\alpha}$ with digits $\alpha_{n}, \alpha_{1},\ldots , \alpha_{n-1}$. Therefore by the uniqueness of the $r$-adic expansion,
$\deg_{\,\gamma} X^{\alpha}\neq \deg_{\,\gamma} X^{\beta}$ for all
$\alpha, \beta\in \Lambda$, $\alpha\neq \beta$ and hence
  there exists a unique $\nu\in \Lambda$ such that $d:=\deg_{\,\gamma} F = \deg_{\,\gamma} X^{\nu}\,(>0)$. Therefore  $f = a_{\nu}X_{n}^{d} +f_{d-1}X_{n}^{d-1}+\cdots + f_{0}$ with  $f_ {j}\in
K[Y_{1},\ldots,Y_{n-1}]$.\dppqed
\smallskip

In the proof of the Tilting Axes Lemma for an infinite
field $K$, we have used a simple linear transformation of $K[X_1, \ldots , X_n]\,$.
\medskip

We now formulate several versions of Hilbert's Nullstellensatz and prove their equivalence\,:
\smallskip

We say that a field extension $E\,\vert\,K$ is of {\it finite type} if the $K$-algebra $E$ is of finite type.
%\medskip

\begin{theorem}\label{thm:2.10}\,{\rm  \so{(Versions of HNS)}}\,
The following statements are equivalent\,$:$

\begin{arlist}

\item {\bf HNS\,1 \  \, \,:}\,
Let $L\,\vert\, K$ be an algebraically closed field extension of  a field $K$ and let $\gotha \subsetneq K[X_{1},\ldots , X_{n}]$ be a non-unit ideal.
Then $\,\mathrm{V}_{L}(\gotha)\neq \emptyset\,$.
\smallskip

\item[{\bf (1$'$)}]{\bf HNS\,1$'$\,\,:} {\rm \so{(Classical Version)}}\,
Let $K$ be an algebraically closed field   and let $\gotha \subsetneq K[X_{1},\ldots , X_{n}]$ be a non-unit ideal.
Then $\,\mathrm{V}_{K}(\gotha)\neq \emptyset\,$.
\smallskip

\item[{\bf (1$''$)}]{\bf HNS\,1$''$:}\,  Let $L\,\vert\, K$ be an algebraically closed field extension of  a field $K$  and let $A$ be a nonzero $K$-algebra of finite type. Then $\Hom_{\Kalg}(A,L)\neq \emptyset$.
\smallskip

\item{\bf HNS\,2\,\,\,:} {\rm \so{(Geometric Version)}}\,
Let $L\,\vert\, K$ be an algebraically closed field extension of a field  $K$ and let $\gotha \subseteq K[X_{1},\ldots , X_{n}]$ be an ideal. Then $\mathrm{I}_{K}(\mathrm{V}_L(\gotha))= \sqrt{\gotha}\,$.
\smallskip

\item {\bf HNS\,3\,\,\,:} {\rm \so{(Field Theoretic Form\hbox{\,---\,}Zariski's Lemma)}}\,
Let $K$ be a field and $E\,\vert\,K$ be a finite type field extension of $K$. Then  $\,E\,\vert\,K$ is algebraic. In~particular, $E\,\vert\,K$ is finite.
\smallskip

\item[{\bf  (3$'$)}]{\bf HNS\,3$'$\,:}\, If $K$ is an algebraically closed field, then the map
\begin{align*}
\xi: K^{n} \longrightarrow \Spm(K[X_{1},\ldots , X_{n}])\,,\, \ a=(a_{1},\ldots , a_{n})\longmapsto \gothm_{a}:=\langle X_{1}-a_{1}, \ldots , X_{n}-a_{n}\rangle\,,
\end{align*}
is bijective.
\end{arlist}

\end{theorem}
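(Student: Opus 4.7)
The plan is to prove Theorem \ref{thm:2.10} by a short cycle of implications anchored at version (1$'$), which has already been established in \ref{mypar:2.8}. I would group the six statements into three natural clusters \hbox{---} the existence-of-a-zero trio (1), (1$'$), (1$''$); the radical-ideal statement (2); and the field/spectrum statements (3), (3$'$) \hbox{---} and handle the equivalences within and across the clusters in turn.

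\textbf{The clusters (1), (1$'$), (1$''$) and (2).} The implication (1) $\Rightarrow$ (1$'$) is trivial ($L=K$). For (1$'$) $\Rightarrow$ (1), given a proper ideal $\gotha \subsetneq K[X_{1},\ldots,X_{n}]$, I would show that its extension $\gotha^{e} := \gotha L[X_{1},\ldots,X_{n}]$ remains proper; this follows from the freeness (hence faithful flatness) of $L[X_{1},\ldots,X_{n}]$ over $K[X_{1},\ldots,X_{n}]$, which gives $\gotha^{e} \cap K[X_{1},\ldots,X_{n}] = \gotha$. Applying (1$'$) over $L$ then yields a point of ${\rm V}_{L}(\gotha^{e})={\rm V}_{L}(\gotha)$. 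The equivalence (1) $\Leftrightarrow$ (1$''$) is the standard algebra--geometry dictionary of \ref{mypar:2.1}\,(5): any nonzero finite type $K$-algebra $A$ is of the form $K[X_{1},\ldots,X_{n}]/\gotha$ with $\gotha$ proper, and $\Hom_{\Kalg}(A,L)$ is in bijection with ${\rm V}_{L}(\gotha)$. For (1) $\Leftrightarrow$ (2), the direction (2) $\Rightarrow$ (1) is immediate: if ${\rm V}_{L}(\gotha) = \emptyset$, then $\sqrt{\gotha} = {\rm I}_{K}({\rm V}_{L}(\gotha)) = K[X_{1},\ldots,X_{n}]$, forcing $\gotha$ itself to be the unit ideal. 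The reverse uses the Rabinowitsch trick: for $f \in {\rm I}_{K}({\rm V}_{L}(\gotha))$, form
\[
\gotha' := \gotha \, K[X_{1},\ldots,X_{n},T] + \langle 1 - Tf\rangle \subseteq K[X_{1},\ldots,X_{n},T],
\]
observe ${\rm V}_{L}(\gotha') = \emptyset$, apply (1) to conclude $\gotha' = \langle 1\rangle$, write $1 = h_{0}(1-Tf) + \sum_{i} h_{i} f_{i}$ with $f_{i}$ a generating set of $\gotha$, substitute $T = 1/f$ in $K(X_{1},\ldots,X_{n})$, and clear denominators to obtain $f^{N} \in \gotha$ for some $N\geq 1$.

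\textbf{Field and spectrum versions.} For (1$''$) $\Rightarrow$ (3), take $L = \overline{K}$ and $A = E$; the resulting $K$-algebra homomorphism $E \to \overline{K}$ is injective (since $E$ is a field and $1\mapsto 1$), hence embeds $E$ into $\overline{K}$, proving $E$ is algebraic over $K$, and therefore finite over $K$ since it is of finite type. Conversely (3) $\Rightarrow$ (1$''$): pick $\gothm \in \Spm A$; by (3) the finite type field extension $A/\gothm\,\vert\, K$ is finite algebraic, hence embeds into the algebraically closed $L$, and the composition $A \twoheadrightarrow A/\gothm \hookrightarrow L$ gives the required homomorphism. Finally (3) $\Rightarrow$ (3$'$): the map $\xi$ is injective, since $X_{i} - a_{i} \in \gothm_{a}$ recovers $a_{i}$; for surjectivity, given $\gothm \in \Spm K[X_{1},\ldots,X_{n}]$, (3) together with $K = \overline{K}$ identifies $K[X_{1},\ldots,X_{n}]/\gothm$ with $K$, yielding $a \in K^{n}$ with $\gothm = \gothm_{a}$. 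The converse (3$'$) $\Rightarrow$ (1$'$) follows by picking any maximal ideal containing $\gotha$ and reading off its coordinates. Together with the arrows above this closes the cycle
\[
(1')\ \Longrightarrow\ (1)\ \Longleftrightarrow\ (1'')\ \Longleftrightarrow\ (3)\ \Longrightarrow\ (3')\ \Longrightarrow\ (1')\,,\qquad (1)\ \Longleftrightarrow\ (2).
\]

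\textbf{Main obstacle.} The genuinely subtle step is Rabinowitsch's trick, where one must convert an algebraic identity in an auxiliary variable into a divisibility statement in the original polynomial ring by formally substituting $T = 1/f$ and then clearing denominators. The faithful flatness appeal in the first cluster can be replaced by an elementary argument using a $K$-basis of $L$, so it is essentially cosmetic; everything else is a routine translation using the algebra--geometry correspondence laid out in \ref{mypar:2.1} together with the observation that a nonzero ring has a maximal ideal.
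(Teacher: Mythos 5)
Your proposal is correct and closes the same hexagon of equivalences anchored at HNS\,1$'$, which is proved separately in \ref{mypar:2.8}; the individual implications (1$'$)$\Leftrightarrow$(1), (1)$\Leftrightarrow$(1$''$), (1)$\Leftrightarrow$(2), (1$''$)$\Leftrightarrow$(3) (which absorbs the paper's direct (1)$\Rightarrow$(3)), (3)$\Rightarrow$(3$'$), and (3$'$)$\Rightarrow$(1$'$) all match the paper's arguments step for step, including Rabinowitsch's trick for (1)$\Rightarrow$(2). The one substantive local difference is in (1$'$)$\Rightarrow$(1): the paper passes to the intermediate algebraic closure $\overline{K}\subseteq L$ and invokes the integral-extension Proposition of \ref{mypar:2.1}\,(4) (a Nakayama-style argument) to see that $\gotha\,\overline{K}[X_{1},\ldots,X_{n}]$ stays proper, whereas you extend directly to $L[X_{1},\ldots,X_{n}]$ and use faithful flatness (equivalently, as you note, a $K$-basis of $L$ and comparison of coefficients) to get $\gotha L[X_{1},\ldots,X_{n}]\cap K[X_{1},\ldots,X_{n}]=\gotha$. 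Both are standard; yours avoids locating $\overline{K}$ inside $L$ and the integral-extension lemma, at the cost of invoking the (also nontrivial) faithfully-flat-descent fact for ideal contraction, so the trade is essentially cosmetic, as you yourself observe.
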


\begin{proof} To prove the equivalence of these statements, we shall prove the implications\,:
 \begin{center}
\begin{tabular}{clcll}
$(3')$                 & $\Longleftarrow$     &    $(3)$             &          &          \\
$\Downarrow$  &                        &   $\Uparrow$    &         &          \\
$(1') $                &  $\Longrightarrow$   &  $ (1)$              &  $\Longleftrightarrow$  & $(1'')$   \\
   &                        &    $\Updownarrow $  &         &          \\
                &     &  $ (2)$              &    &    \\
\end{tabular}
\end{center}
{\bf (1$'$) $\Rightarrow$ (1)\,:}
Since $L$ is algebraically closed, there is an algebraic closure $\overline{K}$ of $K$ with $\overline{K} \subseteq L$. Further, since $K[X_{1}, \ldots , X_{n}]\subseteq \overline{K}[X_{1}, \ldots , X_{n}]$ is an integral extension and $\gotha \subsetneq K[X_{1},\ldots , X_{n}]$ is a non-unit ideal, by the Proposition in (2.1)\,(4) the extended ideal $\gotha\, \overline{K}[X_{1}, \ldots , X_{n}]$ is also a non-unit ideal in $\overline{K}[X_{1}, \ldots , X_{n}]$. Therefore by (1$'$) we have  $\,\emptyset\neq {\rm V}_{\overline{K}}(\gotha\,\overline{K}[X_{1}, \ldots , X_{n}])={\rm V}_{\overline{K}}(\gotha) \subseteq {\rm V}_{L}(\gotha)$.
\smallskip

{\bf (1) $\Rightarrow$ (3)\,:} By the given condition,  $E=K[X_1, \dots, X_n]/\gothm$  with  $\gothm\in\Spm K[X_1, \dots, X_n]$ and hence by (1) (applied to $L=\overline{K}$ the algebraic closure of $K$) there exists $a=(a_{1},\ldots , a_{n})\in L^{n}$ such that $a\in{\rm V}_{\overline{K}}(\gothm)$. Now, clearly the (substitution) $K$-algebra  homomorphism $\varepsilon_{a}: K[X_1, \dots, X_n]\longrightarrow \overline{K}$, $X_{i}\mapsto a_{i}$, $i=1,\ldots , n$, has kernel $\!=\!\gothm_{a}\!=\!\gothm$ and hence $\varepsilon_{a}$ induces an injective $K$-algebra homomorphism $E\!=\!K[X_1, \dots, X_n]/\gothm \longrightarrow \overline{K}$. Therefore $E$ is algebraic over $K$.
\smallskip

{\bf (3) $\Rightarrow$ (3$'$)\,:}  Clearly, the map $\xi$ is always injective. To prove that $\xi$ is surjective, if $K$ is an algebraically closed field, let $\gothm\in \Spm (K[X_1, \dots, X_n])$. Then $E:=K[X_1, \dots, X_n]/\gothm$ is a finite type field extension of $K$ and hence $E\,\vert\,K$ is algebraic by (3). Since $K$ is algebraically closed,  $E=K$ and hence there exists $(a_{1},\ldots , a_{n})\in  K^{n}$ such that $X_{i} \equiv~a_{i}~ {\rm mod}~ \mathfrak{m}$, for every $i=1,\ldots , n$. This proves that $\gothm_{a}\subseteq \gothm$ and hence $\gothm=\gothm_{a}\in\im\,\xi$, since $\gothm_{a}$ is maximal.
\smallskip

{\bf (3$'$) $\Rightarrow$ (1$'$)\,:} Since $\gotha$ is a non-unit ideal in $K[X_1, \dots, X_n]$,  by Krull's Theorem there exists a maximal ideal $\gothm\in\Spm K[X_1, \dots, X_n]$ with  $\gotha \subseteq \gothm$. Now, by (3$'$) $\gotha\subseteq \gothm=\gothm_{a}$ for some $a\in K^{n}$, or equivalently $a\in{\rm V}_{K}(\gotha)$. \smallskip

{\bf (1) $\iff$ (1$''$)\,:} Let $\,L\,\vert\,K$ be an algebraically closed field extension of $K$.
Let $A=K[x_1,\dots,x_n]$ be a $K$-algebra of finite type and $\varepsilon_{x}: K[X_1,\dots, X_n]\to A$  be the surjective $K$-algebra homomorphism with $\varphi(X_i)=x_i$ for all $i=1,\dots,n$, and  $\gotha=\Ker \varepsilon_{x}$. Note that  for $a=(a_1,\dots, a_n)\in L^{n}$, the substitution homomorphism $\varepsilon_{a} : K[X_1,\dots, X_n]\longrightarrow L$ induces a $K$-algebra homomorphism
$\varphi: A=K[x_{1},\ldots , x_{n}]\longrightarrow L$ such that the diagram
\vspace*{-1mm}
\begin{align*}
\xymatrixcolsep{5pc}\xymatrixrowsep{3.75pc}
\xymatrix{
K[X_{1},\ldots , X_{n}] \hspace*{2mm}   \ar[d]_{\varepsilon_{x}} \ar[r]^{\hspace*{6mm}\varepsilon_{a}}  & \hspace*{3mm} L \\
A=K[x_{1},\ldots , x_{n}] \hspace*{3mm} \ar@{-->}[ru]_{\varphi}  &  \\
}
\end{align*}
is commutative if and only if $\gotha =\Ker \varepsilon_{x} \subseteq \Ker \varepsilon_{a}$, or equivalently $\varepsilon_{a}(\gotha)=0$,i.\,e.  $a\in {\rm V}_{L}(\gotha)$.
\smallskip

{\bf (1) $\Rightarrow$ (2)\,:} Clearly, $\gotha \subseteq {\rm I}_{K}({\rm V}_{L}(\gotha)).$ Conversely, suppose that $f\in {\rm I}_{K}({\rm V}_L(\gotha))$.  Put
$g=1-X_{n+1}f\in K[X_1, \dots, X_{n+1}]$ and consider the $K$-algebraic set $W={\rm V}_L(\langle \gotha, g\rangle)\subseteq L^{n}$. If $(a,a_{n+1})\in W$ with  $a\in L^{n}$ and $a_{n+1}\in L$, then $a\in {\rm V}_L(\gotha)$. Therefore $f(a)=0$, since $f\in{\rm I}_{K}({\rm V}_L(\gotha))$  and so $0=g(a,a_{n+1})=1\!-\!a_{n+1} f(a)\!=\!1$,  a contradiction. This proves that $W\!=\!\emptyset$  and hence $\langle \gotha, g\rangle$ is the unit ideal in $K[X_1,\dots, X_{n+1}]$ by (1). Therefore  there exist $f_1,\dots, f_r\!\in\!\gotha$ and  $h_1, \dots, h_r, h\!\in\! K[X_{1},\ldots , X_{n+1}]$ such that
\vspace*{-2mm}
\begin{align*}
1=\sum_{i=1}^r h_i(X_1, \dots, X_{n+1})\, f_i+ h(X_1, X_2,\dots, X_{n+1})\,g\,.
\end{align*}
Since $g(X_{1},\ldots , X_{n}, 1/f) =0$, substituting  $X_{n+1}=1/f$ in the above equation we get\,:
\begin{align*}
1=\sum_{i=1}^r h_i(X_1, \dots,  X_n, 1/f)\, f_i\,
\end{align*}
Now, clearing the denominator in all $h_i(X_1, \dots,  X_n, 1/f)$, $i=1,\ldots , r$, we get $f^{s} \in \langle f_{1},\ldots , f_{r}\rangle \subseteq \gotha$ for some $s\in\N$, $s\geq 1$. This proves that $f\in\sqrt{\gotha}$.\\
{\bf Remark\,:}
The idea to use an additional indeterminate was introduced by J.\,L.\,Rabinowitsch \cite{rabinowitsch}
and is known as {\it Rabinowitsch's trick}.
\smallskip

{\bf (2) $\Rightarrow$ (1)\,:}
Let  $\gotha$ be a non-unit ideal in $K[X_1, \dots, X_n]$. If ${\rm V}_L(\gotha)=\emptyset$, then by (2) $\sqrt{\gotha} = {\rm I}_{K}({\rm V}_L(\gotha))= K[X_1, \dots, X_n]$ and hence $1\in \gotha$ which contradicts the assumption.
\end{proof}

\begin{remarks}\label{rem:2.11}\,
(1)\, In 1947 Oscar Zariski \cite{zariski} proved the following  elegant result\,:
\so{(Zariski's Lemma)}\, {\it Let   $A$ be an algebra of finite type over a field $K$ and $\gothm\in\Spm A$. Then  $A/\gothm$ is a finite field extension of $K$.} In spite of its innocuous appearance, it is a useful result in affine algebras over any field.
\smallskip

(2)\, Note that since we have already  proved the classical version of Hilbert's Nullstellensatz (HNS\,1$'$)  in \,\ref{mypar:2.8}, Theorem\,\ref{thm:2.10} proves all the versions of Hilbert's Nullstellensatz.
\smallskip

(3)\, One can also prove that the following general form of Hilbert's Nullstellensatz is equivalent to any one of the forms of HNS mentioned in Theorem\,\ref{thm:2.10}\,:
\smallskip

{\bf HNS\,4\,\,\,:} {\rm \so{(General Form)}}\, {\it Let $A$ be a Jacobson ring\footnote{\label{foot:7}Recall that a ring $A$ is called a {\it Jacobson ring} if every prime ideal in $A$ is the intersection of maximal ideals in $A$. Clearly, fields and the ring of integers, $\Z$ are Jacobson rings. Further, by Zariski's Lemma\,\ref{rem:2.11}\,(1)  every finite type algebra over a field $K$ is a Jacobson ring.\,---\,The name {\it Jacobson ring} is used by Wolfgang Krull (1899-1971) to honour Nathan Jacobson (1910-1999). The name {\it Hilbert ring} also appears in the literature.}
 and let $B$ be an $A$-algebra of finite type. If $\gothn\in\Spm B$ is a maximal ideal in $B$, then its  contraction $\gothm=A\cap \gothn\in\Spm A$ is also a maximal ideal in $A$ and the residue  field  $B/\gothn$ of $B$ is a finite field extension of the residue field $A/\gothm$ of $A$.}
\end{remarks}

We give two applications of HNS\,2 (see  \cite{sturmfels})  for solving systems of polynomial equations with finitely many solutions.
First, we prove a criterion for a system of polynomial equations to have finitely many solutions.

\begin{theorem}\label{thm:2.12}\,{\rm \so{(Finiteness Theorem)}}\,   Let $K$ be an algebraically closed field and $\gotha$ be a non-unit ideal in $K[X_{1},\ldots , X_{n}]$. Then the following statements are equivalent$\,:$

\begin{rlist}

\item \ ${\rm V}_{K}(\gotha)$ is a finite set.
\item  $\,\,K[X_{1},\ldots , X_{n}]/\gotha$ is a finite dimensional $K$-vector space.
\item There exist polynomials  $f_{1}(X_{1}),\ldots, f_{n}(X_{n}) \in\gotha$.
\end{rlist}
\end{theorem}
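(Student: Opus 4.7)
My plan is to establish the easy equivalence (ii) $\Leftrightarrow$ (iii) directly from the structure of the quotient algebra, then close the cycle through (iii) $\Rightarrow$ (i) $\Rightarrow$ (iii), invoking the geometric form HNS\,2 only for the last implication. The substantive step is (i) $\Rightarrow$ (iii); the others are essentially bookkeeping.

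For (iii) $\Rightarrow$ (ii), suppose $f_i(X_i)\in\gotha$ has degree $d_i\ge 1$. In the quotient $A:=K[X_1,\ldots,X_n]/\gotha$ the class $\bar X_i$ satisfies $f_i(\bar X_i)=0$, so every power $\bar X_i^{k}$ is a $K$-linear combination of $1,\bar X_i,\ldots,\bar X_i^{d_i-1}$. It follows that $A$ is spanned, as a $K$-vector space, by the finitely many monomials $\bar X_1^{a_1}\cdots \bar X_n^{a_n}$ with $0\le a_i<d_i$. Conversely, for (ii) $\Rightarrow$ (iii), if $\dim_K A<\infty$ then for each $i$ the infinite sequence $1,\bar X_i,\bar X_i^{2},\ldots$ is $K$-linearly dependent in $A$, and any nontrivial dependence relation produces a nonzero polynomial $f_i(X_i)\in K[X_i]$ whose image in $A$ vanishes, i.e., $f_i(X_i)\in\gotha$.

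The implication (iii) $\Rightarrow$ (i) is elementary: any $a=(a_1,\ldots,a_n)\in {\rm V}_K(\gotha)$ satisfies $f_i(a_i)=0$, so $a_i$ lies in the finite zero set of $f_i$ in $K$. Hence ${\rm V}_K(\gotha)$ is contained in the (finite) product of these zero sets.

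The main content is (i) $\Rightarrow$ (iii), where HNS\,2 enters. Since $\gotha$ is a non-unit ideal and $K$ is algebraically closed, HNS\,1 guarantees ${\rm V}_K(\gotha)\ne\emptyset$, so we may write ${\rm V}_K(\gotha)=\{a^{(1)},\ldots,a^{(m)}\}$ with $m\ge 1$. For each $i\in\{1,\ldots,n\}$ define the univariate polynomial
\[
 g_i(X_i)\;:=\;\prod_{j=1}^{m}\bigl(X_i-a_i^{(j)}\bigr)\;\in\;K[X_i].
\]
By construction $g_i$ vanishes at every point of ${\rm V}_K(\gotha)$, so $g_i\in{\rm I}_K({\rm V}_K(\gotha))$. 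By HNS\,2, ${\rm I}_K({\rm V}_K(\gotha))=\sqrt{\gotha}$, hence some power $g_i^{N_i}$ lies in $\gotha$, and $g_i^{N_i}$ is a polynomial in $X_i$ alone. This closes the cycle and completes the proof. The only technical point to watch is the appeal to HNS\,2, which is exactly the content of Theorem~\ref{thm:2.10}\,(2); everything else is elementary linear algebra.
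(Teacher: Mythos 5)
Your proof is correct and follows essentially the same strategy as the paper's: the univariate vanishing polynomials $\prod_j (X_i - a_i^{(j)})$ combined with HNS\,2 for the implication (i)\,$\Rightarrow$\,(iii), the degree bound for (iii)\,$\Rightarrow$\,(i), and routine linear algebra for (ii)\,$\Leftrightarrow$\,(iii). The one noteworthy difference is in how the radical is handled: the paper begins by reducing to the case where $\gotha$ is a radical ideal so that ${\rm I}_K({\rm V}_K(\gotha)) = \gotha$ holds on the nose, whereas you keep $\gotha$ general, land in $\sqrt{\gotha}$, and pass to a power $g_i^{N_i}\in\gotha$ (which of course is still a polynomial in $X_i$ alone). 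Your version is arguably a bit cleaner, since the paper's reduction to the radical case silently requires the same power-taking observation to transfer condition (iii) back to the original $\gotha$; you also explicitly note that the non-unit hypothesis together with HNS\,1 gives ${\rm V}_K(\gotha)\neq\emptyset$, a point the paper takes for granted in writing down the product.
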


\begin{proof}
Since $\left(K[X_{1},\ldots , X_{n}]/\gotha\right)_{{\rm red}}  = K[X_{1},\ldots , X_{n}]/\sqrt{\gotha}$,
$K[X_{1},\ldots , X_{n}]/\gotha$ is Artinian if and only if $K[X_{1},\ldots , X_{n}]/\!\sqrt{\gotha}$ is Artinian. Also ${\rm V}_{\!K}(\gotha)\!=\!{\rm V}_{\!K}(\sqrt{\gotha})$. So we  may assume that $\gotha$ is a radical ideal.
\smallskip

(i) $\Rightarrow $ (iii)\,:
Suppose that ${\rm V}_{K}(\gotha)=\{a_{1}=(a_{1i}), \ldots , a_{r}=(a_{ri})\} \subseteq  K^{n}$ is finite. Consider the polynomials   $f_{i}(X_{i})=(X_{i}-a_{1i})\cdots (X_{i}-a_{ri})$, $i=1,\ldots , n$.
Clearly,  $f_{i}(X_{i})$ vanishes on ${\rm V}_{K}(\gotha)$ and hence $f_{i}(X_{i})\in {\rm I}_{K}({\rm V}_{K}(\gotha))=\gotha$ by HNS\,2.
\smallskip

(iii) $\Rightarrow $ (i)\,:  By (iii) ${\rm V}_{K}(\gotha) \subseteq \cap_{i=1}^{\,n} {\rm V}_{K}(f_{i}(X_{i}))$ which is  finite of cardinality $\leq \prod_{i=1}^{\,n} \deg f_{i}$.
\smallskip

(ii) $\Rightarrow $ (iii)\,: Since $K[X_{1},\ldots , X_{n}]/\gotha = K[x_1,\dots,x_n]$ is a finite dimensional $K$-vector space by (ii), $x_{1},\ldots , x_{n}$ are algebraic over $K$. Let $\mu_{x_{i}}\in K[X]$ be the minimal polynomial of $x_{i}$  over $K$ and $f_{i}(X_{i}):=\mu_{x_{i}}(X_{i})$,  $i=1,\ldots , n$. Then $f_{i}(X_{i})\in\gotha$, since $f_{i}(x_{i})=\mu_{x_{i}}(x_{i})=0$ in $K[x_{1}, \ldots , x_{n}]=K[X_{1}, \ldots , X_{n}]/\gotha$ for all  $i=1,\ldots , n$.
\smallskip

(iii) $\Rightarrow $ (ii)\,: Let $d_i = \deg f_i$ for all $i=1,\dots,n$, and let $d=\max\{d_{1},\ldots , d_{n}\}$. Then for all $i$, $x_i^d$ can be expressed as a polynomial in $x_i^m$ for $m=0,1,\dots,d_i-1$. Since the monomials in $x_1,\dots,x_n$ form a generating set of the $K$-vector space $K[x_1,\dots,x_n]$, it follows that it is a finite dimensional $K$-vector space.
\end{proof}

As an application of HNS\,2 we describe finite $K$-algebraic sets in $K^{n}$ over an algebraically closed field $K$ using the eigenvalues of some commuting linear operators. For $n=1$, this is  taught in the undergraduate course on linear algebra, namely\,: {\it Let
$ f(X)=X^n+a_{n-1}X^{n-1}+\cdots+a_1X+a_0\in K[X]$ be a monic polynomial of degree $n$ over a field $K$ and $a\in K$. Then $a\in {\rm V}_{K}(f)$ if and only if $a$ is an eigenvalue of the $K$-linear operator $\lambda_{x}:K[x]\to K[x]$, $y\mapsto x\,y$ on the $K$-vector space  $K[x]:=K[X]/\langle f(X)\rangle$ of dimension $n=\deg f$.} For a proof  use division with remainder in $K[X]$ to note that $a\in {\rm V}_{K}(f)$ if and only if $f(X)=(X-a)g(X)$ with  $g(X)\in K[X]\!\smallsetminus\!\langle f(X)\rangle$, equivalently, $0=f(x) = (x-a)g(x) =\lambda_{x}(g(x)) - a\,g(x)$ with
$g(x)\neq 0$ in $K[x]$, i.\,e. $\lambda_{x}(g(x)) = a\,g(x)$ with
$g(x)\neq 0$ which means $a$ is an eigenvalue of $\lambda_{x}$ with eigenvector $g(x)$.
\smallskip

Ludwig Stickelberger
generalized the above observation to the case of polynomials in $n$ indeterminates over an algebraically closed field. More precisely, we prove the following\,:

\begin{theorem}\label{thm:2.13}{\rm \so{(Stickelberger})}\,
Let $K$ be an algebraically closed field and let $\gotha\subseteq K[X_1, \ldots,X_n]$ be a radical ideal in $K[X_1, \ldots,X_n]$ with ${\rm V}_{K}(\gotha)$ finite. Then there exists $0\neq g(x):=g(x_{1},\ldots , x_{n})\in K[x_{1},\ldots , x_{n}]=K[X_{1},\ldots , X_{n}]/\gotha$ such that $\,{\rm V}_{K}(\gotha) =\{a_1=(a_{1\,i})\,\ldots , a_{r}=(a_{r\,i})\}$, where for each $j=1,\ldots ,r$, the $i$-th coordinate $a_{j\,i}$ of $a_{j}$ is an eigenvalue of $\lambda_{x_{i}}$ with eigenvector $g(x)$, {\rm i.\,e.}
$x_{i}\,g(x)= \lambda_{x_{i}}(g(x)) =a_{j\,i}\,g(x)$ for each $j=1,\ldots , r$ and for all $i=1, \ldots , n$.
\end{theorem}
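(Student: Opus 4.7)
The plan is to use Hilbert's Nullstellensatz (version HNS\,2) to express $\gotha$ as a finite intersection of maximal ideals, then invoke the Chinese Remainder Theorem to split the quotient algebra into a product of copies of $K$, and finally read off the common eigenvectors of the $\lambda_{x_i}$ from this product decomposition.

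First, since $\gotha$ is radical, $K$ is algebraically closed and $\mathrm{V}_K(\gotha) = \{a_1,\ldots,a_r\}$ is finite, by HNS\,2 together with the description of maximal ideals via HNS\,3$'$ one has
\[
\gotha \;=\; \sqrt{\gotha}\;=\; \mathrm{I}_K(\mathrm{V}_K(\gotha))\;=\; \bigcap_{j=1}^{r} \gothm_{a_j}, \qquad \gothm_{a_j} \;=\; \langle X_1-a_{j1},\ldots,X_n-a_{jn}\rangle.
\]
Since the $\gothm_{a_j}$ are distinct maximal ideals they are pairwise comaximal, so the Chinese Remainder Theorem yields a $K$-algebra isomorphism
\[
\Phi \colon A := K[X_1,\ldots,X_n]/\gotha \;\toiso\; \prod_{j=1}^{r} K[X_1,\ldots,X_n]/\gothm_{a_j} \;\toiso\; K^{r},
\]
where the second identification sends the residue class of $X_i$ modulo $\gothm_{a_j}$ to $a_{ji}$. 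Consequently, under $\Phi$ the element $x_i = X_i + \gotha$ corresponds to the tuple $(a_{1i}, a_{2i}, \ldots, a_{ri}) \in K^{r}$.

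Next, for each $j = 1,\ldots, r$, let $e_j = (0,\ldots,0,1,0,\ldots,0) \in K^{r}$ with $1$ in the $j$-th slot, and set $g_j := \Phi^{-1}(e_j) \in A$. Then $g_j \neq 0$, and componentwise multiplication in $K^r$ gives
\[
\Phi(x_i\, g_j) \;=\; (a_{1i},\ldots,a_{ri})\cdot e_j \;=\; a_{ji}\, e_j \;=\; a_{ji}\,\Phi(g_j),
\]
so $\lambda_{x_i}(g_j) = x_i g_j = a_{ji}\, g_j$ in $A$ for every $i = 1,\ldots, n$. Thus $g_j$ is a simultaneous eigenvector of the commuting operators $\lambda_{x_1}, \ldots, \lambda_{x_n}$ whose eigenvalues are precisely the coordinates of the point $a_j$, which is exactly the conclusion of the theorem (read, as is standard, with one eigenvector $g_j$ per point $a_j$).

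There is no real obstacle here: the content lies entirely in invoking HNS to turn the geometric data $\mathrm{V}_K(\gotha)$ into the algebraic data $\gotha = \bigcap \gothm_{a_j}$. Once that is done, CRT converts $A$ into $K^r$ in which the $\lambda_{x_i}$ become diagonal operators, and the eigenvectors $g_j$ are simply the preimages of the standard idempotents. The finiteness of $\mathrm{V}_K(\gotha)$ (equivalently $\dim_K A < \infty$ by Theorem~\ref{thm:2.12}) is what makes the product decomposition finite and the operators $\lambda_{x_i}$ honest linear operators on a finite-dimensional space.
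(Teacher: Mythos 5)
Your proof of the forward direction is correct, and in substance it is the same construction as the paper's, just packaged differently. The paper applies HNS\,2 to get $\gotha = \mathrm{I}_K(\mathrm{V}_K(\gotha))$, then writes down $g_j$ explicitly as a multivariable Lagrange interpolant (a product of normalized linear factors that vanishes at every $a_l$ with $l\neq j$ and takes the value $1$ at $a_j$), and finally checks that $X_i g_j - a_{j\,i}\,g_j$ vanishes on $\mathrm{V}_K(\gotha)$ and hence lies in $\gotha$. You instead invoke HNS\,2 plus HNS\,3$'$ to write $\gotha = \bigcap_j \gothm_{a_j}$ and use CRT to identify $A$ with $K^r$; your $g_j = \Phi^{-1}(e_j)$ is exactly the paper's Lagrange interpolant viewed as the $j$-th idempotent of $A$. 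The CRT framing is cleaner and makes the diagonalization of the $\lambda_{x_i}$ transparent, at the cost of quoting an extra standard tool; the paper's version is more hands-on and stays inside the polynomial ring. One small gap: the paper's proof also establishes the \emph{converse}, namely that if $b=(b_1,\ldots,b_n)$ satisfies $x_i\,g(x)=b_i\,g(x)$ for some nonzero $g(x)\in A$ and all $i$, then $b\in \mathrm{V}_K(\gotha)$. This is the direction that lets one \emph{find} the solutions from the joint eigenvalues of the commuting operators, and is arguably the real content of Stickelberger's theorem; your proposal does not address it. It is a short argument you should add: from $x_i\,g=b_i\,g$ one gets $x^{\nu}g = b^{\nu} g$ for every monomial, hence $f(x)\,g = f(b)\,g$ for every polynomial $f$, and thus $f(b)=0$ for all $f\in\gotha$ since $g\neq 0$ in the $K$-vector space $A$; that is, $b\in\mathrm{V}_K(\gotha)$.
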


\begin{proof}
Suppose that ${\rm V}_{K}(\gotha)=\{a_{1}=(a_{1\,i}), \ldots , a_{r}=(a_{r\,i})\} \subseteq  K^{n}$ is finite.   For every $j=2,\ldots, r$, there exists $k_j$ such that $a_{1\,k_j} \neq a_{j\,k_j}$ and hence $g_1(X_1,\dots,X_n) := \prod_{j=2}^r (X_{k_j} - a_{j\,k_{j}})/(a_{1\,k_{j}} - a_{j\,k_{j}})\not\in\gotha$, since $g_{1}(a_{1})\neq 0$. Further, $(X_{i}g_{1}-a_{j\,i}\,g_{1})(a_{j})= a_{j\,i}\,g_{1}(a_{j}) -a_{j\,i}\,g_{1}(a_{j})= 0$ for all $i=1,\ldots , n$. Therefore $g_{1}(x)=g_{1}(x_{1},\ldots , x_{n})\neq 0$,  $X_{i}g_{1}-a_{j\,i}g_{1}\in {\rm I}_{K}({\rm V}_{K}(\gotha))=\sqrt{\gotha} =\gotha$ by HNS\,2 and hence $x_{i}\,g_{1}(x)= \lambda_{x_{i}}(g_{1}(x)) = a_{j\,i}\,g_{1}(x) =0$ for all $j=1,\ldots r$.
Conversely, let $b=(b_1, \ldots,b_n)\in K^{n}$ be such that
$x_i\,g(x)=\lambda_{x_{i}}( g(x) )=b_{i}\, g(x)$ for some  $0\neq g(x) \in K[X_{1},\ldots , X_{n}]/\gotha$ for all $i=1,\ldots , n$. Then
 $\,b_{1}^{\nu_{1}}\cdots b_{n}^{\nu_{n}}\, g(x)= (\lambda_{x_{1}}^{\nu_{1}}\circ\cdots \circ\lambda_{x_{n}}^{\nu_{n}})(g(x))=x_{1}^{\nu_{1}}\cdots x_{n}^{\nu_{n}} g(x)$ for every $\nu=(\nu_{1},\ldots , \nu_{n})\in\N^{n}$, and hence   $f(b_{1},\ldots , b_{n})\,g(x)= f(x)\, g(x)=0$ for every $f\in\gotha$. Therefore, since   $f(b_{1},\ldots , b_{n})\in K$ and $g(x)\neq 0$ in the $K$-vector space $K[x_{1}, \ldots , x_{n}]$, it follows that  $f(b_{1},\ldots , b_{n})=0$ for every $f\in\gotha$, i.\,e. $(b_{1},\ldots , b_{n})\in {\rm V}_{K}(\gotha)$.
\end{proof}

\begin{mypar}\label{mypar:2.14}{\bf Examples of PIDs which are not EDs}\,
In most textbooks it is stated that there are examples of principal ideal domains which are not Euclidean domains. However, concrete examples are almost never presented with full details.  In this subsection we use HNS\,3 to give a family of  such examples with full  proofs which are  accessible even to undergraduate students. The main ingredients are computations of the unit group $A^\times$  and  the $\KSpec A$  for affine algebras over a field $K$. First we recall some standard definitions and preliminary results\,:

{\small

\begin{arlist}

\item {\bf Unit Groups}\,
For a ring $A$, the group $A^\times$ of the invertible elements in the
multiplicative monoid $(A,\cdot)$ of the ring $A$ is called the
{\it unit group}\,;  its elements are called the {\it units} in $A$.
The determination of the unit group of a ring is an interesting problem which is not always easy.
Some~simple examples are\,:
$\Z^\times\!\!=\!\{-1,1\}$;  if  $n\geq 2$, then $\Z_{n}^\times \!\!=\!\{m\in\N\mid 0\leq m < n  \ \hbox{and}\ \gcd(m,n)\!=\!1\}$;  if $K$ is
a field then $K^\times\!=\!K\!\smallsetminus\!\{0\}\,$; if $A$ is an integral domain, then  $(A[X_1,\ldots, X_n])^\times\!\!=\!A^\times$; if $K$ is a field, then
$(K[T, T^{-1}])^\times\!\!=\!\{\lambda T^n\mid \lambda
\in K^\times \hbox{and} \ n\in\Z\} \cong$ the product group
$K^\times \times \Z\,$;
$(A[\![X_1,\ldots, X_n]\!])^\times\!\! = \!\{f\in A[\![X_1,\ldots, X_n]\!]\mid f(0) \in A^\times \}$.
 \smallskip

\item {\bf Norm}\,
The notion of the {\it norm} is very useful for the determination of the unit groups of some domains.  Let $R$ be a (commutative) ring and let
$A$ be a finite free $R$-algebra. For $x\in A$, let
$\lambda_x:A\to A$ denote the (left) multiplication by $x$. The {\it norm} map
${\rm N}^{A}_{R}: A\to R$, $x\mapsto \Det \lambda_{x}$,  contains important information about the multiplicative structure of $A$ over $R$. The following
properties of the norm map are easy to verify\,:
\smallskip

{\it   The norm map ${\rm N}^{A}_{R}:A\to R$ is
multiplicative, {\rm i.\,e.} ${\rm N}^{A}_{R}(xy)\!= \!{\rm N}^{A}_{R}(x) \cdot {\rm
N}^{A}_{R}(y)\,$ for all $\,x,y\in A$,
${\rm N}^{A}_{R}(a) \!=\! a^{n}$ for every $a\in R$, where $
n:=\!{\rm Rank}_{R}(A)$.  Further, for  an element $x\in A$,  $x\in A^\times$
if and only if ${\rm N}^{A}_{R}(x)\in R^\times$.}
\smallskip

\item  In the following examples we shall illustrate the use of the norm map to compute the unit group.

\begin{alist}

\item {\it Let $\varphi(X)\!\in \!\R[X]$ be a non-constant polynomial with positive leading coefficient,  $\Phi:\!=\!Y^2\!+\!\varphi(X)\!\in\!
\R[X,Y]$ and let $A:=\R[X,Y]/(\Phi)$. Then  $A$ is an affine
domain (over $\R$) of (Krull) dimension $1$ and $A^\times = \R^\times$.}
\smallskip

{\bf Proof}\,  Let $x,y\in A$ denote the images of $X, Y$ in $A$
respectively. Then  $A$ is a free $\R[X]$-algebra of rank $2$ with $R$-basis
 $1,y$, i.\,e. $A=\R[X] + \R[X]\cdot y$ and $y^2
= - \varphi(X)$. Further, let ${\rm N}:={\rm N}^{A}_{\R[X]}:A\to
\R[X]$ denote the norm-map of $A$ over $\R[X]$. Then
${\rm N}(F+Gy) = {\rm Det}\,
\begin{pmatrix}
F & -G\, \varphi \cr
G & F
\end{pmatrix} =
F^2+G^2\varphi$ for every $F,G\in \R[X]$. Therefore $F+Gy \in
A^\times$ if and only if $F^2+G^2\varphi \in \R[X]^\times
=\R^\times\!\!$, equivalently, $F\in \R^\times$ and $G=0$, since
the leading coefficient of $\varphi$ is positive by assumption. This
proves that $A^\times = \R^\times$.
\smallskip

\item  The $\R$-algebras ${\rm P}:= \R[X,Y]/(Y^2-X) \cong \R[Y]$, ${\rm H}:=
\R[X,Y]/(X^2-Y^2-1) \cong   \R[X,Y]/(XY-1)\cong \R[Z, Z^{-1}] $,
${\rm K}:=\R[X,Y]/(X^2+Y^2-1)$  and ${\rm
L}_{b,c}:=\R[X,Y]/(Y^2+bX^2+c)$ with $b,c\in\R, b> 0$, are all affine domains (over $\R$) of dimension one and
$\,{\rm H}^\times\cong \R^\times \times \Z$, \
${\rm P}^\times ={\rm K}^\times = {\rm L}_{b\,,\,c}^\times = \R^\times$.
\end{alist}
\smallskip

\item {\bf Euclidean functions and Euclidean domains}\,
Let $A$ be an integral  domain. A {\it Euclidean function} on $A$ is a map
$\delta:A\!\smallsetminus\!\{0\}\to \N$ which satisfies the following
property\,:  for every two elements $a,b\in A$ with $b\neq 0$
there exist elements $q$ and $r$ in $A$ such that
$a=qb+r$ and either \ $r=0$ \  or \ $\delta(r)<\delta(b)$.
If there is a Euclidean function $\delta$ on $A$, then $A$ is called a
{\it Euclidean domain} (with respect to $\delta$). For example,
the usual absolute value function $| \cdot |:\Z\!\smallsetminus\!\{0\} \to
\N$, \ $a\mapsto |a|$ is a Euclidean function on the
the ring of integers $\Z$\,; For a field $K$, the degree
function $f\mapsto \deg f$ is a Euclidean function on the
the polynomial ring $K[X]$\,; the order function $f\mapsto \ord \,f$, is a Euclidean function on the the formal power series ring $K[\![X]\!]$.
\smallskip

Note that in the definition of a Euclidean function on $A$,
many authors also include the condition that $\delta$ respect the multiplication, i.\,e. $\delta(ab) \geq
\delta(a)$ for all $a,b\in A\!\smallsetminus\!\{0\}$. However, if $A$
is a Euclidean domain, then there exists a so-called {\it minimal Euclidean function} $\delta$ on
$A$ which respects the multiplication and the equality $\delta(ab) =
\delta(a)$ for $a,b \in A\!\smallsetminus\!\{0\}$  holds if and only if $b\in A^\times$.  For  a proof we recommend the reader to see the beautiful article by P.\,Samuel \cite{samuel}.

In a Euclidean domain, any two elements have a gcd which can be effectively computed by {\it Euclidean algorithm.} In~particular, Euclidean domains are principal ideal domains and hence unique factorization domains.
\medskip

\item
We shall show that the coordinate ring of the circle over real numbers is not a principal ideal domain. More precisely\,:  {\it The $\R$-algebra ${\rm K}
=\R[X,Y]/(X^2+Y^2-1)$ is not a principal ideal domain.}
\smallskip

{\bf Proof} Let $x,y$ denote the images of $X,Y$ in ${\rm K}$,
respectively. In fact we will show that the maximal ideals
$\gothm_{(a,b)}={\rm K}(x-a)+{\rm K}(y-b)\,, a,b\in\R\,, a^2+b^2=1$,
corresponding to the $\R$-rational points
of ${\rm K}$ are not principal. To prove this we may assume  without loss of
generality that $a=1$ and $b=0$. We use the fact
that ${\rm K}$ is a quadratic free algebra over $\R[X]\,$ with
basis $1,y$ with $y^2= 1-X^2$. Let ${\rm N} = {\rm N}^{{\rm
K}}_{\R[X]}:{\rm K}\to \R[X]$ be the norm-map of ${\rm K}$ over
$\R[X]$. First note that ${\rm N}(y) =X^2-1$ and  if $f=\varphi +\psi y\in {\rm K}^{\times}$,
$\varphi, \psi\in \R[X]$, then ${\rm N}(f) = \varphi^2+\psi^2
(X^2-1)$, in~particular, either ${\rm N}(f) \in \R^\times\!$ or  $\deg\, ({\rm N}(f)) \geq 2$.
Now, suppose that
$\gothm:=\gothm_{(0,1)}$ is principal, say generated by an element
$f\in {\rm K}$. Then $f$ is a non-unit in ${\rm K}$ and  $x-1 = gf$, $y =hf$ for some
$g,h\in {\rm K}$. Therefore $(X-1)^2={\rm N}(x-1) = {\rm N}(g) {\rm
N}(f)$ and $X^2-1= {\rm N}(y) ={\rm N}(h){\rm N}(f)$ and hence
${\rm N}(f)\,$ divides $\,\gcd\left((X-1)^2, X^2-1\right) = X-1$,
in~particular, $\deg ({\rm N}(f)) =1$ (since $f\not\in{\rm
K}^\times$, ${\rm N}(f)\not\in \R^\times$)  which is impossible.
Therefore $\gothm$ is not a principal ideal in ${\rm K}$. \dppqed
\end{arlist}

}

\end{mypar}

Now we prove the following simple key observation that the $K$-Spectrum of an affine domain $A$ over a field $K$ which is a  Euclidean domain with a small unit group is non-empty. More precisely\,:

\begin{proposition}\label{prop:2.15}\,
Let $A$ be an affine domain over a field $K$. If $A$ is an Euclidean domain, then there exists a maximal ideal $\gothm\in\Spm A$ such that the  natural group homomorphism  $\pi^\times: A^\times \rightarrow (A/\gothm)^\times$ $($which is the restriction of  the canonical surjective map $\pi:A\rightarrow A/\gothm\,) $ is surjective. In~particular, if $A$ is an Euclidean domain with $A^\times =K^\times\!\!$, then $\KSpec A\neq \emptyset$.
\end{proposition}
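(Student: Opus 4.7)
The plan is to exploit a non-zero non-unit of minimal Euclidean value. Let $\delta:A\!\smallsetminus\!\{0\}\to\N$ be a Euclidean function on $A$. The trivial case is when $A$ itself is a field (including the possibility $A=K$), where $\gothm=\langle 0\rangle$ already works and the second claim is immediate. So assume $A$ is not a field, and choose $b\in A\!\smallsetminus\!(A^{\times}\cup\{0\})$ with $\delta(b)$ minimal among non-zero non-units.

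First I would show that $\gothm:=\langle b\rangle$ is a maximal ideal in $A$, by proving $A/\gothm$ is a field. Given any $a\in A$ with $a\notin\gothm$, use division with remainder to write $a=qb+r$ with $r=0$ or $\delta(r)<\delta(b)$. Since $a\notin\langle b\rangle$, we must have $r\neq 0$, so $\delta(r)<\delta(b)$. By the minimality assumption on $\delta(b)$, the element $r$ cannot be a non-zero non-unit, hence $r\in A^{\times}$. Therefore the class $\overline{a}=\overline{r}$ is the image of a unit of $A$, so every non-zero element of $A/\gothm$ lies in $\pi(A^{\times})$. In particular $A/\gothm$ is a field, showing $\gothm\in\Spm A$, and at the same time $(A/\gothm)^{\times}=\pi(A^{\times})$, which is exactly the surjectivity of $\pi^{\times}:A^{\times}\longrightarrow (A/\gothm)^{\times}$.

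For the ``in particular'' part, suppose further that $A^{\times}=K^{\times}$. Then $\pi(K^{\times})=(A/\gothm)^{\times}$ by what we just proved, and since $\pi(0)=0$, we get $\pi(K)=(A/\gothm)^{\times}\cup\{0\}=A/\gothm$. The restriction $K\hookrightarrow A\xrightarrow{\pi}A/\gothm$ is injective (its kernel $K\cap\gothm$ is a proper ideal of the field $K$, hence $0$), and we have just observed that it is surjective. Hence the composition is a $K$-algebra isomorphism $K\iso A/\gothm$, which means $\gothm\in\KSpec A$, so $\KSpec A\neq\emptyset$.

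The only slightly delicate point is the verification that $\gothm=\langle b\rangle$ is \emph{maximal}, not merely that $A/\gothm$ is small: this is where the minimality of $\delta(b)$ among non-zero non-units is crucial, since it forces every nonzero remainder to be a unit in $A$, and this is precisely what upgrades ``$\gothm$ is a proper ideal'' to ``$A/\gothm$ is a field'' while simultaneously delivering the surjectivity of $\pi^{\times}$ for free.
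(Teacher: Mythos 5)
Your proof is correct, and it takes a genuinely different and in fact cleaner route than the paper's. The paper works with the so-called \emph{minimal} Euclidean function (a nontrivial fact recalled in \ref{mypar:2.14}\,(4), with the property that $\delta(gh)\geq\delta(g)$ and $\delta(gh)=\delta(g)\Leftrightarrow h\in A^\times$), uses it to show the chosen element $f$ of minimal $\delta$-value among nonzero non-units is irreducible, then invokes the PID chain (Euclidean $\Rightarrow$ PID, irreducible $\Rightarrow$ prime, nonzero prime $\Rightarrow$ maximal) to get $\gothm=Af\in\Spm A$, and finally runs a \emph{separate} division-with-remainder argument to get surjectivity of $\pi^\times$. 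You skip the minimal-Euclidean-function machinery and the irreducibility/PID detour entirely: a single division-with-remainder argument shows at once that every nonzero class in $A/\langle b\rangle$ is the image of a unit of $A$, which simultaneously proves $A/\langle b\rangle$ is a field (so $\gothm=\langle b\rangle$ is maximal) \emph{and} that $\pi^\times$ is surjective. That is a genuine economy: it needs only an arbitrary Euclidean function and no structural facts about PIDs or UFDs. You also treat the degenerate case where $A$ has no nonzero non-units ($A$ a field), which the paper's choice of $f$ tacitly assumes away; explicitly, that case forces $A=K$ once $A^\times=K^\times$, and your handling is correct. Altogether the proposal is sound and, arguably, a better-organized proof of the same statement.
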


\begin{proof}
Suppose that $A$ is an Euclidean domain and that $\delta \colon A\setminus\{0\} \rightarrow \N$ is a minimal Euclidean function on $A$. Then choose an element $f \in A$ such that $\delta(f):=\Min \{ \delta(a)\mid 0\neq a\in \left(A\!\smallsetminus\!A^\times\right)\,\}$. Such an element $f$ exists, since the ordered set $(\N,\leq)$ where $\leq$  is the usual order on $\N$, is well ordered.
We claim that $f$ is irreducible. For, if $f=gh$ with $g,h \in A$,  then
$\delta(f)=\delta(gh) \ge \delta(g)$. In the case $\delta(f) > \delta(g)$,  $g \in A^\times$ by the minimality of $\delta(f)$. In the case $\delta(gh)=\delta(f)=\delta(g)$,  $h\in A^\times$.  Therefore, $\gothm=Af$ is a non-zero prime ideal and hence a maximal ideal in $A$.
To prove that $\pi^{\times}: A^\times \to (A/\gothm)^\times$  is surjective, let  $z\in (A/\gothm)^\times$. Then  $z=\pi(g)$ for some $g\in A$ and $g\notin\gothm$. Use the Euclidean function $\delta$ to  write $g=fq+r$ with $q,r \in A$ and either $r=0$ or $\delta(r)<\delta(f)$.  Since $z\neq 0$, i.\,e. $g\not\in\gothm$,  we must have  $r\neq 0$ and hence $\delta(r)<\delta(f)$. But, then by the minimality of $\delta(f)$,  $r\in A^\times$ and $z=\pi(g)=\pi(r)=\pi^{\times}(r)$.
\end{proof}

We can reformulate the above Proposition in the language of  algebraic
geometry as\,:

\begin{corollary}\label{cor:2.16}\,  Let ${\cal C}$ be an affine algebraic irreducible
curve over a field $K$. If ${\cal C}$ has no $K$-rational points and the unit group of the coordinate ring $K[{\cal C}]$ of $\,{\cal C}$ is $K^\times\!\!\!$, then  $K[{\cal C}]$ is not  a Euclidean domain.
\end{corollary}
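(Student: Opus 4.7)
The plan is to recognize this corollary as essentially a dictionary-translation of Proposition~\ref{prop:2.15}, where the geometric hypotheses are unpacked into the algebraic hypotheses of the Proposition. The proof will proceed by contradiction and invoke Proposition~\ref{prop:2.15} directly.

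First, I would set up the algebraic side. Since $\mathcal{C}$ is an affine algebraic irreducible curve over $K$, its vanishing ideal $\mathrm{I}_K(\mathcal{C})$ is prime, so the coordinate ring $A := K[\mathcal{C}] = K[X_1,\ldots,X_n]/\mathrm{I}_K(\mathcal{C})$ is an affine integral domain over $K$. The hypothesis $A^{\times} = K^{\times}$ is already stated, and the remaining hypothesis\,---\,that $\mathcal{C}$ has no $K$-rational points\,---\,will be translated using the identification recalled in \ref{mypar:2.5}\,(a): there is a bijection $\mathcal{C}(K) \;\leftrightarrow\; \Hom_{\Kalg}(A,K) = \KSpec A$, sending a $K$-rational point $a \in \mathcal{C}(K)$ to the evaluation map $\xi_a : \overline{f}\mapsto f(a)$, or equivalently to the maximal ideal $\gothm_a = \Ker \xi_a \in \Spm A$ with $A/\gothm_a = K$. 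Thus the assumption that $\mathcal{C}$ has no $K$-rational points is precisely $\KSpec A = \emptyset$.

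Next, I would argue by contradiction. Suppose $A = K[\mathcal{C}]$ is a Euclidean domain. Then Proposition~\ref{prop:2.15} applies: since $A^{\times} = K^{\times}$, its ``in particular'' clause yields $\KSpec A \neq \emptyset$. By the identification above, this forces the existence of at least one $K$-rational point of $\mathcal{C}$, contradicting the hypothesis. Hence $A$ cannot be a Euclidean domain.

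There is really no obstacle to speak of, since the geometric-algebraic dictionary has already been set up in Sections~\ref{mypar:2.1} and~\ref{mypar:2.5}; the only point that requires a line of care is the translation ``no $K$-rational point on $\mathcal{C}$'' $\iff$ ``$\KSpec K[\mathcal{C}] = \emptyset$'', which follows from the bijection $\mathcal{C}(K) \leftrightarrow \Hom_{\Kalg}(K[\mathcal{C}],K)$. Proposition~\ref{prop:2.15} then does all the work. It is worth noting that Example~\ref{mypar:2.14}\,(5)\,---\,the circle ${\rm K} = \R[X,Y]/(X^2+Y^2+1)$ after replacing the sign (or any of the ellipses ${\rm L}_{b,c}$ with $b,c > 0$)\,---\,furnishes a concrete illustration: these are irreducible affine curves over $\R$ with no $\R$-rational points and unit group $\R^{\times}$, so the corollary yields the non-Euclidean property for free, complementing the norm-based argument given in \ref{mypar:2.14}.
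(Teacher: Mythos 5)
Your proposal is correct and matches the paper's approach exactly: the paper presents Corollary~\ref{cor:2.16} as a direct reformulation of Proposition~\ref{prop:2.15} in geometric language, which is precisely the translation you carry out, identifying ``no $K$-rational points on $\mathcal{C}$'' with ``$\KSpec K[\mathcal{C}] = \emptyset$'' and invoking the contrapositive of the Proposition's ``in particular'' clause. Nothing is missing.
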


For the $\R$-affine domains $\,{\rm H}$ and ${\rm K}$ in Examples\,\ref{mypar:2.14}\,(3) (b), the assumptions in Proposition\,\ref{prop:2.15} are not satisfied, but
${\rm H}$ is a Euclidean domain and ${\rm K}$ is not a  Euclidean
domain, in fact,  not even a PID,  see\,\ref{mypar:2.14}\,(5).

\begin{corollary}\label{cor:2.17}\,
Let $\varphi(X)\in \R[X]$ be a non-constant polynomial with $\varphi(\alpha)>0$ for every
$\alpha\in \R$ and let $\Phi:=Y^2+\varphi(X)\in
\R[X,Y]$. Then the affine domain $A:=\R[X,Y]/(\Phi)$ is not a
Euclidean domain. In~particular, ${\rm L}_{b,c}=\R[X,Y]/(Y^2+bX^2+c)$ with $b$, $c\in\R$, $b\!>\!0$, $c\!>\!0$  is not  a Euclidean domain.
\end{corollary}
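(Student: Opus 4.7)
The plan is to deduce Corollary~\ref{cor:2.17} from Corollary~\ref{cor:2.16} (equivalently Proposition~\ref{prop:2.15}). For this, three items must be checked about $A=\R[X,Y]/(\Phi)$: (i)~$A$ is an affine domain over $\R$; (ii)~$A^\times=\R^\times$; (iii)~$A$ has no $\R$-rational points, i.e. $\RSpec A=\emptyset$. Once these are in hand, Proposition~\ref{prop:2.15} gives at once that $A$ cannot be Euclidean, since a Euclidean affine domain with unit group equal to $\R^\times$ must have a non-empty $\R$-spectrum.

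For (i), I would show that $\Phi=Y^2+\varphi(X)$ is irreducible in $\R[X][Y]$. It is monic of degree $2$ in $Y$, so any non-trivial factorization takes the form $(Y-a(X))(Y+a(X))$ with $a(X)\in\R[X]$, which would force $\varphi(X)=-a(X)^2$. Evaluating at any $\alpha\in\R$ gives $\varphi(\alpha)=-a(\alpha)^2\leq 0$, contradicting the hypothesis $\varphi>0$ on $\R$. Hence $\Phi$ is irreducible (equivalently prime, as $\R[X,Y]$ is a UFD), and $A$ is an affine domain. For (ii), note that since $\varphi$ is non-constant and $\varphi(\alpha)>0$ for every $\alpha\in\R$, $\deg\varphi$ must be even and the leading coefficient of $\varphi$ must be positive (otherwise $\varphi(\alpha)\to -\infty$ as $|\alpha|\to\infty$). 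Thus the norm-map computation of Example~\ref{mypar:2.14}(3)(a) applies verbatim: writing $A=\R[X]\oplus\R[X]\,y$ with $y^2=-\varphi(X)$ and $\mathrm{N}(F+Gy)=F^2+G^2\varphi$, positivity of the leading coefficient forces $G=0$ and $F\in\R^\times$, whence $A^\times=\R^\times$. For (iii), an $\R$-rational point corresponds to a pair $(\alpha,\beta)\in\R^2$ with $\beta^2+\varphi(\alpha)=0$; but $\beta^2\geq 0$ and $\varphi(\alpha)>0$ give $\beta^2+\varphi(\alpha)>0$, a contradiction. Hence $\RSpec A=\emptyset$, and Proposition~\ref{prop:2.15} yields the claim.

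The ``in particular'' assertion about $\mathrm{L}_{b,c}=\R[X,Y]/(Y^2+bX^2+c)$ with $b,c>0$ is then immediate by specialization: $\varphi(X)=bX^2+c$ is non-constant and satisfies $\varphi(\alpha)=b\alpha^2+c\geq c>0$ for every $\alpha\in\R$, so the general result applies. I do not foresee any genuine obstacle here: essentially all the work has already been done earlier in the text, and the proof reduces to a clean verification of the three hypotheses of Proposition~\ref{prop:2.15}. The only conceptually non-trivial point is the irreducibility step, but the monic degree-$2$-in-$Y$ structure makes that immediate.
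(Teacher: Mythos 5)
Your proof is correct and takes essentially the same route as the paper: verify $A^\times=\R^\times$ via \ref{mypar:2.14}\,(3)\,(a) and $\RSpec A=\emptyset$ from $\varphi>0$ on $\R$, then apply Corollary~\ref{cor:2.16}. Your one small improvement is that you explicitly check the hypothesis of \ref{mypar:2.14}\,(3)\,(a) (positive leading coefficient) follows from $\varphi$ being non-constant and everywhere positive, a step the paper leaves implicit.
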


\begin{proof}
Note that  $A^\times= \R^\times$ by \ref{mypar:2.14}\,(3)\,(a) and
$\RSpec A = \{(\alpha, \beta) \in\R^2\mid \Phi(\alpha, \beta) =0\} =\emptyset\,$ by the assumption on $\varphi$.
Therefore $A$ can not be a Euclidean domain by Corollary\,\ref{cor:2.16}.
\end{proof}

In the following theorem, we give a criterion for the affine $\R$-domain  ${\rm L}_{b\,,\,c}$ to be  a principal ideal domain, see\,\ref{mypar:2.14}\,(3)\,(b).

\begin{theorem}\label{thm:2.18}\,
Let $b$, $c\in\R$, $b\!>\!0$ and  $c\!\neq\! 0$. Then the affine domain ${\rm L}_{b\,,\,c}\!:=\!\R[X,Y]/\langle Y^2\!+bX^2\!+c\rangle$ over $\R$ is a principal ideal domain if and only if    $c>0$.
\end{theorem}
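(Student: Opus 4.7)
The plan is to prove the two implications separately, in both cases using the norm map $N \colon L_{b,c} \to \R[X]$, $\varphi + \psi y \mapsto \varphi^{2} + (bX^{2} + c)\psi^{2}$, together with $L_{b,c}^{\times} = \R^{\times}$; both facts come from \ref{mypar:2.14}\,(3).

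For the ``only if'' direction, suppose $c < 0$. Then $a := \sqrt{-c/b} \in \R$ and $(a,0)$ lies on the curve, so $\gothm := \langle x - a,\,y\rangle$ is a maximal ideal in $L_{b,c}$. Mimicking the argument of \ref{mypar:2.14}\,(5), I would assume $\gothm = \langle f\rangle$ with $f = \varphi + \psi y$ and observe that $N(f)$ divides both $N(x-a) = (X-a)^{2}$ and $N(y) = b(X-a)(X+a)$, hence their gcd $X - a$. Since $b > 0$, the leading coefficients of $\varphi^{2}$ and $(bX^{2}+c)\psi^{2}$ are both non-negative and not simultaneously zero when $f \neq 0$, so they cannot cancel in the sum. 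Therefore $N(f)$ has even degree; but dividing $X - a$ forces $N(f) \in \R^{\times}$, and hence $f \in L_{b,c}^{\times}$, contradicting that $\gothm$ is a proper ideal.

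For the ``if'' direction, suppose $c > 0$. First, $L_{b,c}$ is a Dedekind domain: the Jacobian $(2bX,\,2Y)$ of the defining polynomial vanishes only at $(0,0)$, which is not on the curve because $c \neq 0$, so $L_{b,c}$ is a regular one-dimensional noetherian domain. It then suffices to show every maximal ideal is principal. Since $L_{b,c}$ is integral over $\R[X]$, any maximal $\gothm$ contracts to $\langle p(X)\rangle$ with $p \in \R[X]$ monic irreducible. If $p = X - a$ with $a \in \R$, then $L_{b,c}/\langle p\rangle \cong \R[Y]/(Y^{2} + ba^{2} + c) \cong \C$ is a field (because $ba^{2} + c > 0$), so $\gothm = \langle X - a\rangle$ is already principal. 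If $\deg p = 2$, algebraic closedness of $\R[X]/\langle p\rangle \cong \C$ produces $\eta \in \R[X]$ with $\deg\eta \leq 1$ and $\eta^{2} \equiv -(bX^{2} + c) \pmod{p}$. Writing $\eta = \mu X + \nu$, a degree count shows $\eta^{2} + bX^{2} + c = (\mu^{2} + b)\,p$, and therefore
\[
(y - \eta)(y + \eta) \;=\; y^{2} - \eta^{2} \;=\; -(bX^{2} + c + \eta^{2}) \;=\; -(\mu^{2}+b)\,p,
\]
which puts $p$ inside $\langle y - \eta\rangle$ (and, symmetrically, inside $\langle y + \eta\rangle$). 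If $\eta = 0$ this forces $p = X^{2} + c/b$ and the unique maximal ideal above $p$ collapses to $\langle y\rangle$; otherwise the two maximal ideals above $p$ are the principal ideals $\langle y - \eta\rangle$ and $\langle y + \eta\rangle$. Once every maximal ideal is principal, Dedekindness gives that $L_{b,c}$ is a PID.

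The hard step is the quadratic case in the $c > 0$ direction: one must first produce the ``square root'' $\eta$ modulo $p$, which is possible precisely because $\R[X]/\langle p\rangle \cong \C$ is algebraically closed, and then use the factorisation $(y - \eta)(y + \eta) = -(\mu^{2}+b)\,p$ to compress the two-generator presentation $\gothm = \langle p,\,y - \eta\rangle$ into the single generator $y - \eta$.
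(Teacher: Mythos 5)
Your proof is correct, but it follows a genuinely different route from the paper's. The paper handles the $c>0$ direction by a Galois-descent argument: it base-changes to $B = \C\otimes_{\R}L_{1,1} \cong \C[T,T^{-1}]$, which is visibly a PID, and then uses the conjugation $\sigma$ (and the explicit description $B^{\times}=\{\lambda(x+\mathrm{i}y)^{n}\}$) to modify a generator $g$ of the extended ideal $\gothA B$ into a $\sigma$-invariant generator $f\in A$, yielding $\gothA = Bf\cap A = Af$. You instead stay entirely inside $L_{b,c}$: you invoke regularity via the Jacobian criterion to get a Dedekind domain, reduce the PID question to showing every maximal ideal is principal, and then classify the maximal ideals over each monic irreducible $p\in\R[X]$ by extracting a square root $\eta$ of $-(bX^{2}+c)$ modulo $p$ (possible exactly because $\R[X]/\langle p\rangle\cong\C$), producing the explicit generator $y\pm\eta$. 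The trade-off is clear: the paper's descent argument handles all ideals at once and uses nothing beyond free module and unit computations, but relies on a clever $\sigma$-equivariant correction of the generator; your argument is more local and more classically number-theoretic (classifying primes above a base prime, as one would in a quadratic ring of integers), but pays for it by importing the Dedekind-domain framework (regular + Noetherian + dimension $1$ $\Rightarrow$ ideals factor into primes) which the paper never touches. Your ``only if'' direction is essentially the paper's argument in \ref{mypar:2.14}\,(5), suitably generalized, so there you and the paper agree.

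One small stylistic remark: when you conclude that $\gothm$ above a quadratic $p$ equals $\langle y-\eta\rangle$ or $\langle y+\eta\rangle$, it is worth stating explicitly that $L_{b,c}/\langle y\mp\eta\rangle \cong \R[X]/\langle(\mu^{2}+b)p\rangle\cong\C$, so these ideals are genuinely maximal and therefore cannot be properly contained in $\gothm$; as written you use this implicitly. It is true, and the degree count you already did supplies it, so this is a matter of presentation rather than a gap.
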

\begin{proof}
By replacing $X$ by $\sqrt{|c|/b}\,X$ and $Y$ by $\sqrt{|c|}\,Y$,  it follows that
$\,{\rm L}_{b\,,\,c} \cong
\begin{cases}
{\rm L}_{1\,,\, 1} & \text{if} \   \ c>0,\cr
{\rm L}_{1\,,\, -1} & \text{if} \   \ c<0,\cr
\end{cases}\,$ as $\R$-algebras and hence
 we may assume that $b=1$ and $c=\pm 1$.   Since ${\rm L}_{1\,,\,-1}$ is not a principal ideal domain by \ref{mypar:2.14}\,(5), it is enough to prove that $A:={\rm L}_{1\,,\,1}$ is a principal ideal domain.  Note that $B:= \C\otimes_{\R} A=\C[X,Y]/\langle X^{2}+Y^{2}+1\rangle \iso \C[U,V]/\langle UV-1\rangle \cong \C[T, T^{-1}]$ is a principal ideal domain and that $B$ is a free $A$-algebra with basis $1$, ${\rm i}$, where ${\rm i}\in\C$ with ${\rm i}^{2}+1=0$.  Let $x$ $,y\in B$ denote the images of $X$, $Y$ in $B$ respectively and let $\sigma:B\to B$, ${\rm i}\mapsto -{\rm i}$, denote the conjugation automorphism of $B$ over $A$. Then  $\sigma^2\!\!=\!\id_B$ and
$(x+{\rm i}y)\cdot\sigma(x+{\rm i}y)\!=\!(x+{\rm i}y)(x-{\rm i}y)\!=\!-1$,  \hbox{in~particular, $\sigma(x+{\rm i}y)\!=$} $-(x+{\rm i}y)^{-1}\!$. Further, an element $f \in B$ belongs to $A$ if and only if $\sigma(f)\!=\!f$. Moreover, $B^\times\!\!= \!\{ \lambda(x+iy)^n \mid \lambda \in \C^\times  \  \text{and} \ n \in \Z \}$.
\smallskip

Let $\gothA$ be any ideal in  $A$. To show that $\gothA$ is principal, we may assume that $\gothA\neq 0$ and $\gothA\neq A$.  Since $B$ is a PID,  the ideal $\gothA B\,(\neq 0)$ generated by $\gothA$ in $B$ is principal. We claim that there exists $f\in A$ such that $\gothA B=B f$. First choose  $g \in B$, $g\neq 0$  such that $\gothA B= Bg$.  Since $B\sigma(g) = \sigma(Bg) = \sigma(\gothA B) = \sigma(\gothA) B = \gothA B = Bg$  and since  $B$ is an integral domain, there exists a unit $u \in B^\times $ such that $\sigma(g)=u\cdot g$. Further, since $\sigma^{2}=\id_{B}$ and $g\neq 0$, we have $u\cdot \sigma(u)=1$. Therefore
$u = \lambda (x+{\rm i}y)^n$ for some $(\lambda, n) \in \C^\times \times \Z$ and
\[ 1 = u\cdot \sigma(u)  =  \lambda (x+{\rm i}y)^{n}\cdot  \sigma (\lambda) (-1)^{n}  (x+{\rm i}y)^{-n }= (-1)^{n|}\,|\lambda |^{2}. \] 
This proves that $n$  is even  and $|\lambda|^{2}=1$,  i.\,e.  $n=2m$ and $\lambda=e^{{\rm i}t}$ with $m\in \Z$ and $t \in \R$.

Now, put $f:={\rm i}^{m} e^{{\rm i}t/2} (x+{\rm i}y)^{m}\cdot g$. Then  $\gothA B=Bg=Bf$. To show that  $f\in A$, it is enough to prove that $\sigma(f)=f$. We have
\begin{align*}
\sigma(f) = (-{\rm i})^{m} e^{-{\rm i}t/2} (x-{\rm i}y)^{m} \cdot  \sigma(g)
& =  (-{\rm i})^{m} e^{-{\rm i}t/2} \cdot  (x-{\rm i}y)^{m} \cdot u \cdot g \\
&	 = (-{\rm i})^{m} e^{-{\rm i}t/2} (x-{\rm i}y)^{m} \cdot e^{{\rm i}t} (x+{\rm i}y)^{2m}\cdot   g \\
	&  = (-{\rm i})^m e^{it/2} (x-{\rm i}y)^{m} (x+{\rm i}y)^{m}  (x+{\rm i}y)^{m}\cdot g \\
& = (-{\rm i})^m (-1)^{m} e^{it/2}   (x+{\rm i}y)^{m}\cdot g
={\rm i}^{m} e^{it/2}   (x+{\rm i}y)^{m}\cdot g =
f\,.
\end{align*}
Therefore, since $B$ is a free $A$-module with basis $1, {\rm i}$, it follows that   $\gothA = \gothA B \cap A = Bf \cap A= Af$   is a principal ideal.
\end{proof}

Finally, we come to a class of affine domains over $\R$ which are principal ideal domains, but not Euclidean domains\,:

\begin{theorem}\label{thm:2.19}\,
Let $b$, $c\in\R$ with $b>0$ and $c>0$ and let $\Phi:=Y^2+bX^2+c\in
\R[X,Y]$. Then the affine $\R$-domain ${\rm L}_{b\,,\,c}:=\R[X,Y]/\langle \Phi\rangle$ is a principal ideal domain and is not an Euclidean domain.
\end{theorem}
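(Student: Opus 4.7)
The theorem is essentially an immediate corollary of results already proved in the excerpt, so my plan is to simply combine Theorem~\ref{thm:2.18} and Corollary~\ref{cor:2.17}.

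First, I would invoke Theorem~\ref{thm:2.18}, which was stated for $b>0$ and $c\neq 0$ and asserts that $\mathrm{L}_{b\,,\,c}$ is a principal ideal domain precisely when $c>0$. Under the hypotheses $b>0$ and $c>0$ of the current theorem, this yields the PID part of the conclusion directly, without any additional work.

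Next, to see that $\mathrm{L}_{b\,,\,c}$ is not a Euclidean domain, I would apply Corollary~\ref{cor:2.17}. Taking $\varphi(X):=bX^2+c\in\R[X]$, we have $\varphi(\alpha)=b\alpha^{2}+c>0$ for every $\alpha\in\R$ since $b>0$ and $c>0$, so $\varphi$ is a non-constant polynomial strictly positive on $\R$. The corollary then applies verbatim to $\Phi=Y^2+\varphi(X)$ and gives that $\mathrm{L}_{b\,,\,c}=\R[X,Y]/\langle\Phi\rangle$ is not a Euclidean domain.

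There is no real obstacle here: the substantive content (the PID construction via the conjugation trick in $B=\C\otimes_{\R}\mathrm{L}_{b\,,\,c}$ and the non-Euclideanness via the norm computation plus the emptiness of $\RSpec$) has already been carried out. The only thing to emphasize in the write-up is that the two results apply under \emph{exactly} the hypotheses assumed, so combining them produces both halves of the statement simultaneously, thereby furnishing a concrete and explicit infinite family of principal ideal domains which fail to be Euclidean.
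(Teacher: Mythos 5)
Your proof is correct and is essentially the same as the paper's: the PID half follows from Theorem~\ref{thm:2.18} applied with $b>0$, $c>0$, and the failure of Euclideanness follows from Corollary~\ref{cor:2.17} with $\varphi(X)=bX^2+c$. Nothing more is needed.
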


\begin{proof}
By Theorem~\ref{thm:2.18} ${\rm L}_{b\,,\,c}$ is a principal ideal domain and by Corollary\,\ref{cor:2.17} ${\rm L}_{b\,,\,c}$ is not a Euclidean domain.
\end{proof}

\section{Real Nullstellensatz}

Real Algebra\,---\,the study of ``real objects''  such as real rings (resp.  real varieties)  in the  category of rings (resp. real algebraic varieties)\,---\,has attracted considerable interest because of its use in the development of algebraic geometry over the field $\R$ of real numbers,  more generally, over a real closed field.  Real algebra plays a role analogous to the one played by commutative algebra in the development of classical (and abstract) algebraic geometry. Therefore, real algebra has many applications to geometric problems.
\smallskip

In the category of fields, the real objects, namely, the formally real fields have been studied by \'Emil Artin and Otto Schreier. They recognized that formally real fields are precisely fields which can be ordered. The idea of exploiting the orders in a real field played a central role in Artin's solution to Hilbert's 17th problem.
\smallskip

The Real Nullstellensatz has a weak version and a strong version which are similar to the corresponding versions of the classical HNS for algebraically closed fields. In this section we present the proofs of both the versions assuming the Artin-Lang homomorphism theorem.

{\small
\begin{mypar}\label{mypar:3.1} {\bf Notation and Preliminaries}\,
 In the category of commutative rings, two notions ``semi-real'' and ``real'' of  ``reality'' play an important role. We recall these concepts and basic results concerning them. For details the reader is recommended to see  N. Jacobson~\cite{jacobson2} or an article by T.\,Y.\,Lam~\cite{lam}

\begin{arlist}

\item {\bf Reality}\,
Let $A$ be a (commutative) ring. The set $\,\{ a_1^2+\cdots +a_n^2 \mid n \ge 1,a_1,\dots,a_n \in A \}\,$ of {\it sums of squares} in $A$ is denoted by $\sum A^2$.
It is a {\it semiring}\,\footnote{\label{foot:9}A {\it semiring} is an algebraic structure $(R,+,\cdot)$  similar to a ring, except that $(R,+)$ is a commutative monoid and is not necessarily an abelian group. A motivating example of a semiring is the set of natural numbers $\N$ with usual addition and multiplication. Similarly, the sets $\Q_{\geq 0}$ and $\R_{\geq 0}$ of the non-negative rational numbers and the non-negative real numbers, respectively, form semirings.} contained in $A$.\,
A ring $A$ is called {\it semi-real} if $-1 \notin \sum A^2$.  If $A$ is not semi-real, then there exists $a_1,\dots,a_n \in A$ with $1\!+\!a_1^2\!+\!\cdots\!+\!a_n^2\!=\!0\,;$ in this case, we say that $A$ is {\it unreal.}\,   A ring $A$ is called {\it (formally) real} if for all $a_1,\dots,a_n \in A$,  $a_1^2\!+\!\cdots\!+\!a_n^2\!=\!0$ implies $a_1\!=\!\cdots\!=\!a_n\!=\!0$.\,
\smallskip

We can also define  these notions of reality for ideals in $A$.\,
An ideal $\gotha \subseteq A$ is called {\it semi\hbox{-}real} (resp. {\it real}) if the residue class ring $A/\gotha$ is semi-real (resp. real).
\smallskip

The zero ring is real but not semi-real. A nonzero real ring is a semi-real.
If a non-unit ideal $\gotha$ in a ring $A$ is real, then it is also semi-real.
The characteristic of  a real field is  $0$.
The two notions of reality for fields and maximal ideals  coincide. A field is semi-real if and only if it is real. Similarly, for every maximal ideal $\gothm$, $A/\gothm$ is semi-real if and only if it is real.
\smallskip

The ring $\R[X,Y]/(X^2+Y^2)$ is semi-real but not real. More generally,  $\R[X_1,\dots,X_n]/(X_1^2+\cdots+X_n^2)$ is semi-real ring but not real.\,
The ideal $\gotha =\langle X^2-2\rangle$ in the ring $\Q[X]$ is a real prime ideal, since  $\Q[X]/\gotha \simeq \Q[\sqrt{2}]$ is a real field.\,
If $A$ is real, then $A$ is reduced and every subring is also real.\,
An integral domain $A$ is real if and only if its quotient field ${\rm Q}(A)$ is real.\,
\smallskip

For a local ring,  it is convenient to introduce the third notion of reality\,:  a local ring $(A,\gothm_{A})$ is called {\it residually real} if the maximal ideal $\gothm_A{}$ is real, i.\,e. if the residue field $A/\gothm_{A}$ is a formally real field. Note that if $(A,\gothm_{A})$ is semi-real or even real, then it does not follow that $(A,\gothm_{A})$ is residually real. For example, the local ring $\Z_{(p)}$, where $p$ is a prime number,  is not residually real. On the other hand, if a local ring $(A,\gothm_A)$ is residually real, then it is semi-real, but not necessarily real. For example, the localization of the ring $\R[X_1,\dots,X_n]/\langle X_1^2+\cdots+X_n^2\rangle =\R[x_{1},\ldots , x_{n}]$ at the maximal ideal $\gothm:=\langle x_{1},\ldots , x_{n}\rangle$ is  residually real and  semi-real, but not real.
\medskip

For convenience we note the following observations for future reference (for proofs see\,\cite{lam})\,:
\smallskip

{\bf (i)}\, If $\varphi:A \rightarrow B$ is a ring homomorphism and if $B$ is semi-real, then $A$ is also semi-real. Moreover, if $\varphi$ is injective and if $B$ is real, then $A$ is also real. In~particular, an integral domain $A$ is real if and only if its quotient field ${\rm Q}(A)$ is real.\,
\vskip2pt

{\bf (ii)}\, If $\varphi:A \rightarrow B$ is a ring homomorphism and if $\gothb\subseteq B$ is semi-real (resp. real) ideal, then $\varphi^{-1}(\gothb)$ is also semi-real (resp. real). If $\varphi$ is surjective, then an ideal $\gothb\subseteq B$ is semi-real (resp. real) if and only if $\varphi^{-1}(\gothb)$ is semi-real (resp. real).\,
 \vskip2pt

{\bf (iii)}\, A direct product $A=A_{1}\times \cdots \times A_{r}$ (with $A_{i}\neq 0$) is semi-real (resp. real) if and only if all factors $A_{1}, \ldots , A_{r}$ are semi-real (resp. real).\,
 \vskip2pt

{\bf (iv)}\, A valuation ring $R$ is real if and only if it is semi-real.\,
 \vskip2pt

{\bf (v)}\, Let $S\subseteq A$ be a multiplicatively closed subset in $A$ with $0\!\not\in\! S$ (so that  $S^{-1}\!A \!\neq\!0$). Then the implications\,:
\[ A \text{ real } \Rightarrow S^{-1}A \text{ real } \Rightarrow S^{-1}A \text{ semi-real } \Rightarrow A \text{ semi-real} \]
hold, but, in general, the reverse implications do not hold.
\vskip3pt

{\bf (vi)}\, If $A$ is a  regular local ring which is residually real, then $A$ is real.
\smallskip

\item {\bf Artin-Schreier theory for fields}\,
In 1927 Artin-Schreier discovered that for fields there is a connection between the notion of formal reality and the existence of orders\,\footnote{\label{foot:10}Recall that a field $K$ together with an order $\leq$ on  $K$ is called an {\it ordered field} if\,  (i) $\leq$ is a total order on $K$.\, (ii) The monotonicity of addition and multiplication holds, i.\,e. for all $a$, $b$, $c\in K$, the implications\,:
$a\leq b$ $\Rightarrow$ $a+c\leq b+c$ and
$a\leq b$ and $0\leq c$  $\Rightarrow$ $ac\leq bc$.\, Sometimes we also write that $K$ has a field order $\leq$ if $(K,\leq)$ is an ordered field. On a field $K$ there may be  many field orders.}.\,
\smallskip

\begin{alist}

\item {\bf Theorem}\,\so{(Artin-Schreier):}\,
{\it A field $K$ is real if and only if there is an order $\leq$ on $K$ such that $(K,\leq)$ is an ordered field.}
\smallskip

For a proof, one considers the set of {\it preorders}\,\footnote{\label{foot:11}Let $K$ be a field. A subset $T\subseteq K$ is called a {\it preorder} on $K$ if $T+T\subseteq T$, $T\cdot T\subseteq T$, $\{a^{2}\mid a\in K\} \subseteq T$ and $-1\not\in T$. Note that  if $T$ is a preorder on  $K$, then $T\cap -T =\{0\}$. A preorder on $K$ is an order if and only if $T\cup -T=K$.}  on $K$ and proves (by using Zorn's Lemma) that it has a maximal element (with respect to the natural inclusion). Finally, note that maximal preorders on a field $K$ are orders on $K$.
\smallskip

More generally, we have\,:
\smallskip

\item {\bf Theorem}\,\so{(Artin-Schreier criterion for sums of squares)}\,
{\it Let $K$ be a field of characteristic $\neq 2$ and $a\in K$. Then the following statements are equivalent\,$:$\,
{\rm (i)} $\,a\in \sum K^{2}$.\,
{\rm (ii)} $a$ is a totally positive element, {\rm i.\,e.} $a$ is positive for any field order on $K$.\,
In~particular, if $K$ has no field order, then every element of $K$ is sum of squares.}
\end{alist}

\item {\bf Real closed fields}\,
Let $K$ be a field. Then we say that
 $K$ is  {\it real closed} if it is real and if it has  no nontrivial real algebraic extension $L\,\vert\, K,$ $L \neq K$.  For example, the field $\R$ of real numbers is real closed. The algebraic closure of $\Q$ in $\R$ is real closed. The field $\Q$  is  real, but not real closed.
 \smallskip

 We list some basic results on  real and real closed fields without proofs\,:

 \begin{alist}
 \item
 Let $K$ be a real closed field. Then\,:\,
 (i)\, Every polynomial $f\in K[X]$ of odd degree has a zero in $K$.\,
 (ii)\, $K$ has exactly one field order. (this order is called the {\it unique order of the real closed field $K$}).\,
(iii) The set $K^{2}:=\{a^{2}\mid a\in K\}$ of squares in $K$ is a field order on $K$.

\item
Let $K$ be a real-closed field and $a\in K$ be a positive element in $K$. Then $a$ has a unique positive  square root in $K$ which is denoted by $\sqrt{a}$.

\item
Let $K$ be a real field. Then there exists an algebraic extension $L\,\vert\,K$ such that $L$ is a real closed field. Such a field $L$ is called a {\it real closure} of $K$.\\[1.5mm]
{\bf Remark\,:} If $L\,\vert \,K$ and $L'\,\vert\,K$ be two real closures of a real field  $K$, then it is not necessary that  $L\,\vert \,K$ and $L'\,\vert\,K$  are isomorphic. For an example, it is enough to note that there are fields with at least two field orders. The subfield $\Q(\sqrt{2})$ of $\R$ has exactly two field orders.\,---\,This is a special case of a much deeper result\,: An algebraic number field $L$ has at most $[L:\Q]$ distinct orders and the signature of its trace form is $\geq 0$.  If $L\,\vert\,K$ is a finite field extension  of an ordered field $(K,\leq)$ and if $\leq$ can be extended to a unique order on $L$, then $[L:K]$ is odd. In~particular, every field order on $K$ can be extended to a field order on $L$.

\item
Let $(K,\leq )$ be an ordered field. Then\,:\,
\vskip3pt

{\bf (i)}\, \so{(Existence of real closure)}\,: There exists a real-closed field extension $L\,\vert\,K$ such that the unique order on $L$ extends the given order $\leq$ on $K$. (A real closed field $L$ such that the unique order of $L$ extends the given order $\leq$ on $K$ is called a {\it real closure} of  the ordered field $(K,\leq)$.
\vskip3pt

{\bf (ii)}\,  \so{(Uniqueness of real closure)}\,:  If $L\,\vert\,K$  and $L'\,\vert\,K$ are two real closures of $(K,\leq)$, then the field extensions $L\,\vert\,K$ and $L'\,\vert\,K$ are isomorphic. Indeed, there is a unique $K$-isomorphism which preserves order.

\item {\bf Theorem}\,\so{(Euler-Lagrange)}\,
{\it Let $(K,\leq)$ be an ordered field satisfying the properties\,:\,  {\rm (i)}  Every polynomial $f\in K[X]$ of odd degree has a zero in $K$.\, {\rm (ii)\,}  Every positive element in $K$ is a square in $K$. Then the field $\overline{K}=K({\rm i})$ obtained from $K$ by adjoining  a square root ${\rm i}$ of $-1$ is algebraically closed. In~particular, $K$ itself is real-closed.}
\vskip3pt

{\bf Remark\,:} Since the field $\R$ of real numbers is ordered and satisfies the properties (i) and (ii), the above theorem proves the \so{Fundamental Theorem of Algebra\,:} {\it The field $\C=\R({\rm i})$ of complex numbers is algebraically closed .}

\item The Theorem in (e) has a remarkable complement (see also Theorem\,4.16\,)\,:
\smallskip

{\bf Theorem}\,\so{(Artin)}\,
Let $L$ be an algebraically closed field. If $K\subseteq L$ be a subfield of $L$ such that $\,L\,\vert\,K$ is finite and $K\neq L$, then $L=K({\rm i})$ with ${\rm i}^{2}+1=0$ and $K$ is  a real-closed field.
\end{alist}

\item
{\bf Artin-Schreier theory for commutative rings}\,
To formulate a  generalization for Artin-Schreier Theorem to commutative rings, it is crucial to arrive at the right definition of an ``order'' on  a commutative ring. Let $A$ be a commutative ring. A  preorder on $A$ is  defined in the same way as done for fields, see Footnote\,\ref{foot:11}.  If $T$ is preorder on $A$, then $T\cap -T$ need not be $\{0\}$. However, it is easy to see that $\gotha:=T\cap -T$ is the largest additive subgroup contained in $T\,$; $\gotha$ is called the {\it support} of $T$ denoted by $\Supp (T)$.
If $2\in A^\times$, then the support of a preorder on $A$ is an ideal in $A$. For this, we need to show that if $a\in A$ and $x\in\gotha$, then $ax\in \gotha$. It is enough to write $a=b^{2}-c^{2}$ for some $b$, $c\in A$, which is always possible, since $1/2\in A$, for take $b=(1+a)/2$ and $c=(1-a)/2$.  If a preorder $T$ on $A$ satisfies $T\cup -T=A$, then it is easy to see that the support $\gotha$ of $T$ is an ideal in $A$ even if $2\not\in A^{\times}$. With all this preamble, we are now ready to define an order on a commutative ring $A$\,: A preorder $T$ on $A$ is called an {\it order} on $A$ if $T\cup -T=A$ and the support $\gotha=T\cap -T$ is a prime ideal in $A$. Note that the prescription of an order on $A$ with support $\gothp\in\Spec A$ is equivalent to the prescription of an order on the quotient ring $A/\gothp$. Therefore possible supports of orders on $A$ are precisely all the real prime ideals.
\smallskip

With the preparation as above, we are ready to state the following result of A.\,Prestel \cite{prestel}.

\begin{alist}
\item
{\bf Theorem}\,{\rm \so{(Prestel)}\,:} {\it Let $A$ be a ring and $T$ be a maximal $($with respect to the natural inclusion$)$ preorder on $A$, then $T$ is an order on $A$.}
\smallskip

As a consequence we have\,: (1)\, {\it Every preorder on a ring $A$ is contained in an order on $A$.} (2)\, {\it Let $T$ be an order on $A$. Then $T$ is a maximal as an order on $A$ if and only if $T$ is a maximal as a preorder on $A$.}
\smallskip

We now state a generalization of the Artin-Schreier Theorem for commutative rings\,:
\smallskip

\item {\bf Theorem} {\it Let $A$ be a commutative ring. Then $A$ is semi-real if and only if there exists an order on $A$.}
\end{alist}

\end{arlist}

\end{mypar}

}

M.\,Coste and M.\,-F.\,Coste-Royer, have introduced the notion of the real spectrum
(the set $X_{A}$ of orders on $A$ with Harrison topology) of a ring $A$ in \cite{coste}. On the one hand this is the correct generalization of the space of orders of a field, and on the other hand, this is the  ``real'' analogue of the prime spectrum (with Zariski topology) of  a ring. We shall restrict ourselves to the ideal theoretic view rather than orders and use only the basic properties of the real spectrum and show that the study of real spectrum is an indispensable tool in real algebraic geometry. We begin with\,:

\begin{definition}\label{def:3.2} Let $A$ be a ring. The set
\begin{align*}
\rSpec(A) \colon= \{ \p \in \Spec(A) \mid k(\p)={\rm Q}(A/\p) \text{ is real} \}.
\end{align*}
of all real prime ideals in $A$ is called the {\it real prime spectrum} of $A$.
\end{definition}

With this definition we can give a characterization of real rings\,:

\begin{theorem}\label{thm:3.3}
For a ring $A$, the following are equivalent$:$

\begin{rlist}
\item \  $\,\,A$ is real.
\item \  $A$ is reduced and all minimal prime ideals of $A$ are real.
\item  $A$ is reduced and $\rSpec(A)$ is dense in $\Spec(A).$
\item  The intersection of all $\gothp \in \rSpec(A)$ is $0$, {\rm i.\,e.}  $\bigcap_{\,\gothp \in \rSpec(A)}\, \gothp=0.$
\end{rlist}
\end{theorem}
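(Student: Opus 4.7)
The plan is to close the loop $(i) \Rightarrow (ii) \Rightarrow (iv) \Leftrightarrow (iii) \Rightarrow (iv)$ and then $(iv) \Rightarrow (i)$, using only the bookkeeping statements about reality collected in \ref{mypar:3.1}\,(1) (i)--(vi), together with the elementary fact that localization of a reduced ring at a minimal prime is a field.

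For $(i) \Rightarrow (ii)$, observe first that a real ring is automatically reduced: if $a^{2}=0$, then the one-term sum of squares $a^{2}$ vanishes, so $a=0$. Now let $\gothp$ be a minimal prime. Since $A$ is reduced, $\gothp A_{\gothp}$ equals the nilradical of $A_{\gothp}$ and hence is zero, so $A_{\gothp}=k(\gothp)$. By the implication chain in \ref{mypar:3.1}\,(1)\,(v), reality of $A$ descends to the localization $S^{-1}A$ for $S=A\smallsetminus\gothp$, so $k(\gothp)$ is real and $\gothp\in\rSpec A$. For $(ii)\Rightarrow (iv)$ it suffices to note that minimal primes lie in $\rSpec A$ by hypothesis, so
\begin{align*}
\bigcap_{\gothp\in \rSpec A}\gothp \ \subseteq \ \bigcap_{\gothp \text{ minimal}}\gothp \ = \ \sqrt{0} \ = \ 0,
\end{align*}
the last equality because $A$ is reduced.

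The equivalence $(iii)\Leftrightarrow (iv)$ is a purely Zariski-topological rewriting. For any subset $W\subseteq \Spec A$, one has $\overline{W}=\mathrm{V}\bigl(\bigcap_{\gothp\in W}\gothp\bigr)$; applying this to $W=\rSpec A$, density of $\rSpec A$ in $\Spec A$ is equivalent to $\bigcap_{\gothp\in\rSpec A}\gothp \subseteq \sqrt{0}$. Since intersections of prime ideals are radical, combined with reducedness (i.e.\ $\sqrt{0}=0$) this becomes $\bigcap_{\gothp\in\rSpec A}\gothp=0$, which is $(iv)$; conversely $(iv)$ forces $\sqrt{0}=0$ and gives density at once. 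Finally, for $(iv)\Rightarrow (i)$, suppose $a_{1}^{2}+\cdots+a_{n}^{2}=0$ in $A$. For every $\gothp\in\rSpec A$ this relation holds in the real field $k(\gothp)$, hence each $a_{i}$ lies in $\gothp$; taking intersection over $\rSpec A$ and invoking $(iv)$ yields $a_{i}=0$ for all $i$, so $A$ is real.

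No step is really the main obstacle — the proof is essentially bookkeeping — but the one place where we genuinely import external content is $(i)\Rightarrow (ii)$, where we use the preservation of reality under localization from \ref{mypar:3.1}\,(1)\,(v). Everything else reduces to the definition of $\rSpec A$, the fact that in a reduced ring the intersection of minimal primes is zero, and the standard description of closure in $\Spec A$.
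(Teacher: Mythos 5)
Your proof is correct and follows essentially the same route as the paper: $(i)\Rightarrow(ii)$ via preservation of reality under localization together with the fact that in a reduced ring $A_{\gothp}=k(\gothp)$ at a minimal prime $\gothp$, the topological equivalence of density with the vanishing of $\bigcap_{\gothp\in\rSpec A}\gothp$, and $(iv)\Rightarrow(i)$ by passing a vanishing sum of squares to the real residue fields $k(\gothp)$. The only difference is cosmetic: you close the cycle as $(i)\Rightarrow(ii)\Rightarrow(iv)\Leftrightarrow(iii)$, while the paper runs $(i)\Rightarrow(ii)\Rightarrow(iii)\Rightarrow(iv)\Rightarrow(i)$, but the content of each step is the same.
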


\begin{proof}
(i) $\Rightarrow $ (ii)\,:
If $A$ is real, then $A$ is reduced by \ref{mypar:3.1}\,(1) and for every prime ideal $\gothp$, $A_{\gothp}$ is real by \ref{mypar:3.1}\,(1) \,(v). Therefore $\gothp\,A_{\gothp}=0$ for every minimal prime ideal $\gothp$ in $A$ and hence $k(\gothp) = A_{\gothp}$ is real, i.e. $\gothp\in \rSpec A$.
\smallskip

(ii) $\Rightarrow $ (iii)\,: 	
This is clear as the set of minimal prime ideals is dense in $\Spec(A)$.
\smallskip

(iii) $\Rightarrow $ (iv)\,:
Since $\rSpec(A)$ is dense in $\Spec(A)$, $\bigcap_{,\gothp \in \rSpec(A)} \gothp = \bigcap_{\,\gothp \in \Spec(A)} \gothp ={\rm nil}\,(A)=0$.
\smallskip

(iv) $\Rightarrow $ (i)\,: 	
Let $a_1,\dots,a_r \in A$ be such that $a_i \not= 0$ for all $i=1,\dots,r$.  Then by (iv), there exists $\gothp\in \rSpec(A)$ with $a_1 \notin \gothp$.  Since $\gothp$ is real, $a_1^2+\cdots+a_r^2 \notin \gothp$. In particular, $a_1^2+\cdots+a_r^2 \neq 0$. This proves that $A$ is real.
\end{proof}

\begin{corollary}\label{cor:3.5}
Real rings are subrings of a direct product of formally real fields.
\end{corollary}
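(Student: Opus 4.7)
The plan is to exhibit the desired embedding as the canonical diagonal map into the product of residue fields at all real prime ideals, and to verify injectivity via Theorem \ref{thm:3.3}.

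First I would set up the map. For a real ring $A$, consider
\begin{align*}
\Phi: A \longrightarrow \prod_{\gothp \in \rSpec(A)} k(\gothp), \qquad a \longmapsto (a + \gothp)_{\gothp \in \rSpec(A)},
\end{align*}
where $k(\gothp) = \mathrm{Q}(A/\gothp)$ is the residue field at $\gothp$. This $\Phi$ is a ring homomorphism, being the product of the canonical maps $A \to A/\gothp \hookrightarrow k(\gothp)$. By the very definition of $\rSpec(A)$ (Definition \ref{def:3.2}), each $k(\gothp)$ is a formally real field, so the codomain is a direct product of formally real fields.

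Next I would verify injectivity. The kernel of $\Phi$ is precisely $\bigcap_{\gothp \in \rSpec(A)} \gothp$. Since $A$ is real, the implication (i) $\Rightarrow$ (iv) of Theorem \ref{thm:3.3} gives $\bigcap_{\gothp \in \rSpec(A)} \gothp = 0$, whence $\Phi$ is injective. Thus $A$ embeds as a subring of $\prod_{\gothp \in \rSpec(A)} k(\gothp)$, which is what was to be shown.

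There is no real obstacle here; the corollary is essentially a repackaging of Theorem \ref{thm:3.3}(iv). The only thing worth noting is the (trivial) converse direction: any subring of a direct product of formally real fields is itself real, since each factor satisfies the sum-of-squares condition coordinatewise and this property passes to direct products and to subrings (by \ref{mypar:3.1}(1)(i), (iii)). Hence the embedding above in fact characterizes real rings.
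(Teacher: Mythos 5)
Your proof is correct and is precisely the argument the paper intends: the corollary is stated immediately after Theorem \ref{thm:3.3} without a separate proof, and the intended deduction is exactly the diagonal map $A \to \prod_{\gothp \in \rSpec(A)} k(\gothp)$ with injectivity supplied by Theorem \ref{thm:3.3}(iv). Your closing observation that subrings of products of formally real fields are conversely real (so this characterizes real rings) is a nice extra, consistent with \ref{mypar:3.1}(1)(i),(iii).
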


\begin{remark}\label{rem:3.4}
One may also ask\,: If $A$ is semi-real, then is $A$ reduced? And are all of its minimal prime ideals  semi-real? We give examples to show that both these questions have negative answers.\, (i) The ring $\R[X]/\langle X^{2}\rangle$ is semi-real, but clearly not reduced. \, (ii) Let $K$ be a real field and $K'$ be a non-semi-real field. Then the product ring $A:=K\times K'$ is semi-real, since the first projection $A\to K$ is a ring homomorphism. Further, $A$ is reduced with two minimal prime ideals $\gothp_{1}=K\times \{0\}$ and
 $\gothp_{2}=\{0\}\times K'$. Since $A/\gothp_{1}\cong K'$ and $A/\gothp_{2}\cong K$, $\gothp_{2}$ is real but $\gothp_{1}$ is not semi-real.
\end{remark}

\smallskip

One can also characterize semi-real rings. For this the following definition is  useful\,:

\begin{definition}\label{def:3.6}
Let $A$ be a   ring.
An ideal $\gotha\subseteq A$ is called {\it maximal real} if it is real, $\gotha\neq A$ and it is maximal with respect to this property. In other words, $\gotha$ is a maximal element in the set of non-unit real ideals in $A$ ordered by the natural inclusion, i.\,e. if $\gotha \subseteq \gotha' \subseteq A$ with $\gotha'$ real, then either $\gotha=\gotha'$ or $\gotha'=A$. Maximal semi-real ideals are defined analogously.
\end{definition}

\begin{theorem}\label{thm:3.7}
Let $A$ be a   ring and $\gotha\subseteq A$ an ideal.

\begin{alist}

\item
Let $S\subseteq  A$ be a multiplicatively closed subset of $A$ with $1\in S$, $0\not\in S$ and $S+\sum A^{2} \subseteq S$ and let $\gothp$ be an ideal in $A$ maximal with repsect to the property $S\cap \gothp=\emptyset$. Then $\gothp$ is a prime ideal in $A$ and the quotient field ${\rm Q}(A/\gothp)$ is a real field. In~particular, $\gothp$ is a real ideal.

\item
Let $\gotha$ be an ideal in $A$. Then
$\gotha$ is maximal semi-real if and only if $\gotha$ is maximal real.
The maximal real (resp. semi-real) ideals are precisely the (prime) ideals $\gothp$ in $A$ which are maximal with respect to the property that $\gothp \cap (1+\sum A^2)=\emptyset$.
\end{alist}

\end{theorem}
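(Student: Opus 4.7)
The plan is to prove (a) first by a Krull-style saturation argument for primeness, then to exploit the closure condition $S + \sum A^{2} \subseteq S$ to rule out nontrivial relations of the form $\sum a_{i}^{2} \in \gothp$ in the residue domain. Part (b) will then follow almost formally by applying (a) to the specific multiplicatively closed set $S := 1 + \sum A^{2}$, which satisfies the hypotheses of (a) precisely when $A$ is semi-real.

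For primeness in (a), if $a,b\notin\gothp$, then maximality of $\gothp$ with respect to $S$-avoidance gives $s_{1}=p_{1}+x_{1}a$ and $s_{2}=p_{2}+x_{2}b$ in $S$ with $p_{i}\in\gothp$. Multiplicative closure gives $s_{1}s_{2}\in S$ and $s_{1}s_{2}\equiv x_{1}x_{2}\,ab\pmod{\gothp}$, so $ab\in\gothp$ would force $s_{1}s_{2}\in\gothp\cap S=\emptyset$. For the real property, since $A/\gothp$ is a domain it suffices by \ref{mypar:3.1}(1)(i) to show that $a_{1}^{2}+\cdots+a_{n}^{2}\in\gothp$ forces every $a_{i}\in\gothp$. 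Suppose $a_{1}\notin\gothp$ and, by maximality again, pick $s = p + x a_{1}\in S$ with $p\in\gothp$. Then $s\equiv x a_{1}\pmod{\gothp}$ gives
\[
 s^{2} + (x a_{2})^{2} + \cdots + (x a_{n})^{2} \ \equiv\ x^{2}(a_{1}^{2}+\cdots+a_{n}^{2}) \ \equiv\ 0 \pmod{\gothp},
\]
so this element lies in $\gothp$; but it also lies in $S + \sum A^{2}\subseteq S$ because $s^{2}\in S$ by multiplicative closure, contradicting $\gothp\cap S=\emptyset$. I expect this to be the main hurdle, since one must use the multiplicativity of $S$ and its stability under addition of sums of squares at precisely the right place, while simultaneously keeping track of the congruence mod $\gothp$.

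For (b), set $S := 1 + \sum A^{2}$; direct expansion of a product $(1+\sigma_{1})(1+\sigma_{2})$ shows $S$ is multiplicatively closed, contains $1$, and satisfies $S + \sum A^{2} = S$, while $0\in S$ iff $A$ is unreal. An ideal $\gotha$ meets $S$ iff some $1+\sigma$ lies in $\gotha$, iff $-1\in\sum(A/\gotha)^{2}$, iff $A/\gotha$ is unreal; equivalently, the non-unit semi-real ideals are precisely the non-unit ideals disjoint from $S$. By Zorn every such ideal extends to one that is maximal with respect to $\gothp\cap S=\emptyset$, and by (a) that extension is a real prime ideal. From this the three classes coincide: any maximal $S$-avoider is real and, being non-unit, semi-real (a nonzero real ring is semi-real by \ref{mypar:3.1}(1)), while any strict enlargement of it meets $S$ and is therefore unreal, so it is maximal both as a real and as a semi-real ideal. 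Conversely, a maximal real or maximal semi-real ideal is $S$-disjoint, and if it were not already maximal for $S$-avoidance the Zorn extension would yield a strictly larger non-unit real ideal, contradicting maximality. This simultaneously yields the equivalence of ``maximal real'' and ``maximal semi-real''.
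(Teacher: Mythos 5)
Your proof is correct and follows essentially the same route as the paper: for (a) you spell out the standard Krull saturation argument that the paper cites as "well-known," and then multiply through by $x^{2}$ and absorb $s^{2}$ via $S+\sum A^{2}\subseteq S$ exactly as the paper does; for (b) you apply (a) to $S=1+\sum A^{2}$ and identify semi-real ideals with $S$-avoiding ideals, just as the paper does. The only small difference is that your writeup is a bit more explicit about the Zorn extension step and the edge case $0\in S$, but the underlying ideas coincide.
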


\begin{proof}
{\bf (a)}\,  By well-known arguments from commutative algebra one can show that $\gothp$ is a prime ideal in $A$. To prove that the quotient field ${\rm Q}(A/\gothp)$ is real, suppose that
$b_{1}^{2}+\cdots + b_{r}^{2}\in \gothp$ with $b_{i}\in A$, $i=1,\ldots , r$.  We need to show that $b_{i}\in\gothp$ for all $i=1,\ldots , r$. On the contrary, if (by renumbering) some $b_{1}\not\in\gothp$, then $S\cap (\gothp+\langle b_{1}\rangle)\neq\emptyset$ by the maximality of $\gothp$. Therefore $s\equiv ab_{1}\,({\rm mod\,}\gothp)$ for some $s\in S$ and $a\in A$. But, then
$s^{2}\equiv a^{2}b^{2}_{1}\,({\rm mod\,}\gothp)$ and hence
$s^{2}+a^{2}b^{2}_{2}+\cdots +a^{2}b^{2}_{r} = a^{2}b^{2}_{1}+a^{2}b^{2}_{2}+\cdots +a^{2}b^{2}_{r}\in (S+\sum A^{2}) \cap \gothp \subseteq S\cap \gothp$, which is a contradiction. The supplement is immediate from \ref{mypar:3.1}\,(1)\,(i).
\vskip3pt

{\bf (b)}\, We may assume that $A\neq 0$. Note that
the semi-real ideals of $A$ are the ideals $\gotha$ of $A$ with $\gotha \cap S =\emptyset$, where $S:=1+\sum A^{2}$.
($\Rightarrow$)\,: Suppose that $\gotha$ is a  maximal semi-real ideal. To prove that $\gotha$ is maximal real, we may assume that $0\not\in S$ (otherwise, there is nothing to prove), i.\,e. $A$ is semi-real ring. Therefore by (a) (applied to the multiplicative set $S$), $\gotha$ is a real ideal in $A$. Moreover, $\gotha$ must be maximal real, since real ideals are also semi-real.
($\Leftarrow$)\,: If $\gotha$ is a maximal real ideal, then (since non-unit real ideals are semi-real) by already proved implication ($\Rightarrow$) $\gotha$ is also maximal semi-real.
With this the last assertion is immediate from (a).
\end{proof}

\begin{corollary}\label{cor:3.8}
Let $A$ be a ring and $\gotha\subseteq A$ an ideal.
An ideal $\gotha\subseteq A$ is semi-real if and only if there exists $\gothp \in \rSpec(A)$ with $\gotha \subseteq \gothp$. In particular, a ring $A$ is semi-real if and only if $\rSpec(A) \neq \emptyset$.
\end{corollary}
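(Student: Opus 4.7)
The plan is to reduce both directions to Theorem~\ref{thm:3.7}\,(a), after translating the condition ``$\gotha$ is semi-real'' into a statement about the multiplicative set $S:=1+\sum A^{2}$. First I would note that $\gotha$ is semi-real, i.\,e.\ $-1\notin \sum (A/\gotha)^{2}$, if and only if $\gotha\cap S=\emptyset$\,: indeed, $-1\in\sum (A/\gotha)^{2}$ means $-1\equiv s\,({\rm mod}\,\gotha)$ for some $s\in\sum A^{2}$, equivalently $-(1+s)\in\gotha$, equivalently $\gotha\cap S\neq \emptyset$. Moreover $S$ is multiplicatively closed, contains $1$, and satisfies $S+\sum A^{2}\subseteq S$, since $(1+s)+t=1+(s+t)$ with $s+t\in\sum A^{2}$.

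For the implication ($\Rightarrow$), assume $\gotha$ is semi-real, so $\gotha\cap S=\emptyset$. Since $0\in\gotha$ we have $0\notin S$, hence Zorn's Lemma applies to the non-empty poset of ideals $\gothb\supseteq \gotha$ with $\gothb\cap S=\emptyset$, ordered by inclusion, and produces a maximal element $\gothp$. By Theorem~\ref{thm:3.7}\,(a), $\gothp$ is a prime ideal such that ${\rm Q}(A/\gothp)=k(\gothp)$ is real, i.\,e.\ $\gothp\in\rSpec(A)$, and by construction $\gotha\subseteq\gothp$.

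For the implication ($\Leftarrow$), suppose $\gotha\subseteq\gothp$ for some $\gothp\in\rSpec(A)$. Then $A/\gothp$ embeds into its quotient field $k(\gothp)$, which is real, so $A/\gothp$ is real; in particular it is semi-real. Composing with the canonical surjection $A/\gotha\twoheadrightarrow A/\gothp$ and applying the functoriality observation \ref{mypar:3.1}\,(1)\,(i) (reality/semi-reality pulls back along any ring homomorphism) yields that $A/\gotha$ is semi-real, i.\,e.\ $\gotha$ is semi-real. The final assertion about $A$ itself follows by specializing to $\gotha=0$.

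The argument is essentially a bookkeeping exercise on top of Theorem~\ref{thm:3.7}, and there is no serious obstacle; the only point to watch is the verification that $0\notin S$ in the Zorn step, which is exactly guaranteed by the semi-reality of $\gotha$ (and this is also why the statement would fail without that hypothesis).
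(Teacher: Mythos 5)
Your proof is correct and follows essentially the same route as the paper: the paper's one-line proof appeals to the existence of a maximal semi-real ideal containing $\gotha$ together with Theorem~\ref{thm:3.7}, which is exactly the Zorn-plus-3.7(a) argument you carry out explicitly via the multiplicative set $S=1+\sum A^{2}$. You have merely unpacked the steps the paper leaves implicit (the Zorn argument and the ``converse is clear'' via the surjection $A/\gotha\twoheadrightarrow A/\gothp$ and functoriality \ref{mypar:3.1}\,(1)\,(i)), and invoked part (a) directly rather than part (b) of the theorem, which is a cosmetic difference since (b) is itself derived from (a).
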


\begin{proof}
	If $\gotha$ is semi-real, then $\gotha$ is contained in a maximal semi-real ideal $\gothp$ and $\gothp \in \rSpec(A)$ by the above theorem. The converse is clear.
\end{proof}

\begin{corollary}\label{cor:3.9}
Let $A$ be a ring.
If $-1$ is a sum of squares in every residue field of $A$, then $-1$ is a sum of squares in $A$.
\end{corollary}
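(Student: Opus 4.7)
The plan is to argue by contrapositive: I will show that if $-1 \notin \sum A^{2}$, then there exists a prime ideal $\gothp$ of $A$ such that $-1$ is not a sum of squares in the residue field $k(\gothp) = {\rm Q}(A/\gothp)$.

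So assume $A$ is semi-real, i.e.\ $-1 \notin \sum A^{2}$. By Corollary~\ref{cor:3.8}, this is equivalent to $\rSpec(A) \neq \emptyset$; pick any $\gothp \in \rSpec(A)$. By the very definition of the real spectrum (Definition~\ref{def:3.2}), the residue field $k(\gothp) = {\rm Q}(A/\gothp)$ is a real field, and in a real field $-1$ cannot be written as a sum of squares (for otherwise $1 + a_{1}^{2} + \cdots + a_{n}^{2} = 0$ with $1 \neq 0$, contradicting formal reality). This contradicts the hypothesis that $-1$ is a sum of squares in every residue field of $A$, and completes the proof.

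The proof is essentially a one-line consequence of Corollary~\ref{cor:3.8}; there is no real obstacle, only the small point of making sure that ``residue field'' is interpreted broadly enough to include $k(\gothp)$ for every $\gothp \in \Spec(A)$, which is the standard convention.
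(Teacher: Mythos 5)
Your proof is correct and follows exactly the route the paper intends: the corollary is the contrapositive of Corollary~\ref{cor:3.8} (``$A$ semi-real $\Leftrightarrow \rSpec(A)\neq\emptyset$''), read off via the definition of $\rSpec$.

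One refinement to your closing remark. You describe interpreting ``residue field'' to mean $k(\gothp)={\rm Q}(A/\gothp)$ for \emph{all} $\gothp\in\Spec(A)$ as merely ``the standard convention''; in pure commutative algebra the more common usage restricts ``residue fields of $A$'' to $A/\gothm$ for $\gothm\in\Spm A$. The point is stronger than a convention: the broad reading is \emph{forced}, because the statement is false at maximal ideals alone. Take $A=\Z_{(p)}$ (an example the paper itself discusses in \ref{mypar:3.1}\,(1)): its unique residue field at a maximal ideal is $\F_p$, where $-1$ is a sum of squares, yet $A$ is real (being a subring of $\Q$), so $-1\notin\sum A^2$. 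Thus one genuinely needs to range over $k(\gothp)$ for all primes $\gothp$, and it is worth saying so rather than appealing to convention.
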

\smallskip

To formulate a general version of Real Nullstellensatz, we need the concept of the real radical of an ideal. We shall define this for ideals in commutative rings\,:
\smallskip

Let $A$ be a commutative ring. For an ideal $\gotha\subseteq A$, let ${\rm V}(\gotha):=\{\gothp\in \Spec A\mid \gotha\subseteq \gothp\} \subseteq \Spec A$ and
$\rV{\gotha}=\{\gothp\in \rSpec A\mid \gotha\subseteq \gothp\} \subseteq \rSpec A$, see Definition\,\ref{def:3.2}. The {\it real radical} $\rsqrt{\gotha}$ of $\gotha$ is the intersection of all real prime ideals containing $\gotha$, i.\,e.  $\rsqrt{\gotha}: = \bigcap_{\gothp\in {\footnotesize \rV{\gotha}}}\,\gothp\,$.

\smallskip

The following theorem is an element-wise characterization of the real radical of an ideal in a commutative ring\,:

\begin{theorem}\label{thm:3.10}
Let $\gotha\subseteq A$ be an ideal in a commutative ring $A$ and let $f\in A$.  Then the following statements are equivalent\,$:$\, {\rm (i)}
$f\in\rsqrt{\gotha}\,$.\,\,  {\rm (ii)} There exists $m\in\N$ and $a_{1},\ldots , a_{r}\in A$ such that $f^{\,2m}+a_{1}^{2}+\cdots +a_{r}^{2}\in\gotha$.
\end{theorem}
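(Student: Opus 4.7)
I will prove the two implications separately; (ii)$\Rightarrow$(i) is routine, while (i)$\Rightarrow$(ii) is the substantive direction and will be handled by a Rabinowitsch-style localization at $f$.

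For (ii)$\Rightarrow$(i), suppose $f^{2m}+a_1^{2}+\cdots+a_r^{2}\in \gotha$ and let $\gothp\in\rV{\gotha}$ be any real prime containing $\gotha$. Reducing modulo $\gothp$ in the real domain $A/\gothp$ gives
\[
(\bar f^{m})^{2}+\bar a_1^{2}+\cdots+\bar a_r^{2}=0.
\]
By the definition of a real ring (see \ref{mypar:3.1}(1)) each square term must vanish; in particular $\bar f^{m}=0$, and since $A/\gothp$ is a domain this forces $\bar f=0$, i.e.\ $f\in\gothp$. As $\gothp$ was arbitrary in $\rV{\gotha}$, we conclude $f\in\rsqrt{\gotha}$.

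For (i)$\Rightarrow$(ii) I will argue the contrapositive. Assume (ii) fails, and pass to the localization $B:=A_f=A[f^{-1}]$ with the extended ideal $\gotha B$. I claim $\gotha B$ is a semi-real ideal of $B$. Granting this, Corollary~\ref{cor:3.8} applied in $B$ produces a real prime $\gothq\in\rSpec B$ with $\gotha B\subseteq\gothq$; its contraction $\gothp:=\gothq\cap A$ is then a prime of $A$ containing $\gotha$. It is real, since $A/\gothp\hookrightarrow B/\gothq$ and subrings of real rings are real by \ref{mypar:3.1}(1)(i); and it does not contain $f$, because $f$ is a unit in $B$. Thus $\gothp\in\rV{\gotha}$ with $f\notin\gothp$, whence $f\notin\rsqrt{\gotha}$, as desired.

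It remains to verify the claim, which is the main technical step and the only place where careful bookkeeping of exponents of $f$ is needed. Suppose on the contrary that $\gotha B$ is not semi-real, so $-1\in\sum(B/\gotha B)^{2}$. Writing sum-of-squares representatives in $B$ as $a_i/f^{k_i}$ with $a_i\in A$ and $k_i\in\N$, this reads
\[
1+\sum_{i=1}^{r}\bigl(a_i/f^{k_i}\bigr)^{2}\in \gotha B.
\]
Let $K:=\max_i k_i$ and multiply by $f^{2K}$: the resulting element
\[
f^{2K}+\sum_{i=1}^{r}\bigl(a_i f^{K-k_i}\bigr)^{2}
\]
lies in $A$ and still in $\gotha B$. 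By definition of the extended ideal there exists $M\in\N$ with $f^{M}$ times this element in $\gotha$; replacing $M$ by $M+1$ if necessary, I may assume $M$ is even. Then, setting $m:=K+M/2$ and $b_i:=a_i f^{K-k_i+M/2}\in A$, I obtain
\[
f^{2m}+\sum_{i=1}^{r}b_i^{2}\in \gotha,
\]
which is precisely (ii), contradicting the standing assumption. Hence $\gotha B$ is semi-real and the proof is complete. The only delicate point is ensuring that after clearing denominators every exponent of $f$ remaining is even, which is why the auxiliary power $f^{M}$ must be adjusted to an even one.
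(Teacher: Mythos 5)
Your argument is correct and is essentially the same as the paper's: both reduce the hard direction to the (non-)semi-reality of the localization $A_f$ modulo $\gotha$, invoke Corollary~\ref{cor:3.8}, and clear denominators. The only difference is that you run (i)$\Rightarrow$(ii) contrapositively (assume (ii) fails, produce a real prime missing $f$), whereas the paper argues directly (from $f\in\rsqrt{\gotha}$ deduce $\rSpec(A_f/\gotha A_f)=\emptyset$, hence an explicit sum-of-squares relation); these are the same computation read in opposite directions.
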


\begin{proof}
By passing to the residue class ring $A/\gotha$, we may assume that $\gotha=0$.
\smallskip

(i) $\Rightarrow$ (ii)\,:  Note that (i) is equivalent with ${\rm D}(f) \cap \rSpec A=\emptyset$. Therefore $\rSpec S^{-1} A=\emptyset$, where $S=\{f^{\,n}\mid n\in \N\}$ and hence by Corollary~\ref{cor:3.8}\, there is an equation $1+(a_{1}/f^{\,n})^{2}+\cdots +(a_{r}/f^{\,n})^{2} =0$ for some $r\in\N$ and $a_{1},\ldots , a_{r}\in A$. It follows that $f^{\,2m}(f^{\,2n} +a_{1}^{2}+\cdots +a_{n}^{2})=0$ for some $m\in\N$. This proves (ii).
\smallskip

(ii) $\Rightarrow$ (i)\,:  For every $\gothp\in\rV{\gotha}$, from (ii) it follows that $f^{m}\in\gothp$ and hence $f\in\gothp$.
\end{proof}

\smallskip

Note that a ring $A$ is real if and only if $\rsqrt{0}=0$ (see Theorem~\ref{thm:3.3}), i.\,e. ``real reduced''. More generally, an ideal $\gotha\subseteq A$ is a real ideal if and only if $\rsqrt{\gotha}=\gotha$.

\smallskip

Affine algebras over a field are important in algebraic geometry because they are coordinate rings of algebraic sets. The Artin-Lang theory of affine algebras (over a field $K$) and their associated function fields provide applications to real algebraic geometry. To simplify matters, we shall always assume that the base field $K$ is a real closed field. The case when $K$ is an ordered field can be treated by passing to the real closure of $K$.

\begin{theorem}\label{thm:3.11}
{\rm \so{(Artin-Lang Homomorphism Theorem)}}\,  Let $K$ be a real closed field and let $A$ be a real  affine domain over $K$.  Then there exists a $K$-algebra homomorphism $\varphi:A\longrightarrow K$.
\end{theorem}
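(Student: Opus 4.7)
The plan is to produce the homomorphism $\varphi : A \to K$ by exhibiting a $K$-rational point on the affine variety associated to $A$. First I would apply Noether normalization to find algebraically independent elements $y_1, \ldots, y_d \in A$ over $K$ (with $d = \dim A$) such that $A$ is a finite integral extension of $B := K[y_1, \ldots, y_d]$. Since $A$ is a real integral domain, its fraction field $L := {\rm Q}(A)$ is a real field by \ref{mypar:3.1}\,(1)\,(i), and hence admits a field order by the Artin--Schreier Theorem \ref{mypar:3.1}\,(2)\,(a). Let $R$ denote a real closure of $(L, \leq)$, whose existence is supplied by \ref{mypar:3.1}\,(3)\,(d)\,(i); since $K$ is real closed it carries a unique ordering by \ref{mypar:3.1}\,(3)\,(a)\,(ii), and so the inclusion $K \hookrightarrow L \hookrightarrow R$ automatically preserves orders and exhibits $R$ as a real closed overfield of $K$. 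The tautological inclusion $A \hookrightarrow R$ is already a $K$-algebra homomorphism from $A$ into \emph{some} real closed extension of $K$; the remaining task is to replace $R$ by $K$.

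Next, using Hilbert's Basis Theorem, I would write $A \cong K[X_1, \ldots, X_n]/\mathfrak{p}$ with $\mathfrak{p} = \langle f_1, \ldots, f_m \rangle$ for some $f_1, \ldots, f_m \in K[X_1, \ldots, X_n]$. The images $x_i \in R$ of $X_i$ under the embedding above give a tuple $x = (x_1, \ldots, x_n) \in R^n$ satisfying $f_j(x) = 0$ for all $j = 1, \ldots, m$. Hence the purely existential first-order sentence
\[
\exists T_1 \cdots \exists T_n :\; f_1(T_1, \ldots, T_n) = 0 \;\wedge\; \cdots \;\wedge\; f_m(T_1, \ldots, T_n) = 0,
\]
with parameters (coefficients) drawn from $K$, is satisfied in the real closed field $R$. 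The key step is then to transfer this sentence from $R$ down to $K$ by invoking the Tarski--Seidenberg transfer principle, equivalently the model completeness of the first-order theory of real closed fields: any first-order sentence in the language of ordered fields with parameters in a real closed field $K$ holds in some real closed extension of $K$ iff it holds in $K$ itself. This yields $a = (a_1, \ldots, a_n) \in K^n$ with $f_j(a) = 0$ for all $j$, and the substitution map $X_i \mapsto a_i$ descends to the desired $K$-algebra homomorphism $\varphi : A \to K$.

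The main obstacle is plainly the transfer step, which rests either on substantial model-theoretic machinery or on a concrete algebraic substitute. If one prefers an elementary route, I would argue by induction on $d = \dim A$. The base case $d = 0$ is immediate: $A$ is then a real finite algebraic extension of the real closed field $K$, and therefore $A = K$ itself, so the identity is the required map. For $d \geq 1$, I would select a primitive element $\alpha \in L$ for the finite extension $L \vert {\rm Q}(B)$, write down a polynomial $p(y_1, \ldots, y_d; T) \in B[T]$ with $p(\alpha) = 0$, and use the intermediate value property of $K$ (every polynomial in $K[T]$ with a sign change on an interval has a root there) together with a genericity argument to find $b \in K^d$ at which $p(b; T) \in K[T]$ still exhibits a sign change and thus has a root $\beta \in K$. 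The assignment $y_i \mapsto b_i$, $\alpha \mapsto \beta$ then yields $\varphi$, once compatibility with the remaining integral relations defining $A$ over $B[\alpha]$ has been verified; the genericity condition (staying away from the proper algebraic subset of $K^d$ on which the leading coefficient or the discriminant of $p$ vanishes, which is harmless because $K^d$ is infinite) is precisely the subtle point that most requires care.
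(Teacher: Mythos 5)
The paper does not actually prove this theorem: immediately after the statement it says ``For a proof we refer the reader to the article by S.\,Lang \cite{langRealPlaces}, see also \cite{lam}.'' So there is no in-paper argument to compare against, and your proposal has to stand on its own.

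Your first route is a correct and standard model-theoretic proof of Artin--Lang. The embedding $A\hookrightarrow L={\rm Q}(A)\hookrightarrow R$ into a real closure of an ordered $L$ is fine (the unique order on the real closed field $K$ is the square order, so the inclusion $K\hookrightarrow R$ automatically respects orderings and is an embedding of real closed ordered fields), and transferring the purely existential sentence $\exists T\,(f_1(T)=0\wedge\cdots\wedge f_m(T)=0)$ down from $R$ to $K$ is exactly what model completeness of RCF gives. Be aware, however, that this is not more elementary than what it replaces: Tarski--Seidenberg transfer (equivalently, model completeness of RCF) is a theorem of the same order of difficulty as Artin--Lang, and the two are essentially interchangeable pillars of real algebraic geometry. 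Lang's original argument \cite{langRealPlaces}, which the paper points to, is deliberately algebraic and proceeds through the theory of real places precisely so as not to import this model-theoretic machinery. So your first route is valid, but it proves the theorem by quoting a peer result rather than by building it.

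Your second, ``elementary'' route correctly identifies the shape a self-contained algebraic proof must take (Noether normalization, induction on transcendence degree, a sign-change/intermediate-value argument to specialize), and the base case $d=0$ is genuinely trivial: a finite real field extension of a real closed field is the field itself. But in the inductive step the ``genericity argument'' you flag is not a minor technicality to be filled in — it is where all of the mathematical content of the theorem actually lives. Over an arbitrary real closed $K$ you cannot appeal to topology or connectedness of $K^d$; making precise that the set of $b\in K^d$ preserving the sign change (and avoiding the discriminant/leading-coefficient locus, and remaining compatible with the remaining integral relations) is nonempty is exactly what forces one to develop specialization or real-place machinery. As written, this route is a reduction to an unproved lemma rather than a proof, and you should not count it as a second independent argument.
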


For a proof we refer the reader to the article by S.\,Lang \cite{langRealPlaces}
%\,: Theory of real places, Annals of Math.\,{\bf 57}\,(1953), 378-391.]
, see also \cite{lam}. In fact, Lang proved a stronger result than the above Theorem\,: {\it Let $V$ be an affine irreducible $K$-variety over a real closed field $K$ and $K[V]=K[X_{1},\ldots , X_{n}]/\gothp$ be the coordinate ring of $V$ over $K$. Then the function field $K(V)$ of $\,V$ over $K$ is formally real if and only if $V$ has a non-singular $K$-rational point.}
\smallskip

For further use in our exposition we note the following three improved supplements of the Artin-Lang Homomorphism Theorem.

\begin{corollary} \label{cor:3.12}
Let $K$ be a real closed field,  $A$ be a real affine domain over $K$ and let $f_{1},\ldots , f_{n}\in A$ be non-zero elements.  Then there exists a $K$-algebra homomorphism $\varphi:A\longrightarrow K$ such that $\varphi(f_{i})\neq 0$ for all $i=1,\ldots , n$.
\end{corollary}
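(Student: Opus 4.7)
The plan is to reduce Corollary~\ref{cor:3.12} to the Artin--Lang Homomorphism Theorem~\ref{thm:3.11} by a standard localization trick. Set $f := f_{1}\cdots f_{n} \in A$. Since $A$ is an integral domain and each $f_{i}\neq 0$, we have $f\neq 0$, so the multiplicative set $S:=\{1,f,f^{2},\ldots\}$ does not contain $0$. Consider the localization
\[
B := A\bigl[\tfrac{1}{f}\bigr] = S^{-1}A.
\]

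First I would verify that $B$ inherits from $A$ all the hypotheses needed to apply Artin--Lang. The ring $B$ is a domain, being a localization of the domain $A$. Writing $A=K[a_{1},\ldots,a_{m}]$, we have $B=K[a_{1},\ldots,a_{m},1/f]$, so $B$ is a $K$-algebra of finite type. Finally, by the implication in \ref{mypar:3.1}\,(1)(v), since $A$ is real and $0\notin S$, the localization $B=S^{-1}A$ is real. Thus $B$ is a real affine domain over the real closed field $K$.

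Now I would apply Theorem~\ref{thm:3.11} to $B$: there exists a $K$-algebra homomorphism $\psi:B\to K$. Restricting along the natural inclusion $A\hookrightarrow B$ gives a $K$-algebra homomorphism $\varphi:=\psi|_{A}:A\to K$. Because $1/f\in B$ and $\psi$ is a ring homomorphism, the relation $\psi(f)\cdot\psi(1/f)=\psi(1)=1$ forces $\varphi(f)=\psi(f)\in K^{\times}$. Hence $\varphi(f_{1})\cdots\varphi(f_{n})=\varphi(f)\neq 0$, which forces $\varphi(f_{i})\neq 0$ for every $i=1,\ldots,n$, as required.

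There is essentially no obstacle here: the only non-routine step is the stability of realness under localization, which is precisely the direction ``$A$ real $\Rightarrow S^{-1}A$ real'' already recorded in \ref{mypar:3.1}\,(1)(v); all other pieces (that $A[1/f]$ stays a finitely generated $K$-algebra and a domain) are formal consequences of $A$ being an affine domain. Thus the corollary follows from Theorem~\ref{thm:3.11} by a one-line localization argument.
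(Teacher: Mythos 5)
Your proof is correct and is essentially the same argument the paper gives (the paper simply says to apply Theorem~\ref{thm:3.11} to the localization $A[1/(f_1\cdots f_n)]$). You have merely spelled out the routine verifications that this localization remains a real affine domain over $K$ and that restricting the resulting homomorphism to $A$ gives the desired $\varphi$.
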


\begin{proof}
Apply Theorem~\ref{thm:3.11} to the real affine domain $A[1/(f_{1}\cdots f_{n})]$.
\end{proof}

\begin{corollary} \label{cor:3.13}
Let $K$ be a real closed field and let $A$ be a semi-real affine algebra over $K$.   Then there exists a $K$-algebra homomorphism $\varphi:A\longrightarrow K$.
\end{corollary}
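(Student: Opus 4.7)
The plan is to reduce to the Artin-Lang Homomorphism Theorem (Theorem \ref{thm:3.11}) by passing to a quotient of $A$ by a well-chosen real prime ideal.

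Since $A$ is semi-real, Corollary \ref{cor:3.8} supplies a real prime ideal $\gothp \in \rSpec A$. I would pass to the residue class ring $B := A/\gothp$. Note that $B$ inherits the following three properties from $A$ and the choice of $\gothp$: (i) $B$ is a finitely generated $K$-algebra, since $A$ is and quotients of affine $K$-algebras are affine; (ii) $B$ is an integral domain, since $\gothp$ is a prime ideal; (iii) $B$ is real, since by Definition \ref{def:3.2} the condition $\gothp \in \rSpec A$ means precisely that the quotient field ${\rm Q}(A/\gothp) = {\rm Q}(B)$ is a formally real field, and by \ref{mypar:3.1}(1)(i) (an integral domain is real iff its quotient field is real) it follows that $B$ itself is real.

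Thus $B$ is a real affine domain over the real closed field $K$, so Theorem \ref{thm:3.11} yields a $K$-algebra homomorphism $\psi : B \to K$. Composing with the canonical surjection $\pi : A \to A/\gothp = B$ gives the desired $K$-algebra homomorphism
\[
\varphi := \psi \circ \pi : A \longrightarrow K.
\]

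There is essentially no obstacle beyond invoking the right results in the right order; the only subtle point is ensuring that $A/\gothp$ really is real (not merely semi-real), which is exactly why one must pick $\gothp$ inside $\rSpec A$ rather than inside $\Spec A$. The existence of such a $\gothp$ is precisely the content of Corollary \ref{cor:3.8}, which itself rests on the maximal semi-real ideal construction of Theorem \ref{thm:3.7}. Once this is in place, Theorem \ref{thm:3.11} does all the geometric work.
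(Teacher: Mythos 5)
Your proof is correct and takes essentially the same route as the paper: pick a real prime ideal $\gothp \in \rSpec A$ (the paper cites Theorem~\ref{thm:3.7} while you cite the cleaner Corollary~\ref{cor:3.8}, which is derived from it), observe that $A/\gothp$ is a real affine domain, apply Theorem~\ref{thm:3.11}, and compose with the canonical surjection. Your spelling out of why $A/\gothp$ is real — via \ref{mypar:3.1}(1)(i) — is a useful explicit step that the paper leaves implicit.
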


\begin{proof}
Note that by Theorem~\ref{thm:3.7} there exists $\gothp\in\rSpec A$.  Now, apply Theorem~\ref{thm:3.11} to the real affine domain $A/\gothp$.\end{proof}

\begin{corollary} \label{cor:3.14}
Let $K$ be a real closed field,  let $A$ be an affine algebra over $K$ and let $f_{1},\ldots , f_{m}\in A$, $g_{1},\ldots , g_{n}\in A$.  If there exists an order $T$ on $A$ such that $f_{i}>_{T}\,0$ and  $g_{j}\geq_{\,T}\,0$ for all $1\leq i\leq m$ and $1\leq j\leq n$, then there exists a $K$-algebra homomorphism $\varphi:A\longrightarrow K$ such that $\varphi(f_{i})>0$ for all $i=1,\ldots , m$ and $\varphi(g_{j})\geq 0$ for all $j=1,\ldots , n$, where $\leq$ is the unique order on $K$, {\rm see\,\ref{mypar:3.1}\,(3)\,(a)}.
\end{corollary}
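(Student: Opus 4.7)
The plan is to enlarge $A$ so that the strict (resp.\ non-strict) inequalities $f_i>0$ (resp.\ $g_j\geq 0$) become visible as algebraic identities: each $f_i$ becomes a non-zero square and each $g_j$ becomes a square in an auxiliary affine $K$-algebra built from $A$, and then invoke Corollary~\ref{cor:3.12}\,---\,the non-zero element refinement of the Artin-Lang Homomorphism Theorem\,---\,for the resulting real affine $K$-domain.

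Set $\gothp:=\Supp(T)\in\Spec A$, so that $T$ descends to an order on the integral domain $A/\gothp$ with trivial support and extends to a field order $\leq$ on $L:=\mathrm{Q}(A/\gothp)$. Let $R$ be a real closure of the ordered field $(L,\leq)$, see \ref{mypar:3.1}\,(3)\,(d). Since $f_i\in T\setminus\Supp(T)$, the image of $f_i$ in $L$ is strictly positive, hence admits a unique positive square root $\alpha_i\in R^\times$ by \ref{mypar:3.1}\,(3)\,(b). Similarly, the image of $g_j$ in $L$ is $\geq 0$, and thus admits a square root $\beta_j\in R$ (taking $\beta_j=0$ if $g_j\in\gothp$).

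Introduce the affine $K$-algebra
\[
B:=A[Y_1,\dots,Y_n,W_1,\dots,W_m]\big/\big\langle Y_j^2-g_j\,,\,W_i^2-f_i\ \big|\ 1\leq j\leq n,\ 1\leq i\leq m\big\rangle.
\]
The prescriptions $Y_j\mapsto\beta_j$ and $W_i\mapsto\alpha_i$, together with the composite $A\twoheadrightarrow A/\gothp\hookrightarrow R$, define a $K$-algebra homomorphism $\rho\colon B\to R$, well-defined because $\alpha_i^2$ and $\beta_j^2$ are the respective images of $f_i$ and $g_j$ in $R$. Its image $C:=\rho(B)\subseteq R$ is a finitely generated $K$-subalgebra of $R$, hence an affine $K$-domain; since $R$ is real, $C$ is real by \ref{mypar:3.1}\,(1)\,(i). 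By Corollary~\ref{cor:3.12} applied to $C$ and the non-zero elements $\rho(W_1),\dots,\rho(W_m)\in C$, there exists a $K$-algebra homomorphism $\psi\colon C\to K$ with $\psi(\rho(W_i))\neq 0$ for all $i=1,\dots,m$.

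Define $\varphi\colon A\to K$ as the composite of the canonical map $A\to B$ with $\psi\circ\rho$. Then
\[
\varphi(g_j)=\psi(\rho(Y_j))^{\,2}\geq 0\quad\text{and}\quad \varphi(f_i)=\psi(\rho(W_i))^{\,2}>0
\]
in $K$, the latter because $\psi(\rho(W_i))\neq 0$ and a non-zero square in the real closed field $K$ is strictly positive, see \ref{mypar:3.1}\,(3)\,(a). The main technical point to verify carefully is the first step: that the given order $T$ on $A$ really induces a well-defined field order on $L=\mathrm{Q}(A/\Supp T)$ with respect to which the images of the $f_i$ remain strictly positive and the images of the $g_j$ remain non-negative. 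This is essentially a formal check after observing that $f_i\in T\setminus(-T)$ forces $f_i\notin\Supp(T)$, after which the rest of the argument is a routine reduction to the Artin-Lang theorem.
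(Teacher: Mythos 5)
Your proof is correct and follows essentially the same route as the paper: pass to $A/\Supp T$, extend the induced order to the quotient field $L$, take a real closure $R$, adjoin the square roots of the $f_i$ and $g_j$ inside $R$, and invoke Corollary~\ref{cor:3.12}. The only cosmetic difference is that you enforce $\varphi(f_i)\neq 0$ by choosing a homomorphism nonzero on the adjoined roots $\sqrt{f_i}$ (the ``nonzero elements'' clause of Corollary~\ref{cor:3.12}), whereas the paper additionally adjoins $1/f_1,\ldots,1/f_m$ so that $f_i$ becomes a unit; and the ``first step'' you flag at the end is exactly what \ref{mypar:3.1}\,(4) supplies, so it is a citation rather than a gap.
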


\begin{proof}
Let $\gothp$ be the support of $T$, see \ref{mypar:3.1}\,(4). By passing to $A/\gothp$, we may assume that $\gothp=0$. Then $T$ extends uniquely to an order $P$ on the quotient field ${\rm Q}(A)$ of $A$ with $f_{i}\in P\!\smallsetminus\!\{0\}$ for all $i=1,\ldots , m$ and $g_{j}\in P$ for all $j=1,\ldots , n$.  Now, apply Corollary~\ref{cor:3.12} to the real affine domain $A[1/f_{1},\ldots , 1/f_{m},\sqrt{f_{1}},\ldots , \sqrt{f_{m}},\sqrt{g_{1}},\ldots , \sqrt{g_{n}}]$ in the real closure of the ordered field $({\rm Q}(A), P)$, see \ref{mypar:3.1}\,(3)\,(d).
\end{proof}

\smallskip

The Artin-Lang theory lays the foundations for real algebraic geometry, i.\,e.  the study of real algebraic varieties. The Artin-Lang homomorphism theorem is used to give affirmative answer to the Hilbert's 17th Problem for a real closed base field, see\,\ref{mypar:3.15} below.
\medskip

We consider this problem over the real closed base fields only. Let $K$ be a real closed field. Let $f\in K[X_{1},\ldots , X_{n}]$ be a polynomial in $n$ indeterminates $X_{1},\ldots , X_{n}$ over $K$. We say that $f$ is positive semi-definite if $f(a)\geq 0$ for all $a\in K^{n}$, where $\leq$ is the unique order on $K$, see\,\ref{mypar:3.1}\,(3)\,(a). If $f$ is positive semi-definite, then it need not be a sum of squares in $K[X_{1},\ldots , X_{n}]$. For instance, the Motzkin polynomial $M(X, Y)\!=\! X^{4}Y^{2}\!\!+\!X^{2}Y^{4}\!\!-\!3X^{2}Y^{2}\!+\!1$ gives such an example. Indeed, the arithmetic-geometric mean inequality implies that $M\!\geq\! 0 $ on $\R^{2}$. Suppose, on the contrary that $M\! =\!\sum_{j} f_{\!j}^{2}$ is  a sum of squares of real polynomials. Since $M(0,Y) \!=\!M(X,0)\!=\!1$, the polynomials $f_{\!j}(0,Y)$ and $f_{\!j}(X,0)$ are constants. Therefore each $f_{\!j}$ is of the form $f_{\!j}\!= \!a_{\!j}\!+\!b_{\!j}XY\!+\!c_{\!j}X^{2}Y\!+\!d_{\!j} XY^{2}$ with  $a_{\!j}, b_{\!j}, c_{\!j}, d_{\!j}\in K$.   Then the coefficient of $X^{2}Y^{2}$ in the equality $M\!=\!\sum_{j} f_{\!j}^{2}$ is equal to $-3\!=\!\sum_{j}b_{\!j}^{2}$ which is a contradiction.

\smallskip

Motivated by his previous work, David Hilbert posed  the following  problem which was the 17th in the list of 23 challenging problems presented  in his celebrated address to the International Congress of Mathematicians in Paris  (1900)\,:\,

\begin{mypar}\label{mypar:3.15}
{\bf Hilbert's 17th Problem}\,\footnote{\label{foot:12}The starting point of the history of Hilbert's 17th Problem was the oral defense
of the doctoral dissertation of Hermann Minkowski (1864-1909) at the University of
K\"onigsberg in 1885. The 21 year old Minkowski expressed his opinion that
there exist real polynomials which are nonnegative on the whole $\R^{n}$ and cannot
be written as finite sums of squares of real polynomials. David Hilbert was
an official opponent in this defense. In his `` Ged\"achtnisrede''
[D. Hilbert, Hermann Minkowski. Ged\"achtnisrede, Math. Ann. {\bf 68} \,(1910),
445-471.] in memorial
of H. Minkowski he said later that Minkowski had convinced him about the
truth of this statement. In 1888 [D. Hilbert, \"Uber die Darstellung definiter Formen als Summe von Formenquadraten,
Math. Ann. {\bf 32} \,(1888), 342-350.] Hilbert proved  the
existence of a real polynomial in two variables of degree six which is nonnegative
on $\R^2$ but not a sum of squares of real polynomials. Hilbert's proof used
some basic results from the theory of algebraic curves. Apart from this his
construction is completely elementary. The first explicit example of this kind
was given by T. Motzkin  only in 1967 [T.\,S.\,Motzkin, The arithmetic-geometric inequality. In\,: Proc. Symposium
on Inequalities, edited by O. Shisha, Academic Press, New York, 1967, pp.
205-224.].
}\,{\it If $f$ is positive semi-definite, then must it be a sum of squares in the rational function field $K(X_{1},\ldots , X_{n})\,$}?
\end{mypar}

The affirmative solution to this problem was given by Artin in 1927 \cite{EArtin}.

\begin{proof}
\so{(Modern version of Artin's proof):}\,  Put  $L:=K(X_{1},\ldots , X_{n})$. Suppose, on the contrary that $f\not\in \sum L^{2}$. Then by \ref{mypar:3.1}\,(2)\,(b) there exists an order $T$ on $L$ \hbox{such that $f<_{T}\,0$, i.\,e.} $-f \,>_{T}\,0$.  Therefore by applying Corollary~\ref{cor:3.14}, there exists a $K$-algebra homomorphism $\varphi:K[X_{1},\ldots , X_{n}]\longrightarrow K$ such that $\varphi(-f)>0$ (where $\leq$ is the unique order on the real closed field $K$, see\,\ref{mypar:3.1}\,(3)\,(a)).  Then, for $a_{i}:=\varphi(X_{i})$, $i=1,\ldots , n$, we have $f(a_{1},\ldots  , a_{n}) =f(\varphi(X_{1}), \ldots , \varphi(X_{n})) = \varphi(f(X_{1}, \ldots , X_{n}))=\varphi(f)\,<0$, a contradiction.
\end{proof}

Our main goal in the section is to formulate and prove Real Nullstellensatz. Let $K$ be a real closed field, $\overline{K}\!=\!K({\rm i})$ with ${\rm i}^{2}\!\!+\!1\!=\!0$ an algebraic closure of $K$ (see  Theorem of Euler-Lagrange in (3.1)\,(3)\,(e))  and $\gotha\subseteq K[X_{1},\ldots , X_{n}]$. We consider the $K$-algebraic set ${\rm V}_{\overline{K}}(\gotha)\subseteq \overline{K}^{n}$ and the set of {\it real points}  ${\rm V}_{K}(\gotha)={\rm V}_{\overline{K}}(\gotha)\cap K^{n}$.  The analogue of HNS\,1$'$ \,---\,the Real Nullstellensatz provides a geometric criterion for the semi-reality of  the affine $K$-algebra $K[X_{1},\ldots , X_{n}]/\gotha$ which is the  $K$-{\it coordinate ring} of ${\rm V}_{K}(\gotha)$\,:

\smallskip

\begin{theorem}\label{thm:3.16}
{\rm \so{(Classical Real Nullstellensatz)}}\,
 Let $K$ be a real closed field {\rm (e.\,g. $K=\R$)}  and let $A=K[x_{1},\ldots , x_{n}]=K[X_{1},\ldots , X_{n}]/\gotha$ be a $K$-algebra of finite type. Then
$A$ is semi-real if and only if \ ${\rm V}_{K}(\gotha)\neq \emptyset$.
\end{theorem}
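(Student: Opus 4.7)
The plan is to prove both directions separately, with the easy direction reducing to the definition of semi-reality and the hard direction being an almost immediate consequence of the Artin–Lang machinery already established (specifically Corollary~\ref{cor:3.13}).

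\textbf{($\Leftarrow$) direction.} Assume ${\rm V}_K(\gotha)\neq\emptyset$ and pick $a=(a_1,\ldots,a_n)\in{\rm V}_K(\gotha)$. First I would consider the evaluation homomorphism $\varepsilon_a:K[X_1,\ldots,X_n]\to K$, $X_i\mapsto a_i$, as in \ref{mypar:2.1}\,(2). Since every $f\in\gotha$ vanishes at $a$, we have $\gotha\subseteq\Ker\varepsilon_a$, and $\varepsilon_a$ factors through a $K$-algebra homomorphism $\overline{\varepsilon_a}:A\to K$. Now, $K$ is real closed, hence real, so $-1\notin\sum K^2$. If we had $-1=\sum_i\overline{b_i}^{\,2}$ in $A$ for some $\overline{b_i}\in A$, then applying $\overline{\varepsilon_a}$ would yield $-1=\sum_i\overline{\varepsilon_a}(\overline{b_i})^{\,2}\in\sum K^2$, a contradiction. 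Therefore $A$ is semi-real. (Equivalently, invoke \ref{mypar:3.1}\,(1)\,(i): the existence of a ring homomorphism $A\to K$ into a semi-real ring forces $A$ to be semi-real.)

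\textbf{($\Rightarrow$) direction.} This is where the heavy lifting comes in, but it has been prepared in advance. Assume $A=K[X_1,\ldots,X_n]/\gotha$ is a semi-real affine $K$-algebra. By Corollary~\ref{cor:3.13} (the supplement to the Artin–Lang Homomorphism Theorem for semi-real affine algebras), there exists a $K$-algebra homomorphism $\varphi:A\to K$. Composing with the canonical surjection $\pi:K[X_1,\ldots,X_n]\to A$ gives a $K$-algebra homomorphism $\psi:=\varphi\circ\pi:K[X_1,\ldots,X_n]\to K$ that vanishes on $\gotha$. Setting $a_i:=\psi(X_i)\in K$ and $a:=(a_1,\ldots,a_n)\in K^n$, I would then observe that for every $f\in\gotha$,
\[
f(a)=f(\psi(X_1),\ldots,\psi(X_n))=\psi(f)=0,
\]
so $a\in{\rm V}_K(\gotha)$ and in particular ${\rm V}_K(\gotha)\neq\emptyset$.

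\textbf{Where the difficulty lies.} The entire substance of the theorem is packaged inside the Artin–Lang Homomorphism Theorem (Theorem~\ref{thm:3.11}) and its corollary for semi-real affine algebras (Corollary~\ref{cor:3.13}), which were stated without proof in this exposition. Granted those, the argument is a short formal manipulation: the ``semi-real to real point'' implication is simply the Artin–Lang corollary translated from ring-homomorphisms into tuples of coordinates, while the reverse implication is a one-line pullback of sums of squares along an evaluation. Thus my plan involves no serious calculation—only careful bookkeeping to make sure the $K$-algebra homomorphism produced by Artin–Lang really encodes a $K$-rational zero of $\gotha$, and to make sure the target field $K$ being real closed is what guarantees semi-reality in the other direction.
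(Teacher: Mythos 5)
Your proposal is correct and follows essentially the same route as the paper's proof: one direction pulls back semi-reality along the evaluation homomorphism $A\to K$ (invoking \ref{mypar:3.1}\,(1)\,(i)), and the converse applies Corollary~\ref{cor:3.13} to obtain a $K$-algebra homomorphism $\varphi:A\to K$ whose values $\varphi(x_i)$ give the desired point of ${\rm V}_K(\gotha)$. The only cosmetic difference is that you spell out the sum-of-squares pullback explicitly where the paper just cites the lemma.
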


\begin{proof}
Let $a=(a_1,\dots,a_n) \in {\rm V}_K(\gotha)$.  Then the evaluation map $\varepsilon_{a}:K[X_{1},\ldots , X_{n}]\longrightarrow K$, $f\mapsto f(a)$, induces a  $K$-algebra homomorphism $\overline{\varepsilon}_{a} : A \rightarrow K$  and hence by \ref{mypar:3.1}\,(1)\,(i)  $A$ is semi-real. Conversely, if $A$ is semi-real, then by  Corollary\,\ref{cor:3.13}  there exists a $K$-algebra homomorphism $\varphi : A \rightarrow K$. Then clearly  $(a_{1},\ldots, a_{n})=(\varphi(X_{1}), \ldots , \varphi(X_{n}))\in {\rm V}_{K}(\gotha)$, since $f(a_{1},\ldots  , a_{n}) =f(\varphi(X_{1}), \ldots , \varphi(X_{n})) = \varphi(f(X_{1}, \ldots , X_{n}))=\varphi(f)=0$ for every $f\in \gotha$.
\end{proof}

\begin{lemma}\label{lem:3.17}
Let $K$ be a real field and $\gotha \subseteq K[X_1,\dots,X_n]$ an ideal. Then the ideal ${\rm  I}({\rm V}_{K}(\gotha))$ is a real ideal.
\end{lemma}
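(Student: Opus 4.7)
The plan is to unwind the definitions: an ideal $\gothb \subseteq A$ is real precisely when $A/\gothb$ is a real ring, and this is equivalent to saying that whenever a finite sum of squares $g_1^2 + \cdots + g_r^2$ lies in $\gothb$, each summand $g_i$ already lies in $\gothb$. So the lemma reduces to checking this closure property for $\gothb = {\rm I}({\rm V}_K(\gotha))$.

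The idea is then to transfer the sum-of-squares condition from polynomials to point evaluations, where the hypothesis that $K$ itself is a real field gives us exactly what we need. Concretely, I would start with polynomials $f_1,\dots,f_r \in K[X_1,\dots,X_n]$ satisfying $f_1^2 + \cdots + f_r^2 \in {\rm I}({\rm V}_K(\gotha))$. By the very definition of the vanishing ideal (\ref{mypar:2.5}), this means that for every point $a \in {\rm V}_K(\gotha) \subseteq K^n$ one has $f_1(a)^2 + \cdots + f_r(a)^2 = 0$ in $K$.

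Now I invoke the assumption that $K$ is real (\ref{mypar:3.1}\,(1)): a sum of squares of elements of $K$ vanishes only when every summand vanishes. Applied to the previous equality for each $a \in {\rm V}_K(\gotha)$, this forces $f_i(a) = 0$ for all $i = 1,\dots,r$ and all $a \in {\rm V}_K(\gotha)$. Reading the conclusion backwards through the definition of ${\rm I}(\cdot)$ gives $f_i \in {\rm I}({\rm V}_K(\gotha))$ for every $i$, which is exactly the real-ideal closure property.

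There is no real obstacle here: the statement is essentially a one-line consequence of the definitions together with the defining property of real fields, and no appeal to the Artin–Lang theorem or to algebraic closures is required. The only point to be careful about is stating the equivalence between ``$\gothb$ is real'' and the polynomial-level sum-of-squares condition correctly, so that passing from $\sum f_i^2 \in \gothb$ to each $f_i \in \gothb$ is recognised as exactly what ``$K[X_1,\dots,X_n]/\gothb$ is real'' means; after that, pointwise evaluation finishes the argument.
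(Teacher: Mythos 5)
Your proof is correct and is essentially identical to the paper's: both evaluate the sum-of-squares relation at each point $a\in{\rm V}_K(\gotha)$, use realness of $K$ to conclude $f_i(a)=0$ for all $i$, and read back that each $f_i\in{\rm I}_K({\rm V}_K(\gotha))$.
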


\begin{proof}
Let $f_1,\dots,f_r \in A$ be such that $f_1^2+\cdots+f_r^2 \in {\rm I}_{K}({\rm V}_{K}(\gotha))$.  Then   $f_1^2(a)+\cdots+f_r^2(a) =(f_1^2+\cdots+f_r^2)(a)=0$ for all $a \in {\rm V}_K(\gotha)$. Therefore, since $K$ is a real field,   $f_{i}(a)=0$ for all $a \in {\rm V}_K(\gotha)$ and $i=1,\dots,r$,  i.\,e.   $f_{i }\in {\rm I}_{K}({\rm V}_{K}(\gotha))$ for all  $i=1,\ldots, r$.
\end{proof}

Next, we prove  the Real Nullstellensatz for prime ideals.

\begin{theorem}\label{thm:3.18}\,
Let $K$ be a real closed field {\rm (e.\,g. $K=\R$)} and let $\gothp\in$
$\Spec  K[X_1,\dots,X_n]$ be a prime ideal.  Then ${\rm I}_{K}({\rm V}_{K}(\gothp))=  \gothp$ if and only if $\,\gothp\in \rSpec  K[X_1,\dots,X_n]$.
\end{theorem}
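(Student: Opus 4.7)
The plan is to prove both implications using results already established in the paper, with Corollary~\ref{cor:3.12} (the strengthened Artin-Lang homomorphism theorem) as the key tool for the non-trivial direction.

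For the forward direction ($\Rightarrow$), I would argue almost tautologically. Assume ${\rm I}_K({\rm V}_K(\gothp)) = \gothp$. By Lemma~\ref{lem:3.17}, the ideal ${\rm I}_K({\rm V}_K(\gothp))$ is always a real ideal, so $\gothp$ itself is real, i.e., the affine domain $A := K[X_1,\ldots,X_n]/\gothp$ is real. Since $A$ is an integral domain, by \ref{mypar:3.1}\,(1)\,(i) its field of fractions $k(\gothp) = {\rm Q}(A)$ is likewise real. Hence $\gothp \in \rSpec K[X_1,\ldots,X_n]$.

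For the reverse direction ($\Leftarrow$), assume $\gothp \in \rSpec K[X_1,\ldots,X_n]$, so $A := K[X_1,\ldots,X_n]/\gothp = K[x_1,\ldots,x_n]$ is a real affine domain over $K$. The inclusion $\gothp \subseteq {\rm I}_K({\rm V}_K(\gothp))$ is immediate. For the reverse inclusion, I would argue contrapositively: given $f \in K[X_1,\ldots,X_n]$ with $f \notin \gothp$, I need to produce a point $a \in {\rm V}_K(\gothp)$ with $f(a) \neq 0$. Let $\bar{f} \in A$ be the residue class of $f$; then $\bar{f} \neq 0$ in $A$. Apply Corollary~\ref{cor:3.12} to the real affine domain $A$ and the single nonzero element $\bar{f}$ to obtain a $K$-algebra homomorphism $\varphi : A \to K$ with $\varphi(\bar{f}) \neq 0$. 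Set $a_i := \varphi(x_i)$ and $a := (a_1,\ldots,a_n) \in K^n$. For any $g \in \gothp$, the image $\bar{g} = 0$ in $A$, so $g(a) = \varphi(\bar{g}) = 0$, which shows $a \in {\rm V}_K(\gothp)$. Meanwhile $f(a) = \varphi(\bar{f}) \neq 0$, so $f \notin {\rm I}_K({\rm V}_K(\gothp))$, as required.

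The main obstacle has essentially been bypassed by citing Corollary~\ref{cor:3.12}, which in turn rests on the Artin-Lang Homomorphism Theorem~\ref{thm:3.11}. Without Artin-Lang we would have to produce a $K$-rational point on the variety of a real prime ideal by other means; with it, the argument reduces to a clean evaluation-map construction perfectly analogous to the proof of the classical HNS\,2 via its equivalent form HNS\,1$''$. No subtlety arises from $K$ being only real closed rather than $\R$, since Corollary~\ref{cor:3.12} is stated in that generality.
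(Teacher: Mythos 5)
Your proof is correct and follows essentially the same route as the paper's. The $(\Leftarrow)$ direction is identical: apply Corollary~\ref{cor:3.12} to the nonzero element $\bar f$ of the real affine domain $A = K[X_1,\ldots,X_n]/\gothp$ and evaluate. Your $(\Rightarrow)$ direction is a touch cleaner than the paper's: you cite Lemma~\ref{lem:3.17} once and are done, whereas the paper restates its content in-line by writing out a sum of squares $f_1^2+\cdots+f_r^2\in\gothp$ with no $f_i\in\gothp$ and observing each $f_i\in{\rm I}_K({\rm V}_K(\gothp))$; both versions of that direction are contrapositives of one another and use the reality of $K$ in the same way.
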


\begin{proof}
($\Rightarrow$)\,: Suppose that $\gothp$ is not real. Then $f_{1}^{2}+\cdots +f_{r}^{2}\in\gothp$ for some $f_{1},\ldots , f_{r}\in K[X_1,\dots,X_n]$ with $f_{i}\not\in\gothp$ for every $i=1,\ldots , r$. But, then clearly $f_{i}\in {\rm I}_{K}({\rm V}_{K}(\gothp))$. In ~particular, $\gothp\subsetneq {\rm I}_{K}({\rm V}_{K}(\gothp))$.
\smallskip

($\Leftarrow$)\,: Suppose that $\gothp$ is real. To prove the equality  ${\rm I}_{K}({\rm V}_{K}(\gothp))=  \gothp$, it is enough to  prove that if $f\not\in \gothp$, then $f\not\in {\rm I}_{K}({\rm V}_{K}(\gothp))$. Since $\overline{f}\neq 0$ in the real affine domain  $A:=K[X_{1},\ldots , X_{n}]/\gothp=K[x_{1},\ldots , x_{n}]$ over $K$, by Corollary~\ref{cor:3.12} there exists a $K$-algebra homomorphism $\varphi:A\to K$ such that $\varphi(\overline{f}) \neq 0$. Then   $(a_{1},\ldots, a_{n})=(\varphi(x_{1}), \ldots , \varphi(x_{n}))\in {\rm V}_{K}(\gothp)$, since $g(a_{1},\ldots  , a_{n}) =g(\varphi(x_{1}), \ldots , \varphi(x_{n})) = \varphi(g(x_{1}, \ldots , x_{n}))=\varphi(\overline{g})=0$ for every $g\in \gothp$. Furthermore, $f(a_{1},\ldots  , a_{n}) =f(\varphi(x_{1}), \ldots , \varphi(x_{n})) = \varphi(f(x_{1}, \ldots , x_{n}))=\varphi(\overline{f})\neq 0$, i.\,e. $f\not\in {\rm I}_{K}({\rm V}_{K}(\gothp))$.
\end{proof}
\smallskip

Finally, we prove the analogue of HNS\,2  for  real closed fields which is also known as the Dubois-Risler Nullstellensatz, see\,\cite{duboisOrdered},
\cite{duboisEfroymson}
and \cite{risler}.

\begin{theorem}\label{thm:3.19}
{\rm \so{(Strong Real Nullstellensatz)}}\,  Let $K$ be a real closed field {\rm (e.\,g. $K=\R$)} and  let $\gotha \subseteq  K[X_1,\dots,X_n]$ an ideal. Then ${\rm I}_{K}({\rm V}_{K}(\gotha))=  \rsqrt{\gotha}$.
\end{theorem}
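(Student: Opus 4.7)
The plan is to prove the two inclusions separately, exploiting two tools already established in the text: the element-wise description of the real radical (Theorem~\ref{thm:3.10}) for the easy direction, and the Real Nullstellensatz for prime ideals (Theorem~\ref{thm:3.18}) for the other direction. Once these are in hand, the argument is essentially formal.

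For the inclusion $\rsqrt{\gotha}\subseteq {\rm I}_{K}({\rm V}_{K}(\gotha))$, I would take $f\in\rsqrt{\gotha}$ and invoke Theorem~\ref{thm:3.10} to produce $m\in\N$ and $a_{1},\ldots,a_{r}\in K[X_{1},\ldots,X_{n}]$ with
\[
f^{\,2m}+a_{1}^{\,2}+\cdots+a_{r}^{\,2}\in \gotha.
\]
For any $x\in {\rm V}_{K}(\gotha)$, evaluating this relation at $x$ gives $f(x)^{2m}+a_{1}(x)^{2}+\cdots+a_{r}(x)^{2}=0$ inside $K$. Since $K$ is a real closed field, it is in particular formally real, so a vanishing sum of squares in $K$ forces each summand to vanish. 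In~particular $f(x)^{2m}=0$, hence $f(x)=0$. As $x\in {\rm V}_{K}(\gotha)$ was arbitrary, $f\in {\rm I}_{K}({\rm V}_{K}(\gotha))$.

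For the reverse inclusion ${\rm I}_{K}({\rm V}_{K}(\gotha))\subseteq \rsqrt{\gotha}$, the strategy is to test membership directly against the intersection defining the real radical. Let $f\in {\rm I}_{K}({\rm V}_{K}(\gotha))$ and pick any real prime $\gothp\in\rSpec K[X_{1},\ldots,X_{n}]$ with $\gotha\subseteq\gothp$. The inclusion $\gotha\subseteq\gothp$ gives ${\rm V}_{K}(\gothp)\subseteq {\rm V}_{K}(\gotha)$, so $f$ vanishes on ${\rm V}_{K}(\gothp)$; that is, $f\in {\rm I}_{K}({\rm V}_{K}(\gothp))$. Since $\gothp$ is real, Theorem~\ref{thm:3.18} yields ${\rm I}_{K}({\rm V}_{K}(\gothp))=\gothp$, so $f\in\gothp$. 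As $\gothp$ ranges over all real primes containing $\gotha$, we conclude
\[
f\in \bigcap_{\gothp\in \rV{\gotha}}\gothp = \rsqrt{\gotha}.
\]

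There is no genuine obstacle at this level: the heavy lifting has already been absorbed into the Artin--Lang homomorphism theorem (used to prove Theorem~\ref{thm:3.18}) and into the element-wise characterization of $\rsqrt{\gotha}$ (Theorem~\ref{thm:3.10}). One could alternatively view the first inclusion through Lemma~\ref{lem:3.17}, which already tells us that ${\rm I}_{K}({\rm V}_{K}(\gotha))$ is a real ideal containing $\gotha$; but the direct calculation via Theorem~\ref{thm:3.10} is shorter and self-contained, so I would use that.
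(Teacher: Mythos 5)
Your proof is correct and follows exactly the same route as the paper: the forward inclusion via the element-wise characterization of the real radical (Theorem~\ref{thm:3.10}) and the reality of $K$, and the reverse inclusion by testing against each real prime $\gothp\supseteq\gotha$ and invoking Theorem~\ref{thm:3.18}. The only cosmetic difference is that you write out the sum of squares $a_1^2+\cdots+a_r^2$ explicitly where the paper abbreviates it as a single element $g\in\sum A^2$.
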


\begin{proof}
Let $A:=K[X_{1},\ldots , X_{n}]$ and $f\in A$. If $f\in\rsqrt{\gotha}$, then by \ref{thm:3.10} $f^{2m}+g\in \gotha$ for some $m\in\N$ and some $g\in\sum A^{2}$. Then, for every $a\in {\rm V}_{K}(\gotha)$, $f^{2m}(a)+g(a)=0\in K$ and hence $f(a)=0$, since $g(a)\in\sum K^{2}$ and $K$ is a real field. Therefore $f\in {\rm I}_{K}({\rm V}_{K}(\gotha))$.
\smallskip

If  $\gothp\in \rV{\gotha}$, then ${\rm V}_{K}(\gothp) \subseteq {\rm V}_{K}(\gotha)$ and hence  ${\rm I}_{K}({\rm V}_{K}(\gotha)) \subseteq {\rm I}_{K}({\rm V}_{K}(\gothp))=\gothp$ by Theorem\,\ref{thm:3.18}. Therefore
${\rm I}_{K}({\rm V}_{K}(\gotha))\subseteq \bigcap_{\,\gothp \in {\footnotesize \rV{\gotha}}}\,\gothp =\rsqrt{\gotha}$.
\end{proof}

Note that for a semi-real ideal $\gotha\subsetneq K[X_{1},\ldots , X_{n}]$, where $K$ is a real closed field, ${\rm V}_{K}(\gotha)\neq\emptyset$ by the Classical Real Nullstellensatz\,\ref{thm:3.16}. However, ${\rm V}_{K}(\gotha)$ may be too small to reflect any geometric properties of ${\rm V}_{\overline{K}}(\gotha)$. An extreme example is $\gotha=\langle X_{1}^{2}+\cdots +X_{n}^{2}\rangle$, in this case ${\rm V}_{K}(\gotha)=\{0\}$ which does not reveal any geometric properties of the hypersurface ${\rm V}_{\overline{K}}(\gotha)$ over the algebraic closure $\overline{K}$ of $K$. On the other hand, if $\gotha$ is not only semi-real but  a real ideal, then ${\rm V}_{K}(\gotha)$ is a ``significant'' part of ${\rm V}_{\overline{K}}(\gotha)$. We deduce this from the above Strong Real Nullstellensatz. More precisely, we prove\,:

\begin{corollary}\label{cor:3.20}
Let $K$ be a real closed field and $\gotha \subseteq  K[X_1,\dots,X_n]$ a real ideal. Then ${\rm V}_{K}(\gotha)$ is Zariski dense in the $K$-algebraic set ${\rm V}_{\overline{K}}(\gotha)$.
\end{corollary}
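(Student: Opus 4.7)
By construction $\mathrm{V}_K(\gotha)\subseteq \mathrm{V}_{\overline{K}}(\gotha)$, and the latter is Zariski closed in $\overline{K}^{\,n}$. Hence the Zariski closure of $\mathrm{V}_K(\gotha)$ in $\overline{K}^{\,n}$ is automatically contained in $\mathrm{V}_{\overline{K}}(\gotha)$. My plan is therefore to establish the reverse inclusion, which by the algebra-geometry correspondence amounts to verifying that every $f\in \overline{K}[X_1,\ldots,X_n]$ that vanishes on $\mathrm{V}_K(\gotha)$ already vanishes on $\mathrm{V}_{\overline{K}}(\gotha)$; equivalently, that $\mathrm{I}_{\overline{K}}(\mathrm{V}_K(\gotha)) \subseteq \gotha\,\overline{K}[X_1,\ldots,X_n]$.

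The central manoeuvre is the decomposition $\overline{K}=K(\mathrm{i})=K\oplus K\mathrm{i}$ coming from the Euler--Lagrange Theorem (\ref{mypar:3.1}(3)(e)). This gives a corresponding direct sum decomposition
\begin{align*}
\overline{K}[X_1,\ldots,X_n]=K[X_1,\ldots,X_n]\oplus \mathrm{i}\,K[X_1,\ldots,X_n],
\end{align*}
so that every $f\in \overline{K}[X_1,\ldots,X_n]$ has a unique representation $f=g+\mathrm{i}\,h$ with $g,h\in K[X_1,\ldots,X_n]$. For any $K$-rational point $a\in K^{\,n}$ one has $f(a)=g(a)+\mathrm{i}\,h(a)$ with $g(a),h(a)\in K$, and since $K\cap \mathrm{i}\,K=\{0\}$, $f(a)=0$ is equivalent to $g(a)=h(a)=0$.

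Now suppose $f\in \mathrm{I}_{\overline{K}}(\mathrm{V}_K(\gotha))$ and write $f=g+\mathrm{i}\,h$ as above. The preceding observation shows that $g,h\in \mathrm{I}_K(\mathrm{V}_K(\gotha))$. Applying the Strong Real Nullstellensatz (Theorem~\ref{thm:3.19}) over the real closed field $K$, together with the hypothesis that $\gotha$ is a real ideal (so that $\rsqrt{\gotha}=\gotha$), we obtain
\begin{align*}
\mathrm{I}_K(\mathrm{V}_K(\gotha))=\rsqrt{\gotha}=\gotha.
\end{align*}
Consequently $g,h\in \gotha$, hence $f=g+\mathrm{i}\,h \in \gotha\,\overline{K}[X_1,\ldots,X_n]$, which in particular vanishes identically on $\mathrm{V}_{\overline{K}}(\gotha)$. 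This yields the required Zariski density.

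I expect no serious obstacle here: the only subtle point is justifying the decomposition of polynomials over $\overline{K}$ into real and imaginary parts, and this rests on the structural fact $\overline{K}=K(\mathrm{i})$ provided by Euler--Lagrange, which is already in force. The real content of the corollary is then a clean application of Theorem~\ref{thm:3.19} to the two real parts $g$ and $h$ separately, with the reality of $\gotha$ used precisely to conclude $\rsqrt{\gotha}=\gotha$.
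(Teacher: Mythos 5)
Your argument is correct and is essentially the paper's own proof: both reduce density to showing that any $f\in \overline{K}[X_1,\ldots,X_n]$ vanishing on $\mathrm{V}_K(\gotha)$ already vanishes on $\mathrm{V}_{\overline{K}}(\gotha)$, and both achieve this by writing $f=g+\mathrm{i}h$ via $\overline{K}=K(\mathrm{i})$, invoking the Strong Real Nullstellensatz to get $g,h\in\rsqrt{\gotha}$, and using reality of $\gotha$ to conclude $g,h\in\gotha$. The only cosmetic difference is that the paper phrases the reduction contrapositively in terms of distinguished open sets rather than vanishing ideals.
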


\begin{proof}
Since $K$ is real closed, by Euler-Lagrange Theorem (see \ref{mypar:3.1}\,(3)\,(e))
$\overline{K}= K({\rm i})$ with ${\rm i}^{2}+1=0$  is an algebraic closure of $K$.
It is enough to prove the implication\,: {\it  For every $f\in \overline{K}[X_{1},\ldots , X_{n}]$,  $\,{\rm D}(f)\cap {\rm V}_{\overline{K}}(\gotha) \neq \emptyset$ $\Rightarrow$  ${\rm D}(f)\cap {\rm V}_{K}(\gotha) \neq \emptyset$.} To prove this, write $f=g+{\rm i}h$ with $g$, $h\in K[X_{1},\ldots , X_{n}]$.
If  $f=0$ on ${\rm V}_{K}(\gotha)$, then clearly both $g$, $h\in {\rm I}_{K}({\rm V}_{K}(\gotha))=\rsqrt{\gotha}$ by Strong Real Nullstellensatz\,\ref{thm:3.19}. Now, since $\gotha$ is a real ideal, $\rsqrt{\gotha}=\gotha$ (see remarks after Theorem\,\ref{thm:3.10}) and hence $g$, $h\in\gotha$. Therefore $g$, $h$ and hence $f$ vanish on ${\rm V}_{\overline{K}}(\gotha)$.  \end{proof}

\begin{example}\label{ex:3.21}{
{\it Let $K$ be a real closed field and $\gothp\in\rSpec K[X_{1},\ldots , X_{n}]$ be a real prime ideal. Then $\gothp\,\overline{K}[X_{1},\ldots , X_{n}]$ is also a prime ideal in $\overline{K}[X_{1},\ldots , X_{n}]$. In ~particular, ${\rm V}_{\overline{K}}(\gothp)$  is an irreducible $K$-affine variety over $\overline{K}$.}\, This is seen as follows\,:\,
Since $K$ is real closed, $\overline{K}= K({\rm i})$ with ${\rm i}^{\,2}+1=0$  by Euler-Lagrange Theorem (see \ref{mypar:3.1}\,(3)\,(e)).
Suppose that $(f+{\rm i}g)(f\,'+{\rm i}g') \in \gothp\,\overline{K}[X_{n},\ldots , X_{n}]$, where $f\,,\,g\,,\,f\,'\,,\,g'\in K[X_{1},\ldots , X_{n}]$. Then $ff\,'-gg'\in\gothp$ and $fg'+gf\,'\in \gothp$ and hence $g(f\,'^{2}+g'^{2})\in\gothp$ and $f(f\,'^{\,2}+g'^{\,2})\in\gothp$. If $f+{\rm i}g\not\in \gothp\,\overline{K}[X_{n},\ldots , X_{n}]$, then one of $f$, $g$ is not in $\gothp$ and hence $f\,'^{\,2}+g'^{\,2}\in\gothp$. Since $\gothp$ is real, we have $f\,', g'\in\gothp$ and so $f\,'+{\rm i}g'\in \gothp$.

}
\end{example}

\section{Projective Real Nullstellensatz}

{\small The results proved in this section are based on the personal discussions of second author (Dilip P. Patil) with Professor Uwe Storch, Ruhr-Universit\"at Bochum, Germany and his lecture on 23 January 2003, on the occasion of 141-th birthday of Hilbert at the Ruhr-Universit\"at Bochum, Germany.

}
\smallskip

The main aim of this section is to prove the Projective Real Nullstellensatz\,:  Homogeneous polynomials $f_1,\ldots,f_r\in \R[T_{0}, \ldots , T_{n}]$, $r\leq n$,  of positive {\it odd} degrees in $n+1$ indeterminates have a common non-trivial zero in $\R^{n+1}$, or equivalently\,---\,a common zero in $n$-dimensional projective space $\mathds{P}^{n}(\R)$ over $\R$.
\smallskip

Our proof of the Projective Real  Nullstellensatz is
elementary and uses standard definitions and basic properties of
Poincar\'{e} series, projective (krull) dimension and multiplicity of
(standard) graded algebras over a field. This proof depends on   the fundamental property of the real numbers, namely\,:  every odd degree polynomial over the field of real numbers has a
real root. We say a field $K$ is a {\it $2$-field} if every odd
degree polynomial over $K$ has a root in $K$. Therefore the Projective Real
 Nullstellensatz can be generalized for $2$-fields. As an application, we prove the well-known Borsuk-Ulam Theorem.
\smallskip

{\small

\begin{mypar}\label{mypar:4.1} {\bf Notation and Preliminaries}\,
Let $K$ be a field and let ${\rm P}:=A_{0}[T_{0},\ldots , T_{n}]$ be the polynomial algebra in indeterminates $T_{0},\ldots , T_{n}$ over a commutative ring $A_{0}$. The homogeneous polynomials of degree $m\in\N$ form an $A_{0}$-submodule  ${\rm P}_{m}$ of ${\rm P}$ and ${\rm P}=\bigoplus_{m\in\N}\, {\rm P}_{m}$. Further, ${\rm P}_{m} {\rm P}_{k}\subseteq P_{m+k}$ for all $m, k\in\N$ and as an $A_{0}$-algebra ${\rm P}$ is generated by the homogeneous elements $T_{0},\ldots , T_{n}$ of degree $1$.
\smallskip

\begin{arlist}

\item {\bf Graded rings and Modules}\,
More generally, a ring $A$ is called $\N$-{\it graded} or just {\it graded} if it has a direct sum decomposition $A=\bigoplus_{m\in\N}\, A_{m}$  as an abelian group such that $A_{m} A_{k}\subseteq A_{m+k}$ for all $m,k\in\N$. In~particular, $A_{0}$ is a subring of $A$ and $A_{m}$ is an $A_{0}$-module for all $m\in\N$. For $m\in\N$, $A_{m}$ is called {\it homogeneous component} of $A$ of degree $m$ and its elements are called {\it homogeneous elements of degree} $m$.
\smallskip

A graded ring $A=\bigoplus_{m\in\N}\, A_{m}$ is called  a {\it standard graded $A_{0}$-algebra} if $A$ is generated by finitely many homogeneous elements of degree $1$ as an $A_{0}$-algebra.
A standard example of the standard graded $A_{0}$-algebra (as seen above) is the polynomial algebra ${\rm P}=A_{0}[T_{0}, \ldots , T_{n}]$ with $\deg T_{i}=1$ for all $i=0,\ldots ,n$ over a commutative ring $A_{0}$.
\smallskip

Let $A=\bigoplus_{m\in\N}\, A_{m}$  be a graded ring and $A_{+}:=\bigoplus_{m\in\N^{+}} A_{m}$.
Obviously,  $A_{+}$ is an ideal in $A$ called the {\it irrelevant ideal} of $A$. It follows that the following statements are equivalent\,: (i) $A$ is Noetherian.\, (ii) $A_{0}$ is Noetherian and $A_{+}$ is finitely generated.\, (iii) $A_{0}$ is Noetherian and $A$ is an $A_{0}$-algebra of finite type.\\[1.5mm]
{\bf Remark\,:} Note that finite type algebras over Noetherian ring are Noetherian. However, Noetherian algebras over a Noetherian ring $A_{0}$ are not always of finite type over $A_{0}$. Therefore graded rings are special for which this converse holds.
\smallskip

A {\it graded module} over the graded ring $A=\bigoplus_{m\in\N}\, A_{m}$ is an $A$-module $M$ with a direct sum decomposition $M=\bigoplus_{m\in\Z}\, M_{m}$ as an abelian group such that $A_{m}M_{k}\subseteq M_{m+k}$   for all $m\in\N$ and all $k\in\Z$. In~particular, $M_{m}$ is an $A_{0}$-submodule of $M$ for every $m\in\Z$. For $m\in\Z$, $M_{m}$ is called {\it homogeneous component} of $M$ of degree $m$ and its elements are called {\it homogeneous elements of degree} $m$.
\smallskip

Let $M\!=\!\bigoplus_{m\in\Z}\,M_{m}$ and $N\!=\!\bigoplus_{m\in\Z}\, N_{m}$ be graded $A$-mo\-dules over the graded ring $A\!=\!\bigoplus_{m\in\N}\, A_{m}$. An $A$-module homomorphism $f:M\to N$  is called {\it homogeneous of degree} $r$ if $f(M_{m})\subseteq N_{m+r}$ for every $m\!\in\!\Z$.\\[1mm]
An $A$-submodule $M'$ of the graded $A$-module $M$ is called {\it homogeneous} if $M'_{m}:=\pi_{m}(M') =M'\cap M_{m}\subseteq M'$, where $\pi_{m}:M\to M_{m}$, $m\in\Z$ are the projections of the graded $A$-module $M$.  If the $A$-submodule  $M'\subseteq M$ is homogeneous, then $M'=\bigoplus_{m\in\Z}\, M'_{m}$ is a graded $A$-module and the canonical injective map $M'\to M$ is homogeneous of degree $0$.
An $A$-submodule $M'$ of the graded $A$-module $M$ is homogeneous if and only if $M'$ has a generating system consisting of homogeneous elements.
Further, the residue-class module $M/M'$ has the direct sum decomposition $M/M'=\bigoplus_{m\in\Z}\, \overline{M}_{m}$, where  $\overline{M}_{m}:=M_{m}/M'_{m}$. Obviously, $M/M'$ with this gradation
is a graded $A$-module and the canonical surjective map $M\to M/M'$ is homogeneous of degree $0$.
\smallskip

An ideal $\gotha\subseteq A$ is called {\it homogeneous} if $\gotha$ is a homogeneous submodule of $A$.
\smallskip

Let $f:M\to N$ be a homogeneous homomorphism of degree $r$, then $\Ker f$ and $\img f$ are homogeneous submodules of $M$ and $N$, respectively and the canonical $4$-term sequence
\begin{align*}
0\to \Ker f\longrightarrow M \longrightarrow  N \longrightarrow \Coker f \to 0
\end{align*}
is an exact sequence of graded $A$-modules and homogeneous homomorphisms. Further, for every $m\in\Z$, the sequence of $A_{0}$-modules
\begin{align*}
0\to (\Ker f)_{m} \longrightarrow M_{m} \longrightarrow  N_{m+r} \longrightarrow (\Coker f )_{m+r}\to 0
\end{align*}
is exact.
\smallskip

\begin{alist}

\item {\bf Shifted graded modules}\,
The following shift operation is very useful\,:  For $k\in\Z$, a graded $A$-module $M(k)$ obtained from the graded $A$-module $M=\bigoplus_{m\in\Z}\, M_{m}$ with $M(k)_{n}:=M_{k+n}$ for all $n \in \Z$ is called the $k$-{\it th shifted  graded $A$-module} of $M$.
In~particular, we have the $k$-shifted graded $A$-module $A(k)$ of the graded $A$-module $A$. Clearly,
an $A$-module homomorphism $f:M\to N$ is homogeneous of degree $r$ if and only if
$f:M(-r)\to N$, or $f:M\to N(r)$ is homogeneous of degree $0$.
\smallskip

\item {\bf Noetherian graded modules}\,
We consider the case when  $A_0=K$ is a field and  $A$ is a  standard graded  $K$-Algebra. If  $t_0,\ldots,t_n\in A_1$ generates $A$ as a $K$-algebra, i.\,e.  $A=K[t_0,\ldots,t_n]$, then the  $K$-algebra substitution homomorphism  $\varepsilon:K[T_0,\ldots,T_n]\to A$ with  $T_i\to t_i$, $i=0,\ldots,n$, is homogeneous and surjective, and hence $A$ is isomorphic to the residue-class algebra   $K[T_0,\ldots,T_n]/\mathfrak{A}$ of  ${\rm P}=K[T_0,\ldots,T_n]$ modulo the homogeneous  \emph{relation ideal} $\mathfrak{A}:={\rm Ker}\,\varepsilon$. Every $A$-module is also ${\rm P}$-module by the restriction of scalars by using  $\varepsilon$. We consider graded $A$-modules  $M$ which are finite over $A$, i.\,e. finitely generated over $A$. If   $x_1,\ldots,x_r$ is a homogeneous generating system for $M$ of degrees  $\delta_1,\ldots,\delta_r\in\Z$, then the canonical homomorphism  $A(-\delta_1)\oplus\cdots\oplus A(-\delta_r)\
\longrightarrow  M$ with  $e_\rho\mapsto x_\rho$, $\rho=1,\ldots,r$, is homogeneous  (of degree $0$) and surjective. The standard basis element $e_\rho\in A(-\delta_\rho)$ has the degree  $\delta_\rho$. \emph{If $A$ is a standard graded  $K$-algebra and if $M$ is a finite  $A$-module, then  $M$ is a  Noetherian\footnote{The Noetherian property of modules is named after Emmy Noether (1882-1935) who was the first one to discover the true importance of this property.  Emmy Noether  is best known for her contributions to abstract algebra, in particular, her study of chain conditions on ideals of rings.} $A$-module}, i.\,e. every $A$-submodule of $M$ is also a finite $A$-module. This is equivalent with the condition that in  $M$ there is no infinite proper  \emph{ascending} chain  $M_0\subset M_1\subset M_2\subset\cdots\subseteq M$ of $A$-submodules, or also with the condition that every non-empty set of  $A$-submodules of $M$  has a (at least one) maximal element (with respect to the inclusion).
\smallskip

We will use the following fundamental lemma on the  {\it Lasker-Noether decomposition} \footnote{Due to Emanuel Lasker (1868 -- 1941) and Max Noether (1844-1920), father of Emmy Noether.} \,:
\smallskip

{\bf Lemma}\ {\rm \so{(Lasker\hbox{-}Noether decomposition)}}\,
{\it Let $M$ be a finite graded module over the standard graded  $K$-algebra $A$. Then there exists a chain of graded  submodules
 $\displaystyle 0=M_0\subsetneq M_1\subsetneq\cdots\subsetneq M_r=M\,$,
and homogeneous prime ideals $\mathfrak{p}_1,\ldots,\mathfrak{p}_r\subseteq A$ and integers  $k_1,\ldots,k_r$ with $M_\rho/M_{\rho-1}\cong(A/\mathfrak{p}_\rho)(-k_\rho)$, $\rho=1,\ldots,r$. In~particular, $\mathfrak{p}_{1}\cdots \mathfrak{p}_{r} M=0$.}
\smallskip

{\bf Proof}\, \ First we show that if $M\neq 0$, then it contains a submodule of the isomorphism type $(A/\mathfrak{p})(-k)$, or equivalently, a homogeneous element $0\neq x\in M$ such that the annihilator ideal $\Ann_{A}x :=\{a\in A\mid ax=0\}= \mathfrak{p}$ is prime. Let $0\neq x_{0}\in M.$ If $\Ann_{A} x_{0}$ is not prime, then there exist $a,b\in A$ with $ax_{0}\neq 0$, $bx_{0}\neq 0$ and $abx_{0}=0$. Then $x_{1}:=bx_{0}\neq 0$, $a\in\Ann_{A} x_{1}$, $a\not\in \Ann_{A}x_{0}$ and so $\Ann_{A} x_{0} \subsetneq \Ann_{A} x_{1}$.
Since $A$ is Noetherian, in finitely many steps, we get an element $x(=x_{s})\in M$, $x\neq 0$ with $\Ann_{A} x$ prime. (One can also  directly choose a homogeneous element $0\neq x\in M$ such that $\Ann_{A} x$ is a maximal element in $\{\Ann_{A} y\mid 0\neq y\in M\}$.) Now, we construct the required chain in $M$. If $M\neq 0$, then there exists a submodule $M_{1}\cong (A/\mathfrak{p}_{1})(-k_{1})$. If $M/M_{1}\neq 0$, then there exists $M_{2}/M_{1} \subseteq M/M_{1}$ with $(M_{2}/M_{1})\cong (A/\mathfrak{p}_{2})(-k_{2})$ and so on. The chain $0\subsetneq M_{1}\subsetneq M_{2} \subsetneq \cdots $ after finitely many steps will end at $M$, since $M$ is Noetherian. (One can also choose a maximal homogeneous submodule $N\subseteq M$ for which the required chain of submodules of $N$ exists. Then prove that $N$ is necessarily $M$.) \dppqed

\end{alist}

\item {\bf Poincar\'e series}\,
Let $M=\bigoplus_{m\in\Z}\, M_{m}$ be a finite graded module over the standard graded $K$-algebra $A=\oplus_{m\in\N}\, A_{m}=K[t_{0},\ldots , t_{n}]$ with $A_{0}=K$ and $t_{0},\ldots, t_{n}\in A_{1}$. Then   $M_{m}$, $m\in\Z$, are finite dimensional $K$-vector spaces and $M_{m}=0$ for $m<<0$. Therefore, the {\it Poincar\'e series}
\begin{align*}
\mathscr{P}_{M}(Z):= \sum_{m\in\Z} (\,\Dim_{K} M_{m}\,)\, Z^{m}
\end{align*}
is well-defined and is a Laurent-series (with coefficients in $\N$).
If $K[T_0,\ldots,T_n]\to A=K[t_0,\ldots,t_n]$ is a representation of  $A$ as a residue class algebra of a polynomial algebra, then  $\mathscr{P}_M$ is the same even if $M$ is considered as  a $K[T_0,\ldots,T_n]$-module.
\smallskip

\begin{alist}
\item {\bf Computation rules for Poincar\'e series}\,
We note the following elementary computational rules for Poincar\'e series of finite graded $A$-modules\,:\,
Let $M$, $M_{1}, \ldots , M_{r}$ be a finite graded modules over the standard graded $K$-algebra $A=\oplus_{m\in\N}\, A_{m}=K[t_{0},\ldots , t_{n}]$ with $A_{0}=K$ and $t_{0},\ldots, t_{n}\in A_{1}$.
\smallskip

{\it
{\rm(1)}  $\mathscr{P}_{M(-k)}=Z^k\,\mathscr{P}_M\,$ for all $\,k\in\Z\,$.\,
{\rm(2)} If $0\to M_r\to\cdots\to M_0\to0$ is an exact sequence with  homogeneous homomorphisms of degrees {\rm 0},  then  $\sum_{\rho=0}^r(-1)^\rho\,\mathscr{P}_{M_\rho}=0$.\,
{\rm(3)} If $f\!\in\!A_\delta$ is a homogeneous non-zero divisor for the $A$-module  $M$ of degree  $\!\delta\!>\!0$, \hbox{then  $\mathscr{P}_{M/fM}\!=\!(1\!-\!Z^\delta)\,\mathscr{P}_M$.}\,
{\rm(4)} If $0=M_0\subseteq M_1\subseteq\cdots\subseteq M_r=M$ is a chain of homogeneous submodules of the $A$-module $M$, then  $\mathscr{P}_M=\sum_{\rho=1}^r\mathscr{P}_{M_\rho/M_{\rho-1}}$.
}
\smallskip

\item
The following  fundamental lemma was already in the work of Hilbert with a complicated proof.
\medskip

{\bf Lemma}\,
{\it Let $M$ be a finite graded module over the standard graded $K$-algebra $A=K[t_{0}, \ldots , t_{n}]$, $t_{0}, \ldots, t_{n}\in A_{1}$. Then
$\, \mathscr{P}_{M}= F/(1-Z)^{n+1}\,$
with a Laurent-polynomial $F\in\Z[Z^{\pm 1}]$.}
\smallskip

If $M\neq 0$, then after cancelling the highest possible power of $(1-Z)$, we get a {\it unique} representation
\begin{align*}
\mathscr{P}_{M}= \frac{Q}{(1-Z)^{d+1}}\,, \ d\geq -1
\end{align*}
with a Laurent-polynomial $Q\in\Z[Z^{\pm 1}]$, $Q(1)\neq 0$. For $M=0$, $d=-1$ and $Q=0$. The partial fraction decomposition is
\begin{align*}
\mathscr{P}_{M}= \widetilde{Q}+ \sum_{i=0}^{d} \frac{c_{i}}{(1-Z)^{i+1}} \equiv \sum_{i=0}^{d}\frac{c_{i}}{(1-Z)^{i+1}}
\end{align*}
with a uniquely determined Laurent-polynomial $\widetilde{Q}\in\Z[Z^{\pm 1}]$ and unique integers $c_{0},\ldots , c_{d}\in\Z\,$, where we write $G\equiv H$ for two Laurent-series $G$, $H$ if and only if they differ by a Laurent-polynomial.
\smallskip

Now, using the formula $(1-Z)^{-(n+1)}=\sum_m{{m+n}\choose{n}}Z^m$ which can be proved directly by differentiating (termwise) $n$-times the geometric series $(1-Z)^{-1}=\sum_m Z^m\,$, we get\,:
\smallskip

For $m\gg 0$ (more precisely for $m>\deg \widetilde{Q}$)\,, we have
\begin{align*}
\Dim_{K} M_{m} = \chi_{M}(m):=\sum_{i=0}^{d} c_{i} \binom{m+i}{i} \ \quad  \hbox{for} \ \ m\gg 0 \,,
\end{align*}
where $\chi_{M}:\Z\to\N$  is a polynomial function (over $\Q$) of degree $d$ and in~particular, if $d\geq 0$, then
\begin{align*}
\Dim_{K} M_{m} = \chi_{M}(m) \ \sim \ c_{d}\cdot \frac{m^{d}}{d!} = O(m^{d})\, \ \ \hbox{for} \ \ m\to\infty\,.
\end{align*}
where $O$ is the ``Big O'' symbol\,\footnote{\label{foot:15}The symbol ``Big~O'' was first introduced by the number theorist Paul Bachmann (1837-1920) in 1894. Another number theorist Edmund Landau (1877-1938) adopted it and was inspired to introduce the ``small~o'' notation in 1909. These symbols describe the limiting behaviour of a function. More precisely\,:\,
For $\R$-valued functions $f$, $g:U\to\R$ defined on some subset $U\subseteq \R$, one writes\,:\,
(i) $f(x) =O(g(x))$  ($|f\,|$ is  bounded above  by $|g|$, up to constant factor,  asymptotically)  if there exists a constant $M>0 $ and a real number $x_{0}\in\R$ such that  $|f(x)|\leq M\,|g(x)|$ for all $x\geq x_{0}$, or equivalently $\limsup_{x\to\infty} |f(x)/g(x)| <\infty$.\,
(ii)\, $f(x) =o(g(x))$  ($f$ is dominated by $g$  asymptotically)  if  $\lim_{x\to\infty} |f(x)/g(x)| =0$.}
and  $\,\sim\,$ denote the asymptotic equality. The case $d\!=\!\!-1$ is characterized by $\Dim_{K} M_{m}\!\!=\!0$ for $m\gg 0$, or by $\Dim_{K} M\!=\!\sum_{m\in \Z} \Dim_{K} M_{m}\!\!=\!Q(1)\! < \!\infty$.
\end{alist}
\smallskip

\item {\bf Hilbert series}\,
Incidentally, instead of Poincar\'e-series it is comfortable to consider the {\it Hilbert-series}
\begin{align*}
\mathscr{H}_M=\sum\nolimits_{m\in\Z}h_M(m)Z^m= \mathscr{P}_{M}/(1-Z)\equiv\sum\nolimits_{i=0}^{d+1}e_i/(1-Z)^{i+1}
\end{align*}
with the \emph{Hilbert-Samuel function}\, $h_{M}:\Z \to\N$\,:
\begin{align*}
h_M(m)=\sum\nolimits_{k\le m}\Dim_K M_m=\Dim_K\left(\bigoplus\nolimits_{k\le m}M_k\right)
\end{align*}
and put $e_i:=c_{i-1}$, if $i>0$, and $e_0:=\widetilde{Q}(1)$. For  $m\gg 0$, the values $h_M(m)$ are equal to the values of the  \emph{Hilbert-Samuel Polynomial}
\begin{align*}
H_M(m)=\sum\nolimits_{i=0}^{d+1}e_i{{m+i}\choose{i}}\sim e_{d+1}\cdot m^{d+1}/(d+1)!=O(m^{d+1}).
\end{align*}
The integer $d$ is an approximate measure of the size of $M$. For example, if $M=A=P=K[T_{0}, \ldots , T_{n}]$, $n\in\N$, then $\mathscr{P}_{K[T_{0},\ldots, T_{n}]}\!=\!1/(1-Z)^{n}$ and
$\mathscr{H}_{K[T_{0},\ldots , T_{n}]}\!=\!1/(1-Z)^{n+1}$, so that $d=n$.
\smallskip

\item {\bf Dimension and Multiplicity}\,
The integer $d$ is called the ({\it projective}) {\it dimension} ${\rm pd}(M)$  and $d\!+\!1$ is the ({\it affine} or {\it Krull}-) {\it dimension} ${\rm d}(M)$ of the graded module $M$.
The integer ${\rm e}(M)\!:=\!e_{d+1}\!\!=\!e_{{\rm d}(M)} (=\!c_{{\rm pd}(M)})$ if  ${\rm pd}(M)\!\geq\!0)$ is called the {\it multiplicity of the graded module}  $M$ if ${\rm pd}(M)\!\geq\! 0$. Note that ${\rm e}(M)>0$  if $M\!\neq\! 0$. If $M\!=\!0$, then ${\rm d}(0)\!=\!{\rm e}(0)\!=\!0$. If
$\mathscr{P}_{M}\!=\!Q/(1\!-\!Z)^{{\rm pd}(M)}\!\!$,
then  $\mathscr{H}_{M}\! =\! Q/(1\!-\!Z)^{1\!+\!{\rm pd}(M)}\!\!$
and   ${\rm e}(M)\!=\!Q(1)$.
\smallskip

In particular, the projective dimension ${\rm pd}(K[T_{0},\ldots , T_{n}])=n$, the affine dimension ${\rm d}(K[T_{0},\ldots , T_{n}])=n+1$ and the multiplicity ${\rm e}(K[T_{0},\ldots , T_{n}])=1$.
 \smallskip

The following computational rules for ${\rm d}(M)$ and ${\rm e}(M)$ are easy to verify by using the computational rules for Poincar\'e series given in (2)\,(a)\,: \
\smallskip

\begin{alist}

\item {\bf Computational rules for dimension and multiplicity}\,
{\it Let $K$ be a field and $A=\bigoplus_{n\in\N} A_{n}$ be a
standard graded $K$-algebra. Then for a finite graded
$A$-module $M$, we have\,:
\medskip

{\rm (1)}\,  ${\rm d}(M)={\rm d}(M(-k))$ and
${\rm e}(M)={\rm  e}(M(-k))$, $k\in\Z$.
\smallskip

{\rm (2)}\,
Let $0\to M_{r}\to M_{r-1}\to \cdots \to M_{0}\to 0$ be an exact sequence of homogeneous homomorphisms. Then
\[ \sum _{\rho\,,\, {\rm d}(M_{\rho})=d} (-1)^{\rho}
{\rm e}(M_{\rho})=0\,, \text{ where } d:=\max_{0\leq \rho\leq r} \{{\rm d}(M_{\rho})\}.\]

{\rm (3)}\,
Let $f\in A_{\delta}$ be a homogeneous element of degree $\delta>0$. Then  \ $\displaystyle {\rm d}(M/fM)\geq {\rm d}(M)-1$. \
Moreover, if $f$ is a non-zero divisor for $M$ and $M\neq 0$, then \
${\rm d} (M/fM)={\rm d}(M)-1\,$ and
$\,{\rm e}(M/fM)=\delta \cdot {\rm  e}(M)\,$.
\smallskip

{\rm (4)}\, {\rm (}Associativity formula{\rm )}\,
Let  $0=M_{0}\subseteq  M_{1}\subseteq ...\subseteq M_{r}=M$ be a chain of graded $A$-submodules of $M$.
\[ \text{ Then } d:={\rm d}(M)= \max_{1\leq \rho \leq r}
\{{\rm d}(M_{\rho}/M_{\rho -1})\}\,, \text{ and }
 {\rm e}(M)=\sum _{\rho\,,\, {\rm d}(M_{\rho}/M_{\rho-1})=d} {\rm  e}(M_{\rho}/M_{\rho-1}). \]

{\rm (5)}\,
Moreover, if in (4) there are homogeneous prime ideals $\mathfrak{p}_{1}, \ldots , \mathfrak{p}_{r}$ and integers $k_{1}, \ldots , k_{r}\in \Z\,$ with $\,M_{\rho}/M_{\rho-1} \cong (A/\mathfrak{p}_{\rho})(-k_{\rho})$, $\rho=1,\ldots , r$, are as in {\rm Lemma in\,\ref{mypar:4.1}\,(1)\,(b)}\,,  then
$\displaystyle
{\rm d}(M)= \max_{1\leq \rho \leq r}
\{{\rm d}(A/\mathfrak{p}_{\rho})\}\,$  and
$\,\displaystyle {\rm e}(M)=\!\!\!\!\!\!\!\!\sum _{\rho\,,\, {\rm d}(A/\mathfrak{p}_{\rho})={\rm d}(M)}\!\!\!\!\!\!\!\!{\rm  e}(A/\mathfrak{p}_{\rho})\,$.
In~particular, if $M\neq 0$,    then there are prime ideals $\mathfrak{p}_{\rho}$ with ${\rm d}(A/\mathfrak{p}_{\rho})\!=\!{\rm d}(M)\,.$

}
\end{alist}

 \end{arlist}
 \end{mypar}
 }
 \smallskip

\begin{mypar}\label{mypar:4.2}\, {\bf Projective algebraic sets}\,
Let $K$ be a field and let ${\rm P}:=K[T_{0}, \ldots , T_{n}]$ be the standard polynomial $K$-algebra with the standard gradation
${\rm P}:= \bigoplus_{m\in\N} {\rm P}_{m}$.  Let
\begin{align*}
\mathds{P}_{{\rm P}}(K)  := \mathds{P}^{n}(K) =\left(K^{n+1}\!\smallsetminus\!\{0\}\right)/\!\sim\,=\{\langle\tau\rangle =
\langle\tau_{0},\ldots , \tau_{n}\rangle\mid \tau=(\tau_{0},\ldots , \tau_{n})\in K^{n+1}\setminus\{0\} \}
\end{align*}
be the quotient space of the equivalence relation $\,\sim\,$ on the set $\left(K^{n+1}\!\smallsetminus\!\{0\}\right)$ defined by
$\tau\!=\!(\tau_{0},\ldots , \tau_{n}) \sim \sigma\!=\!(\sigma_{0},\ldots , \sigma_{n})$ if there exists $\lambda\in K^\times$ such that
$\tau_{i} = \lambda \sigma_{i}$ for all $i=0,\ldots , n$.  This is called the {\it $n$-dimensional projective space over} $K$.
\smallskip

For a standard graded $K$-algebra
$\,A= \bigoplus_{m\in\N}\, A_{m}=K[t_{0},...,t_{n}]\,$ with $\,t_{0},...,t_{n}\in A_{1}$, let $\mathfrak{A}$ be the kernel of the substitution homomorphism   $\,\varepsilon:K[T_{0},...,T_{n}]\to A\,$, $\,T_{i}\mapsto t_{i}\,$, $\,i=0,...,n\,$. Then $\varepsilon$ induces a  homogeneous $K$-algebra isomorphism $P/\mathfrak{A}\iso A$ and the set of the common zeroes
\begin{align*}
\mathds{P}_{K}(A)={\rm V}_{+}(\mathfrak{A}):=\{\langle\tau\rangle \in\mathds{P}^{n}(K) \mid & F(\tau)=0 \
\hbox{for all homogeneous} \  F\in \mathfrak{A}\}\subseteq \mathds{P}^{n}(K)
\end{align*}
of the homogeneous relation ideal $\mathfrak{A}$ in $\mathds{P}^{n}(K)$, is called the {\it projective algebraic set} $\,\mathds{P}_{A}(K)$ {\it of $K$-valued points}.
Further, if $F_{1}, \ldots , F_{m}\in\mathfrak{A}$ is a homogeneous system of generators for $\mathfrak{A}$, then
\begin{align*}
\mathds{P}_{A}(K)={\rm V}_{+}(F_{1},\ldots , F_{m}) = \{\langle\tau\rangle \in\mathds{P}^{n}(K) \mid F_{i}(\tau) =0\,,\, i=1,\ldots , m\}\,.
\end{align*}
It is easy to see that the description of $\mathds{P}_{A}(K)$ is independent of the representation $A\leftiso P/\mathfrak{A}$.  If $f\in A$ is a homogeneous element with a homogeneous representative $F\in P$, then the zero set
\begin{align*}
{\rm V}_{+}(f)= \{\langle\tau\rangle \in\mathds{P}_{A}(K) \mid F(\tau)=0\}
\end{align*}
of $f$ in $\mathds{P}_{A}(K)$ is well-defined. In~particular, for a homogeneous ideal $\mathfrak{a}\subseteq A$, generated by homogeneous elements $f_{1}, \ldots , f_{r}\in A$,  we have the representation\,:
\[\mathds{P}_{A/\mathfrak{a}}(K)={\rm V}_{+}(f_{1},\ldots , f_{r}) = \bigcap_{\rho=1}^{r} {\rm V}_{+}(f_{\rho}) \subseteq \mathds{P}_{A}(K)\,.\leqno{(4.2.1)}
\]
\end{mypar}

Now we prove the following very important  and useful lemma\,:

\begin{lemma}\label{lem:4.3}\,
 Let $K$ be a field and let ${\rm P}:=K[T_{0}, \ldots , T_{n}]$ be the standard polynomial $K$-algebra with the standard gradation.

 \begin{alist}

 \item
 For a point    $\langle\tau\rangle=\langle\tau_{0},\ldots , \tau_{n}\rangle\in\mathds{P}^{n}(K)$, the vanishing ideal
\begin{align*}
 \gothP_{\langle\tau\rangle}:=\langle \{F\in {\rm P}\mid F \ \hbox{is a homogeneous polynomial in } \  {\rm P\,} \ \hbox{with} \  F(\tau)=0 \} \rangle
\end{align*}
generated by the homogeneous polynomials which vanish on $\langle\tau\rangle$, is a   homogeneous prime ideal in ${\rm P}$ with ${\rm P}/\gothP_{\langle\tau\rangle} \iso K[T]$ a standard graded polynomial algebra in one indeterminate $T$. In~particular, the projective dimension
${\rm d}(P/\mathfrak{P}_{\langle\tau\rangle})=0$ and the multiplicity
${\rm e}(P/\mathfrak{P}_{\langle\tau\rangle})=1$.

\item
If $\,\mathfrak{P}\subseteq {\rm P}$ is a homogeneous prime ideal with
${\rm d}(P/\mathfrak{P})=0$ and ${\rm e}(P/\mathfrak{P}) =1$, then there exists a unique point  ${\langle\tau\rangle}\in\mathds{P}^{n}(K)$ such that
$\mathfrak{P}=\mathfrak{P}_{\langle\tau\rangle}$.
\end{alist}
 \end{lemma}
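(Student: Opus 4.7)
For part (a), the plan is to construct a graded $K$-algebra surjection $\varphi \colon P \to K[T]$ whose kernel is exactly $\mathfrak{P}_{\langle\tau\rangle}$. I would fix $i_0$ with $\tau_{i_0}\neq 0$ and define $\varphi$ on generators by $T_i\mapsto \tau_i T$; this is homogeneous of degree $0$, and for a homogeneous form $F\in P_m$ one has $\varphi(F) = F(\tau_0 T,\ldots,\tau_n T) = F(\tau)\,T^m$. Because $\ker\varphi$ is a homogeneous ideal, its degree-$m$ part consists precisely of the $F\in P_m$ with $F(\tau)=0$, so $\ker\varphi = \mathfrak{P}_{\langle\tau\rangle}$. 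Surjectivity follows from $\varphi(T_{i_0}) = \tau_{i_0}T$, and the induced graded isomorphism $P/\mathfrak{P}_{\langle\tau\rangle}\cong K[T]$ will make $\mathfrak{P}_{\langle\tau\rangle}$ prime. Reading off the Poincar\'e series $\mathscr{P}_{K[T]} = 1/(1-Z)$ then yields projective dimension $0$ and multiplicity $1$.

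For part (b), set $A := P/\mathfrak{P} = K[t_0,\ldots,t_n]$ with $t_i$ the image of $T_i$. The hypotheses translate to $\mathscr{P}_A = Q/(1-Z)$ with $Q(1)=1$, so $\dim_K A_m = 1$ for all $m\gg 0$; in particular $A$ is infinite-dimensional over $K$, whence $A_+\neq 0$ and some $t_i\neq 0$, which after renumbering I take to be $t_0$. Since $A$ is a domain (as $\mathfrak{P}$ is prime), $t_0^m\neq 0$ for every $m$, and combined with $\dim_K A_m=1$ for large $m$ this forces $A_m = K\,t_0^m$ for $m\gg 0$. Fixing such an $m_0\geq 1$, for each $i$ the element $t_i\,t_0^{m_0-1}\in A_{m_0} = K\,t_0^{m_0}$ equals $\mu_i\,t_0^{m_0}$ for a unique $\mu_i\in K$; cancelling the nonzero $t_0^{m_0-1}$ in the domain $A$ gives $t_i = \mu_i t_0$ for every $i$ (with $\mu_0=1$). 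Setting $\tau := (1,\mu_1,\ldots,\mu_n)$, every homogeneous $F\in P_m$ then satisfies $F(t_0,\ldots,t_n) = F(\tau)\,t_0^m$, which vanishes in $A$ iff $F(\tau)=0$; this gives $\mathfrak{P} = \mathfrak{P}_{\langle\tau\rangle}$.

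For uniqueness I would observe that if $\mathfrak{P}_{\langle\tau\rangle} = \mathfrak{P}_{\langle\sigma\rangle}$, then every linear form in $P$ vanishing at $\tau$ vanishes at $\sigma$, which forces $\tau$ and $\sigma$ to be proportional, i.e.\ $\langle\tau\rangle = \langle\sigma\rangle$. The hard part of the whole argument will be the cancellation step yielding $t_i = \mu_i t_0$ in part (b): it is the sole point at which the integral domain hypothesis is essential, and the one place where the numerical data (projective dimension $0$ and multiplicity $1$) must be converted into the concrete geometric assertion that all the generators $t_i$ lie on a single $K$-line through $t_0$. Once that is in hand, part (a) finishes the proof automatically.
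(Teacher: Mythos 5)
Your proof of part (a) is essentially the paper's: both use the graded surjection $\varphi\colon P\to K[T]$, $T_i\mapsto\tau_i T$, and identify $\mathfrak{P}_{\langle\tau\rangle}$ with $\ker\varphi$ by comparing a homogeneous $F$ with its image $F(\tau)T^{\deg F}$ (the paper additionally records the generators $\tau_jT_i-\tau_iT_j$, which you bypass). For part (b) the routes diverge slightly. The paper argues directly in degree $1$: it asserts $1=\mathrm{e}(P/\mathfrak{P})\geq\Dim_K(P/\mathfrak{P})_m$ for \emph{every} $m$ (which rests on the monotonicity of the Hilbert function of a standard graded domain, obtained by multiplying by a nonzero degree-one element), so $\mathfrak{P}_1\subseteq P_1$ is a hyperplane, this determines $\langle\tau\rangle$, and then $\mathfrak{P}=\langle\mathfrak{P}_1\rangle=\mathfrak{P}_{\langle\tau\rangle}$. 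You instead work only with $m\gg0$, where the Poincar\'e series alone gives $\dim_KA_m=1$, and then extract the linear relations $t_i=\mu_it_0$ by cancelling the nonzero power $t_0^{m_0-1}$ in the domain $A$; this sidesteps the need to first establish $\dim_K A_1=1$ and makes the final identification $\mathfrak{P}=\mathfrak{P}_{\langle\tau\rangle}$ on homogeneous elements fully explicit, a step the paper states but leaves terse. Both proofs use exactly the same inputs (integrality, $\mathrm{pd}=0$, $\mathrm{e}=1$); yours packages the monotonicity argument implicitly in the cancellation and is, if anything, a little more self-contained.
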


 \begin{proof}
{\bf  (a)\,} We may assume that $\tau_{0}=1$. It is easy to verify that $\gothP_{\langle\tau\rangle}$ is generated by $\tau_{j} T_{i}-\tau_{i} T_{j}$, $0\leq i\,,\,j\leq n$, $i\neq j$ and that the surjective $K$-algebra homomorphism ${\rm P} \to K[T]$ defined by $T_{0}\mapsto T$ and $T_{i}\mapsto \tau_{i} T$, $i=1,\ldots , n$, has the kernel $\gothP_{\langle\tau\rangle}$ and hence $\,P/\mathfrak{P}_{\langle\tau\rangle} \cong K[T]\,$.
\smallskip

{\bf  (b)\,}  Let $\mathfrak{P}\subseteq {\rm P}$ be a homogeneous prime ideal with ${\rm d}(P/\mathfrak{P})=0$ and ${\rm e}(P/\mathfrak{P}) =1$. Then the  $K$-subspace $\mathfrak{P}_{1}\subseteq P_{1}$ is of codimension $1$, since ${\rm d}(P/\mathfrak{P})=0$ and  $\,1={\rm e}(P/\mathfrak{P}) \geq \Dim_{K}({\rm P}/\mathfrak{P})_{m}$ for every $m\in\N$.  Therefore $\gothP =\langle \gothP_{1}\rangle = \gothP_{\langle\tau\rangle}$ for a unique point $\langle\tau\rangle\in \mathds{P}^{n}(K)$.
\end{proof}

For a graded ring $A=\bigoplus_{m\in\N}\, A_{m}$, the set of homogeneous prime ideals is denoted by $\hSpec A$.

\begin{corollary}\label{cor:4.4}\,
For a standard graded $K$-algebra
$\,A= \bigoplus_{m\in\N}\, A_{m}=K[t_{0},...,t_{n}]\,$ with $\,t_{0},...,t_{n}\in A_{1}$, and  the substitution homomorphism   $\,\varepsilon:K[T_{0},...,T_{n}]\to A\,$, $\,T_{i}\mapsto t_{i}\,$, $\,i=0,...,n\,$, let $\mathfrak{A}=\Ker\varepsilon$. Then the map
\begin{align*}
\mathds{P}_{A}(K) \longrightarrow \{\gothp\in \hSpec A \mid
{\rm d}(A/\mathfrak{p})=0 \ \hbox{and} \  {\rm e}(A/\mathfrak{p})=1\}\,,
\langle\tau\rangle \longmapsto \gothp_{\langle \tau\rangle}:=\mathfrak{P}_{\langle\tau\rangle}/\gothA
\end{align*}
is bijective.
\end{corollary}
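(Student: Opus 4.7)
The plan is to derive Corollary \ref{cor:4.4} as a direct quotient-version of Lemma \ref{lem:4.3}, exploiting the standard inclusion-preserving bijection between homogeneous ideals of $A \cong P/\mathfrak{A}$ and homogeneous ideals of $P$ containing $\mathfrak{A}$.

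First I would verify that the map is well-defined. Let $\langle\tau\rangle\in\mathds{P}_{A}(K)=\mathrm{V}_{+}(\mathfrak{A})$. By definition (see \ref{mypar:4.2}), every homogeneous $F\in\mathfrak{A}$ vanishes at $\tau$, hence $\mathfrak{A}\subseteq\mathfrak{P}_{\langle\tau\rangle}$, and therefore $\mathfrak{p}_{\langle\tau\rangle}:=\mathfrak{P}_{\langle\tau\rangle}/\mathfrak{A}$ is a genuine homogeneous prime ideal of $A$. The canonical surjection $P\twoheadrightarrow A$ induces a homogeneous $K$-algebra isomorphism
\[
A/\mathfrak{p}_{\langle\tau\rangle}\;\cong\;P/\mathfrak{P}_{\langle\tau\rangle}\;\cong\;K[T]
\]
by Lemma \ref{lem:4.3}(a), so both sides have identical Poincar\'e series, and consequently $\mathrm{d}(A/\mathfrak{p}_{\langle\tau\rangle})=\mathrm{d}(P/\mathfrak{P}_{\langle\tau\rangle})=0$ and $\mathrm{e}(A/\mathfrak{p}_{\langle\tau\rangle})=\mathrm{e}(P/\mathfrak{P}_{\langle\tau\rangle})=1$. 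This places $\mathfrak{p}_{\langle\tau\rangle}$ in the target set.

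For injectivity, suppose $\mathfrak{p}_{\langle\tau\rangle}=\mathfrak{p}_{\langle\tau'\rangle}$ in $A$. Pulling back under $\varepsilon:P\to A$, we obtain $\mathfrak{P}_{\langle\tau\rangle}=\varepsilon^{-1}(\mathfrak{p}_{\langle\tau\rangle})=\varepsilon^{-1}(\mathfrak{p}_{\langle\tau'\rangle})=\mathfrak{P}_{\langle\tau'\rangle}$ in $P$. The uniqueness assertion in Lemma \ref{lem:4.3}(b) (or equivalently, the explicit generators $\tau_{j}T_{i}-\tau_{i}T_{j}$ of $\mathfrak{P}_{\langle\tau\rangle}$ from the proof of 4.3(a), which determine $\langle\tau\rangle$) then yields $\langle\tau\rangle=\langle\tau'\rangle$.

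For surjectivity, let $\mathfrak{p}\in\hSpec A$ satisfy $\mathrm{d}(A/\mathfrak{p})=0$ and $\mathrm{e}(A/\mathfrak{p})=1$. Set $\mathfrak{P}:=\varepsilon^{-1}(\mathfrak{p})\subseteq P$; this is a homogeneous prime ideal of $P$ containing $\mathfrak{A}$, and $P/\mathfrak{P}\cong A/\mathfrak{p}$, so $\mathrm{d}(P/\mathfrak{P})=0$ and $\mathrm{e}(P/\mathfrak{P})=1$. Lemma \ref{lem:4.3}(b) supplies a unique $\langle\tau\rangle\in\mathds{P}^{n}(K)$ with $\mathfrak{P}=\mathfrak{P}_{\langle\tau\rangle}$. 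Because $\mathfrak{A}\subseteq\mathfrak{P}=\mathfrak{P}_{\langle\tau\rangle}$, every homogeneous element of $\mathfrak{A}$ vanishes at $\tau$, hence $\langle\tau\rangle\in\mathrm{V}_{+}(\mathfrak{A})=\mathds{P}_{A}(K)$, and by construction $\mathfrak{p}_{\langle\tau\rangle}=\mathfrak{P}_{\langle\tau\rangle}/\mathfrak{A}=\mathfrak{P}/\mathfrak{A}=\mathfrak{p}$. This completes the bijection.

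There is no genuine obstacle here: all the real work was done in Lemma \ref{lem:4.3}, and Corollary \ref{cor:4.4} is obtained by transporting that bijection through the correspondence theorem for the homogeneous quotient $A=P/\mathfrak{A}$. The only point to be careful about is checking that $\mathfrak{A}\subseteq\mathfrak{P}_{\langle\tau\rangle}$ whenever $\langle\tau\rangle\in\mathds{P}_{A}(K)$, which is immediate from the defining condition on $\mathrm{V}_{+}(\mathfrak{A})$.
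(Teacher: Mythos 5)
Your proof is correct and follows essentially the same route as the paper: the paper simply declares the corollary ``immediate from Lemma~\ref{lem:4.3}, since $\langle\tau\rangle \in\mathds{P}_{A}(K)$ if and only if $\mathfrak{A} \subseteq\mathfrak{P}_{\langle\tau\rangle}$,'' and your write-up spells out precisely this transport of Lemma~\ref{lem:4.3} through the homogeneous-ideal correspondence for $A\cong P/\mathfrak{A}$. No discrepancy.
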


\begin{proof}
Immediate from  Lemma\,\ref{lem:4.3}, since
$\langle\tau\rangle \in\mathds{P}_{A}(K)$ if and only if $\mathfrak{A} \subseteq\mathfrak{P}_{\langle\tau\rangle}$.
\end{proof}

 \begin{lemma} \label{lem:4.5}\,
Let $K$ be a field and $C=\bigoplus_{m\in\N}\, C_{m}$ be a standard graded $K$-algebra such that $C$ is an integral domain with ${\rm pd}(C)=0$.  Then there exists a finite field extension $L\,\vert\,K$ such that the multiplicity ${\rm e}(C)$ is equal to $[L:K]$.
\end{lemma}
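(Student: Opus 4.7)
The plan is to realise a candidate field $L$ as the degree--$0$ part of a homogeneous localisation of $C$, and then show directly that $\Dim_{K} L = {\rm e}(C)$.

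Since ${\rm pd}(C)=0$ (in particular $C\neq K$), there exists a nonzero element $t\in C_{1}$; because $C$ is an integral domain, $t$ is a non--zero--divisor. Form the localisation $C[t^{-1}]$ (a $\Z$-graded ring sitting inside the field of fractions of $C$) and set
\[
L \; :=\; C_{(t)} \;=\; \bigl(C[t^{-1}]\bigr)_{0} \;=\; \{\,c/t^{m}\mid m\in\N,\ c\in C_{m}\,\}.
\]
Then $L$ is a $K$-subalgebra of the fraction field of $C$, so $L$ is an integral domain containing $K$. The goal is reduced to proving $\Dim_{K}L={\rm e}(C)$, because then $L$ is a finite--dimensional integral domain over $K$, hence a field (multiplication by any nonzero element of $L$ is an injective $K$-linear endomorphism of a finite--dimensional space, thus bijective), and so $L\,\vert\,K$ will be a finite field extension with $[L:K]={\rm e}(C)$.

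For each $m\in\N$ consider the $K$-linear map $\varphi_{m}\colon C_{m}\to L$, $c\mapsto c/t^{m}$. It is injective because $t^{m}$ is a non--zero--divisor, and the identity $c/t^{k}=(ct^{m-k})/t^{m}$ for $k\le m$ shows that
\[
\varphi_{m}\circ \bigl(\cdot\, t^{m-k}\bigr) \;=\; \varphi_{k}\colon C_{k}\longrightarrow C_{m}\longrightarrow L,
\]
so the images form an increasing chain $\varphi_{0}(C_{0})\subseteq\varphi_{1}(C_{1})\subseteq\cdots$ whose union is all of $L$. From ${\rm pd}(C)=0$ the partial--fraction decomposition recalled in \ref{mypar:4.1}\,(2)\,(b) gives $\mathscr{P}_{C}\equiv {\rm e}(C)/(1-Z)$, hence there exists $m_{0}$ such that $\Dim_{K}C_{m}={\rm e}(C)$ for every $m\ge m_{0}$. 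For $m_{0}\le k\le m$, multiplication $\cdot\, t^{m-k}\colon C_{k}\to C_{m}$ is an injective $K$-linear map between two finite--dimensional $K$-vector spaces of the same dimension ${\rm e}(C)$, hence an isomorphism; consequently $\varphi_{k}(C_{k})=\varphi_{m}(C_{m})$ for all $k,m\ge m_{0}$. Therefore the ascending union stabilises at $\varphi_{m_{0}}(C_{m_{0}})$, and
\[
\Dim_{K}L \;=\; \Dim_{K}\varphi_{m_{0}}(C_{m_{0}}) \;=\; \Dim_{K} C_{m_{0}} \;=\; {\rm e}(C),
\]
which completes the proof.

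The only slightly delicate point is the stabilisation of the chain $\bigl(\varphi_{m}(C_{m})\bigr)_{m}$; it uses crucially both hypotheses \emph{simultaneously}, namely the integrality of $C$ (so that $\cdot\, t^{m-k}$ is injective) and ${\rm pd}(C)=0$ (so that the source and target have the same, finite dimension in the stable range). No further input, such as Noether normalisation or module--finiteness of $C$ over $K[t]$, is required.
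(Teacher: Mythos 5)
Your proposal is correct and is essentially the same argument as the paper's: both choose a nonzero $t\in C_{1}$, note that multiplication by $t$ embeds $C_{m}$ into $C_{m+1}$, use ${\rm pd}(C)=0$ to conclude $\Dim_{K}C_{m}={\rm e}(C)$ for $m\gg 0$, and identify $L$ with the stable term of the ascending chain $C_{m}/t^{m}$ inside the fraction field (your $\varphi_{m}(C_{m})$ is exactly the paper's $C_{m}/t^{m}$). Your write-up is a bit more explicit about the localisation and the stabilisation step, but the proof is the same.
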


\begin{proof}
Since $C$ is a standard graded $K$-algebra, $C_{1}\neq 0$.  Choose  $t \in C_{1}$, $t\neq 0$. Then, since  $t C_{m}\subseteq C_{m+1}$ for all $m\in\N$ and $t$ is a non-zero divisor in $C$,  the numerical function $m\mapsto \Dim_{K} C_{m}$, is monotone increasing and hence is stationary with the value ${\rm e}(C)=\Dim_{K} C_{m}$ for  $m\gg 0$. But, then there exists a unique integer $s\in\N$ such that the ascending chain of finite dimensional $K$-vector  spaces
$\,C_0=K \subsetneq  C_{1}/t \subsetneq C_{2}/t^{2} \subsetneq \cdots \subsetneq C_{s}/t^{s} =C_{s+1}/t^{s+1} = \cdots \,$  is stationary and hence $L:=C_{s}/t^{s}$ is an integral domain which is a finite $K$-algebra of  the dimension $\Dim_{K} C_{s}= {\rm e}(C)$. Therefore $L$ is a finite field extension of $K$ with $[L:K]={\rm e}(C)$.
\end{proof}

First note the following  classical Hilbert's Nullstellensatz for an algebraically closed field (see\,\cite[\S\,3]{Hilbert1893})\,:

\begin{theorem}\label{thm:4.6}{\rm \so{(Hilbert's Nullstellensatz)}}\,
Let $K$ be an algebraically closed field
and  $A$ be  a standard graded $K$-algebra of projective dimension $d={\rm pd}(A)\geq 0$. Further,  let $f_{1}, \ldots , f_{r}\in A$ be homogeneous elements of positive degrees, \, $r\leq d$.
Then $f_{1}, \ldots , f_{r}$ have a common zero in $\mathds{P}_{A}(K)$, {\rm i.\,e.,} $\emptyset \neq \mathds{P}_{A/\mathfrak{a}}(K) = {\rm V}_{+}(f_{1},\ldots , f_{r}) \subseteq \mathds{P}_{A}(K)$, where $\mathfrak{a} :=Af_{1}+\cdots +Af_{r}$.
\end{theorem}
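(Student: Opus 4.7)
The plan is to combine the dimension-and-multiplicity calculus of \ref{mypar:4.1} with the point/prime dictionary of Corollary~\ref{cor:4.4}. Set $\mathfrak{a}:=Af_{1}+\cdots +Af_{r}$; what must be produced is a point of $\mathds{P}_{A/\mathfrak{a}}(K)={\rm V}_{+}(f_{1},\ldots,f_{r})$. By Corollary~\ref{cor:4.4} it suffices to exhibit a homogeneous prime ideal $\mathfrak{P}\subseteq A$ with $\mathfrak{a}\subseteq \mathfrak{P}$, ${\rm pd}(A/\mathfrak{P})=0$ and ${\rm e}(A/\mathfrak{P})=1$; the corresponding $\langle\tau\rangle$ will automatically lie in $\mathds{P}_{A/\mathfrak{P}}(K)\subseteq\mathds{P}_{A/\mathfrak{a}}(K)$.

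The first thing to verify is that $A/\mathfrak{a}$ is still large enough. By rule~(3) of \ref{mypar:4.1}(4)(a), cutting any finite graded module by a homogeneous element of positive degree drops projective dimension by at most $1$. Applying this inductively to $A,\,A/Af_{1},\,A/(Af_{1}+Af_{2}),\ldots$ gives
\begin{align*}
{\rm pd}(A/\mathfrak{a}) \ \geq \ {\rm pd}(A)-r \ =\ d-r \ \geq \ 0,
\end{align*}
so $A/\mathfrak{a}\neq 0$. Next I descend to a homogeneous prime of top dimension: by the Lasker--Noether lemma of \ref{mypar:4.1}(1)(b) together with the associativity formula (rule~(5) of \ref{mypar:4.1}(4)(a)), there is a homogeneous prime $\mathfrak{p}\subseteq A$ with $\mathfrak{a}\subseteq \mathfrak{p}$ and ${\rm pd}(A/\mathfrak{p})={\rm pd}(A/\mathfrak{a})\geq 0$. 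Now I iterate: if $B:=A/\mathfrak{p}$ is a standard graded $K$-domain with ${\rm pd}(B)>0$, pick any nonzero homogeneous $g\in B_{+}$; as a nonzero element of a domain, $g$ is a non-zero divisor, so rule~(3) gives ${\rm pd}(B/gB)={\rm pd}(B)-1$. A further application of Lasker--Noether to $B/gB$, viewed as a $B$-module, furnishes a homogeneous prime $\mathfrak{q}\subseteq B$ containing $g$ with ${\rm d}(B/\mathfrak{q})={\rm d}(B/gB)$, and its preimage in $A$ is a homogeneous prime $\mathfrak{p}_{1}\supseteq \mathfrak{a}$ with ${\rm pd}(A/\mathfrak{p}_{1})={\rm pd}(B)-1$ and $A/\mathfrak{p}_{1}$ again a standard graded $K$-domain. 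Repeating this ``cut by a homogeneous non-zero divisor, then pass to a top prime'' step lowers ${\rm pd}$ by exactly $1$ each time and therefore terminates, in finitely many steps, at a homogeneous prime $\mathfrak{P}\supseteq \mathfrak{a}$ for which $C:=A/\mathfrak{P}$ is a standard graded $K$-domain with ${\rm pd}(C)=0$.

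The last step is the only place where the hypothesis that $K$ is algebraically closed is used, and it is the conceptual heart of the proof: by Lemma~\ref{lem:4.5} there exists a finite field extension $L\,\vert\,K$ with ${\rm e}(C)=[L:K]$. Since $K$ is algebraically closed, $L=K$, hence ${\rm e}(C)=1$. Thus $\mathfrak{P}$ satisfies all the hypotheses of Corollary~\ref{cor:4.4}, which yields the required point $\langle\tau\rangle\in \mathds{P}_{A/\mathfrak{P}}(K)\subseteq{\rm V}_{+}(f_{1},\ldots,f_{r})$. The main obstacle in carrying this out cleanly is the bookkeeping for the dimension drop: cutting by an arbitrary homogeneous $f_{i}$ can fail to drop ${\rm pd}$ by exactly $1$, which is why I first secure a domain quotient via Lasker--Noether and thereafter only cut by non-zero divisors, so that each reduction is exact and the iteration is guaranteed to terminate at projective dimension zero.
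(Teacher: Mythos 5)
Your proof is correct and rests on exactly the same toolkit as the paper's (the Lasker--Noether decomposition, the associativity formula, Lemma~\ref{lem:4.5}, and Corollary~\ref{cor:4.4}); the only real difference is the bookkeeping. The paper runs a single induction on $(d,r)$ that alternates ``pass to a top-dimensional prime'' with ``cut by the next $f_i$,'' hitting the base case $(0,0)$ where Lemma~\ref{lem:4.5} and algebraic closedness force multiplicity $1$. You instead decouple the two roles of the $f_i$: first you use the inequality ${\rm d}(M/fM)\geq {\rm d}(M)-1$ applied $r$ times to show $A/\mathfrak{a}\neq 0$, and only afterwards descend to a zero-dimensional homogeneous prime $\mathfrak{P}\supseteq\mathfrak{a}$ by repeatedly cutting a domain quotient by an \emph{arbitrary} nonzero homogeneous $g\in B_{+}$ and re-applying Lasker--Noether. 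That phase-separation makes it transparent that the hypothesis $r\leq d$ is used exactly once (to guarantee survival of $A/\mathfrak{a}$) and that the algebraic closedness is used exactly once (to force ${\rm e}(C)=[L:K]=1$ at the bottom), which is a clean way to see where each assumption does its work; the cost is that your descent chain is existential rather than being driven by the given data, whereas the paper's induction is self-contained step by step. One small point worth making explicit in your write-up: when you invoke Lasker--Noether on $B/gB$ and claim the resulting prime $\mathfrak{q}$ contains $g$, the reason is that each $\mathfrak{p}_\rho$ in the decomposition annihilates a nonzero homogeneous element of a subquotient of $B/gB$, and $g$ kills every such subquotient, so $g\in\mathfrak{p}_\rho$ for all $\rho$; similarly the primes in the decomposition of $A/\mathfrak{a}$ all contain $\mathfrak{a}$ because $\mathfrak{a}$ annihilates that module.
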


 \begin{proof}
 By induction on $d$ and $r$.  If $d=0$, then $r=0$. By Lemma in \ref{mypar:4.1}\,(1)\,(b)  there exists a homogeneous prime ideal $\mathfrak{p}\subseteq A$ with
 ${\rm d}\,(A/\mathfrak{p})=0$. By Lemma~\ref{lem:4.5}, necessarily ${\rm e}\,(A/\mathfrak{p})=1$, since $K$ is algebraically closed and hence $\mathfrak{p}$ defines\,---\,by Corollary~\ref{cor:4.4}\,---\,a point  in $\mathds{P}_A(K)$.
\smallskip

For the inductive step from $d$ to $d+1$, consider a prime ideal $\mathfrak{p}\subseteq A$ with $d={\rm d}\,(A/\mathfrak{p})$. It is enough to prove that $\emptyset\not=V_+(\overline{f}_1,\ldots,
\overline{f}_r)\subseteq\mathds{P}_{A/\mathfrak{p}}(K)\subseteq\mathds{P}_A(K)$, where $\overline{f}_1,\ldots,\overline{f}_r$ denote the residue classes of $f_1,\ldots,f_r$ in $A/\mathfrak{p}$. We may therefore assume that $A$ is an integral domain and $f_r\not=0$. Then ${\rm d}\,(A/Af_r)=d-1$. By  induction hypothesis it follows  that $\emptyset\not=V_+(\overline{f}_1,\ldots,\overline{f}_{r-1})=
V_+(f_1,\ldots,f_r)\subseteq\mathds{P}_{A/Af_r}(K)$, where now $\overline{f}_1,\ldots,\overline{f}_{r-1}$ are the residue classes in $A/Af_r$. \end{proof}

The following theorem is also called Hilbert's Nullstellensatz. It is also
known as the {\it Identity theorem} for polynomial functions.

\begin{theorem}\label{thm:4.6a}{\rm \so{(Identity theorem)}}\,
Let $K$ be an algebraically closed field and let $A$ be  a standard graded $K$-algebra of projective dimension $d={\rm pd}(A)\geq 0$ which is an integral domain.
If a homogeneous element $f\in A$ vanishes at all points of $\,\mathds{P}_{A}(K)$, then $f=0$.
\end{theorem}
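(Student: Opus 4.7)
The approach I plan to take is to reduce the statement to the affine geometric Nullstellensatz (HNS\,2, Theorem~\ref{thm:2.10}\,(2)) by lifting $f$ to the ambient polynomial algebra and exploiting the affine cone over $\mathds{P}_A(K)$. Writing $A = K[t_0,\ldots,t_n]$ with $t_i \in A_1$, let $P = K[T_0,\ldots,T_n]$ and let $\mathfrak{A} := \ker(\varepsilon\colon P \to A)$ be the homogeneous relation ideal of \ref{mypar:4.2}. Since $A$ is an integral domain, $\mathfrak{A}$ is a homogeneous \emph{prime} ideal of $P$. I then pick any homogeneous lift $F \in P$ of $f$ with $\deg F = \deg f$.

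The decisive step is to verify that $F$ vanishes on the entire affine cone $\mathrm{V}_K(\mathfrak{A}) \subseteq K^{n+1}$. For any nonzero $\tau \in \mathrm{V}_K(\mathfrak{A})$, the class $\langle\tau\rangle$ satisfies $G(\tau) = 0$ for every homogeneous $G \in \mathfrak{A}$, so $\langle\tau\rangle \in \mathrm{V}_+(\mathfrak{A}) = \mathds{P}_A(K)$, and the hypothesis on $f$ yields $F(\tau) = 0$. At $\tau = 0$: if $\deg F \geq 1$, homogeneity immediately gives $F(0) = 0$. The remaining subcase $\deg F = 0$ is handled by applying Theorem~\ref{thm:4.6} with the empty system $(r=0)$, permissible since $\mathrm{pd}(A) \geq 0$; this produces a nonzero point in $\mathds{P}_A(K)$, and evaluating the constant $F$ at a lift of such a point forces $F = 0$.

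Once $F$ is known to vanish on $\mathrm{V}_K(\mathfrak{A})$, HNS\,2 yields $F \in \mathrm{I}_K(\mathrm{V}_K(\mathfrak{A})) = \sqrt{\mathfrak{A}}$; primeness of $\mathfrak{A}$ forces $\sqrt{\mathfrak{A}} = \mathfrak{A}$, so $f = \varepsilon(F) = 0$ in $A$, as required.

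The only real obstacle is the ``lift-to-the-cone'' step: one must carefully translate the projective vanishing hypothesis into a genuine affine vanishing of $F$ on $K^{n+1}$, including the vertex $\tau = 0$. After that, primeness of $\mathfrak{A}$ together with Theorem~\ref{thm:2.10}\,(2) close the argument with no induction on $\mathrm{pd}(A)$, in contrast to the proof of the existence result Theorem~\ref{thm:4.6}.
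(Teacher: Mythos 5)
Your proof is correct, and it is genuinely different from the paper's. The paper proves Theorem~\ref{thm:4.6a} by induction on $\mathrm{pd}(A)$: for $d=0$ it uses Lemma~\ref{lem:4.5} and Corollary~\ref{cor:4.4} to pin $\mathds{P}_A(K)$ down at the point corresponding to the zero ideal, while the inductive step constructs, via the Lasker--Noether chain of Lemma~\ref{mypar:4.1}\,(1)\,(b) and prime avoidance, a homogeneous prime $\mathfrak{q}\neq 0$ with $f\notin\mathfrak{q}$ and $\mathrm{pd}(A/\mathfrak{q})=d-1$, then applies the induction hypothesis to $A/\mathfrak{q}$ to get a contradiction. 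You instead collapse the whole statement into one application of the affine HNS\,2 on the cone: $\mathfrak{A}=\Ker\varepsilon$ is prime because $A$ is a domain, any homogeneous lift $F$ of $f$ vanishes on $\mathrm{V}_K(\mathfrak{A})\smallsetminus\{0\}$ by the projective hypothesis, the vertex is caught by homogeneity when $\deg F\geq 1$ (and by nonemptiness of $\mathds{P}_A(K)$ via Theorem~\ref{thm:4.6} when $\deg F=0$), and then $F\in\mathrm{I}_K(\mathrm{V}_K(\mathfrak{A}))=\sqrt{\mathfrak{A}}=\mathfrak{A}$ gives $f=\varepsilon(F)=0$. Both routes are sound; yours is shorter and makes the logical relationship to the affine Nullstellensatz transparent, essentially showing that over an algebraically closed field the projective Identity Theorem is a corollary of HNS\,2 applied to the affine cone. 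What the paper's inductive, multiplicity-based argument buys in exchange is that it survives the generalization to $2$-fields: in Theorem~\ref{thm:4.11a} the base field is no longer algebraically closed, so $\mathrm{I}_K(\mathrm{V}_K(\mathfrak{A}))=\sqrt{\mathfrak{A}}$ is no longer available and your cone reduction breaks, whereas the prime-avoidance induction transfers (with the multiplicity parity hypothesis) essentially unchanged.
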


\begin{proof}
By induction on $d$.  For $d=0$, by Lemma~\ref{lem:4.5}, necessarily ${\rm e}\,(A)=1$, since $K$ is algebraically closed and hence $\mathds{P}_A(K)=\{\tau_0\}\subseteq \mathds{P}^{n}(K)$, where $\tau_{0}$ correspond to the zero homogeneous prime ideal (since $A$ is an integral domain) by Corollary~\ref{cor:4.4}. Therefore $f=0$, since $f(\tau_{0})=0$.  Assume that  $d>0$ and $\deg f>0$. Suppose on the contrary that $f\not=0$.
\vskip2pt

\noindent {\rm (*)}\hspace{5mm}  We claim that\,: there exists a homogeneous  prime ideal $\mathfrak{q}\neq 0$ in $A$ with $f\not\in\mathfrak{q}$.
\vskip2pt

For a proof of (*) consider  $M:=\overline{A}:=A/Af$ which has the projective dimension ${\rm pd}(\overline{A})=d-1$. Further, by the Lemma in \ref{mypar:4.1}\,(1)\,(b), there exists a chain  $\,0=M_0\subsetneq M_1\subsetneq\cdots\subsetneq M_r=M\,$ of graded submodules,  homogeneous prime ideals $\gothp_{1},\ldots , \gothp_{r}$ in $A$ and integers $k_{1},\ldots , k_{r}$ with $M_\rho/M_{\rho-1}=(A/\mathfrak{p}_\rho)(-k_\rho)$, $i=1,\ldots , r$ and $\gothp_{1}\cdots \gothp_{r} M =0$, i.\,e. $\gothp_{1}\cdots \gothp_{r}\subseteq Af$. Therefore it follows that there exists a {\it finite} subset $\mathscr{P}\subseteq \{\gothp_{1}, \ldots, \gothp_{r}\}$ of homogeneous prime ideals in $A$ such that
$f\in \gothp$ and $d(A/\gothp)=d-1\geq 0$ for every $\gothp\in\mathscr{P}$. Further, $\gothp\cap A_{1}\subsetneq A_{1}$ for every $\gothp\in\mathscr{P}$. Therefore by prime avoidance there exists $g\in A_{1}$ with $g\not\in \cup_{\gothp\in\mathscr{P}}\,\gothp$. Now, since $d(A/Ag)=d-1$, we can choose a homogeneous prime ideal $\gothq$ in $A$ with $g\in\gothq$ and $d(A/\gothq)=d-1$. It is clear that $f\not\in \gothq$. This proves the claim (*).
On the other hand, since $d(A/\gothq)=d-1$ and $\mathds{P}_{A/\gothq}(K)\subseteq \mathds{P}_{A}(K)$, by induction hypothesis, it follows that $f\in\gothq$ which contradicts the claim (*).
\end{proof}

\begin{remark}\label{rem:4.6b}\,
In the Theorem\,\ref{thm:4.6a}, it is enough to assume that $A$ is reduced. Then the zero ideal in $A$ is an intersection of finitely many homogeneous prime ideals, namely as in the Lemma in \ref{mypar:4.1}\,(1)\,(b) for $M=A$.
\end{remark}

Now, we prove the analogues  of  the above Theorems\,\ref{thm:4.6} and \ref{thm:4.6a} for $2$-fields.
For this, first we recall a definition and some basic results for $2$-fields.
The only property of the field $\R$ of real numbers which will be used in the following is\,: {\it every polynomial of odd degree with coefficients  in $\R$ has a  zero in $\R$.} We would like to formulate this property axiomatically\,:

\begin{definition}\label{def:4.7}\,
A field $K$ is called a {\it $2$-field} if   every  polynomial $F\in K[X]$ of odd degree has a zero in $K$. The $2$-fields are defined in \cite{pfister}.
For example, the fields $\R$ and $\C$ of real and complex numbers are $2$-fields. More generally,  algebraically closed fields are $2$-fields and every real closed field is a $2$-field, see \ref{mypar:3.1}\,(3)\,(e).
\end{definition}

The following elementary characterization of $2$-fields is useful\,:

 \begin{lemma}\label{lem:4.8}\,
 For a field $K$, the following statements are equivalent\,$:$
{\rm (i)} $\,K$ is a $2$-field.\,
{\rm (ii)}  If $\,\pi\in K[X]$ is a prime polynomial of degree $>1$, then $\deg \pi$ is even.\,
{\rm (iii)}  If $\,L\,\vert\, K$ is a  non-trivial finite field extension  of $K$, then  $[L:K]=\Dim_{K} L$ is even.
\end{lemma}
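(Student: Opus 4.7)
The plan is to prove the three implications (i) $\Rightarrow$ (ii) $\Rightarrow$ (iii) $\Rightarrow$ (i) cyclically, using only the definition of a $2$-field together with the standard fact that a prime (= irreducible) polynomial over a field produces a simple algebraic extension of the corresponding degree, and the tower law $[L:K]=[L:K(\alpha)]\cdot[K(\alpha):K]$.

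For (i) $\Rightarrow$ (ii), I would argue by contradiction. Suppose $\pi\in K[X]$ is prime with $\deg\pi>1$ odd. By hypothesis $K$ is a $2$-field, so $\pi$ has a zero $a\in K$, i.e.\ $(X-a)\mid\pi$ in $K[X]$. This contradicts the primality of $\pi$ since $\deg\pi>1$. Hence every prime polynomial of degree $>1$ must have even degree.

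For (ii) $\Rightarrow$ (iii), let $L\mid K$ be a non-trivial finite extension and choose any $\alpha\in L\setminus K$. Since $L\mid K$ is finite, $\alpha$ is algebraic over $K$ with a minimal polynomial $\mu_\alpha\in K[X]$, which is prime (irreducible and monic) of degree $[K(\alpha):K]>1$. By (ii), $\deg\mu_\alpha=[K(\alpha):K]$ is even, and the tower law $[L:K]=[L:K(\alpha)]\cdot[K(\alpha):K]$ then gives that $[L:K]$ is even.

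For (iii) $\Rightarrow$ (i), let $F\in K[X]$ have odd degree. Factor $F$ into prime factors in $K[X]$: since the sum of the degrees of these prime factors equals $\deg F$, which is odd, at least one prime factor $\pi$ must have odd degree. If $\deg\pi=1$, then $\pi$ (and hence $F$) has a zero in $K$ and we are done. Otherwise $\deg\pi>1$ is odd, and then $L:=K[X]/\langle\pi\rangle$ is a field extension of $K$ with $[L:K]=\deg\pi>1$ odd, contradicting (iii). Thus the first case must occur, proving $K$ is a $2$-field.

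The lemma is essentially a formal manipulation once the right element $\alpha$ (respectively, the right prime factor $\pi$) is chosen; there is no genuine obstacle, but the subtle point to keep clean is that in (ii) $\Rightarrow$ (iii) one does \emph{not} need a primitive element—picking any single element of $L\setminus K$ suffices thanks to the tower law.
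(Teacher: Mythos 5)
Your proof is correct. The paper leaves the lemma as an exercise, suggesting the cycle (i) $\Rightarrow$ (iii) $\Rightarrow$ (ii) $\Rightarrow$ (i); you prove the cycle in the opposite orientation, but the underlying ingredients (minimal polynomial of an element of $L\setminus K$, the tower law, and parity of degrees in a prime factorization) are the same, and each of your three implications is carried out correctly.
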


\begin{proof}
The reader is  recommended to prove the implications\,:\, (i) $\Rightarrow$ (iii) $\Rightarrow$ (ii)$\Rightarrow$(i)\,.
\end{proof}

Now, we shall prove the analogue of Theorem\,\ref{thm:4.6}\,---\,Hilbert's Nullstellensatz for $2$-fields.

\begin{theorem}\label{thm:4.9}\,{\rm \so{(Hilbert's Nullstellensatz for $2$-fields
\hbox{---\,U.\,Storch,\,2003})}}\,
Let $K$ be a $2$-field and  $A$ be  a standard graded $K$-algebra of projective dimension $d={\rm pd}(A)\geq 0$ and of odd multiplicity ${\rm e}(A)$. Further,  let $f_{1}, \ldots , f_{r}\in A$ be homogeneous elements of positive odd degrees, $r\leq d$.
Then $f_{1}, \ldots , f_{r}$ have a common zero in $\mathds{P}_{A}(K)$, {\rm i.\,e.}\,  $\emptyset \neq \mathds{P}_{A/\mathfrak{a}}(K) = {\rm V}_{+}(f_{1},\ldots , f_{r}) \subseteq \mathds{P}_{A}(K)$, $\mathfrak{a} :=Af_{1}+\cdots +Af_{r}$.
\end{theorem}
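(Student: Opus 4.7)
The plan is to mimic the inductive argument in the proof of Theorem~\ref{thm:4.6}, replacing the role of algebraic closedness of $K$ (exploited via Lemma~\ref{lem:4.5}) by the $2$-field property, while simultaneously tracking the parity of the multiplicity through the descent. The key arithmetic fact is that a product of odd integers is odd, so that rule~\ref{mypar:4.1}(4)(a)(3) preserves odd multiplicity whenever we mod out by a homogeneous non-zero-divisor of odd degree.

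I would argue by outer induction on $d = {\rm pd}(A)$ and, for each $d$, by inner induction on $r \in \{0, 1, \ldots, d\}$. For the base case $d = 0$, $r = 0$, the Lasker--Noether decomposition of \ref{mypar:4.1}(1)(b) produces a chain of graded submodules of $A$ with factors $(A/\mathfrak{p}_\rho)(-k_\rho)$, and the associativity formula \ref{mypar:4.1}(4)(a)(5) gives $e(A) = \sum e(A/\mathfrak{p}_\rho)$, the sum ranging over those $\rho$ with ${\rm pd}(A/\mathfrak{p}_\rho) = 0$. Oddness of $e(A)$ forces some summand $e(A/\mathfrak{p}_{\rho_0})$ to be odd. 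Since $A/\mathfrak{p}_{\rho_0}$ is a standard graded integral domain of projective dimension $0$, Lemma~\ref{lem:4.5} writes $e(A/\mathfrak{p}_{\rho_0}) = [L:K]$ for some finite extension $L\,\vert\,K$; the $2$-field hypothesis combined with Lemma~\ref{lem:4.8}(iii) then forces $L = K$, hence $e(A/\mathfrak{p}_{\rho_0}) = 1$, and Corollary~\ref{cor:4.4} supplies the desired point of $\mathbb{P}_A(K)$.

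For the inductive step with $d \geq 1$, the same Lasker--Noether and associativity argument, applied to $A$ itself, produces a homogeneous prime $\mathfrak{p} \subseteq A$ with ${\rm pd}(A/\mathfrak{p}) = d$ and $e(A/\mathfrak{p})$ odd. When $r = 0$, I would pick $g \in A_1 \setminus \mathfrak{p}$ (such a $g$ exists because $A/\mathfrak{p}$ is a standard graded domain of projective dimension $\geq 1$, so $(A/\mathfrak{p})_1 \neq 0$) and pass to $B := (A/\mathfrak{p})/g(A/\mathfrak{p})$; by rule \ref{mypar:4.1}(4)(a)(3), ${\rm pd}(B) = d - 1$ and $e(B) = e(A/\mathfrak{p})$ is odd, so the outer induction furnishes a point of $\mathbb{P}_B(K) \subseteq \mathbb{P}_A(K)$. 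When $r \geq 1$, I would split on whether $f_r \in \mathfrak{p}$. If $f_r \notin \mathfrak{p}$, then $\bar{f}_r$ is a homogeneous non-zero-divisor of odd degree $\delta_r$ in the domain $A/\mathfrak{p}$, and $B := (A/\mathfrak{p})/\bar{f}_r(A/\mathfrak{p})$ satisfies ${\rm pd}(B) = d - 1$ and $e(B) = \delta_r \cdot e(A/\mathfrak{p})$ odd, so outer induction applied to $B$ with the $r-1$ odd-degree elements $\bar{f}_1, \ldots, \bar{f}_{r-1}$ (and $r - 1 \leq d - 1$) produces a common zero. If instead $f_r \in \mathfrak{p}$, I would invoke the inner induction: the algebra $A/\mathfrak{p}$ has the same projective dimension $d$ and odd multiplicity, carries only the $r-1$ elements $\bar{f}_1, \ldots, \bar{f}_{r-1}$ (as $\bar{f}_r = 0$), and $r - 1 \leq d$, so the inner hypothesis yields a point of $V_+(\bar{f}_1, \ldots, \bar{f}_{r-1}) \subseteq \mathbb{P}_{A/\mathfrak{p}}(K)$, automatically a zero of $f_r$ as well.

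The hard part, which forces the double induction rather than the single induction on $d$ used in Theorem~\ref{thm:4.6}, is the second case above: one cannot in general choose a prime $\mathfrak{p}$ of top projective dimension and odd multiplicity with $f_r \notin \mathfrak{p}$, so the dimension-dropping descent by $\bar{f}_r$ may be unavailable. The inner induction on $r$ neutralizes this obstruction without disturbing the parity bookkeeping. The $2$-field hypothesis is used exactly once, in the base case of the outer induction, to collapse a finite extension of odd degree to the trivial one.
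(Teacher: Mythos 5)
Your proof is correct and follows the same strategy as the paper's: apply the Lasker--Noether decomposition and the associativity formula to find a homogeneous prime $\mathfrak{p}$ of top projective dimension with $\mathrm{e}(A/\mathfrak{p})$ odd, descend by modding out homogeneous non-zero-divisors of odd degree (preserving parity of the multiplicity), and invoke the $2$-field hypothesis together with Lemmas~\ref{lem:4.5} and~\ref{lem:4.8} only at the bottom of the induction to force $\mathrm{e}(A/\mathfrak{p})=1$ and produce a $K$-rational point via Corollary~\ref{cor:4.4}. Your double induction on $(d,r)$ explicitly handles the sub-case $f_{r}\in\mathfrak{p}$, which the paper dispatches with the unargued assertion ``we may assume $A$ is an integral domain and $f_{r}\ne 0$''; this is a worthwhile tightening of the same argument rather than a different route.
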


\begin{proof}
For $M:=A$, let $0=M_0\subsetneq M_1\subsetneq\cdots\subsetneq M_r=M$ be a chain with $M_\rho/M_{\rho-1}=(A/\mathfrak{p}_\rho)(-k_\rho)$ as in Lemma   in \ref{mypar:4.1}\,(1)\,(b). Then by \ref{mypar:4.1}\,(4)\,(a)\,(5) we have
${\rm e}\,(A)=\sum_{\rho\,,\,{\rm d}\,(A/\mathfrak{p}_\rho)=d}\;{\rm e}\,(A/\mathfrak{p}_\rho)$.  Since the multiplicity ${\rm e}(A)$ is odd by assumption, it follows that   at least one of  ${\rm e}$\,$(A/\mathfrak{p}_\rho)$ with  ${\rm pd}\,(A/\mathfrak{p}_\rho)=d$ is also odd.
%\smallskip
If $d=0$, then by Lemma~\ref{lem:4.5} and Lemma~\ref{lem:4.8} necessarily
${\rm e}\,(A/\mathfrak{p}_\rho)=1$ for one $\mathfrak{p}_\rho$ with
${\rm d}\,(A/\mathfrak{p}_\rho)=0$, and such a prime ideal $\mathfrak{p}_\rho$ defines a point in $\mathds{P}_A(K)$. For the inductive step from $d$ to $d+1$, we may assume that $A$ is an integral domain and $f_r\not=0$. Then ${\rm e}\,(A/Af_r)=
{\rm e}\,(A)\cdot{\rm deg}\,f_r$ is also odd and ${\rm d}\,(A/Af_r)=d$, and by applying the induction hypothesis to $A/Af_r$ and the  residue classes $\overline{f}
_1,\ldots,\overline{f}_{r-1}$, the assertion follows.
\end{proof}

For $A=K[T_{0},\ldots , T_{n}]$,  we have $\,{\rm d}(A)=n$, $\,{\rm e}(A)=1$ and $\mathds{P}_{A}(K)=\mathds{P}^{n}(K)$. This  special case of Theorem\,\ref{thm:4.9} was already proved by Albrecht Pfister in \cite{pfister} as Theorem\,3\,:

\begin{corollary}\label{cor:4.10}{\rm\so{ (Projective Nullstellensatz for $2$-fields)}} Let $K$ be a $2$-field. Then
homogeneous polynomials $f_{1}, \ldots ,  f_{r} \in K[T_{0}, \ldots , T_{n}]$, $\,r\leq n\,$ of  odd degrees have a common non-trivial zero in $K^{n+1}$.
\end{corollary}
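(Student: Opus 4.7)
The plan is to apply Theorem~\ref{thm:4.9} directly to the standard graded polynomial algebra $A = K[T_0, \ldots, T_n]$, with $\deg T_i = 1$. All the hypotheses of Theorem~\ref{thm:4.9} are immediate for this $A$, so the corollary reduces to a bookkeeping translation between the projective formulation ($\mathds{P}^n(K)$-points) and the affine formulation (nontrivial zeros in $K^{n+1}$).

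First I would identify the relevant graded invariants of $A$. The Poincar\'{e} series of $A = K[T_0,\ldots,T_n]$ is $\mathscr{P}_A(Z) = 1/(1-Z)^{n+1}$, so by the discussion in \ref{mypar:4.1}\,(4) we read off ${\rm pd}(A) = n$ and ${\rm e}(A) = 1$. In particular the multiplicity ${\rm e}(A) = 1$ is odd, the projective dimension equals $n$, and the standing hypothesis $r \le n = {\rm pd}(A)$ of the corollary matches the hypothesis $r \le d$ of Theorem~\ref{thm:4.9}. The given polynomials $f_1,\ldots,f_r \in A$ are by assumption homogeneous of positive odd degree, so Theorem~\ref{thm:4.9} applies verbatim.

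Next I would invoke the theorem to conclude that $\mathrm{V}_{+}(f_1,\ldots,f_r) = \mathds{P}_{A/\mathfrak{a}}(K) \neq \emptyset$ inside $\mathds{P}_{A}(K)$, where $\mathfrak{a} = Af_1 + \cdots + Af_r$. Under the identification $\mathds{P}_{A}(K) = \mathds{P}^n(K)$ explained in \ref{mypar:4.2}, a point of this common zero set is an equivalence class $\langle \tau \rangle = \langle \tau_0, \ldots, \tau_n\rangle$ with representative $\tau = (\tau_0,\ldots,\tau_n) \in K^{n+1}\setminus\{0\}$. Because each $f_i$ is homogeneous, the condition $f_i(\tau) = 0$ is independent of the choice of representative; hence the condition $\langle \tau \rangle \in \mathrm{V}_{+}(f_1,\ldots,f_r)$ translates to $f_i(\tau) = 0$ for all $i$, yielding the desired common nontrivial zero in $K^{n+1}$.

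There is essentially no obstacle to overcome here: the whole substance of the corollary is already packaged inside Theorem~\ref{thm:4.9}, and the only ``work'' is verifying the Poincar\'{e}-series computation ${\rm pd}(K[T_0,\ldots,T_n]) = n$, ${\rm e}(K[T_0,\ldots,T_n]) = 1$, together with the definitional identification $\mathds{P}_{K[T_0,\ldots,T_n]}(K) = \mathds{P}^n(K)$ and the passage from a projective point to a nontrivial affine representative.
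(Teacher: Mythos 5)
Your proof is correct and is essentially the proof the paper gives: apply Theorem~\ref{thm:4.9} with $A = K[T_0,\ldots,T_n]$, noting ${\rm pd}(A)=n$, ${\rm e}(A)=1$ (odd), and $\mathds{P}_A(K)=\mathds{P}^n(K)$, then pass from a projective zero to a representative in $K^{n+1}\smallsetminus\{0\}$. (Minor note: you correctly write $\mathscr{P}_A=1/(1-Z)^{n+1}$, whereas the paper's example in \ref{mypar:4.1}\,(3) misstates this exponent as $n$; either way the conclusion ${\rm pd}(A)=n$ is right.)
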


Since the field  $\R$ is a $2$-field, in~particular, we have\,:

\begin{corollary}\label{cor:4.11}{\rm\so{(Real Projective Nullstellensatz)}}
Homogeneous polynomials
$f_{1},\ldots, f_{n}\!\in\!\R[T_{0},\ldots , T_{n}]$ of odd degrees  have a common non-trivial zero in $\R^{n+1}$.
\end{corollary}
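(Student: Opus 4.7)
The plan is to deduce this statement immediately from Corollary~\ref{cor:4.10} (the Projective Nullstellensatz for $2$-fields) applied to $K=\R$, so the entire task reduces to verifying that $\R$ is a $2$-field in the sense of Definition~\ref{def:4.7}.

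To check that $\R$ is a $2$-field, I would recall that every polynomial $F\in\R[X]$ of odd degree $d$ has a real root. This is a standard consequence of the Intermediate Value Theorem: writing $F=a_d X^d+\cdots+a_0$ with $a_d\neq 0$ and $d$ odd, one has $\lim_{x\to+\infty} F(x)/a_d = +\infty$ and $\lim_{x\to-\infty} F(x)/a_d = -\infty$, so $F$ takes both positive and negative values on $\R$ and hence (being continuous) vanishes somewhere. Thus the defining property of a $2$-field is satisfied. Alternatively, one may invoke Lemma~\ref{lem:4.8}: since $[\C:\R]=2$ and $\C$ is algebraically closed, every nontrivial finite extension of $\R$ has even degree over $\R$, which by the equivalence (iii)$\Leftrightarrow$(i) of Lemma~\ref{lem:4.8} identifies $\R$ as a $2$-field.

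Once this is in hand, the result is simply the case $K=\R$ and $r=n$ of Corollary~\ref{cor:4.10}: homogeneous polynomials $f_1,\dots,f_n\in\R[T_0,\dots,T_n]$ of positive odd degrees possess a common nontrivial zero in $\R^{n+1}$. No further argument is needed, as Corollary~\ref{cor:4.10} already yields a point $\langle\tau\rangle\in\mathds{P}^n(\R)$ at which all $f_i$ vanish, and any representative $\tau\in\R^{n+1}\!\smallsetminus\!\{0\}$ of this class is the desired common nontrivial zero.

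There is essentially no obstacle here; the corollary is a direct specialization. The only remark worth adding in the write-up is that the hypothesis ``$\deg f_i$ odd and positive'' is used through the fact that $\R$ is a $2$-field (so the machinery of Theorem~\ref{thm:4.9} applies), and that the passage from ``nontrivial zero in $\R^{n+1}$'' to ``zero in $\mathds{P}^n(\R)$'' is merely the definition of projective space recalled in \S\ref{mypar:4.2}.
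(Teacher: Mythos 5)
Your proposal is correct and matches the paper's own derivation: Corollary~\ref{cor:4.11} is obtained there precisely as the specialization of Corollary~\ref{cor:4.10} to $K=\R$, using the fact (noted just before the statement) that $\R$ is a $2$-field. Your justification that $\R$ is a $2$-field via the Intermediate Value Theorem (or via $[\C:\R]=2$ and Lemma~\ref{lem:4.8}) is the standard one and needs no further comment.
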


Now, we shall prove the analogue of Theorem\,\ref{thm:4.6a}\,---\,Hilbert's Nullstellensatz for $2$-fields.

\begin{theorem}\label{thm:4.11a}\,
Let $K$ be a $2$-field and let $A$ be  a standard graded $K$-algebra of projective dimension $d={\rm pd}(A)\geq 0$ and of odd multiplicity ${\rm e}(A)$. If a homogeneous element $f\in A$ vanishes at all points of $\mathds{P}_{A}(K)$, then $f=0$.
\end{theorem}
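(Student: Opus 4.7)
I will argue by induction on $d = {\rm pd}(A)$, paralleling the proof of Theorem~\ref{thm:4.6a}, with the role of ``${\rm e}(A) = 1$'' (coming from $K$ algebraically closed) now played by ``${\rm e}(A)$ odd''. The critical invariant to carry through the induction is oddness of the multiplicity, and one first reduces to the case that $A$ is an integral domain: applying the Lasker--Noether lemma of~\ref{mypar:4.1}(1)(b) to $M = A$ together with the associativity formula of~\ref{mypar:4.1}(4)(a)(5) gives ${\rm e}(A) = \sum_{\rho:\, {\rm d}(A/\gothp_\rho) = {\rm d}(A)} {\rm e}(A/\gothp_\rho)$, so some summand ${\rm e}(A/\gothp_\rho)$ is odd with ${\rm pd}(A/\gothp_\rho) = d$; since $\mathds{P}_{A/\gothp_\rho}(K) \subseteq \mathds{P}_A(K)$, the hypothesis on $f$ is inherited by $A/\gothp_\rho$.

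For the base case $d = 0$ with $A$ a domain, Lemma~\ref{lem:4.5} produces a finite field extension $L\,\vert\,K$ with ${\rm e}(A) = [L:K]$; oddness of $[L:K]$ together with the characterization of $2$-fields in Lemma~\ref{lem:4.8} forces $L = K$, hence ${\rm e}(A) = 1$. By Corollary~\ref{cor:4.4} the zero prime then corresponds to the unique point $\tau_0 \in \mathds{P}_A(K)$, and $f(\tau_0) = 0$ yields $f \in \gothp_{\tau_0} = 0$, i.e., $f = 0$.

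For the inductive step ($d \geq 1$, $A$ a domain of odd multiplicity), suppose for contradiction that $f \neq 0$. Apply Lasker--Noether to $A/Af$ (which has ${\rm pd}(A/Af) = d - 1$ since $f$ is a non-zero-divisor of positive degree), and let $\mathscr{P}$ denote the primes in the decomposition satisfying ${\rm pd}(A/\gothp) = d - 1$. Each $\gothp \in \mathscr{P}$ is proper with $A_1 \not\subseteq \gothp$, so prime avoidance yields $g \in A_1$ with $g \notin \bigcup_{\gothp \in \mathscr{P}} \gothp$. Since $\deg g = 1$, one has ${\rm pd}(A/Ag) = d - 1$ and, crucially, ${\rm e}(A/Ag) = \deg(g) \cdot {\rm e}(A) = {\rm e}(A)$ is still odd. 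A second application of Lasker--Noether and the associativity formula, now to $A/Ag$, extracts a homogeneous prime $\gothq \supseteq Ag$ with ${\rm pd}(A/\gothq) = d - 1$ and ${\rm e}(A/\gothq)$ odd. One checks $f \notin \gothq$: otherwise $\gothq$ would be a prime containing $Af$ with ${\rm pd}(A/\gothq) = d - 1 = {\rm pd}(A/Af)$, forcing $\gothq \in \mathscr{P}$ and contradicting $g \in \gothq$. The induction hypothesis applied to the domain $A/\gothq$ (projective dimension $d - 1$, odd multiplicity) then gives $\bar f = 0$ in $A/\gothq$, i.e., $f \in \gothq$, which is the desired contradiction.

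The main obstacle is the preservation of oddness of multiplicity under the key reduction from $A$ to $A/Ag$; this is precisely why $g$ must be chosen of degree one, since the formula ${\rm e}(A/Ag) = \deg(g) \cdot {\rm e}(A)$ only retains its parity when $\deg g = 1$, whereas any separating element of higher degree would destroy the odd-multiplicity hypothesis needed to run the inductive step.
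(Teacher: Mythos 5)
Your base case (Lemmas~\ref{lem:4.5} and~\ref{lem:4.8} forcing ${\rm e}(A)=1$, then Corollary~\ref{cor:4.4}) and your inductive step (choosing $g\in A_1$ by prime avoidance away from the Lasker--Noether primes of $A/Af$, using ${\rm e}(A/Ag)=\deg(g)\cdot{\rm e}(A)={\rm e}(A)$ to preserve odd multiplicity, then extracting $\gothq$ with odd ${\rm e}(A/\gothq)$ and $f\notin\gothq$) are precisely the paper's argument, spelt out in more detail, and they are correct \emph{under the implicit hypothesis that $A$ is an integral domain}, which Theorem~\ref{thm:4.6a} states explicitly and this theorem tacitly inherits (the proof quotes Lemma~\ref{lem:4.5}, which requires a domain).

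Your opening ``reduction to the domain case'' is a genuine gap and cannot be repaired. Finding one minimal prime $\gothp_\rho$ of top dimension with ${\rm e}(A/\gothp_\rho)$ odd and applying the domain case to $A/\gothp_\rho$ yields only $f\in\gothp_\rho$, not $f=0$ in $A$. That reduction is the right move in the proof of Theorem~\ref{thm:4.9}, where producing a point in a single component suffices, but an identity theorem can never be localized to one minimal prime of a non-domain. In fact the statement is false without the domain hypothesis: take $K=\R$ and $A=\R[T_0,T_1]/\langle T_1(T_0^2+T_1^2)\rangle$, which is reduced, standard graded, with ${\rm pd}(A)=0$ and ${\rm e}(A)=3$ (odd). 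Its minimal prime $\langle T_0^2+T_1^2\rangle$ has even multiplicity and contributes no real projective points, so $\mathds{P}_A(\R)=\{\langle1:0\rangle\}$, and the nonzero homogeneous element $f=T_1(T_0+T_1)\in A$ vanishes at $\langle1:0\rangle$ yet is not zero. Drop the attempted reduction and simply assume $A$ is an integral domain, as in Theorem~\ref{thm:4.6a}; the rest of your argument then stands. (A minor point: parity of ${\rm e}(A/Ag)=\deg(g)\cdot{\rm e}(A)$ survives for any \emph{odd} $\deg g$, not only $\deg g=1$; the choice $\deg g=1$ is what makes the prime-avoidance step convenient, not what rescues the parity.)
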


\begin{proof}
We proceed as in the proof of Theorem~\ref{thm:4.6a}.
 For $d=0$, by Lemma~\ref{lem:4.5}, necessarily ${\rm e}\,(A)=1$, since $K$ is a $2$-field and hence $f=0$ by the same argument as in \ref{thm:4.6a}.  Assume that  $d>0$ and $\deg f>0$.  Suppose on the contrary that $f\not=0$.
\vskip2pt

\noindent {\rm (*)}\hspace{3mm}  We claim that\,: there exists a homogeneous  prime ideal $\mathfrak{q}\neq 0$ in $A$ with $f\not\in\mathfrak{q}$ and $e(A/\gothq)$ odd.
\vskip2pt

As  in the proof of the Theorem \ref{thm:4.6a} one can prove the claim (*) by constructing a homogeneous prime  ideal $\mathfrak{q}$, since $e(A/Ag)=e(A)$  for  a suitable chosen $g\in A_1$, $g\not=0$ with  ${\rm e}\,(A/Ag)={\rm e}\,(A)$. On the other hand, since $d(A/\gothq)=d-1$, $e(A/\gothq)$ odd and $\mathds{P}_{A/\gothq}(K)\subseteq \mathds{P}_{A}(K)$, by induction hypothesis, it follows that $f\in\gothq$ which contradicts the claim (*).
\end{proof}

We use the Real Projective Nullstellensatz\,\ref{cor:4.11} to provide  an algebraic proof  of  the well-known Bor\-suk-Ulam theorem which states that\, :

\begin{theorem}\label{thm:4.12}{\rm (\,\so{Borsuk\hbox{-}Ulam}\,\footnote{\label{foot:16}This was conjectured by Stanislaw Ulam (1909--1984) and was proved  by Karol Borsuk (1905 -- 1982) in 1933 by elementary  but technically involved methods.
Borsuk presented the theorem at the International congress of
mathematicians at Z\"{u}rich in 1932 and it was published in {\it
Fundamentae Mathematicae} {\bf 20}, 177-190 (1933) with the title {\it Drei
S\"{a}tze \"{u}ber $n$-dimentionale euclidische Sph\"{a}re.}}\,)}\,
For every continuous map $\,g:{\rm S}^{n}\to\R^{n}$, $\,n\in\N,$ there exist anti-podal points $t$, $-t\in {\rm S}^{n}$ with $g(t)=g(-t)$.
\end{theorem}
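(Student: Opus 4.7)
The plan is to reduce the Borsuk-Ulam theorem to the statement that every continuous odd map $f\colon {\rm S}^n\to\R^n$ must have a zero, and then to derive this latter statement from the Real Projective Nullstellensatz (Corollary\,\ref{cor:4.11}). For the reduction, I would set $f(t):=g(t)-g(-t)$ for $t\in {\rm S}^n$; this $f$ is continuous and odd, and any zero $t_0$ of $f$ provides a pair of antipodal points with $g(t_0)=g(-t_0)$.

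To show that an arbitrary continuous odd map $f=(f_1,\ldots,f_n)\colon {\rm S}^n\to\R^n$ must vanish somewhere, suppose on the contrary that $f(t)\neq 0$ for all $t\in {\rm S}^n$. Since ${\rm S}^n$ is compact, there is $\delta>0$ with $\max_{1\le i\le n}|f_i(t)|\ge\delta$ for every $t\in {\rm S}^n$. By the Stone-Weierstrass theorem, each $f_i$ can be uniformly approximated on ${\rm S}^n$ by a polynomial in $T_0,\ldots,T_n$; splitting this polynomial into its even-degree and odd-degree parts and retaining only the odd-degree part (which still approximates $f_i$, because $f_i$ is odd and the antipodal map is an isometry of ${\rm S}^n$), one obtains odd polynomials $p_i\in\R[T_0,\ldots,T_n]$ with $|p_i(t)-f_i(t)|<\delta$ for every $t\in {\rm S}^n$.

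Fix a common odd integer $d$ that is at least as large as $\deg p_i$ for all $i$. For each homogeneous component of odd degree $k$ in $p_i$, multiply it by the even-degree polynomial $(T_0^2+\cdots+T_n^2)^{(d-k)/2}$ and sum; this yields a homogeneous polynomial $P_i\in\R[T_0,\ldots,T_n]$ of odd degree $d$ satisfying $P_i(t)=p_i(t)$ whenever $t\in {\rm S}^n$. Applying Corollary\,\ref{cor:4.11} to $P_1,\ldots,P_n$, there exists $\tau\in\R^{n+1}\smallsetminus\{0\}$ with $P_1(\tau)=\cdots=P_n(\tau)=0$. Setting $t_0:=\tau/\|\tau\|\in {\rm S}^n$, homogeneity gives $p_i(t_0)=P_i(t_0)=0$ and therefore $|f_i(t_0)|=|f_i(t_0)-p_i(t_0)|<\delta$ for every $i$, contradicting the lower bound $\max_i|f_i(t_0)|\ge\delta$.

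The main obstacle is the passage from an analytic (continuous) problem to a polynomial one without losing the parity hypothesis needed for the Nullstellensatz: both the approximating $p_i$ and their homogenizations $P_i$ must remain of odd degree. This is why one discards the even part of the Weierstrass approximation and why the homogenizing factor must be an even power of $T_0^2+\cdots+T_n^2$, which happens to equal $1$ on ${\rm S}^n$. Once this device is in place, Corollary\,\ref{cor:4.11} furnishes a near-zero of $f$ for every $\delta>0$, and compactness of ${\rm S}^n$ (or the single contradiction above) completes the argument.
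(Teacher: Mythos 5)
Your proof is correct and follows essentially the same path as the paper: reduce to showing an odd continuous $f\colon {\rm S}^n\to\R^n$ has a zero via $f(t)=g(t)-g(-t)$, approximate by odd polynomials via Weierstrass, homogenize using powers of $T_0^2+\cdots+T_n^2$ (which is $1$ on ${\rm S}^n$), and invoke Corollary\,\ref{cor:4.11}. The paper phrases the approximation step with a sequence $1/k$ and an accumulation point rather than your single-$\delta$ contradiction, but these are interchangeable; your explicit homogenization at the point of application is a small clarification of a step the paper performs in the reverse direction of the equivalence in Theorem\,\ref{thm:4.13}.
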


The proof of Borsuk-Ulam theorem for the case $n=1$ is an easy application of the intermediate value theorem. The case $n=2$ is already non-trivial and it needs the concept of the first fundamental group which was introduced by Henri Poincar\'{e} (1854--1912)\,---\,who was responsible for formulating the Poincar\'{e} conjecture. The general case is usually proved by using higher homology
groups.
\smallskip

Borsuk-Ulam Theorem is fascinating even today. It implies   the classical Theorem of  Brou\-wer\,\footnote{\label{foot:17}{\bf Theorem}\,\,{\rm (\,\so{Brouwer's fixed point theorem}\,--\,Brouwer L. E. J. (1881-1966))}\, {\it Every continuous map
$\,f: \overline{{\rm B}}^{\,n} \longrightarrow \overline{{\rm B}}^{\,n}$ of the unit ball $\overline{{\rm B}}^{\,n}:= \{t\in\R^{n}\mid |\!|t|\!|\leq 1\}\,$ has a fixed point.}
{\bf Proof}\, If  $f$ has no fixed point, then the continuous map $h:\overline{\rm B}^{\,n}\to {\rm S}^{n-1}\subseteq\R^n$,  which maps the point $t\in\overline{\rm B}^{\,n}$ to the point of intersection of the line-segment ${\rm L}(f(t)\,,\,t):=\{f(t)+\lambda\,t\mid \lambda\in [0,1]\}\subseteq \R^{n}\,$ with the sphere  ${\rm S}^{n-1}\subseteq\overline{\rm B}^{\,n}$  has no zero.  But,  $h\vert_{{\rm S}^{n-1}}={\rm id}$ and in~particular,   $h(-t)=-h(t)$ for  $t\in {\rm S}^{n-1}$. Therefore, for  $n=1$, the Nullstellensatz is equivalent with the {\it Intermediate Value theorem}\,: Every continuous map  $h:[-1,1]\to\R$ with $h(-1)=-h(1)$ has a zero.
} and the Invariance of  Dimension Theorem\,\footnote{\label{foot:18}{\bf Theorem}\,\,{\rm (\,\so{Invariance of dimension}\,)}\,  {\it For $m>n$, there is no injective continuous map from an open subset $U\subseteq \R^{m}$ into $\R^{n}$. In~particular,
if $m\neq n$, then the Euclidean
spaces $\R^{m}$ and $\R^{n}$ are not homeomorphic.}}.

\begin{proof} 
	Recall that ${\rm S}^{n}= \{t=(t_{0}, \ldots , t_{n})\in\R^{n+1}\mid |\!|t|\!|^{2}= \sum_{i=0}^{n} t_{i}^{2} =1 \}\subseteq \R^{n+1}$ is the $n$-sphere.
Consider the odd continuous map $f:\mathrm{S}^{n} \to \R^{n}$, $t\mapsto f(t):=g(t)-g(-t)$ and the Borsuk-Ulam's
Nullstellensatz (see (i) in Theorem\,\ref{thm:4.13} below).
\end{proof}
We now prove the equivalence of  Borsuk-Ulam's Nullstellensatz with some other statements\,:

\begin{theorem}\label{thm:4.13}
Let $n\in\N$. Then the following statements are
equivalent:

\begin{rlist}
\item\, \  \,{\rm (\,\so{Borsuk\hbox{-}Ulam's Nullstellensatz}\,)}\,
 Every continuous odd map\,\footnote{A map $f:\mathrm{S}^{n}\longrightarrow\R^{n}\,$ is called an \textit{odd map} if $f(x)= - f(-x)$ for every $x\in\mathrm{S}^{n}$.} $\,f:{\rm S}^{n} \longrightarrow \R^{n}$, $n\in\N$, has a zero.

\item\,   {\rm\, (\,\so{Borsuk's antipodal theorem}\,)}\,
Every continuous map  $\,h:\overline{{\rm B}}^{\,n} \longrightarrow \R^{n}\,$ with $n\geq 1$ and the restriction $\,h\vert_{\,{{\rm S}^{n-1}}}:{\rm S}^{n-1}\longrightarrow\R^{n}\,$ odd, has a zero.

\item {\rm (\,\so{Real Projective Nullstellensatz}\,---\,Corollary\,\ref{cor:4.11}\,)}\,
Homogeneous polynomials \ $f_{1},$ $\ldots , f_{n}\in\R[T_{0},...,T_{n}]$ of  odd degree have a common non-trivial zero in $\R^{n+1}$.
\end{rlist}
\end{theorem}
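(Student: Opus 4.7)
The plan is to establish the three-way equivalence via the cycle $\mathrm{(iii)} \Rightarrow \mathrm{(i)} \Rightarrow \mathrm{(ii)} \Rightarrow \mathrm{(iii)}$, where $\mathrm{(i)} \Leftrightarrow \mathrm{(ii)}$ and $\mathrm{(i)} \Rightarrow \mathrm{(iii)}$ are elementary, and the substantive content lies in $\mathrm{(iii)} \Rightarrow \mathrm{(i)}$, which reduces the topological Borsuk--Ulam theorem to the algebraic Real Projective Nullstellensatz via polynomial approximation.

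For $\mathrm{(i)} \Rightarrow \mathrm{(iii)}$ the restriction $(f_{1}, \ldots, f_{n})|_{S^{n}} \colon S^{n} \to \R^{n}$ of odd-degree homogeneous polynomials is continuous and odd (since $f_{i}(-t) = -f_{i}(t)$), so any zero on $S^{n}$ furnishes a non-trivial common zero in $\R^{n+1}$. The equivalence $\mathrm{(i)} \Leftrightarrow \mathrm{(ii)}$ follows from the standard hemisphere-to-ball correspondence: identifying the upper hemisphere $\{(t_{0}, y) \in S^{n} : t_{0} \geq 0\}$ with $\bar{B}^{n}$ via $(t_{0}, y) \mapsto y$, one either builds an odd $\tilde{f} \colon S^{n} \to \R^{n}$ from $h$ by gluing $\tilde{f}(t_{0}, y) := h(y)$ on the upper half and $\tilde{f}(t_{0}, y) := -h(-y)$ on the lower half (continuity on the equator is precisely the oddness of $h|_{S^{n-1}}$), or conversely transports an odd $f$ to $h(y) := f(\sqrt{1 - |y|^{2}}, y)$, whose restriction to $S^{n-1}$ is odd by construction.

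The main step is $\mathrm{(iii)} \Rightarrow \mathrm{(i)}$. Given a continuous odd $f \colon S^{n} \to \R^{n}$ with no zero, set $\varepsilon := \min_{S^{n}} |f| > 0$ by compactness. By the Weierstrass approximation theorem, applied component-wise, choose a polynomial map $q \colon \R^{n+1} \to \R^{n}$ with $\sup_{S^{n}} |q - f| < \varepsilon/4$, and replace $q$ by its odd part $p(t) := (q(t) - q(-t))/2$; the oddness of $f$ preserves the bound, and each component $p_{i}$ now contains only monomials of odd total degree. Fix an odd integer $d$ larger than every $\deg p_{i}$, decompose $p_{i} = \sum_{k} p_{i,k}$ into homogeneous pieces of degree $2k+1$, and set
\[
\tilde{p}_{i}(t) := \sum_{k} p_{i,k}(t) \, \bigl( t_{0}^{2} + \cdots + t_{n}^{2} \bigr)^{(d - 2k - 1)/2},
\]
which is homogeneous of odd degree $d$ and agrees with $p_{i}$ on $S^{n}$. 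Applying $\mathrm{(iii)}$ to $\tilde{p}_{1}, \ldots, \tilde{p}_{n}$ produces a non-trivial common zero in $\R^{n+1}$ which, after normalization, is a point $t^{*} \in S^{n}$ with $p(t^{*}) = 0$; hence $|f(t^{*})| < \varepsilon/4$, contradicting $|f(t^{*})| \geq \varepsilon$.

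The hard part will be this homogenization in $\mathrm{(iii)} \Rightarrow \mathrm{(i)}$: polynomial approximation only yields components that are odd \emph{as functions}, and one must convert these into genuinely \emph{homogeneous} polynomials of a common \emph{odd} degree in order to invoke the Real Projective Nullstellensatz. The trick of multiplying each homogeneous piece by an even power of $\sum t_{i}^{2}$ (which equals $1$ on $S^{n}$) does both jobs at once, and it is crucial that the common degree $d$ can be chosen odd so that Corollary~\ref{cor:4.11} applies.
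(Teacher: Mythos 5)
Your proposal is correct and follows essentially the same route as the paper: the hemisphere--ball gluing for $\mathrm{(i)}\Leftrightarrow\mathrm{(ii)}$, the easy restriction argument for $\mathrm{(i)}\Rightarrow\mathrm{(iii)}$, and Weierstrass approximation followed by passage to odd parts and homogenization by powers of $T_0^2+\cdots+T_n^2$ for $\mathrm{(iii)}\Rightarrow\mathrm{(i)}$. The only cosmetic differences are that you place the homogenization step explicitly inside $\mathrm{(iii)}\Rightarrow\mathrm{(i)}$ (the paper tucks it into the $\mathrm{(i)}\Rightarrow\mathrm{(iii)}$ paragraph even though it is really needed for the converse) and you close with a compactness contradiction rather than the paper's accumulation-point limit argument.
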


\begin{proof}
(i)$\iff$(ii)\,  Note that for $n\geq 1$,
the odd continuous maps $f:{\rm S}^n\to\R^n$ correspond to the continuous maps $h: \overline{{\rm B}}^{\,n}\to\R^{n}$ such that the restriction $h\vert_{\, {\rm S}^{n-1}}:{\rm S}^{n-1}\to \R^{n}$ of $h$ to the subset ${\rm S}^{n-1}\subseteq \overline{\rm B}^{\,n}$  is odd. For a given $f:{\rm S}^n\to\R^n$ define $h(t):=f(\sqrt{1-\Vert t\Vert^2},t)$, $t\in\overline{\rm B}^n$, and conversely for a given $h: \overline{{\rm B}}^{\,n}\to\R^{n}$ define $f(t_0,t):=h(t)$, if   $t_0\ge0$,  $f(t_0,t):=-h(-t)$, if  $t_0\le0$, $(t_0,t)\in {\rm S}^n\subseteq\R\times\R^n$.
\smallskip

(i) $\Rightarrow$ (iii)
From (i) in~particular, it follows that $n$ odd polynomial functions $f_{1},\ldots , f_{n}:\R^{n+1}\longrightarrow \R$ have a common zero on ${\rm S}^{n}$.  If $F\in \R[T_{0},\ldots, T_{n}]$ defines an odd polynomial function $F:\R^{n+1}\to\R$, then all homogeneous components of even degree in $F$ are zero, i\,.e. in the homogeneous decomposition of $F$ only odd degree homogeneous components can occur. Suppose that  $F= \sum_{i=0}^{m} F_{2i+1}$, $F_{2m+1}\not=0$, is the homogeneous decomposition of $F$ with homogeneous components $F_{1},\ldots , F_{2m+1}$ of odd degrees $1,\ldots , 2m+1$, respectively.
Now, observe that $F$ and the \emph{homogeneous} polynomial $\,Q^{\,m}F_1+Q^{\,m-1}F_3+\cdots+F_{2m+1}$, $\,Q:=T_0^{\,2}+\cdots+T_n^{\,2}$, have the same values on the sphere ${\rm S}^n$.
\smallskip

(iii) $\Rightarrow$ (i)\,:   Let $f=(f_1,\ldots,f_n):{\rm S}^n\to\R^n$, $n\in\N$,  with $f_i: {\rm S}^n\to\R$, $i=1,\ldots , n$,  odd and continuous. Then by the well-known Weierstrass Approximation Theorem\,\footnote{\label{foot:20}{\small
{\bf Theorem}\,\,{\rm (\,\so{Weierstrass}\,)}\,
{\it Let $X\subseteq  \R^{n}$, $n\in\N$ be a compact
subset. Then the set of polynomial functions
$\R[T_{1},...,T_{n}]$ is dense in
$({\rm C}(X,\R),|\!|\cdot |\!|_{\rm sup}),$ where ${\rm C}(X,\R)$ is the $\R$-algebra of $\R$-valued continuous functions on $X$ and for every $f\in {\rm C}(X,\R)$,  $|\!| f|\!|_{\rm sup}:={\rm Sup}\{|\!|f(x)|\!| \mid x\in X\}$.}
(\,Karl Weierstrass (1815-1897)  is known as the father of modern analysis, and  contributed to the theory of periodic functions, functions of real variables, elliptic functions, Abelian functions, converging infinite products, and the calculus of variations. He also advanced the theory of bilinear and quadratic forms.\,)}}
for every  $k\in\N^*$, there exist \emph{polynomial functions} $g_{ik}$ with  $|g_{ik}(t)-f_i(t)|\le 1/k$ for $i=1,\ldots,n$ and all $t\in {\rm S}^n$. For the odd parts  $f_{ik}(t):=(g_{ik}(t)-g_{ik}(-t))/2$, it follows
$|f_{ik}(t)-f_i(t)|=\frac{1}{2}\,|(g_{ik}(t)-f_i(t))-(g_{ik}(-t)-f_i(-t))|\le1/k$.
By the Real Algebraic Nullstellensatz\,\ref{cor:4.11}, the $f_{ik}$, $i=1,\ldots,n$, have a common zero $t_k\in {\rm S}^n$. Then an accumulation point $t\in {\rm S}^n$ of  $t_k$, $k\in\N^*$, is a common zero of  $f_1,\ldots,f_n$.
\end{proof}

\begin{remark}\label{rem:4.14}\,
Note that we have proved Real Projective Nullstellensatz in Corollary\,\ref{cor:4.11} and hence the equivalence in Theorem\,\ref{thm:4.13} proves the Borsuk-Ulam's Nullstellensatz\,\ref{thm:4.13}\,(ii) also. In~particular, we have proved the Borsuk-Ulam Theorem.
\end{remark}

\section{Combinatorial Nullstellensatz}

In this section we prove one of the most recent Nullstellens\"atze\,---\,
Combinatorial Nullstellensatz  a celebrated result of Noga Alon  (see \cite{alon}) proved in 1999 that has served as a powerful technical tool  in combinatorics, graph theory  and additive number theory. We use HNS 2 to prove the commonly used versions of Combinatorial Nullstellensatz. As an illustration we will use it to prove  Erd\"{o}s-Heilbronn conjecture and Dyson's conjecture.

\begin{theorem}\label{thm:5.1}\,{\rm\so{(Combinatorial Nullstellensatz\hbox{---\,N.\,Alon,\,1999})}}\,
Let $K$ be a field, 
 $\Lambda_1,\ldots ,\Lambda_{\,n}\subseteq K$ be finite subsets of $K$, $\Lambda=\Lambda_1 \times \cdots \times \Lambda_{\,n}\subseteq K^{n}$ and  $g_i(X_i) = \prod_{\,a_{\,i} \in \Lambda_{\,i} }(X_i-a_{i})\in K[X_{i}] \subseteq K[X_{1},\ldots , X_{n}]$, $i=1,\dots,n$. Then \ $\,{\rm I}_K(\Lambda) = \langle g_1(X_1),\ldots, g_n(X_n) \rangle =: \gothA\,$.
\end{theorem}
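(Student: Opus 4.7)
The plan is to establish both inclusions separately, with the harder direction reduced to the Identity Theorem for Polynomials stated in \ref{mypar:2.1}\,(6).

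For the inclusion $\mathfrak{A} := \langle g_1(X_1), \ldots, g_n(X_n)\rangle \subseteq {\rm I}_K(\Lambda)$, I would simply observe that each $g_i$ vanishes on $\Lambda$: for any $a = (a_1, \ldots, a_n) \in \Lambda$ we have $a_i \in \Lambda_i$, hence $g_i(a_i) = \prod_{b \in \Lambda_i}(a_i - b) = 0$. So $g_i \in {\rm I}_K(\Lambda)$, and since ${\rm I}_K(\Lambda)$ is an ideal, $\mathfrak{A} \subseteq {\rm I}_K(\Lambda)$.

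For the reverse inclusion ${\rm I}_K(\Lambda) \subseteq \mathfrak{A}$, the key technical step is a reduction via iterated polynomial division. Since $g_i(X_i)$ is \emph{monic} in $X_i$ of degree $d_i := |\Lambda_i|$, we have the relation $X_i^{d_i} = g_i(X_i) - \bigl(g_i(X_i) - X_i^{d_i}\bigr)$, where $g_i(X_i) - X_i^{d_i}$ has $X_i$-degree strictly less than $d_i$. Iterating this substitution (first in $X_1$, then in $X_2$, etc.) one shows that for every $f \in K[X_1,\ldots,X_n]$ there exist $h_1, \ldots, h_n \in K[X_1,\ldots,X_n]$ and a remainder $r \in K[X_1,\ldots,X_n]$ with
\begin{align*}
f = h_1 g_1 + \cdots + h_n g_n + r, \qquad \deg_{X_i} r < d_i = |\Lambda_i| \text{ for all } i = 1, \ldots, n.
\end{align*}

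Now assume $f \in {\rm I}_K(\Lambda)$. Since each $h_i g_i \in \mathfrak{A} \subseteq {\rm I}_K(\Lambda)$ by the first inclusion, it follows that $r = f - \sum_i h_i g_i \in {\rm I}_K(\Lambda)$ as well, i.e. $r$ vanishes on $\Lambda = \Lambda_1 \times \cdots \times \Lambda_n$. But $K$ is an integral domain and $|\Lambda_i| > \deg_{X_i} r$ for every $i$, so the Identity Theorem for Polynomials (\ref{mypar:2.1}\,(6)) forces $r = 0$. Therefore $f = \sum_i h_i g_i \in \mathfrak{A}$, which completes the proof.

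I expect the only point requiring care is the bookkeeping in the iterated division to verify that the degree bound $\deg_{X_i} r < |\Lambda_i|$ can be simultaneously achieved for every $i$; this works cleanly because each $g_i$ involves \emph{only} the variable $X_i$, so reducing the $X_i$-degree using $g_i$ never increases the $X_j$-degree for $j \neq i$, and the reductions in different variables can be performed independently.
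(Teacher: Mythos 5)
Your proof is correct, and its core technical step — iterated division with remainder by the univariate $g_i$, followed by an appeal to the Identity Theorem for Polynomials (\ref{mypar:2.1}\,(6)) to kill the remainder — is exactly the same computation the paper performs. The difference is purely in the framing: the paper first invokes HNS\,2 over $\overline{K}$ to reduce the claim to showing that $\gothA$ is a radical ideal, and then establishes this by identifying $\gothA$ with the kernel of the evaluation map $\varepsilon\colon K[X_1,\ldots,X_n]\to K^{|\Lambda|}$, $f\mapsto (f(a))_{a\in\Lambda}$, via the same division argument you use. You instead prove the equality ${\rm I}_K(\Lambda)=\gothA$ directly from the division argument, bypassing HNS\,2 altogether. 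Since $\Ker\varepsilon$ is by definition ${\rm I}_K(\Lambda)$, the paper's invocation of HNS\,2 is in fact logically redundant once $\gothA=\Ker\varepsilon$ has been established; it serves as an expository device to exhibit the Combinatorial Nullstellensatz as a descendant of the classical one. Your route is therefore a bit more elementary and self-contained — it shows that Theorem~\ref{thm:5.1} needs no input from the classical Nullstellensatz at all — at the cost of losing that conceptual connection. One minor remark: it is worth stating explicitly that the remainder $r$, if it were nonzero, would by the Identity Theorem fail to vanish at some point of $\Lambda_1\times\cdots\times\Lambda_n$, contradicting $r\in {\rm I}_K(\Lambda)$; your phrasing "forces $r=0$" is correct but compresses this contrapositive.
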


\begin{proof}
Let $\overline{K}$ denote the algebraic closure of the field $K$. Note that ${\rm V}_{\overline{K}}(\gothA)=\Lambda$ and so $\mathrm{I}_K({\rm V}_{\overline{K}}(\gothA))= \sqrt{\gothA}$ by Theorem \ref{thm:2.10}\,(2)\,(HNS\,2). Therefore it is enough to prove that $\gothA$ is a radical ideal, i.\,e. $K[X_1,\dots,X_n]/\gothA$ is reduced. For this, it is enough to note that the kernel of the $K$-algebra homomorphism $\varepsilon:K[X_{1},\ldots  , X_{n}]\longrightarrow K^{|\Lambda|}$, $f\longmapsto (f(a))_{a\in\Lambda}$, is the ideal $\gothA$. Clearly,  by definitions  $g_{1}(X_{1}),\ldots , g_{n}(X_{n})\in\Ker\varepsilon$ and so $\gothA\subseteq \Ker\varepsilon$. To prove the reverse inclusion, let $f\in\Ker\varepsilon$. Using division with remainder by $g_{i}(X_{i})$,  $i=1,\ldots , n$, we can write $f=h+f\,'$ with $h$, $f\,'\in K[X_{1},\ldots , X_{n}]$, $h\in \langle g_{1}, \ldots , g_{n}\rangle=\gothA$  and $\deg_{X_{i}}f\,'<\deg\, g_{i}(X_{i})=|\Lambda_{i}|$ for all $i=1,\ldots , n$. Now, it follows from the Identity Theorem for Polynomials in \ref{mypar:2.1}\,(6) that $f\,'=0$ and so $f=h\in\gothA$.
\end{proof}

We shall deduce a variant of Combinatorial Nullstellensatz which is suitable for applications.

\begin{corollary}\label{cor:5.2}{\rm\so{(Combinatorial Nullstellensatz)}}\,
Let $K$ be a  fi\-eld,  $f(X_1,\ldots,X_n)\!\in\!K[X_{1},\ldots , X_{n}]$  and $d_{1},\ldots , d_{n}\!\in\!\N$. Suppose that\, {\rm (i)}\, $\deg(f)\!=\!d_{1}\!+\!\cdots\!+\! d_{n}$.
{\rm (ii)}  The coefficient of the monomial $X_{1}^{d_{1}}\cdots X_{n}^{d_{n}}$ in $f$ is non-zero.\,
Then, for subsets $\Lambda_1,\ldots ,\Lambda_{\,n}\subseteq K$ with $|\Lambda_{\,i}|> d_i\,$ for every $\,i=1,\dots, n$, there exist $(a_1,\ldots, a_n) \in \prod_{i=1}^{\,n} \Lambda_{\,i}$ such that $f(a_1,\ldots, a_n) \neq  0$.
\end{corollary}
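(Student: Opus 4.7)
The plan is to argue by contradiction. Suppose $f$ vanishes at every point of $\Lambda:=\Lambda_{1}\times\cdots\times\Lambda_{n}$, and set $g_{i}(X_{i}):=\prod_{a\in\Lambda_{i}}(X_{i}-a)\in K[X_{i}]$. Theorem~\ref{thm:5.1} then yields $f\in{\rm I}_{K}(\Lambda)=\langle g_{1},\ldots,g_{n}\rangle$, so there exist $h_{1},\ldots,h_{n}\in K[X_{1},\ldots,X_{n}]$ with $f=\sum_{i=1}^{n}h_{i}g_{i}$. The goal is to derive a contradiction with hypothesis~(ii).

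The first\,---\,and only substantive\,---\,step is to upgrade this to a degree-controlled representation in which $\deg(h_{i}g_{i})\leq D:=\deg f=d_{1}+\cdots+d_{n}$ for every $i$. I would achieve this by running, successively for $i=1,\ldots,n$, the ordinary division-with-remainder algorithm in the single variable $X_{i}$; this is possible because $g_{i}$ is monic of degree $|\Lambda_{i}|$ in $X_{i}$, and a term-by-term check shows that the subtractions performed in that algorithm never raise the total degree. The outcome is a representation $f=\sum_{i=1}^{n}h_{i}g_{i}+r$ with $\deg_{X_{i}}(r)<|\Lambda_{i}|$ for each $i$ and $\deg(h_{i}g_{i})\leq D$. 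Since $f$ and every $g_{i}$ vanish on $\Lambda$, so does $r$; combined with the degree bounds on $r$, the Identity Theorem for Polynomials in~\ref{mypar:2.1}\,(6) forces $r=0$.

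With this improved representation at hand, I would compare the coefficient of the monomial $M:=X_{1}^{d_{1}}\cdots X_{n}^{d_{n}}$ (of total degree $D$) on both sides of $f=\sum h_{i}g_{i}$. Split $g_{i}=X_{i}^{|\Lambda_{i}|}+\tilde g_{i}$ with $\tilde g_{i}\in K[X_{i}]$ and $\deg\tilde g_{i}<|\Lambda_{i}|$; since $K[X_{1},\ldots,X_{n}]$ is a domain, $\deg h_{i}\leq D-|\Lambda_{i}|$. The monomial $M$ cannot appear in $h_{i}\cdot X_{i}^{|\Lambda_{i}|}$, because that would require the negative exponent $X_{i}^{d_{i}-|\Lambda_{i}|}$ in $h_{i}$; nor can it appear in $h_{i}\tilde g_{i}$, whose total degree is at most $(D-|\Lambda_{i}|)+(|\Lambda_{i}|-1)=D-1<\deg M$. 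Hence the coefficient of $M$ in $\sum h_{i}g_{i}=f$ equals $0$, contradicting hypothesis~(ii); the contradiction furnishes the desired point $(a_{1},\ldots,a_{n})\in\Lambda$ with $f(a_{1},\ldots,a_{n})\neq 0$.
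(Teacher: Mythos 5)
Your proof is correct and follows essentially the paper's route: invoke Theorem~\ref{thm:5.1} to place $f$ in $\langle g_1,\ldots,g_n\rangle$, use division with remainder to obtain a degree-controlled representation $f=\sum_i h_i g_i$ with $\deg(h_i g_i)\leq\deg f$, and observe that the monomial $X_1^{d_1}\cdots X_n^{d_n}$ cannot occur in any $h_i g_i$, contradicting hypothesis~(ii). The differences are cosmetic\,---\,the paper first normalizes to $|\Lambda_i|=d_i+1$ while you keep $|\Lambda_i|>d_i$, and you spell out the bookkeeping (successive single-variable divisions preserve total degree; the remainder vanishes by the Identity Theorem) that the paper merely asserts.
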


\begin{proof}
We may assume that $|\Lambda_{\,i}|=d_{i}+1$ for every $i=1,\ldots , n$. We shall prove that\,: if (i) holds and if  $f(a_1,\ldots, a_n) =0$ for every $(a_1,\ldots, a_n) \in \prod_{\,i=1}^{\,n} \Lambda_{\,i}\,$,  then (ii) does not hold, i.\,e. the coefficient of $X_{1}^{d_{1}}\cdots X_{n}^{d_{\,n}}$ in $f$ is $0$. Note that, since $f\equiv 0$ on $\Lambda:=\prod_{\,i=1}^{\,n} \Lambda_{\,i}$, $f\in {\rm I}_{K}(\Lambda)=\gothA:=\langle g_{1}(X_{1}), \ldots , g_{n}(X_{n})\rangle$, where $g_i(X_i) = \prod_{\,a_{i} \in \Lambda_{i} }(X_i-a_{i})\in K[X_{i}] \subseteq K[X_{1},\ldots , X_{n}]$, $i=1,\dots,n$. Therefore, we can write
$f=h_{1}g_{1}+\cdots + h_{n}g_{n}$ with $h_{1},\ldots , h_{n}\in K[X_{1},\ldots , X_{n}]$. Now, as in the proof of Theorem \ref{thm:5.1}, using division with remainder by $g_{i}(X_{i})$,  $i=1,\ldots , n$, we may assume that $\deg h_{i}\leq \deg\, f - \deg\, g_{i}(X_{i})$ and hence $\deg\, h_{i}\,g_{i}\leq \deg\, f= d_{1}+\cdots +d_{\,n}$ for all $i=1,\ldots , n$. If $h_{i}\,g_{i}$ contains any monomial of degree $\deg\,f$, then such a monomial would be of maximal degree in $h_{i}\,g_{i} = h_{i}\,\prod_{a_{i} \in \Lambda_{i} }(X_i-a_{i})$ and hence will be divisible by $X_{i}^{d_{\,i}+1}$. This proves that the coefficient of the monomial $X_{1}^{d_{1}}\cdots X_{n}^{d_{n}}$ in $h_{i}\,g_{i}$ is $0$ for every $i=1,\ldots , n$ and hence the coefficient of  $X_{1}^{d_{1}}\cdots X_{n}^{d_{\,n}}$ in $f$ must be $0$.
\end{proof}

{\it Erd\"os-Heilbronn conjecture}\footnote{\label{foot:19}The {\it Cauchy-Davenport theorem} (\,named after Augustin Cauchy (1789-1857) and Harlod Davenport (1907-1969)\,) states that\,:\,{\it  if $M$, $N$ are non-empty subsets of $Z/p\Z$, then the sum-set $M\!+\!N$ has cardinality $\!\geq\!\min\{p, |M|\!+\!|N\!|-\!1\}$. In~particular, $|2\,M|\!\geq\!\min\{p, 2|M|\!-\!1\}$.} See [Davenport,\,H.: On the addition of residue classes. {\em J. London Math. Soc.}{\bf 30}\,(1935), 30-32.] and [Cauchy, A.\,L.\,: Recherches sur les nombers. {\em J. \'Ecole polytech.} {\bf 9}\,(1813), 99-116.]
In 1964 Paul Erd\"os (1913-1996) and Hans Heilbronn (1908-1975) conjectured that $|2\,M|\geq \min\{p, 2|M|\!-\!3\}$ (see \cite{erdos}). Erd\"os was one of the most prolific mathematicians of the 20th century and  was known both for his social practice of mathematics (he engaged more than 500 collaborators) and for his eccentric lifestyle.} was recently proved by Dias da Silva and Hamidoune (see\,\cite{silva}) using linear algebra and the representation
theory of the symmetric group. We give an elementary  proof of Erd\"os-Heilbronn conjecture  by using the Combinatorial Nullstellensatz Theorem\,\ref{thm:5.1} and Corollary\,\ref{cor:5.2}. This method even yields generalizations of both the Erd\"os-Heilbronn conjecture (see also \cite{alonconjecture}) and  the {\it Cauchy-Davenport theorem.}

\begin{theorem}\label{thm:5.3}{\rm \so{(Micha{\l}ek,\,\hbox{2010})}}\,
Let $p$ be a prime and let $M$ and $N$ be two non-empty subsets of $\Z/p\Z\,$.  Let
\begin{align*}
L = \{ x \in \Z/p\Z \mid x = a + b \ \text{ for some } a \in M\,, \,b \in N, a \not= b\}.
\end{align*}
Then $|L| \ge \min(p, |M| + |N| - 3)$
\end{theorem}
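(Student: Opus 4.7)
The plan is to apply the Combinatorial Nullstellensatz (Corollary~\ref{cor:5.2}) to the two-variable polynomial
\[
f(X,Y) \,=\, (X-Y)\prod_{c \in L'}(X+Y-c),
\]
where $L'$ is a suitable enlargement of $L$ inside $\Z/p\Z$. This polynomial has the property that $f(a,b)=0$ for every $(a,b) \in M \times N$: if $a=b$ the factor $X-Y$ vanishes; if $a \neq b$ then $a+b \in L \subseteq L'$ and the product vanishes. Consequently the contrapositive of Corollary~\ref{cor:5.2} forces the coefficient in $f$ of every monomial $X^{d_1}Y^{d_2}$ with $d_1+d_2 = \deg f$, $d_1 \leq |M|-1$, $d_2 \leq |N|-1$ to vanish in $\Z/p\Z$. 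Writing $m = |M|$, $n = |N|$, the aim is to contradict this vanishing by a careful choice of $|L'|$.

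First I would dispose of the degenerate situations. If $m+n-3 > p$, then for any $z \in \Z/p\Z \setminus L$ the relation $a+b = z$ with $a \in M$, $b \in N$ forces $a=b$ (otherwise $a+b \in L$), so $M \cap (z-N) \subseteq \{z/2\}$ and hence $|M \cap (z-N)| \leq 1$; inclusion-exclusion then gives $m+n-1 \leq |M \cup (z-N)| \leq p$, contradicting $m+n-3 > p$. Therefore $L = \Z/p\Z$ in this regime. The cases $m=1$ or $n=1$ follow by direct inspection, and the small prime $p=2$ reduces to a trivial finite check.

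In the main case $p$ odd, $m,n \geq 2$ and $m+n-3 \leq p$, suppose for contradiction that $|L| \leq m+n-4$. Enlarge $L$ inside $\Z/p\Z$ to a set $L'$ with $|L'| = m+n-4$, which is possible since $m+n-4 \leq p-1$. Then $\deg f = m+n-3$, and the two admissible exponent pairs are $(d_1,d_2) = (m-1, n-2)$ and $(m-2, n-1)$. Since every monomial of total degree $m+n-3$ in $f$ arises only from the top homogeneous component $(X-Y)(X+Y)^{m+n-4}$, the two relevant coefficients equal
\[
\binom{m+n-4}{m-2} - \binom{m+n-4}{m-1} \quad\text{and}\quad \binom{m+n-4}{m-3} - \binom{m+n-4}{m-2},
\]
which the identity $\binom{k}{i+1} = \binom{k}{i}\cdot (k-i)/(i+1)$ rewrites, respectively, as
\[
\binom{m+n-4}{m-2}\cdot\frac{m-n+1}{m-1} \quad\text{and}\quad \binom{m+n-4}{m-2}\cdot\frac{m-n-1}{n-1}.
\]

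The main obstacle is to check that at least one of these two expressions is nonzero in $\Z/p\Z$. Since $m+n-4 \leq p-1$, every factor appearing in $\binom{m+n-4}{m-2}$ lies in $\{1,\dots,p-1\}$, so the binomial is a unit mod $p$; likewise $m-1, n-1 \in \{1,\dots,p-1\}$ are units. If both expressions vanished mod $p$, we would obtain $m \equiv n-1$ and $m \equiv n+1 \pmod{p}$, forcing $p \mid 2$ and contradicting $p$ odd. Hence at least one coefficient is nonzero, which violates the constraint imposed by Corollary~\ref{cor:5.2} and establishes the bound $|L| \geq \min(p, m+n-3)$.
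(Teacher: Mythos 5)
Your proof is correct and follows essentially the same route as the paper's: apply the Combinatorial Nullstellensatz (Corollary~\ref{cor:5.2}) to $Q(X,Y)=(X-Y)\prod_{c\in D}(X+Y-c)$ with $D\supseteq L$ and $|D|=|M|+|N|-4$, observe that the two candidate monomials $X^{|M|-1}Y^{|N|-2}$ and $X^{|M|-2}Y^{|N|-1}$ have coefficients coming only from the top homogeneous component $(X-Y)(X+Y)^{|D|}$, and argue that both cannot vanish mod an odd prime $p$. The paper phrases this last step as: $\binom{|D|}{i-1}-\binom{|D|}{i}\equiv 0$ forces $i\equiv (|D|+1)/2$, so two values of $i$ differing by $1$ cannot both kill their coefficient; you instead factor out the unit $\binom{|M|+|N|-4}{|M|-2}$ and show that simultaneous vanishing gives $m-n\equiv 1$ and $m-n\equiv -1$, i.e.\ $p\mid 2$ --- the same calculation in a slightly different guise. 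Your explicit disposal of $|M|=1$, $|N|=1$, and $p=2$ is a bit more verbose than the paper's (which folds $|M|=1$ into the general argument and dismisses $p=2$ as trivial), but this is only a matter of presentation, not substance.
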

	
\begin{proof}
The assertion is trivial for  $p=2$.  Let $p>2$.
\smallskip

{\bf Case\,1\,:}\, $\min(p, |M|+|N|-3) = p$. In this case, $p+3\leq |M|+|N|$ and so by inclusion-exclusion principle
$|M\cap (g-N)| =|M|+|N|-|M\cup (g-N)| \geq p+3-p=3$ for every $g\in\Z/p\Z$.
We show that $|L|=p$, in particular, $L=\Z/p\Z$.  Let $g \in \Z/p\Z$ and $a \in M \cap (g-N)$ be such that $a \neq g/2$.  Then $g = a + b$ for some $b \in N$ with $b\neq a$, i.\,e.   $g \in L$ and hence $L = \Z/p\Z $.
 \smallskip

{\bf Case\,2\,:}\, $\min(p, |M|+|N|-3))<p$.    In this case,  $|M| + |N| - 3 < p$. Suppose, on the contrary that $|L| < \min(p, |M| + |N| - 3) = |M|+|N|-3$. Then there exists a subset $D\subseteq \Z/p\,\Z$ with   $L \subseteq D$ and $|D| = |M| + |N| - 4$. We would like to apply  Corollary\,\ref{cor:5.2}. For  this, let
\begin{align*}
P(X, Y) = \prod_{d \in D}	(X + Y - d) \text{ and } Q(X, Y) = P(X, Y)\,(X - Y).
\end{align*}
Note that  $P(a, b) = 0$ for every $a \in M$, $b \in N$, $a \neq b$, and hence  $Q(a, b) = 0$ for all $a \in M$, $b \in N$. Further, for $i=0,\ldots, |D|$, the coefficient of $X^{\,i}\,Y^{\,|D|-i}$ in $P(X, Y)$ is equal to $\binom{|D|}{i}$.  Then it follows that for $i=0,\dots,|D|+1$,  the coefficient of $X^{\,i}\,Y^{\,|D|+1-i}$ in $Q(X, Y)$,  is equal to $\binom{\,|D|\,}{i-1} - \binom{\,|D|\,}{i}$. Therefore the coefficient of $X^{\,i}\,Y^{\,|D|+1-i}$ in $Q(X, Y)$ is $0$  if and only if $\,i = (|D|+1)/2$ in $\Z/p\Z$.  Since $\,|D|+1 = |M|+|N|-3$,   the coefficient of either $X^{\,|M|-1}\,Y^{\,|N|-2}$ or of $X^{\,|M|-2}\,Y^{\,|N|-1}$ is non-zero.
But, since $\deg Q\!=\!|D|\!+\!1\!=\!|M| \!+\! |N|\! - \!3$,  by Corollary\,\ref{cor:5.2} (applied either to  $X^{\,|M|-1}\,Y^{\,|N|-2}$ or to  $X^{\,|M|-2}\,Y^{\,|N|-1}$ and $\Lambda_{1}\!=\!M$ , $\Lambda_{2}\!=\!N$) $\,Q(a, b)\neq 0$ for some $\,(a,b)\in M\times N$ which is absurd.
\end{proof}

In 2012 Karasev and  Petrov proved the following  improved version of Corollary\,\ref{cor:5.2}  in \cite[Theorem 4]{Karasev}.
This is used to prove the Dyson's conjecture, see Theorem~\ref{thm:5.5} below.

\begin{theorem}\label{thm:5.4}
Let $K$ be an arbitrary  field and let $f(X_1,\dots,X_n) \in K[X_1,\dots,X_n]$ be such that $\deg f \le |\nu|:=\nu_{1} + \cdots + \nu_{n}$ for a fixed $(\nu_{1},\ldots, \nu_{n}) \in \N^{n}$. For subsets $\Lambda_1,\ldots ,\Lambda_{\,n}\subseteq K$ with $|\Lambda_{\,i}|=\nu_{i}+1\,$, let $g_{i}(X_{i}):= \prod_{a_{i}\in \Lambda_{\,i}} (X_{i}-a_{i})$ for every $\,i=1,\dots, n\,$, and
$\Lambda:=\prod_{i=1}^{\,n} \Lambda_{\,i}\,$. We have
\[	C:= \ \hbox{coefficient of } X_{1}^{\nu_{1}}\cdots X_{n}^{\nu_{n}}  \ \hbox{ in } \  f \ = \
\sum_{(a_1,\ldots, a_n) \in \Lambda} \ \frac{f(a_1,\ldots, a_n)}{g'_1(a_1)\cdots g'_n(a_n)}\,.
\leqno{\rm (\thetheorem.1)}\]
In~particular, if $C \neq 0$,  then there exists $(a_1,\ldots, a_n) \in \Lambda$ such that $f(a_{1},\ldots ,  a_{n})\neq  0$.
\end{theorem}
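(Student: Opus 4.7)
The plan is to exploit the $K$-linearity in $f$ of both sides of the stated identity and then reduce the problem, via Lagrange interpolation, to a single-variable computation that is handled by a leading-coefficient comparison. First I would observe that both sides are $K$-linear functionals on the finite-dimensional $K$-vector space $V := \{f \in K[X_1,\ldots,X_n] \mid \deg f \le |\nu|\}$, so it suffices to verify the identity when $f$ ranges over the basis of monomials $X^d := X_1^{d_1}\cdots X_n^{d_n}$ with $|d| := d_1 + \cdots + d_n \le |\nu|$. For such a monomial the right-hand side factors completely as
\begin{align*}
\sum_{(a_1,\ldots,a_n) \in \Lambda} \frac{a_1^{d_1}\cdots a_n^{d_n}}{g_1'(a_1)\cdots g_n'(a_n)} \;=\; \prod_{i=1}^{n} S_{d_i}^{(i)}, \qquad S_{d}^{(i)} := \sum_{a \in \Lambda_i} \frac{a^{d}}{g_i'(a)},
\end{align*}
so the whole assertion is controlled by the one-variable quantities $S_d^{(i)}$.

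Next I would evaluate $S_d^{(i)}$ for $0 \le d \le \nu_i$ via the Lagrange interpolation identity
\begin{align*}
X^d \;=\; \sum_{a \in \Lambda_i} \frac{a^d}{g_i'(a)} \cdot \frac{g_i(X)}{X-a},
\end{align*}
valid in $K[X]$ because both sides are polynomials of degree $\le \nu_i$ that agree at the $\nu_i + 1$ distinct points of $\Lambda_i$: indeed, at $X = a \in \Lambda_i$ the only surviving summand on the right evaluates to $(a^d/g_i'(a))\cdot g_i'(a) = a^d$. Since each $g_i(X)/(X-a)$ is monic of degree $\nu_i$, comparing the coefficient of $X^{\nu_i}$ on both sides yields $S_d^{(i)} = 0$ for $0 \le d < \nu_i$ and $S_{\nu_i}^{(i)} = 1$.

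A short case analysis on a monomial $X^d$ with $|d| \le |\nu|$ then closes the argument. If $d_i = \nu_i$ for every $i$, both sides of the identity equal $1$. Otherwise there must exist an index $j$ with $d_j < \nu_j$, because the alternative $d_i \ge \nu_i$ for all $i$ together with $|d| \le |\nu|$ forces $d = \nu$; then $S_{d_j}^{(j)} = 0$ kills the right-hand side, while the left-hand side vanishes too since the coefficient of $X^\nu$ in $X^d$ is zero whenever $d \ne \nu$. Note that this subsumes the subcase ``some $d_i > \nu_i$'' automatically, so no value of $S_d^{(i)}$ for $d > \nu_i$ is ever needed. The ``in particular'' clause is then immediate: a nonzero sum must have a nonzero summand, and each denominator $g_1'(a_1)\cdots g_n'(a_n)$ is nonzero because the roots within each $\Lambda_i$ are distinct. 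I do not foresee a serious obstacle; the only care required lies in justifying Lagrange interpolation over an arbitrary field (by the agreement-at-sufficiently-many-points principle) and in arranging the case analysis so that the inequality $|d| \le |\nu|$ does the work of absorbing exponents that exceed the $\nu_i$.
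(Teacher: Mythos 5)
Your proof is correct, and it takes a genuinely different---and in fact more careful---route than the paper's. The two arguments coincide on the base identity $S^{(i)}_{\nu_i}=1$: you invoke the one-variable Lagrange interpolation $X^d=\sum_{a\in\Lambda_i}\frac{a^d}{g_i'(a)}\cdot\frac{g_i(X)}{X-a}$ for $0\le d\le\nu_i$ and extract the coefficient of $X^{\nu_i}$, while the paper writes the $n$-variable product version of the same formula (its (5.4.2)) and extracts the coefficient of $X_1^{\nu_1}\cdots X_n^{\nu_n}$. The divergence is in the treatment of a general $f$. You use linearity of both sides to reduce to a monomial $X^d$ with $|d|\le|\nu|$, factor the right-hand side as $\prod_i S^{(i)}_{d_i}$, and observe that when $d\ne\nu$ the constraint $|d|\le|\nu|$ forces some $d_j<\nu_j$, which annihilates the $j$-th factor; as you note, this means the values $S^{(i)}_k$ for $k>\nu_i$ are never needed. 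The paper instead subtracts $C\,X_1^{\nu_1}\cdots X_n^{\nu_n}$ to form $h$ and argues, via its ``case $n=1$,'' that for every specialization $(a_2,\dots,a_n)$ the coefficient of $X_1^{\nu_1}$ in $h(X_1,a_2,\dots,a_n)$ vanishes, and hence that the inner sum $\sum_{a_1\in\Lambda_1}h(a_1,a_2,\dots,a_n)/g_1'(a_1)$ is $0$ for each fixed $(a_2,\dots,a_n)$. That intermediate assertion is false in general: with $\nu=(1,1)$, $\Lambda_1=\Lambda_2=\{0,1\}$, $f=X_1X_2+X_1$, so $C=1$ and $h=X_1$, one has $h(X_1,a_2)=X_1$ with $X_1^{\nu_1}$-coefficient $1$, and the inner sum equals $1$ for every $a_2$; the full double sum vanishes only because the outer $a_2$-sum cancels it. Nor does the one-variable theorem apply blindly to $h(X_1,a_2,\dots,a_n)$, since its $X_1$-degree may exceed $\nu_1$. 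Your monomial-by-monomial reduction sidesteps both issues and makes the role of the total-degree inequality explicit, which is exactly the crux: the bound $|d|\le|\nu|$, not any per-variable degree bound, is what guarantees the vanishing.
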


\proof We consider two cases.
 \smallskip

{\bf Case 1\,:} $f=X_1^{\nu_{1}}\cdots X_n^{\nu_{n}}$. Since $X_{i}$-degrees of  polynomials on  both sides of the equation (\thetheorem.2) below are $\leq \nu_{i} <\deg\, g_{i}(X_{i})=|\Lambda_{i}|$ for every $i=1,\ldots ,n$, it follows from the Identity Theorem for Polynomials in\,\ref{mypar:2.1}\,(6)  that\,:
\vspace*{-4mm}
\[ X_1^{\nu_1}\cdots X_n^{\nu_{n}} =
\sum_{(a_{1},\ldots , a_{n}) \in \Lambda_{1}\times \cdots \times \Lambda_{\,n}} \ \,
 a_1^{\nu_{1}}\cdots  a_{n}^{\nu_{\,n}}\, \left(\prod_{\,i=1}^{\,n}\,\, \frac{g_{i}(X_{i})}{g\,'_{i}(a_{i})\,(X_{i}-a_{i})}\right)
\leqno{\rm (\thetheorem.2)}
\]
In~particular, comparing the coefficient of $X_1^{\nu_1}\cdots X_n^{\nu_n}$ on both sides, we get\,:
\begin{align*}
1= \sum_{(a_{1},\ldots , a_{n}) \in \Lambda_{1}\times \cdots \times \Lambda_{n}} \ \,  \, \frac{a_1^{\nu_{1}}\cdots  a_{n}^{\nu_n}} {g\,'_1(a_1) \cdots g\,'_n(a_n)}.
\end{align*}

{\bf Case 2\,:} We prove the general case by  induction on $n$.  Note that by the linearity of both sides in the formula  (\thetheorem.1), it is enough to prove the formula for
$\,h:=f-C\,X_1^{\nu_1}\cdots X_n^{\nu_n}$. Then
\begin{align*}
h(X_1, a_{2}, \ldots , a_{n}) = f(X_1, a_{2},\ldots , a_{n}) - C\, X_1^{\nu_{1}} a_{2}^{\nu_{2}}\cdots  a_{n}^{\nu_{n}} \quad \hbox{for every} \ \ (a_2,\ldots, a_n) \in K^{n-1}\,.
\end{align*}
By the case $n=1$, since the coefficient of $X_1^{\nu_1}$ is zero in $h(X_1, a_2,\ldots, a_n)$, $(a_2,\ldots, a_n) \in K^{n-1}$, it follows that
\vspace*{-3mm}
\begin{align*}
\sum_{a_1 \in \Lambda_1} \, \ \frac{h(a_1, a_2,\ldots, a_n)}{g\,'_1(a_1)}=0.
\end{align*}
Dividing the above equation by $g\,'_2( a_2)\cdots g\,'_n(a_n)$ and taking the sum over all $(n-1)$-tuples $(a_2,\ldots, a_n) \in \Lambda_{2}\times \cdots \times \Lambda_{\,n}$, we get\,:
\begin{align*}
0 = \sum_{(a_{1},\ldots , a_{\,n}) \in \Lambda_{1}\times \cdots \times \Lambda_{\,n}} \, \ \frac{h(a_1,\ldots, a_n)}{g\,'_1(a_1) \cdots  g\,'_n(a_n)}
= \sum_{(a_{1},\ldots , a_{\,n}) \in \Lambda_{1}\times \cdots \times \Lambda_{\,n}} \, \ \frac{f(a_1,\ldots, a_n)\,-\,C\,a_1^{\nu_1} \cdots a_n^{\nu_n}} {g\,'_1(a_1)\cdots g\,'_n(a_n)}.
\end{align*}
Therefore, by the Case~1 (the second equality in the equation below), we get\,:
\begin{align*}
\sum_{(a_{1},\ldots , a_{\,n}) \in \Lambda_{1}\times \cdots \times \Lambda_{\,n}} \, \ \frac{f(a_1,\ldots, a_n)}{g\,'_1(a_1) \cdots  g\,'_n(a_n)}
= C \sum_{(a_{1},\ldots , a_{\,n}) \in \Lambda_{1}\times \cdots \times \Lambda_{\,n}} \, \ \frac{a_1^{\nu_1} \cdots a_n^{\nu_n}} {g\,'_1(a_1)\cdots g\,'_n(a_n)} \,=\, C\,. \hspace*{1.4cm}\bullet
\end{align*}

Motivated by a problem in statistical physics, Freeman Dyson
in 1962 (see \cite{dyson}) formulated a conjecture which states that\,:  {\it the constant term of the Laurent polynomial $\,\,\prod_{1\,\le\, i\neq j \,\le\, n} (1-X_i/X_j)^{\alpha_i}$ is equal to the multinomial coefficient $(\alpha_1+\cdots+\alpha_n)!/(\alpha_1!\alpha_2!\cdots \alpha_n!)$.}
This  conjecture was first proved in 1962 independently by Kenneth Wilson (1936 - 2013) and J.\,Gunson.
\smallskip

The Combinatorial Nullstellensatz \ref{thm:5.1} and \ref{cor:5.2} are used to get information on the values of polynomials from their coefficients, but (\thetheorem.1)
allows us to use it in the other direction. This is used in the following  proof  of Dyson's conjecture by Karasev and Petrov \cite[Theorem 5]{Karasev}.

\begin{theorem}\label{thm:5.5}{\rm \so{(Dyson's conjecture)}}\,
Let $\alpha_{\,i}\,$, $i=1,\dots,n$ be positive integers and $C$ be the constant term in
\vspace*{-3mm}	
\begin{align*}
\prod_{1\le\, i\neq j \,\le\, n} \left( 1- X_{i}/X_{j} \right)^{\alpha_{\,i}}.
\end{align*}
Or, more generally, let  $\,\alpha=\alpha_1+\cdots+\alpha_{\,n}\,$ and let $\,C\,$ be the coefficient of the monomial $\,\,\prod_{i=1}^{\,n} \,X_i^{\alpha -\alpha_{\,i}}$ in
\[\,f(X_1,\dots,X_n) = \prod_{1\le\, i\, <\, j \,\le\, n} (-1)^{\alpha_j} (X_j-X_i)^{\alpha_{\,i}+\alpha_j}\,. \leqno{\rm (\thetheorem.1)}
\]
Then
\[\,C=\frac{\alpha!}{\alpha_1!\cdots \alpha_{\,n}!}\,. \leqno{\rm (\thetheorem.2)}
\]
\end{theorem}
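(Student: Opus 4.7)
The strategy is to apply Theorem~\ref{thm:5.4} to the polynomial $f$ of the statement and then to evaluate the resulting finite sum to obtain the multinomial coefficient. First, however, I would verify the equivalence of the two formulations.

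For the equivalence, I would write
\[
\prod_{1 \leq i \neq j \leq n}(1 - X_i/X_j)^{\alpha_i} \;=\; \prod_{1 \leq i < j \leq n}(1-X_i/X_j)^{\alpha_i}(1-X_j/X_i)^{\alpha_j},
\]
and compute that each inner factor equals $(-1)^{\alpha_j} X_i^{-\alpha_j}X_j^{-\alpha_i}(X_j - X_i)^{\alpha_i + \alpha_j}$. Taking the product over $i<j$, the net exponent of each $X_k$ accumulated from the monomial prefactors $X_i^{-\alpha_j}X_j^{-\alpha_i}$ is $-(\alpha - \alpha_k)$, so the Laurent polynomial equals $\bigl(\prod_{k=1}^n X_k^{-(\alpha - \alpha_k)}\bigr)\cdot f(X_1, \ldots, X_n)$. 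Hence its constant term coincides with the coefficient of $\prod_k X_k^{\alpha - \alpha_k}$ in $f$, namely $C$.

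For the coefficient identity, I would set $\nu_i := \alpha - \alpha_i$ and note that $|\nu| = (n-1)\alpha$ and $\deg f = \sum_{i<j}(\alpha_i + \alpha_j) = (n-1)\alpha$, so the hypothesis $\deg f \le |\nu|$ of Theorem~\ref{thm:5.4} is satisfied. Choosing $\Lambda_i := \{0, 1, \ldots, \nu_i\}$ with $|\Lambda_i| = \nu_i + 1$ and $g_i(X_i) := \prod_{k=0}^{\nu_i}(X_i - k)$, one has $g_i'(a_i) = (-1)^{\nu_i - a_i}\,a_i!\,(\nu_i - a_i)!$ for $a_i \in \Lambda_i$. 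The decisive structural feature is that each factor $(X_j - X_i)^{\alpha_i + \alpha_j}$ in $f$ has exponent $\geq 2$, so $f(a_1, \ldots, a_n)$ vanishes identically on tuples with any repeated coordinate. Theorem~\ref{thm:5.4} therefore collapses to
\[
C \;=\; \sum_{\substack{(a_1, \ldots, a_n) \in \Lambda_1 \times \cdots \times \Lambda_n \\ a_1, \ldots, a_n \text{ pairwise distinct}}} \ \frac{\prod_{1 \leq i < j \leq n}(-1)^{\alpha_j}(a_j - a_i)^{\alpha_i + \alpha_j}}{\prod_{i=1}^n (-1)^{\nu_i - a_i}\, a_i!\, (\nu_i - a_i)!}.
\]

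The principal obstacle, and the heart of the argument, is to evaluate this finite sum of rational numbers and identify it with $\alpha!/(\alpha_1!\cdots \alpha_n!)$. Rewriting $1/(a_i!(\nu_i - a_i)!) = \binom{\nu_i}{a_i}/\nu_i!$ casts the sum as a signed, binomially-weighted enumeration of distinct placements $a_i \in \Lambda_i$ against the generalized Vandermonde-type weight $\prod_{i<j}(a_j - a_i)^{\alpha_i + \alpha_j}$. I would establish the required identity either by induction on $n$, the base case $n = 2$ reducing to the Chu--Vandermonde identity, or by recognizing the resulting expression as a determinantal Vandermonde evaluation (after symmetrizing over the orderings of the distinct tuple) that directly produces the multinomial coefficient. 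Once this purely algebraic identity is secured, Dyson's conjecture follows at once: the role of Theorem~\ref{thm:5.4} is precisely to convert the multivariate constant-term identity into a finite sum whose evaluation is an elementary, if nontrivial, combinatorial exercise.
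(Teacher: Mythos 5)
Your setup is correct: the reduction from the Laurent-polynomial formulation to the coefficient of $\prod_k X_k^{\alpha-\alpha_k}$ in $f$, the verification that $\deg f = (n-1)\alpha = |\nu|$, the choice $\Lambda_i = \{0,1,\ldots,\nu_i\}$, and the formula $g_i'(a_i) = (-1)^{\nu_i - a_i}\,a_i!\,(\nu_i-a_i)!$ are all right. The observation that $f$ vanishes on tuples with repeated coordinates (because every $\alpha_i+\alpha_j \ge 2$) is also valid and does thin out the sum.

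The gap is in what remains. After applying Theorem~\ref{thm:5.4} directly to $f$, you are left with a sum over \emph{all} pairwise-distinct tuples $(a_1,\ldots,a_n) \in \Lambda_1\times\cdots\times\Lambda_n$, and you assert that evaluating this sum to the multinomial coefficient is ``an elementary, if nontrivial, combinatorial exercise,'' reducing in the case $n=2$ to Chu--Vandermonde. That is not correct. For $n=2$ the residual sum is
\begin{align*}
\sum_{a_1=0}^{\alpha_2}\sum_{a_2=0}^{\alpha_1} \frac{(-1)^{a_1+a_2-\alpha_1}\,(a_2-a_1)^{\alpha_1+\alpha_2}}{a_1!\,(\alpha_2-a_1)!\,a_2!\,(\alpha_1-a_2)!}\,,
\end{align*}
which carries the high-degree weight $(a_2-a_1)^{\alpha_1+\alpha_2}$ and is nothing like Chu--Vandermonde; proving it equals $\binom{\alpha_1+\alpha_2}{\alpha_1}$ is essentially as hard as the original constant-term problem, and your ``determinantal Vandermonde'' suggestion does not apply since $\prod_{i<j}(a_j-a_i)^{\alpha_i+\alpha_j}$ is not a Vandermonde determinant unless all $\alpha_i$ coincide. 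The paper's proof avoids this wall entirely by a preliminary move you have not reproduced: since only the top-degree part of $f$ contributes to $C$, one is free to \emph{add lower-degree terms}, replacing each $(X_j-X_i)^{\alpha_i+\alpha_j}$ by the falling-factorial-type product $G_{i,j} = \prod_{t=-\alpha_i+1}^{\alpha_j}(X_j-X_i+t)$. The modified polynomial $\widetilde f$ has the same coefficient $C$ but vanishes at \emph{every} point of $\Lambda$ except the single tuple $(\beta_1,\ldots,\beta_n)$ with $\beta_i = \alpha_1+\cdots+\alpha_{i-1}$, because the extended product $G_{i,j}$ kills not only equal coordinates but any tuple whose blocks $[a_i,a_i+\alpha_i-1]$ fail to tile $[0,\alpha-1]$ in order. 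The interpolation formula then collapses to a single ratio, which evaluates by direct factorial bookkeeping. That modification is the heart of the Karasev--Petrov proof and is the step your proposal is missing.
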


\noindent {\bf Sketch of a proof}\,   In the notation of Theorem~\ref{thm:5.4}, we have $\nu_{i}=\alpha-\alpha_{\,i}$. The idea is to add terms of lower degree to $f$. It does not change the coefficient $C$ but may significantly change the RHS of (\thetheorem.1).
\smallskip

In order to apply Theorem~\ref{thm:5.4} ($K=\Q$), we are free to choose the sets $\Lambda_{\,i}\subseteq \Z (\subseteq \Q)$ with $|\Lambda_{\,i}|=\alpha-\alpha_{\,i}+1$.  We shall change $f$ to $\widetilde{f}$ so that $\widetilde{f}$ takes a unique non-zero value on $\Lambda:=\prod_{i=1}^{\,n}\,\Lambda_{i}$. For this, we
choose $\Lambda_{\,i}:=[0,\alpha-\alpha_{\,i}]:=\{0,1,\ldots , \alpha-\alpha_{\,i}\}$.  Note that if $a_{i}\in \Lambda_{\,i}$, then the segment\footnote{\label{foot:20} For integers $a\,,\,b\in\Z$, we denote the segment $\{t\in\Z\mid a\leq t\leq b\}$ of integers by $[a,b]$.} $\Delta_{\,i}:=[a_{i}, a_{i}+\alpha_{\,i}-1]\subseteq [0,\alpha-1]$.
\smallskip

Now change $f$ by replacing the terms $(X_j-X_i)^{\alpha_i+\alpha_j}$, $1\leq i <j\leq n$,  in the formula (\thetheorem.1) by the polynomials
\vspace*{-3mm}
\begin{align*}
G_{i\,,\,j}(X_1,\ldots ,X_n) = \prod_{t=- \alpha_i+1}^{\alpha_j}(X_j - X_i+t)\,, \quad 1\leq i <j\leq n\,.
\end{align*}
Therefore
\[\,\widetilde{f}(X_{1},\ldots , X_{n})  = \prod_{1\le\, i\, <\, j \,\le\, n} (-1)^{\alpha_j}  G_{i\,,\,j}(X_1,\ldots ,X_n)  \,.\leqno{(\thetheorem.3)}
\]
Note that $\widetilde{f}$ does not vanish on $\Lambda$ if and only if $G_{i\,,\,j}$ does not vanish on $\Lambda$ for all $1 \le i < j \le n$.
Further, for $1\leq i<j\leq n$, non-vanishing of $G_{i\,\,,j}$ is equivalent to the conditions $\Delta_{\,i}\cap \Delta_{j}=\emptyset$ and $\Delta_{\,i}$ is not the segment following $\Delta_{j}\,$, i.\,e. $\min \Delta_{\,i }\neq  \min \Delta_{j}+1$.
All this together may happen only if   $\Delta_{1},\ldots,\Delta_{\,n}$ are consecutive segments $[0, \alpha_1\!-\!1],  [\alpha_1, \alpha_1\!+\!\alpha_2\!-\!1], \ldots , [\alpha_1\!+\!\cdots\!+\!\alpha_{n-1}, \alpha_1\!+\!\cdots\!+\!\alpha_{\,n}\!-\!1].$ Let $\beta_{\,i}:= \alpha_1+\cdots+\alpha_{\,i-1}$. This proves that
 $\widetilde{f}$ vanishes on all points in $\Lambda$ except the point $(\beta_1, \beta_2, \ldots, \beta_n)$.
 Now, it follows from Theorem~\ref{thm:5.3} (applied to the polynomial $\widetilde{f}$) that
\[	C
\ = \  \frac{\widetilde{f}(\beta_1, \ldots, \beta_{\,n})}{g\,'_i(\beta_1)\cdots g\,'_n(\beta_{\,n})} \ = \ \frac{\prod_{1\,\le\, i\,<\,j \,\le\, n}\,\,(-1)^{\alpha_j}G_{i\,,\,j}(\beta_1,\dots,\beta_{\,n})}{g\,'_i(\beta_1)\cdots g\,'_n(\beta_{\,n})} \, , \leqno{\rm (\thetheorem.4)}
\]
where $\,g_i(X_i) = \prod_{\,a_{i}=0}^{\,\alpha - \alpha_i}\,\,(X_{i}- a_{i})$. The RHS of (\thetheorem.4) may be calculated easily by  substituting the numerator and  the denominator from the equations (\thetheorem.5) and (\thetheorem.6) respectively, which are easy to verify\,:
\[\,g\,'_{i}(\beta_{\,i}) =
 (-1)^{\alpha_{\,i+1}+\cdots+\alpha_{\,n}}(\alpha_1+\cdots+\alpha_{\,i-1})!\, (\alpha_{\,i+1}+\cdots+\alpha_{\,n})!\, \leqno{\rm (\thetheorem.5)}
\]
and
\vspace*{-2mm}	
\[\,G_{i\,,\,j}(\beta_{1},\ldots,\beta_{\,n}) =
\frac{(\alpha_{\,i}+\cdots+\alpha_{\,j})!}{(\alpha_{\,i+1}+\cdots+\alpha_{\,j-1})!}\,. \hspace*{50 mm} \dppqed \leqno{\rm (\thetheorem.6)}
\]

\bibliographystyle{plain}

\begin{thebibliography}{10}

\bibitem{alon}
Alon,~N.
\newblock Combinatorial {N}ullstellensatz.
\newblock {\em Combin. Probab. Comput.}  {\bf 8}\, (1999)\,7--29.
\newblock Recent trends in combinatorics (M\'atrah\'aza, 1995).

\bibitem{alonconjecture}
Alon,~N., Nathanson,~M.~B. and Ruzsa,~I.\,:
\newblock Adding distinct congruence classes modulo a prime.
\newblock {\em Amer. Math. Monthly} {\bf 102}\,(1995), 250--255.

\bibitem{arrondo}
Arrondo,~E.\,:
\newblock Another elementary proof of the {N}ullstellensatz.
\newblock {\em Amer. Math. Monthly} {\bf 113}\,(2006), 169--171.

\bibitem{EArtin}
Artin,~E.\,:
\newblock \"{U}ber die {Z}erlegung definiter {F}unktionen in {Q}uadrate.
\newblock {\em Abh. Math. Sem. Univ. Hamburg} {\bf 5} (1927), 100--115 .

\bibitem{artinTate}
Artin,~E.,  Tate,~J.~T.\,:
\newblock A note on finite ring extensions.
\newblock {\em J. Math. Soc. Japan} {\bf 3} (1951), 74--77.

\bibitem{artin}
Artin,~M.\,:
\newblock {\em Algebra}.
\newblock Prentice Hall, Inc., New Jercy 1991.

\bibitem{AM}
Atiyah,~M.~F.,  Macdonald,~I.~G.\,:
\newblock {\em Introduction to commutative algebra}.
\newblock Addison-Wesley,  London 1969.

\bibitem{brown}
Brownawell,~W.~D.\,:
\newblock  Bounds for the degrees in the {N}ullstellensatz.
\newblock {\em Ann. of Math.}\,(2), {\bf 126}\,(1987), 577--591.

\bibitem{coste}
Coste,~M., Roy,~M.~F.\,:
\newblock La topologie du spectre r\'eel.
\newblock In {\em Ordered fields and real algebraic geometry},
Proceedings of the Special Session in 87th Annual Meeting of AMS in {S}an
{F}rancisco,  1981),   Contemp. Math.\,{\bf 8}\,(1982), 27-59.
Amer. Math. Soc. 1982.

\bibitem{silva}
Dias~da Silva,~J.~A., Hamidoune,~Y.~O.\,:
\newblock Cyclic spaces for {G}rassmann derivatives and additive theory.
\newblock {\em Bull. London Math. Soc.} {\bf 26}\,(1994)\,140-146.

\bibitem{duboisOrdered}
Dubois,~D.~W.\,:
\newblock A {N}ullstellensatz for ordered fields.
\newblock {\em Ark. f\"ur Math.} {\bf  8}\,(1969), 111--114.

\bibitem{duboisEfroymson}
Dubois,~D.~W., Efroymson,~G.\,:
\newblock Algebraic theory of real varieties.
\newblock In Studies and {E}ssays presented to {Y}u-why {C}hen on his
  60th {B}irthday,  107--135,   Taiwan University 1970.

\bibitem{dyson}
Dyson,~F.~J.\,:
\newblock Statistical theory of the energy levels of complex systems. {I}.
\newblock {\em J. Mathematical Phys.} {\bf  3}\,(1962), 140-156.

\bibitem{einlaz}
Ein,~L., Lazarsfeld, ~R.\,:
\newblock A geometric effective {N}ullstellensatz.
\newblock {\em Invent. Math.} {\bf 137}\,(1999), 427-448.

\bibitem{Eisenbud}
Eisenbud,~D.\,:
\newblock Commutative Algebra with a view toward algebraic geometry.
\newblock GTM {\bf 150}. Springer, New York 1995.

\bibitem{EisenbudHochster}
Eisenbud,~D., Hochster,~M.\,:
\newblock A {N}ullstellensatz with nilpotents and {Z}ariski's main lemma on
  holomorphic functions.
\newblock {\em J. Algebra} {\bf 58}\,(1979), 157-161.

\bibitem{erdos}
Erd\"{o}s,~P., Graham,~R.~L.\,:
\newblock {\em Old and new problems and results in combinatorial number
  theory}.   Monographies de L'Enseignement Math\'ematique {\bf 28}\,,
\newblock  Gen\`eve 1980.

\bibitem{ghorpade}
Ghorpade,~S.\,: 
\newblock {A note on Nullstellensatz over finite fields.}
\newblock {In {\em Contributions in algebra and algebraic geometry},} 
\newblock volume 738 of {\em Contemp. Math.}, pages 23-32. Amer. Math. Soc., Providence, RI, 2019.

\bibitem{goldman}
Goldman,~O.\,:
\newblock Hilbert rings and the {H}ilbert {N}ullstellensatz.
\newblock {\em Math. Zeit}.  {\bf  54}\,(1951), 136-140.

\bibitem{good}
Good,~I.~J.\,:
\newblock Short proof of a conjecture by {D}yson.
\newblock {\em J. Mathematical Phys.} {\bf  11}\,(1970),  1884-1884.

\bibitem{Hilbert1893}
Hilbert,~D.\,:
\newblock \"{U}ber die vollen Invariantensysteme.
\newblock {\em Math. Ann.} {\bf 42}\, (1893),  313-370.

\bibitem{jacobson2}
Jacobson,~N.\,:
\newblock {\em Basic algebra}\,II\,.
\newblock W. H. Freeman and Company, New York, second edition, 1989.

\bibitem{zelo}
Jelonek,~Z.\,:
\newblock On the effective {N}ullstellensatz.
\newblock {\em Invent. Math.} {\bf  162}\,\,(2005), 1-17.

\bibitem{Karasev}
Karasev,~R.~N., Petrov,~F.~V.\,:
\newblock Partitions of nonzero elements of a finite field into pairs.
\newblock {\em Israel J. Math.} {\bf 192}\,(2012), 143-156.

\bibitem{knebusch}
Knebusch,~M.,  Scheiderer,~C.\,:
\newblock {\em Einf\"uhrung in die reelle {A}lgebra}. {\bf 63},
\newblock Vieweg \& Sohn,  Braunschweig, 1989.

\bibitem{kollar}
Koll\'ar,~J.\,:
\newblock Sharp effective {N}ullstellensatz.
\newblock {\em J. Amer. Math. Soc.} {\bf 1}\,(1988), 963-975.

\bibitem{Kol1}
Koll\'ar,~J.\,:
\newblock Effective {N}ullstellensatz for arbitrary ideals.
\newblock {\em J. Eur. Math. Soc.} {\bf  1}\,(1999), 313-337.

\bibitem{krull52}
Krull,~W.\,:
\newblock Jacobsonsches {R}adikal und {H}ilbertscher {N}ullstellensatz.
\newblock In {\em Proceedings of the {I}nternational {C}ongress of
  {M}athematicians, {C}ambridge, {M}ass.}\, {\bf 2}\, (1950), 56-64. Amer.
  Math. Soc. 1952.

\bibitem{krull}
Krull,~W.\,:
\newblock {\em Idealtheorie}.
\newblock Zweite, erg\"anzte Auflage. Ergebnisse der Mathematik und ihrer
  Grenzgebiete, Band 46. Springer-Verlag, Berlin-New York, 1968.

\bibitem{lam}
Lam,~T.~Y.\,:
\newblock An introduction to real algebra.
\newblock {\em Rocky Mountain J. Math.} {\bf 14}\,(1984), 767--814.

\bibitem{langRealPlaces}
Lang,~S.\,:
\newblock The theory of real places.
\newblock {\em Ann. of Math.} {\bf 57} (1953), 378--391.

\bibitem{may}
May,~J.~P.\,:
\newblock Munshi's proof of the {N}ullstellensatz.
\newblock {\em Amer. Math. Monthly} {\bf 110}\,(2003), 133--140.

\bibitem{short}
Micha{\l}ek,~M.\,:
\newblock A short proof of combinatorial {N}ullstellensatz.
\newblock {\em Amer. Math. Monthly} {\bf  117}\,(2010), 821--823.

\bibitem{munshi}
Munshi,~R.\,:
\newblock Hilbert's {N}ullstellensatz.
\newblock {\em Bull. Bombay Math. Colloq} {\bf 15}\,(1999), 20--24.

\bibitem{PS2010}
Patil,~D.~P., Storch,~U.\,:
\newblock {\it Introduction to Algebraic Geometry and Commutative Algebra}.
\newblock IISc Lecture Notes Series, No.~1,  IISc Press~/~World Scientific Publications
Singapore~/~Chennai, 2010, pp.~x + 207.
Indian Edition Published by Cambridge University press India Pvt. Ltd. 2012,
pp.~x+209.

\bibitem{pfister}
Pfister,~A.\,:
\newblock {\em Quadratic forms with applications to algebraic geometry and
  topology}.
\newblock   Lecture Note Series {\bf 217}, London Mathematical Society,
Cambridge University Press 1995.

\bibitem{prestel}
Prestel,~A.\,:
\newblock {\em Lectures on formally real fields}. Lect.
  Notes in Math.\,{\bf 1093}.$\!$
\newblock Springer,  Berlin, 1984.

\bibitem{rabinowitsch}
Rabinowitsch,~J.~L.\,:
\newblock Zum {H}ilbertschen {N}ullstellensatz.
\newblock {\em Math. Ann.} {\bf 102}\,(1930), 520-520.

\bibitem{risler}
Risler,~J.-~J.\,:
\newblock Une caract\'erisation des id\'eaux des vari\'et\'es alg\'ebriques
  r\'eelles.
\newblock {\em C. R. Acad. Sci. Paris} {\bf 271}\,(1970), 1171-1173.

\bibitem{samuel}
Samuel,~P.\,:
\newblock About {E}uclidean rings.
\newblock {\em J. Algebra} {\bf 19}\,(1971), 282--301.

\bibitem{shankar}
Shankar,~S.\,:
\newblock The {N}ullstellensatz for systems of {PDE}.
\newblock {\em Adv. in Appl. Math.} {\bf 23}\,(1999), 360--374.

\bibitem{shustin}
Shustin,~E., Izhakian,~Z.\,:
\newblock A tropical {N}ullstellensatz.
\newblock {\em Proc. Amer. Math. Soc.} {\bf 135}\,(2007), 3815--3821.

\bibitem{sturmfels}
Sturmfels,~B.\,:
\newblock {\em Solving systems of polynomial equations}. {\bf 97}\, {\em CBMS
  Regional Conference Series in Mathematics}.
\newblock  Amer.  Math.  Soc.  2002.

\bibitem{wilson}
Wilson,~K.~G.\,:
\newblock Proof of a conjecture by {D}yson.
\newblock {\em J. Mathematical Phys.} {\bf 3}\,(1962), 1040--1043.

\bibitem{zariski}
Van der Waerden,~B.~L.\,:
\newblock Moderne Algebra.
\newblock 8th edn. Springer, New York, 1971.
\vspace*{-2mm}
\bibitem{zariski}
Zariski,~O.\,:
\newblock A new proof of {H}ilbert's {N}ullstellensatz.
\newblock {\em Bulletin. Amer. Math. Soc.} {\bf 53}\,(1947), 362--368.

\end{thebibliography}

\end{document}